\def\bfB{\mathbf{B}}
\def\op{\mathrm{op}}
\newcommand{\Isom}{\operatorname{Isom}}
\newcommand{\Mat}{\operatorname{M}}
\newcommand{\charac}{\chi}
\newcommand{\Rad}{\operatorname{Rad}}
\newcommand{\id}{\operatorname{id}}
\newcommand{\GL}{\operatorname{GL}}
\newcommand{\Ker}{\operatorname{Ker}}
\newcommand{\Nil}{\operatorname{Nil}}
\newcommand{\Co}{\operatorname{Co}}
\newcommand{\Irr}{\operatorname{Irr}}
\newcommand{\End}{\operatorname{End}}
\newcommand{\Hom}{\operatorname{Hom}}
\newcommand{\Vect}{\operatorname{span}}
\newcommand{\im}{\operatorname{Im}}
\newcommand{\rk}{\operatorname{rk}}
\renewcommand{\setminus}{\smallsetminus}
\def\F{\mathbb{F}}
\def\K{\mathbb{K}}
\def\R{\mathbb{R}}
\def\C{\mathbb{C}}
\def\N{\mathbb{N}}
\def\M{\mathbb{M}}
\def\Z{\mathbb{Z}}
\renewcommand{\L}{\mathbb{L}}
\def\calA{\mathcal{A}}
\def\calC{\mathcal{C}}
\def\calP{\mathcal{P}}
\def\calS{\mathcal{S}}
\def\calW{\mathcal{W}}
\def\lcro{\mathopen{[\![}}
\def\rcro{\mathclose{]\!]}}
\theoremstyle{definition}
\newtheorem{Def}{Definition}[section]
\newtheorem{Not}[Def]{Notation}
\theoremstyle{plain}
\newtheorem{theo}{Theorem}[section]
\newtheorem{prop}[theo]{Proposition}
\newtheorem{cor}[theo]{Corollary}
\newtheorem{lemma}[theo]{Lemma}
\theoremstyle{plain}
\theoremstyle{remark}
\newtheorem{Rems}{Remarks}
\newtheorem{Rem}[Rems]{Remark}
\title{Sums of two square-zero selfadjoint or skew-selfadjoint endomorphisms}
\author{Cl\'ement de Seguins Pazzis\footnote{Universit\'e de Versailles Saint-Quentin-en-Yvelines, Laboratoire de Math\'ematiques
de Versailles, 45 avenue des Etats-Unis, 78035 Versailles cedex, France}
\footnote{e-mail address: dsp.prof@gmail.com}}
\begin{document}

\thispagestyle{plain}

\maketitle

\begin{abstract}
Let $V$ be a finite-dimensional vector space over a field $\F$, equipped with a symmetric or alternating non-degenerate bilinear form $b$. When the characteristic of $\F$ is not $2$, we characterize the endomorphisms $u$ of $V$ that split into $u=a_1+a_2$
for some pair $(a_1,a_2)$ of $b$-selfadjoint (respectively,
$b$-skew-selfadjoint) endomorphisms of $V$ such that $(a_1)^2=(a_2)^2=0$.
In the characteristic $2$ case, we obtain a similar classification for the endomorphisms of $V$ that split into the sum of two square-zero $b$-alternating endomorphisms of $V$ when $b$ is alternating (an endomorphism $v$ is called $b$-alternating whenever
$b(x,v(x))=0$ for all $x \in V$).

Finally, if the field $\F$ is equipped with a non-identity involution, we
characterize the pairs $(h,u)$ in which $h$ is a Hermitian form on a finite-dimensional space over $\F$, and $u$ is the sum of two square-zero
$h$-selfadjoint endomorphisms.
\end{abstract}

\vskip 2mm
\noindent
\emph{AMS Classification:} 15A23, 11E04, 15A21

\vskip 2mm
\noindent
\emph{Keywords:} Symmetric bilinear forms, Symplectic forms, Square-zero endomorphisms, Decomposition,
Selfadjoint endomorphisms, Skew-selfadjoint endomorphisms


\section{Introduction}

Throughout, $\N$ denotes the set of all nonnegative integers, and $\N^*$ the set of all positive ones.

\subsection{The problem}

The present article is set in the general context of decompositions of elements of an algebra into
quadratic objects. Given an associative algebra $\calA$ over a field $\F$, an element $x \in \calA$
is called quadratic whenever it is annihilated by a polynomial of degree $2$, i.e.\
$x^2 \in \Vect_\F(1_\calA,x)$.
In \cite{dSPsum1,dSPsum2}, we have characterized, in algebras of endomorphisms of finite-dimensional vector spaces,
the elements that split into $a+b$ where $a$ and $b$ are quadratic elements with prescribed annihilating polynomials
of degree $2$. For example, when both polynomials equal $t^2$, this describes the
endomorphisms that are the sum of two square-zero endomorphisms (see \cite{WuWang} and \cite{Bothasquarezero}), when both equal
$t^2-t$ this describes the endomorphisms that are the sum of two idempotents (see \cite{HP} for the seminal results).
The main innovation in \cite{dSPsum1,dSPsum2} allowed to deal with the case where at least one of the prescribed annihilating polynomials is irreducible over $\F$.

In the present work, we wish to start a similar study in the context of vector spaces equipped with non-degenerate
symmetric or alternating bilinear forms. This topic is not entirely new, yet to this day the
research has been mostly concentrated in the study of decomposing elements of orthogonal or symplectic groups into products of two quadratic elements of those groups. The oldest result is a theorem of M.J. Wonenburger \cite{Wonenburger}, which states that in an orthogonal group over a field with characteristic distinct from $2$, every element is the product of two involutions. This result was generalized by Gow \cite{Gow} to fields of characteristic $2$ for orthogonal groups of regular quadratic forms (but not for orthogonal groups of symmetric bilinear forms); see also \cite{Bunger,KnuppelNielsen}.

A few years ago, de La Cruz \cite{delaCruz} considered the problem of sums of square-zero matrices:
among several results, he showed that over the complex field every Hamiltonian matrix is the sum of two square-zero Hamiltonian matrices,
whereas a skew-Hamiltonian matrix is the sum of two square-zero skew-Hamiltonian matrices if and only if it is similar to its opposite.

Due to the appeal of group theory, decompositions into products tend to be more popular,
but we view decompositions into sums as equally interesting.

Now, let us be more precise about our aims. Throughout, all the vector spaces under consideration are assumed to be \emph{finite-dimensional}
(we will only restate this assumption in the main theorems, for reference purpose). All those vector spaces are over a fixed field $\F$
whose characteristic we denote by $\charac(\F)$.
We denote by $\Irr(\F)$ the set of all monic irreducible polynomials of $\F[t]$,
and by $\Irr_0(\F)$ the set of all \emph{even} monic irreducible polynomials of $\F[t]$, i.e.\
$\Irr_0(\F)=\Irr(\F) \cap \F[t^2]$.

Let $V$ be a vector space over $\F$, and
$$b : V \times V \longrightarrow \F$$
be a bilinear form. We assume that $b$ is either \textbf{symmetric} ($\forall (x,y)\in V^2, \; b(x,y)=b(y,x)$)
or \textbf{alternating} ($\forall x\in V, \; b(x,x)=0$).
We say that $b$ is skew-symmetric whenever $\forall (x,y) \in V^2, \; b(y,x)=-b(x,y)$.
The alternating bilinear forms are the skew-symmetric ones if $\charac(\F) \neq 2$,
whereas if $\charac(\F)= 2$ an alternating form is necessarily symmetric but the converse does not hold in general.

In general, the radical of $b$ is defined as
$$\Rad(b):=\{x \in V : \; \forall y \in V, \; b(x,y)=0\}.$$
We shall systematically assume that this radical is zero, i.e.\ $b$ is \emph{non-degenerate}.
Given a linear subspace $W$ of $V$, its orthogonal under $b$ is denoted by $W^{\bot_b}$, or more simply by $W^{\bot}$
in lack of a possible confusion. Remember that $\dim W^{\bot}=\dim V-\dim W$, that $(W^{\bot})^{\bot}=W$, and that $V=W \oplus W^{\bot}$
if and only if the restriction of $b$ to $W^2$ is non-degenerate (in which case we say that $W$ is a \textbf{$b$-regular} subspace).

Given an endomorphism $u$ of $V$, there is a unique endomorphism of $V$, denoted by $u^\star$ and called the $b$-adjoint of $u$, such that
$$\forall (x,y)\in V^2, \; b\bigl(u^\star(x),y\bigr)=b\bigl(x,u(y)\bigr).$$
We say that $u$ is \textbf{$b$-selfadjoint} whenever $u^\star=u$, and that $u$ is \textbf{$b$-skew-selfadjoint}
whenever $u^\star=-u$ (of course, these two notions are equivalent if $\charac(\F)=2$).
We say that $u$ is \textbf{$b$-alternating} whenever $\forall x\in V, \; b(x,u(x))=0$, i.e.\ the bilinear form
$(x,y) \mapsto b(x,u(y))$ is alternating. If $\charac(\F) \neq 2$ and $b$ is symmetric (respectively,
$\charac(\F)\neq 2$ and $b$ is alternating), then $u$ is $b$-alternating  if and only if it is $b$-skew-selfadjoint
(respectively, $b$-selfadjoint). If $\charac(\F)=2$, every $b$-alternating endomorphism
is $b$-selfadjoint but the converse does not hold in general.

The sets of all $b$-selfadjoint, $b$-skew-selfadjoint and $b$-alternating endomorphisms are respectively denoted by
$\calS_{b+}$, $\calS_{b-}$ and $\calA_b$. Obviously, they are linear subspaces (but not subalgebras!) of the algebra $\End(V)$ of all endomorphisms
of $V$.

\vskip 3mm
Now, we can state our main problem:
\begin{center}
In each subspace $\calS_{b+}$, $\calS_{b-}$ and $\calA_b$,
characterize the elements that split into the sum of two square-zero elements.
\end{center}

To this end, the following notation will be useful:

\begin{Not}
Given a linear subspace $\calW$ of $\End(V)$, we denote by $\calW^{[2]}$
the subset of $\calW$ consisting of the elements of the form $a_1+a_2$, where $a_1,a_2$ are elements of $\calW$
such that $a_1^2=a_2^2=0$.
\end{Not}

\subsection{A review of the case where $W=\End(V)$}

At this point, it is important to explain what is exactly meant by ``characterize" in the above problem.

So, let us first go back to the classical case of $\End(V)$ where $V$ is a (finite-dimensional) vector space.
In that case, the problem is invariant under replacing $u$ with a conjugate endomorphism:
indeed, if $\varphi$ is an automorphism of $V$ and $u=a_1+a_2$ for square-zero endomorphisms of $V$,
then $\varphi u \varphi^{-1}=b_1+b_2$ where $b_1:=\varphi a_1 \varphi^{-1}$ and $b_2:=\varphi a_2 \varphi^{-1}$
are square-zero endomorphisms of $V$. Hence, $\End(V)^{[2]}$ is a union of conjugacy classes: if we have (computable) invariants for conjugacy classes
then, in theory, belonging to $\End(V)^{[2]}$ could be expressed in terms of these invariants only.

So, let us recall the relevant invariants for conjugacy classes in the algebra $\End(V)$.
A module over a ring is called of rank $1$ when it is non-zero and generated by a single element.
Every module of rank $1$ over $\F[t]$ is isomorphic to $\F[t]/(p)$ for a unique zero or nonconstant monic polynomial $p \in \F[t]$, which we call the minimal
polynomial of the said module; the module under consideration is finite-dimensional as a vector space over $\F$ if and only if $p$ is nonzero.

Let $u$ be an endomorphism of a vector space $V$. By putting $p(t).x:=p(u)[x]$,
we enrich the vector space $V$ into an $\F[t]$-module $V^u$.
If $V^u$ is of rank $1$, it is isomorphic to $\F[t]/(p)$ where $p$ denotes the minimal polynomial of $u$.

In general $V^u$ splits into the direct sum $V_1 \oplus \cdots \oplus V_r$
of non-zero submodules of rank $1$ whose respective minimal polynomials are so that
$p_{i+1}$ divides $p_i$ for all $i \in \lcro 1,r-1\rcro$.
The polynomials $p_1,\dots,p_r$ are then uniquely determined by $u$: they are its \textbf{invariant factors}.
They characterize the similarity (i.e.\ conjugacy) class of $u$, i.e.\ the orbit of $u$ under the action
of the general linear group $\GL(V)$ by $a.u:=a ua^{-1}$.

Now, we can reformulate the classical result of Wang-Wu \cite{WuWang} and Botha \cite{Bothasquarezero} (see also
the appendix of \cite{dSPsum3} for an alternative proof):

\begin{theo}\label{theoBotha}
Let $V$ be a finite-dimensional vector space, and let $u \in \End(V)$.
Then $u$ belongs to $\End(V)^{[2]}$ if and only if each invariant factor of $u$ is even or odd.
\end{theo}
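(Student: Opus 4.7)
The plan is to split into sufficiency and necessity, reducing each direction to the cyclic case. Since $\End(V)^{[2]}$ is closed under direct sums (a block-diagonal $u=u_1\oplus u_2$ lies in $\End(V)^{[2]}$ as soon as each block does), and since the even/odd condition is a per-invariant-factor property, I would decompose $V$ into $u$-stable cyclic submodules corresponding to the invariant factors of $u$ and argue on each, so that one may assume throughout that $V$ is cyclic under $u$ with minimal polynomial $p(t)=t^n+c_{n-1}t^{n-1}+\cdots+c_0$.

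For the sufficiency, assume $p$ is even or odd. On a cyclic basis $(e_0,\dots,e_{n-1})$ satisfying $u(e_i)=e_{i+1}$ for $i<n-1$ and $u(e_{n-1})=-\sum_j c_j e_j$, I would set $a_1(e_{2k})=e_{2k+1}$, $a_1(e_{2k+1})=0$, and $a_2(e_{2k+1})=e_{2k+2}$, $a_2(e_{2k})=0$, then route the wrap-around $u(e_{n-1})$ entirely through $a_1$ if $n$ is odd (so $p$ is odd) or through $a_2$ if $n$ is even (so $p$ is even). Then $a_1+a_2=u$ by inspection, and the check $a_1^2=a_2^2=0$ reduces to the vanishing of those $c_j$ sitting at the ``unfavourable'' parity --- precisely what the even/odd hypothesis provides (and where the crucial condition $c_{n-1}=0$ enters).

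For the necessity (the harder direction), given $u=a_1+a_2$ with $a_i^2=0$, I must show each invariant factor of $u$ is even or odd. In characteristic $\neq 2$, I would introduce $c:=a_1-a_2$; expanding $(u+c)^2=4a_1^2=0$ and $(u-c)^2=4a_2^2=0$ yields the two relations $uc+cu=0$ and $c^2+u^2=0$. The anticommutation forces $c$ to carry each generalized $\lambda$-eigenspace of $u$ (over the algebraic closure) into the generalized $(-\lambda)$-eigenspace, and the identity $c^2=-u^2$ makes $c$ an isomorphism whenever $\lambda\neq 0$, matching the Jordan-block sizes of $u$ at $\lambda$ and $-\lambda$. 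This yields $u\sim -u$, which, by comparing the invariant factors of $u$ with those of $-u$, is equivalent to each invariant factor being even or odd. In characteristic $2$, where $u\sim -u$ is automatic, a distinct argument is required: exploit the identities $u^{2k}=(a_1a_2)^k+(a_2a_1)^k$ and $u^{2k+1}=a_1(a_2a_1)^k+a_2(a_1a_2)^k$ to produce rank estimates on the powers $u^k$ forcing the invariant factors to lie in $\F[t^2]\cup t\F[t^2]$. The main obstacle is exactly this necessity direction, especially in characteristic $2$, where the clean reduction to $u\sim -u$ is unavailable and one has to extract the even/odd structure from rank data rather than from a similarity argument.
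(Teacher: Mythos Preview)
The paper does not prove this theorem; it quotes it as a known result of Wang--Wu \cite{WuWang} and Botha \cite{Bothasquarezero}, pointing also to the appendix of \cite{dSPsum3} for an alternative proof. There is therefore no in-paper argument to compare your proposal against.

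On its own merits: your sufficiency argument (splitting the companion matrix along even/odd indices and routing the wrap-around column through the appropriate summand) is correct and standard, as is your necessity argument in characteristic $\neq 2$ via $c:=a_1-a_2$, the relations $uc+cu=0$ and $c^2=-u^2$, and the resulting similarity $u\sim -u$.

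The characteristic $2$ necessity direction, however, is a genuine gap. The identities $u^{2k}=(a_1a_2)^k+(a_2a_1)^k$ and $u^{2k+1}=a_1(a_2a_1)^k+a_2(a_1a_2)^k$ are correct, but ``rank estimates on the powers $u^k$'' is not an argument, and it is unclear how any such estimate alone would force each invariant factor into $\F[t^2]\cup t\,\F[t^2]$. For a concrete test: take $\F=\F_2$ and $u$ cyclic on $\F_2^{\,2}$ with minimal polynomial $t^2+t+1$; then $u^2=u+1$ and $u^3=1$ are both invertible of rank $2$, while the only inequality your identities yield is $\rk u^{k}\leq \rk a_1+\rk a_2\leq 2$, which is vacuous. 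Ruling this case out actually requires a structural analysis (for instance of how $\Ker a_1$, $\Ker a_2$, $\im a_1$, $\im a_2$ sit inside $V$, or of the $\F[t^2]$-module structure induced by $u^2$), not merely ranks of powers of $u$. You correctly flag this as the main obstacle, but as written the characteristic $2$ half of the necessity is not proved.
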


Alternatively, we can characterize similarity classes in terms of primary invariants
and so-called Jordan numbers. An $\F[t]$-module of rank $1$ is called \textbf{primary} whenever
its minimal polynomial is a power of some (monic) irreducible polynomial.
Using the Chinese remainder theorem, one can split each
module of the form $\F[t]/(p)$ into a direct sum of primary submodules.
It follows that $V^u$ is isomorphic to a direct sum of primary modules:
then, the non-ordered list of minimal polynomials of the summands in such a primary decomposition depends only on the similarity class of $u$,
and these minimal polynomials are called the \textbf{primary invariants} of $u$.
Finally, we can recover those invariants as follows: for each $p \in \Irr(\F)$,
and each positive integer $k$, we denote by $n_{p,k}(u)$ the number of such invariants that equal $p^k$.
The mapping $p(u)$ induce an injective homomorphism of $\F[t]$-modules
$$\Ker p(u)^{k+1}/\Ker p(u)^{k} \longrightarrow \Ker p(u)^{k}/\Ker p(u)^{k-1}.$$
Each one of those modules has a natural structure of vector space over the residue field $\L:=\F[t]/(p)$,
and one checks that the integer $n_{p,k}(u)$ is the dimension over $\L$ of the cokernel of that module.
In other words,
$$n_{p,k}(u)=\dim_{\F[t]/(p)}\Bigl(\Ker p(u)^k/\bigl(\Ker p(u)^{k-1}+(\im p(u) \cap \Ker p(u)^k)\bigr)\Bigr).$$

\begin{Not}
Given a non-zero polynomial $p \in \F[t]$, we define its opposite polynomial as
$$p^{\op}:=(-1)^{\deg p}\, p(-t).$$
If $p$ is monic and irreducible, then so is $p^{\op}$. Note in any case that $(p^{\op})^{\op}=p$ (because $p^{\op}$ has the same degree as $p$).
\end{Not}

Using Jordan numbers, the previous theorem can be reformulated as follows:

\begin{theo}
Let $V$ be a finite-dimensional vector space over the field $\F$, and let $u \in \End(V)$. Assume that $\charac(\F)\neq 2$.
The following conditions are equivalent:
\begin{enumerate}[(i)]
\item The endomorphism $u$ belongs to $\End(V)^{[2]}$.
\item For every $p\in \Irr(\F)$ and every integer $k>0$, one has
$n_{p,k}(u)=n_{p^{\op},k}(u)$.
\item For every $p \in \Irr(\F) \setminus (\Irr_0(\F) \cup \{t\})$ and every integer $k>0$, one has
$n_{p,k}(u)=n_{p^{\op},k}(u)$.
\item $u$ is similar to $-u$.
\end{enumerate}
\end{theo}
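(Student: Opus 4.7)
The plan is to deduce the four-way equivalence from Theorem \ref{theoBotha} by matching the ``even or odd'' language with the $\op$-operation on polynomials. I will establish (i) $\Leftrightarrow$ (iv) $\Leftrightarrow$ (ii) $\Leftrightarrow$ (iii).

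First I would prove a key polynomial dictionary: for a nonconstant monic $p \in \F[t]$ with $\charac(\F) \neq 2$, one has $p^{\op} = p$ if and only if $p$ is either ``even'' ($p \in \F[t^2]$) or ``odd'' ($p \in t \cdot \F[t^2]$). This is a short parity analysis of $\deg p$: when $\deg p$ is even, $p^{\op}(t) = p(-t)$, so $p = p^{\op}$ means $p \in \F[t^2]$; when $\deg p$ is odd, $p^{\op}(t) = -p(-t)$, so $p = p^{\op}$ forces every even-degree coefficient of $p$ to vanish, and this is precisely where $\charac(\F) \neq 2$ is used. Applied to each invariant factor of $u$, Theorem \ref{theoBotha} becomes the statement that $p_i^{\op} = p_i$ for every invariant factor $p_i$ of $u$.

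Next I would show how $\op$ controls passage from $u$ to $-u$. The map $p \mapsto p^{\op}$ is a degree-preserving involution on monic polynomials that preserves divisibility and sends $q^k$ to $(q^{\op})^k$, hence powers of irreducibles to powers of irreducibles. Moreover, for an $\F[t]$-cyclic submodule $W \subseteq V^u$ with minimal polynomial $p$, the same underlying subspace is cyclic under $-u$ with minimal polynomial $p^{\op}$: a monic $m$ annihilates $u|_W$ if and only if $m^{\op}$ annihilates $(-u)|_W$, as one sees directly from the definition of $\op$. Applying this to an invariant-factor decomposition and then to a primary decomposition of $V^u$ yields, respectively, that the invariant factors of $-u$ are $p_1^{\op}, \ldots, p_r^{\op}$ in divisibility order, and that $n_{p,k}(-u) = n_{p^{\op},k}(u)$ for every $p \in \Irr(\F)$ and every $k > 0$.

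From these two facts I would read off the equivalences: by uniqueness of the invariant-factor sequence, $u$ is similar to $-u$ if and only if $p_i^{\op} = p_i$ for every $i$, which combined with the dictionary and Theorem \ref{theoBotha} gives (i) $\Leftrightarrow$ (iv); by the Jordan-number characterization of similarity, $u \sim -u$ is equivalent to $n_{p,k}(u) = n_{p,k}(-u) = n_{p^{\op},k}(u)$ for all $p$ and $k$, giving (iv) $\Leftrightarrow$ (ii). Lastly, (ii) $\Leftrightarrow$ (iii) is free: $t^{\op} = t$, and the dictionary shows $p^{\op} = p$ for every $p \in \Irr_0(\F)$, so the equalities in (ii) hold automatically for $p \in \Irr_0(\F) \cup \{t\}$.

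The only delicate step, and thus the main obstacle, is the dictionary lemma, where $\charac(\F) \neq 2$ is essential to ensure that a monic polynomial of odd degree satisfying $p(-t) = -p(t)$ is genuinely odd in the elementary sense; everything else is bookkeeping atop Theorem \ref{theoBotha}.
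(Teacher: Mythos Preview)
Your argument is correct and is precisely the verification that the paper leaves implicit: the theorem is presented there as a direct reformulation of Theorem~\ref{theoBotha} in terms of Jordan numbers, with no separate proof given. Your polynomial dictionary ($p^{\op}=p$ iff $p$ is even or odd, for $\charac(\F)\neq 2$) together with the observation that the invariant factors and primary invariants of $-u$ are obtained from those of $u$ by applying $(-)^{\op}$ is exactly the bridge needed, and your handling of each equivalence is clean.
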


Now, we turn to the characteristic $2$ case.

\begin{theo}
Let $V$ be a finite-dimensional vector space over the field $\F$, and let $u \in \End(V)$. Assume that $\charac(\F)= 2$.
The following conditions are equivalent:
\begin{enumerate}[(i)]
\item The endomorphism $u$ belongs to $\End(V)^{[2]}$.
\item For every $p \in \Irr(\F) \setminus (\Irr_0(\F) \cup \{t\})$ and every odd integer $k \geq 1$, one has
$n_{p,k}(u)=0$.
\item For every non-zero eigenvalue $\lambda$
of $u$ in an algebraic closure of $\F$, the Jordan cells of $u$ for the eigenvalue $\lambda$ are all even-sized.
\end{enumerate}
\end{theo}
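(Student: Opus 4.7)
The plan is to combine Theorem~\ref{theoBotha} with a characterization, valid in characteristic $2$, of the sets $\Irr_0(\F)$ and $\Irr(\F)\setminus(\Irr_0(\F)\cup\{t\})$ in terms of separability. The key preliminary fact is that in characteristic $2$, a monic irreducible polynomial $p\in\F[t]$ is inseparable if and only if $p'(t)=0$, which happens if and only if every monomial in $p$ has even degree, i.e.\ $p\in\F[t^2]$. Since $t$ itself is separable, this shows that $\Irr_0(\F)$ consists exactly of the inseparable monic irreducibles, while $\Irr(\F)\setminus(\Irr_0(\F)\cup\{t\})$ consists of the separable monic irreducibles distinct from $t$. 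Iterating Frobenius further, each $p\in\Irr(\F)$ has a unique presentation $p(t)=q(t^{2^e})$ with $e\in\N$ and $q\in\Irr(\F)$ separable, and factors in $\overline{\F}[t]$ as $\prod_{j}(t-\beta_j)^{2^e}$ where $\beta_1,\dots,\beta_{\deg q}$ are the distinct roots of $q$.

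For (ii)~$\Leftrightarrow$~(iii), the correspondence between the primary decomposition of $u$ and its Jordan form over $\overline{\F}$ shows that a primary invariant $p^k$ of $u$ produces, over $\overline{\F}$, exactly $\deg(q)$ Jordan cells of size $k\cdot 2^e$, one at each root of $p$. For $p=t$, these cells lie at the eigenvalue $0$; for $p\in\Irr_0(\F)$, they lie at nonzero eigenvalues but have the automatically even size $k\cdot 2^e$ with $e\geq 1$; for $p\in\Irr(\F)\setminus(\Irr_0(\F)\cup\{t\})$, we have $e=0$, and the cells have size exactly $k$ at nonzero eigenvalues. Hence (iii) asserts precisely that no such $k$ be odd, which is condition (ii).

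For (i)~$\Leftrightarrow$~(ii), I would invoke Theorem~\ref{theoBotha}, so that it suffices to show that each invariant factor $p_i$ of $u$ is even ($p_i\in\F[t^2]$) or odd ($p_i\in t\F[t^2]$) if and only if, for every $p\in\Irr(\F)\setminus(\Irr_0(\F)\cup\{t\})$ and every $i$, the valuation $v_p(p_i)$ is even---which, via $n_{p,k}(u)=\#\{i:v_p(p_i)=k\}$, is exactly condition (ii). The backward direction rests on three identities valid in characteristic $2$: any square in $\F[t]$ lies in $\F[t^2]$; each $p\in\Irr_0(\F)$ lies in $\F[t^2]$ by definition; and $t^{v_t(p_i)}$ contributes an even or odd factor depending on the parity of $v_t(p_i)$. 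For the forward direction, writing $p_i=t^\epsilon g(t^2)$ for some $\epsilon\in\{0,1\}$ and $g\in\F[t]$, the identity $t^2-\alpha=(t-\beta)^2$ in characteristic $2$ (with $\beta^2=\alpha$) factors $g(t^2)$ over $\overline{\F}$ as $\prod_j(t-\beta_j)^{2m_j}$; so any separable monic irreducible factor $p\neq t$ of $p_i$, having distinct roots in $\overline{\F}$, must appear in $p_i$ with even multiplicity $v_p(p_i)$.

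The most delicate step will be the forward direction of this last equivalence: I must carefully isolate the factor $t^\epsilon$ (since $v_t(p_i)$ is allowed to be odd precisely when $p_i$ is odd) and use the separability of the irreducibles lying outside $\Irr_0(\F)\cup\{t\}$ to convert even multiplicity of each individual root over $\overline{\F}$ into even valuation $v_p(p_i)$ over $\F$.
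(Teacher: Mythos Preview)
Your argument is correct. The paper does not supply its own proof of this statement: it presents the theorem as a direct reformulation, in terms of Jordan numbers, of Theorem~\ref{theoBotha} (the Wang--Wu/Botha criterion), and leaves the translation to the reader. Your write-up carries out precisely that translation, and the key observations you rely on---that in characteristic~$2$ the set $\Irr_0(\F)$ coincides with the inseparable irreducibles, that squares in $\F[t]$ lie in $\F[t^2]$, and that a primary block $\F[t]/(p^k)$ with $p(t)=q(t^{2^e})$ and $q$ separable breaks over~$\overline{\F}$ into $\deg q$ Jordan cells of size $k\cdot 2^e$---are all sound. The forward direction of your (i)$\Leftrightarrow$(ii), where you pass to $\overline{\F}$ and use that a separable $p$ has simple roots to read off $v_p(p_i)$ as the (necessarily even) multiplicity of any one root in $g(t^2)$, is the right way to handle what you flagged as the delicate step; just make explicit that $\gcd(p,h)=1$ in $\F[t]$ forces $p$ and $h$ to be coprime in $\overline{\F}[t]$ as well, so that no root of $p$ recurs in the cofactor.
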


Notice that, according to these theorems, every nilpotent endomorphism of a finite-dimensional vector space is the sum of two square-zero ones!

\subsection{The viewpoint of pairs}\label{section:pairs}

In the context of spaces equipped with a bilinear form (except in very special cases) there are known sets of invariants
that characterize the orbits of the elements of
$\calS_{b+}$, $\calS_{b-}$ and $\calA_b$ under the action of the isometry group $\Isom(b)$
by conjugation. This is relevant because our problem is invariant under conjugation.
Actually, the relevant invariants are not associated to the endomorphism $u$ itself but to the pair $(b,u)$.

The following terminology will be of help:

\begin{Def}
Let $(\varepsilon,\eta) \in \{-1,1\}^2$ and $(b,u)$ be a pair consisting of a non-degenerate bilinear form $b$
on a vector space $V$, and of an endomorphism $u$ of $V$.
We say that $(b,u)$ is an $(\varepsilon,\eta)$-pair when:
\begin{itemize}
\item $b$ is symmetric and $u$ is $b$-selfadjoint, if $\varepsilon=\eta=1$;
\item $b$ is symmetric and $u$ is $b$-alternating (i.e.\ $b$-skew-selfadjoint if $\charac(\F)\neq 2$), if $\varepsilon=1$ and $\eta=-1$;
\item $b$ is alternating and $u$ is $b$-alternating (i.e.\ $b$-selfadjoint if $\charac(\F)\neq 2$) if $\varepsilon=-1$ and $\eta=1$;
\item $b$ is alternating and $u$ is $b$-skew-selfadjoint if $\varepsilon=-1$ and $\eta=-1$.
\end{itemize}
\end{Def}

Hence, in an $(\varepsilon,\eta)$-pair $(b,u)$:
\begin{itemize}
\item $\varepsilon$ is the parity of $b$;
\item $\eta$ is the parity of $u$ with respect to $b$ (i.e.\ $u^\star=\eta u$);
\item $\varepsilon \eta$ is the parity of the bilinear form $(x,y) \mapsto b(x,u(y))$.
\end{itemize}

\begin{Def}
Let $(\varepsilon,\eta) \in \{1,-1\}^2$.
An $(\varepsilon,\eta)$-pair $(b,u)$, with underlying vector space $V$,
is said to have the \textbf{square-zero splitting property} whenever there exist
endomorphisms $a_1$ and $a_2$ of $V$ such that $u=a_1+a_2$, $a_1^2=a_2^2=0$ and
$(b,a_1)$ and $(b,a_2)$ are $(\varepsilon,\eta)$-pairs.
\end{Def}

Next, we consider the notion of an induced pair.
Let $(b,u)$ be an $(\varepsilon,\eta)$-pair with underlying vector space $V$.
Assume that we have a linear subspace $W$ of $V$ such that $u(W) \subset W$.
Since $u$ is $b$-selfadjoint or $b$-skew-selfadjoint, this leads to $u(W^{\bot_b}) \subset W^{\bot_b}$.
It follows that $u$ induces an endomorphism $\overline{u}$ of the quotient space $W/(W \cap W^{\bot_b})$.
Moreover, since $W \cap W^{\bot_b}$ is the radical of the bilinear form induced by $b$ on $W$,
it turns out that $b$ induces a non-degenerate bilinear form $\overline{b}$ on $W/(W \cap W^{\bot_b})$.
It is easily checked that the pair $(\overline{b},\overline{u})$ is an $(\varepsilon,\eta)$-pair.
Note that if $W$ is already $b$-regular, then this new pair is defined on $W$ itself (with the usual identification between $W$ and $W/\{0\}$).

\begin{Def}
Let $(b,u)$ be an $(\varepsilon,\eta)$-pair with underlying vector space $V$, and let
$W$ be a linear subspace of $V$ that is stable under $u$. We denote by $(b,u)^W$ the pair induced by $(b,u)$
on the quotient space $W/(W \cap W^{\bot_b})$. We call it the pair \textbf{induced by $(b,u)$ for $W$.}
\end{Def}

\begin{Rem}\label{remark:squarezerosplittinginduced}
The square-zero splitting property is not inherited by induced pairs in general, yet in specific instances it is:
assume indeed that $(b,u)$ is an $(\varepsilon,\eta)$-pair, with underlying vector space $V$,
and assume that there is a splitting $u=a_1+a_2$ where each $a_i$ has square zero and is such that $(b,a_i)$ is an $(\varepsilon,\eta)$-pair.
Let $W$ be a linear subspace of $V$ that is not only stable under $u$, but also under $a_1$ and $a_2$.
Denote by $\overline{u},\overline{a_1},\overline{a_2}$ the endomorphisms of $W/(W \cap W^{\bot_b})$
induced by $u,a_1,a_2$, respectively. Then, $\overline{a_1}$ and $\overline{a_2}$ have square zero and sum up to $\overline{u}$, and of course
$(b,a_1)^W$ and $(b,a_2)^W$ are $(\varepsilon,\eta)$-pairs. It follows that $(b,u)^W$ has the square-zero splitting property.
\end{Rem}

\vskip 3mm
Next, given two bilinear forms $b$ and $c$ on respective vector spaces $V$ and $W$,
remember that an isometry from $b$ to $c$ is a vector space isomorphism $f : V \overset{\simeq}{\longrightarrow} W$
such that $c(f(x),f(y))=b(x,y)$ for all $(x,y)\in V^2$.

\begin{Def}
Given endomorphisms $u \in \End(V)$ and $v \in \End(W)$, the pairs $(b,u)$ and $(c,v)$
are called \textbf{isometric} if there exists an isometry $f$ from $b$ to $c$ such that $v=f \circ u \circ f^{-1}$.
\end{Def}

This defines an equivalence relation on the collection of all pairs $(b,u)$ consisting of a bilinear form $b$ on a vector space
and of an endomorphism $u$ of it.
One checks that if $(b,u)$ and $(c,v)$ are isometric pairs, and $(\varepsilon,\eta)$ is an arbitrary element of $\{-1,1\}^2$,
then $(b,u)$ is an $(\varepsilon,\eta)$-pair if and only if $(c,v)$ is an $(\varepsilon,\eta)$-pair; also, in that case
$(b,u)$ has the square-zero splitting property if and only if $(c,v)$ has the square-zero splitting property.

Shortly, we will recall a complete set of invariants for the $(\varepsilon,\eta)$-pairs $(b,u)$ under isometry.
Before we do so, it is important that we also introduce the notion of the orthogonal direct sum of pairs.

\begin{Def}[Orthogonal direct sum of pairs]
Let $(b,u)$ and $(b',u')$ be two $(\varepsilon,\eta)$-pairs, with respective underlying vector spaces $V$ and $V'$.
Their orthogonal direct sum is the pair $(B,U)=(b,u)\bot (b',u')$ defined
with
$$B : ((x,x'),(y,y')) \in (V \times V')^2 \longmapsto b(x,y)+b'(x',y')$$
and
$$U : (x,x') \in V \times V' \longmapsto \bigl(u(x),u'(x')\bigr),$$
i.e.\ $B$ is the orthogonal direct sum $b \bot b'$ and $U$ is the (external) direct sum $u \oplus u'$.
Note that $(B,U)$ is itself an  $(\varepsilon,\eta)$-pair.
\end{Def}

\begin{Rem}\label{remark:orthogonalsumsplitting}
Let $(b,u)$ and $(b',u')$ be $(\varepsilon,\eta)$-pairs, with underlying spaces $V$ and $V'$,
each with the square-zero splitting property.
Let us split $u=a_1+a_2$ and $u'=a'_1+a'_2$ where the $a_i$'s and $a'_j$'s have square zero, and $(b,a_i)$ and $(b',a'_i)$ are
$(\varepsilon,\eta)$-pairs.
Then, $u \oplus u'=(a_1 \oplus a'_1) + (a_2 \oplus a'_2)$, the endomorphisms $a_1 \oplus a'_1$ and $a_2 \oplus a'_2$
have square zero, and $(b,a_1) \bot (b',a'_1)$ and $(b,a_2) \bot (b',a'_2)$ are $(\varepsilon,\eta)$-pairs.
This shows that $(b,u) \bot (b',u')$ has the square-zero splitting property.
\end{Rem}

Beware that the converse does not hold in general. It is possible for $(b,u)\bot (b',u')$ to have the square-zero splitting property
without $(b,u)$ nor $(b',u')$ having this property.

There is however an interesting special situation which we will sometimes use.
Assume that we have an $(\varepsilon,\eta)$-pair $(b,u)$ with the square-zero splitting property, and consider corresponding square-zero endomorphisms
$a_1$ and $a_2$. Denote the underlying space of $(b,u)$ by $V$, and assume that we have a splitting $V=V_1 \oplus V_2$ into
$b$-orthogonal subspaces $V_1$ and $V_2$ that are both stable under $a_1$ and $a_2$.
Then the induced pairs $(b,u)^{V_1}$ and $(b,u)^{V_2}$, which are respectively defined on $V_1$ and $V_2$ (because they are $b$-regular subspaces), have the square-zero splitting property. In that case, we also note that $(b,u) \simeq (b,u)^{V_1} \bot (b,u)^{V_2}$.

\subsection{A review of the classification of selfadjoint and skew-selfadjoint endomorphisms}\label{section:invariants}

We are ready to review the relevant invariants for the $(\varepsilon,\eta)$-pairs.
The simplest case is the one where $\eta=1$.

\subsubsection{Extension of a bilinear form}\label{section:extensionbilinform}

Let $p \in \Irr(\F)$, whose degree we denote by $d$.
We consider the residue field $\L:=\F[t]/(p)$
and the $\F$-linear form $e_p$ on $\L$ that takes the class $\overline{1}$ to $1$, and the class $\overline{t^k}$ to $0$
for all $k \in \lcro 1,d-1\rcro$. The $\F$-bilinear form $(x,y)\in \L^2 \mapsto e_p(xy)$ is then non-degenerate, and hence
every $\F$-linear form on $\L$ reads $x \mapsto e_p(x\alpha)$ for a unique $\alpha \in \L$.

Let $W$ be an $\L$-vector space equipped with an $\F$-bilinear form $B : W^2 \rightarrow \F$
such that
$$\forall \lambda \in \L, \; \forall (x,y)\in W^2, \quad B(\lambda x,y)=B(x,\lambda y).$$
For all $x,y$ in $W$, the $\F$-linear form $\lambda \in \L \mapsto B(x,\lambda y)$ can be written
as $\lambda \mapsto e_p(\lambda B^{\L}(x,y))$ for a unique $B^{\L}(x,y)\in \L$.
It is then easily found that $B^{\L}$ is $\F$-bilinear; one also checks that $B^{\L}$ is right-$\L$-linear.
To sum up, $B^{\L}$ is defined by the following property :
$$\forall \lambda \in \L, \; \forall (x,y)\in W^2, \;  B(x,\lambda y)=e_p(\lambda B^{\L}(x,y)).$$

If $B$ is symmetric (respectively, skew-symmetric) then so is $B^{\L}$,
and in that case $B^{\L}$ turns out to be $\L$-bilinear (because it is already right-$\L$-linear),
and better still $B^{\L}$ is non-degenerate if in addition $b$ is non-degenerate.

There is a specific difficulty related to the characteristic $2$ case.
Assume that $B$ is not only alternating but that
$B(x,\lambda x)=0$ for all $x \in W$ and all $\lambda \in \L$.
Then, by letting $x \in W$, one finds that $e_p(\lambda B^{\L}(x,x))=B(x,\lambda x)=0$
for all $\lambda \in \F$, which yields $B^{\L}(x,x)=0$.
Hence, $B^\L$ is alternating. It is not true however that if $B$ is alternating
then so is $B^\L$ (there is a special difficulty when $\F$ has characteristic $2$ and $p$ is even).

We will also need a variation of the above construction in case $p$ is even.
So, assume that $p$ is even and $\chi(\F) \neq 2$.
The mapping that takes $q(t)$ to $q(-t)$ induces a non-identity involution $x \mapsto x^\bullet$ of $\L$
(that takes the class of $t$ to its opposite). The subfield $\K:=\{x \in \L : x^\bullet=x\}$
of $\L$ has index $2$ in $\L$ (and the extension $\K-\L$ is separable). Note that $\K$ is simply the subfield of $\L$ generated by the class of $t^2$, and that
$\forall x \in \L, \; e_p(x^\bullet)=e_p(x)$.
Now, let $W$ be an $\L$-vector space and $B : W^2 \rightarrow \F$ be an
$\F$-bilinear form such that
$$\forall \lambda \in \L, \; \forall (x,y)\in W^2, \; B(x,\lambda y)=B(\lambda^\bullet x,y).$$
Like before, there is a unique $\F$-bilinear mapping $B^\L : W^2 \rightarrow \L$
such that
$$\forall \lambda \in \L, \; \forall (x,y)\in W^2, \; B(x,\lambda y)=e_p(\lambda B^\L(x,y)).$$
Again, $B^\L$ is right-$\L$-linear.
Now, say that $B$ is symmetric (in which case set $\varepsilon:=1$) or alternating (in which case $\varepsilon:=-1$).
Then, for all $(x,y,\lambda)\in W^2 \times \L$,
$$B(y,\lambda x)=B(\lambda^\bullet y,x)=\varepsilon B(x,\lambda^\bullet y)=\varepsilon
e_p(\lambda^\bullet B^\L(x,y))=e_p(\lambda \varepsilon B^\L(x,y)^\bullet).$$
It follows that
$$\forall (x,y)\in W^2, \;  B^\L(y,x)=\varepsilon B^\L(x,y)^\bullet.$$
We conclude that $B^\L$ is Hermitian if $B$ is symmetric, and skew-Hermitian if $B$ is alternating.
In any case, $B^\L$ is left-$\L$-semilinear.
In any of these cases, $B$ is non-degenerate if and only if $B^\L$ is non-degenerate.

Note finally that skew-Hermitian forms over $\L$ are in natural one-to-one correspondence with Hermitian forms:
indeed we can choose $\eta \in \L \setminus \{0\}$ such that $\eta^\bullet=-\eta$ (it suffices to take $\eta$ as the class of $t$) and then $C \mapsto \eta C$ yields a bijection between Hermitian forms on $W$ and skew-Hermitian forms on $W$, and this bijection and its inverse bijection turn equivalent forms into equivalent forms.

\subsubsection{Symmetric endomorphisms}

We are ready to recall the classification of $(\varepsilon,1)$-pairs.

Let $p \in \Irr(\F)$, and set $\L:=\F[t]/(p)$.
Let $P:=(b,u)$ be an $(\varepsilon,1)$-pair for some $\varepsilon \in \{-1,1\}$.
There, $u$ is $b$-selfadjoint so one sees that $q(u)$ is $b$-selfadjoint for all $q \in \F[t]$,
and it classically turns out that $(\Ker q(u))^{\bot_b}=\im q(u)$.

Let $k$ be a positive integer. Consider the $\F$-bilinear form
$$\begin{cases}
\bigl(\Ker p(u)^{k}\bigr)^2 & \longrightarrow \F \\
(x,y) & \longmapsto b(x,p(u)^{k-1}[y])
\end{cases}$$
on the vector space $\Ker p(u)^k$.
Note that is is symmetric (respectively, skew-symmetric) if
$b$ is symmetric (respectively, skew-symmetric).

Let us look at its radical: clearly it is the intersection of $\Ker p(u)^k$
with the inverse image of $(\Ker p(u)^k)^{\bot_b}=\im p(u)^k$ under $p(u)^{k-1}$, and one easily
checks that this inverse image equals $\Ker p(u)^{k-1}+(\Ker p(u)^k \cap \im p(u))$.
Hence, the preceding bilinear form induces a non-degenerate $\F$-bilinear form $\overline{b_{p,k}}$ on the quotient space $V_{p,k}:=\Ker p(u)^k/(\Ker p(u)^{k-1}+(\Ker p(u)^k \cap \im p(u)))$, i.e.\ on the cokernel of the mapping from
$\Ker p(u)^{k+1}/\Ker p(u)^k$ to $\Ker p(u)^{k}/\Ker p(u)^{k-1}$ induced by $p(u)$.
Remember that these quotient spaces are naturally seen as vector spaces over $\L$, and hence so does the said cokernel.
Since $u$ is $b$-selfadjoint it turns out that
$\overline{b_{p,k}}(x,\lambda y)=\overline{b_{p,k}}(\lambda x,y)$ for all
$\lambda \in \L$ and all $(x,y)\in (V_{p,k})^2$.
Hence, we can consider the extended non-degenerate $\L$-bilinear form
$P_{p,k}:=\overline{b_{p,k}}^{\L}$, which is symmetric if $\varepsilon=1$.
If $\varepsilon=1$, we say that $P_{p,k}$ is the \textbf{quadratic invariant} of $P$ with respect to $(p,k)$
(in fact, this quadratic invariant should be defined not as $P_{p,k}$ but as its equivalence ``class"; however, in practice we prefer to
deal with $P_{p,k}$ directly).

Assuming that $\varepsilon=-1$, we also check that
$P_{p,k}$ is actually alternating. Indeed, we start from the observation that
$b(x,x)=0=b(x,u(x))$ for all $x \in W$, in that case.
Then, for every integer $k \geq 0$, we gather that $b(x,u^{2k}(x))=b(u^k(x),u^k(x))=0$
and $b(x,u^{2k+1}(x))=b(u^k(x),u(u^k(x)))=0$ for all $x \in V$, and it follows that
$b(x,q(u)[x])=0$ for all $q \in \F[t]$ and all $x \in V$.
Hence, $\overline{b_{p,k}}(x,\lambda x)=0$ for all $\lambda \in \L$ and all $x \in V_{p,k}$.
As explained in the previous section, it follows that $P_{p,k}$ is alternating (and hence symplectic).

From the above construction, it is clear that the equivalence class of the $\L$-bilinear form $P_{p,k}$
depends only on the equivalence class of the pair $P=(b,u)$ and on the pair $(p,k)$.
It turns out that the family of those equivalence classes, when $p$ ranges over $\Irr(\F)$ and $k$ ranges over the positive integers, is sufficient to understand $P$ up to equivalence: the following theorem can be retrieved from Section 2 of \cite{Waterhouse}
(there, the definition of the quadratic invariants is slightly different, but it is not difficult to see that the invariants defined by
Waterhouse correspond to ours up to multiplication with fixed nonzero scalars).

\begin{theo}\label{theo:epsilon1pairs}
Let $\varepsilon \in \{-1,1\}$, and assume that $\varepsilon=1$ if $\charac(\F) \neq 2$.
Let $(b,u)$ and $(c,v)$ be $(\varepsilon,1)$-pairs.

The following conditions are equivalent:
\begin{enumerate}[(i)]
\item $(b,u)$ is isometric to $(c,v)$;
\item For every $p \in \Irr(\F)$ and every integer $k \geq 1$, the forms $(b,u)_{p,k}$ and $(c,v)_{p,k}$ are equivalent as bilinear forms over the residue field $\F[t]/(p)$.
\end{enumerate}
\end{theo}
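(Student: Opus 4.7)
The implication (i)$\Rightarrow$(ii) is immediate, since $V_{p,k}$, $\overline{b_{p,k}}$ and its $\L$-extension $P_{p,k}$ are defined functorially from $(b,u)$, so any isometry from $(b,u)$ to $(c,v)$ transports one family of invariants to the other. For (ii)$\Rightarrow$(i), my plan is a two-stage argument: a primary decomposition, followed by an induction on $\dim V$ that peels off one cyclic primary block at a time.

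For the primary decomposition, fix distinct $p,q\in\Irr(\F)$ and let $N,M$ be large enough that $\ker p(u)^N$ and $\ker q(u)^M$ are the $p$- and $q$-primary components of $V^u$. B\'ezout furnishes $\alpha,\beta\in\F[t]$ with $1=\alpha p^N+\beta q^M$, and the $b$-selfadjointness of $u$ gives
\[
b(x,y)=b\bigl(\alpha(u)p(u)^N x,\,y\bigr)+b\bigl(x,\,\beta(u)q(u)^M y\bigr)=0
\]
for $x\in\ker p(u)^N$ and $y\in\ker q(u)^M$. The primary components of $V$ are therefore mutually $b$-orthogonal, each is $u$-invariant and $b$-regular, and the pair splits as an orthogonal direct sum of $p$-primary pairs with invariants splitting accordingly. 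I may thus assume $(b,u)$ and $(c,v)$ are both $p$-primary for a single fixed $p\in\Irr(\F)$.

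Now I induct on $\dim V$. Let $K$ be the largest positive integer with $p(u)^{K-1}\neq 0$; by construction $P_{p,K}$ is a non-degenerate $\L$-bilinear form on $V_{p,K}\neq 0$. When $\varepsilon=1$ and $P_{p,K}$ is non-alternating on $V_{p,K}$, some class $\overline{x}\in V_{p,K}$ satisfies $P_{p,K}(\overline{x},\overline{x})\neq 0$. Lifting to $x\in\ker p(u)^K$ and setting $W:=\F[t]\cdot x$, the Gram matrix of $b|_W$ in the cyclic basis $(x,u(x),\dots,u^{K\deg p-1}(x))$ is encoded by the $\F$-linear form $\ell:q\mapsto b(x,q(u)(x))$ on $\F[t]/(p^K)$, and non-degeneracy of $b|_W$ is equivalent to $\ell$ being nonzero on the socle $(p^{K-1})/(p^K)$ --- that is, to $P_{p,K}(\overline{x},\overline{x})\neq 0$. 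Thus $W$ is $b$-regular. When $\varepsilon=-1$ (which forces $\charac(\F)=2$), Section \ref{section:invariants} shows that cyclic submodules $\F[t]\cdot x$ are totally isotropic, so I would instead pick a hyperbolic pair $(\overline{x},\overline{y})$ for $P_{p,K}$ in $V_{p,K}$ and set $W:=\F[t]\cdot x+\F[t]\cdot y$; an analogous Gram-matrix computation shows $W$ is $b$-regular of dimension $2K\deg p$. The same device handles the char-$2$, $\varepsilon=1$ subcase when $P_{p,K}$ happens to be alternating on $V_{p,K}$.

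With $W$ constructed, the splitting $V=W\botoplus W^{\bot}$ decomposes the pair orthogonally, and a direct verification shows that the $(p,k)$-invariants of $(b,u)^{W^{\bot}}$ are those of $(b,u)$ with the contribution of $W$ removed --- a rank-one $\L$-summand (or a hyperbolic plane) at $(p,K)$, trivial at every other $(p',k')$. Extracting the same type of block from $(c,v)$ via hypothesis (ii) and invoking Witt cancellation for non-degenerate symmetric or alternating $\L$-bilinear forms reduces the comparison to a pair of strictly smaller dimension with matching invariants, closing the induction. The principal obstacle is the extraction of the $b$-regular cyclic block from the non-degeneracy of $P_{p,K}$: this is where the symmetric (as opposed to skew-symmetric) nature of $b$ is genuinely used in odd characteristic, and where the characteristic-$2$ alternating subcase becomes delicate because $P_{p,K}$ interacts subtly with the involution on $\L$ when $p$ is even --- the scenario analysed in detail in Section~2 of \cite{Waterhouse}, which I would cite rather than redo.
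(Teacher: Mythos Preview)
The paper does not give its own proof of this theorem: it simply records that the result ``can be retrieved from Section~2 of \cite{Waterhouse}'' (up to a renormalisation of the invariants). So there is nothing in the paper to compare your argument against beyond that citation --- and you yourself end up citing the same source for the delicate characteristic-$2$ interaction between $P_{p,K}$ and the parity of $p$.

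Your sketch (primary decomposition, then induction by peeling off a $b$-regular primary cyclic block whose existence is controlled by the non-degeneracy of the top invariant $P_{p,K}$) is the standard route to such classifications and is sound in outline. One genuine wrinkle deserves flagging: your closing appeal to ``Witt cancellation for non-degenerate symmetric or alternating $\L$-bilinear forms'' is unsafe in the $\varepsilon=1$, $\charac(\F)=2$ subcase, because Witt cancellation \emph{fails} for symmetric bilinear forms in characteristic~$2$ (over $\F_2$ one has $\langle 1\rangle\bot\langle 1\rangle\bot\langle 1\rangle\simeq \langle 1\rangle\bot H$ while $\langle 1\rangle\bot\langle 1\rangle\not\simeq H$). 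The fix is easy and actually streamlines the argument: since $(b,u)_{p,K}\simeq (c,v)_{p,K}$ by hypothesis, fix an isometry $\varphi$ between them and set $\overline{y}:=\varphi(\overline{x})$. The cyclic blocks built on $x$ and on $y$ then have the same sole nonzero invariant, hence are isometric as pairs; and because the invariant construction is compatible with orthogonal sums, $((b,u)^{W^\bot})_{p,K}$ and $((c,v)^{W'^\bot})_{p,K}$ are precisely the $P_{p,K}$-orthogonals of $\L\overline{x}$ and $\L\overline{y}$, which $\varphi$ matches. The lower invariants are untouched since the cyclic block contributes nothing there. This closes the induction without any cancellation step.
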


Moreover, there is little limitation on the potential invariants $(b,u)_{p,k}$.
More precisely, let $(a_{p,k})_{p \in \Irr(\F), k \in \N^*}$ be a family in which,
for all $p \in \Irr(\F)$ and all $k \in \N^*$, the object $a_{p,k}$ is a
non-degenerate bilinear form on a (finite-dimensional) vector space over $\F[t]/(p)$.
Then the following conditions are equivalent:
\begin{itemize}
\item There exists an $(\varepsilon,1)$-pair $P$ such that
$P_{p,k} \simeq a_{p,k}$ for all $p,k$;
\item All the $a_{p,k}$'s are symmetric if $\varepsilon=1$, alternating if $\varepsilon=-1$,
and only finitely many of them are nonzero.
\end{itemize}

Note that the Jordan numbers of $u$ are easily retrieved from the forms $(b,u)_{p,k}$
since $n_{p,k}(u)$ is simply the dimension over the residue field $\F[t]/(p)$ of the underlying vector space of $(b,u)_{p,k}$.
For a $(-1,1)$-pair $(b,u)$, the form $(b,u)_{p,k}$ is symplectic and hence
its equivalence class is entirely determined by the dimension of the underlying space (which must be even). Hence, in that case the Jordan numbers of $u$ are sufficient to classify the pair $(b,u)$: this is expressed in the next theorem:

\begin{theo}[Scharlau \cite{Scharlaupairs}]
Let $(b,u)$ and $(c,v)$ be $(-1,1)$-pairs. The following conditions are equivalent:
\begin{enumerate}[(i)]
\item The pairs $(b,u)$ and $(c,v)$ are isometric.
\item The endomorphisms $u$ and $v$ are similar.
\item The endomorphisms $u$ and $v$ have the same Jordan numbers.
\end{enumerate}
\end{theo}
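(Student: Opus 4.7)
The equivalence (ii) $\Leftrightarrow$ (iii) is the classical Frobenius--Jordan fact that the similarity class of an endomorphism of a finite-dimensional vector space is determined by the family of its Jordan numbers $(n_{p,k}(u))_{p,k}$, equivalently by its primary invariants, and this has already been recalled in the introduction. The implication (i) $\Rightarrow$ (ii) is immediate, since any isometry $f$ from $(b,u)$ to $(c,v)$ satisfies $v = f \circ u \circ f^{-1}$ and therefore exhibits a similarity between $u$ and $v$.

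The substance of the theorem is (iii) $\Rightarrow$ (i). My plan is to use the quadratic invariants $(b,u)_{p,k}$ constructed in Section \ref{section:invariants}, together with the observation that, in the $(-1,1)$-setting, each $(b,u)_{p,k}$ is a non-degenerate \emph{alternating} $\L$-bilinear form on the $\L$-vector space $V_{p,k}$, where $\L := \F[t]/(p)$. Because non-degenerate alternating bilinear forms over an arbitrary field are classified up to equivalence by the (necessarily even) dimension of the underlying space, and because $\dim_\L V_{p,k} = n_{p,k}(u)$, the equivalence class of $(b,u)_{p,k}$ is entirely determined by the Jordan number $n_{p,k}(u)$. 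Consequently, if $u$ and $v$ share all their Jordan numbers, then $(b,u)_{p,k} \simeq (c,v)_{p,k}$ as $\L$-bilinear forms for every $(p,k)$, and a classification of $(-1,1)$-pairs by their quadratic invariants (the $\varepsilon=-1$ analogue of Theorem \ref{theo:epsilon1pairs}) then yields the desired isometry $(b,u) \simeq (c,v)$.

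The main obstacle is that Theorem \ref{theo:epsilon1pairs} is, as stated, restricted to $\varepsilon = 1$ when $\charac(\F) \neq 2$, so the classification-by-quadratic-invariants step must be proved independently in that characteristic range. I would do this by the following orthogonal-decomposition argument. First, using Bezout's identity and the fact that every polynomial in $u$ is $b$-selfadjoint, one splits $V$ as a $b$-orthogonal direct sum $V = \botoplus_{p \in \Irr(\F)} \Ker p(u)^{N}$ (for $N$ large) of the primary components of $u$, thereby reducing the problem to a single primary pair. Second, within such a primary pair one inductively peels off $b$-regular $u$-stable subspaces of a standard shape --- one Jordan block of size $k$ together with a $b$-dual partner Jordan block of the same size --- each such elementary piece being a hyperbolic $(-1,1)$-pair determined up to isometry by $(p,k)$ alone. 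Matching Jordan numbers block by block then gives the isometry. The technical heart of the argument is this inductive step, which requires producing, inside a primary pair, a cyclic $u$-invariant subspace whose generator pairs non-degenerately under $b$ with a suitable companion, so that together they span a $b$-regular subspace that splits off orthogonally.
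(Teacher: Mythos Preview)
Your approach is correct and matches the paper's own treatment. The paper does not give an independent proof of this theorem: it states it with attribution to Scharlau, preceded only by the one-line observation that for a $(-1,1)$-pair each invariant $(b,u)_{p,k}$ is symplectic and is therefore determined up to equivalence by its rank, which equals the Jordan number $n_{p,k}(u)$. That is exactly the reduction you propose.

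You actually go further than the paper in one respect: you notice that Theorem~\ref{theo:epsilon1pairs}, as stated, does not cover the case $\varepsilon=-1$ with $\charac(\F)\neq 2$, so the ``classification by invariants'' step needs to be justified separately in that range. The paper sidesteps this by citing Scharlau's result directly. Your outline for filling the gap --- primary decomposition into $b$-orthogonal components, then inductively splitting off $b$-regular subspaces consisting of a Jordan block together with a $b$-dual partner --- is the standard route and is essentially how Scharlau proceeds. The inductive step (finding a cyclic vector $x$ of maximal height and a partner $y$ so that $\Vect(u^i x,u^j y)$ is $b$-regular) is indeed the technical heart, and your description of it is accurate.
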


\begin{Rem}
The quadratic invariants of a $(\varepsilon,1)$-pair are compatible with orthogonal direct sums.
In other words, given such pairs $(b,u)$ and $(b',u')$, and given $p \in \Irr(\F)$ and $k \geq 1$,
one has $((b,u)\bot (b',u'))_{p,k} \simeq (b,u)_{p,k} \bot (b',u')_{p,k}$.
\end{Rem}

\subsubsection{Skew-selfadjoint endomorphisms}

Here, we assume that $\charac(\F) \neq 2$, we take an $(\varepsilon,-1)$-pair $(b,u)$
for some $\varepsilon \in \{-1,1\}$, and we denote by $V$ its underlying vector space. Note that $u$ is $b$-skew-selfadjoint.

For all $q \in \F[t]$, we have
$$\forall (x,y)\in V^2, \; b(x,q(u)[y])=b(q(-u)[x],y),$$
so that $q(u)^\star=q(-u)$.
In particular, $q(u)$ is $b$-selfadjoint if $q$ is even, and $b$-skew-selfadjoint if $q$ is odd.

Classically, the endomorphism $u^\star$ is similar to the (dual) transpose $u^t : \varphi \in V^\star \mapsto \varphi \circ u$ of $u$, and hence to $u$.
Hence, the Jordan numbers of $-u$ are the ones of $u$, leading to
$n_{p,k}(u)=n_{p^\op,k}(u)$ for every $p \in \Irr(\F)$, which is a non-trivial condition only when $p$ is non-even and different from $t$.

Now, let $p \in \Irr_0(\F)$. Consider the residue field $\L:=\F[t]/(p)$ and its non-identity involution $x \mapsto x^\bullet$
that takes the class of $t$ to the one of $-t$.

Let also $k \in \N^*$. Noting that $p(u)$ is $b$-selfadjoint, we obtain, like in the previous section, that the bilinear form
$(x,y) \mapsto b(x,p(u)^{k-1}(y))$ induces a non-degenerate bilinear form on the quotient space
$V_{p,k}:=\Ker p(u)^k/(\Ker p(u)^{k-1}+(\Ker p(u)^k \cap \im p(u)))$, which has a natural structure of vector space over $\L$.
This bilinear form $B$ satisfies
$$\forall (x,y,\lambda)\in (V_{p,k})^2\times \L, \; B(x,\lambda\,y)=B(\lambda^\bullet\,x,y).$$
Hence, we can consider its $\L$-extension $(b,u)_{p,k}:=B^{\L}$, which is
a Hermitian form over the $\L$-vector space $V_{p,k}$ if $b$ is symmetric, and a skew-Hermitian one if $b$ is skew-symmetric.
The equivalence class of this Hermitian (respectively, skew-Hermitian) form is then uniquely determined by the isometry class of $(b,u)$ and by the pair $(p,k)$.
The $(b,u)_{p,k}$ forms, when $p$ ranges over $\Irr_0(\F)$ and $k$ ranges over the positive integers, are called the \textbf{Hermitian invariants} of
$(b,u)$ (in fact, the Hermitian invariants of $(b,u)$ should be defined as the equivalence ``classes" of the previous Hermitian forms).

There is one additional family of invariants in the present situation. Consider the polynomial $p:=t$, and
let $k \in \N^*$. The bilinear form
$(x,y) \mapsto b(x,u^{k-1}(y))$ induces a non-degenerate bilinear form $(b,u)_{t,k}$ on
$V_{t,k}:=\Ker u^k/(\Ker u^{k-1}+(\Ker u^k \cap \im u))$.
Note that if $k$ is odd, the $(b,u)_{t,k}$ form has the same \emph{parity} as $b$
(i.e.\ it is symmetric if $b$ is symmetric, and alternating if $b$ is alternating);
if $k$ is even then $(b,u)_{t,k}$ has the inverse parity of $b$, i.e.\
it is symmetric if $b$ is alternating, and it is alternating if $b$ is symmetric.
As before, in case $(b,u)_{t,k}$ is alternating its isometry class is entirely determined
by the corresponding Jordan number $n_{t,k}(u)$ (which must be even).
In case $(b,u)_{t,k}$ is symmetric, we call it the \textbf{quadratic invariant} of $(b,u)$ attached to $(t,k)$.

Again, the following theorem can be derived, with some effort, from Theorem 4 of \cite{Sergeichuk}
or from \cite{Riehm}.

\begin{theo}[Classification of $(\varepsilon,-1)$-pairs]
Assume that $\charac(\F) \neq 2$. Let $\varepsilon \in \{-1,1\}$.
Let $(b,u)$ and $(c,v)$ be $(\varepsilon,-1)$-pairs.
For $(b,u)$ to be isometric to $(c,v)$, it is necessary and sufficient that all the following conditions hold:
\begin{enumerate}[(i)]
\item For each positive integer $k>0$, the bilinear forms $(b,u)_{t,k}$ and $(c,v)_{t,k}$ are equivalent.
\item For every $p \in \Irr_0(\F)$ and every integer $k \geq 1$, the Hermitian forms (respectively, the skew-Hermitian forms)
$(b,u)_{p,k}$ and $(c,v)_{p,k}$ over $\F[t]/(p)$ are equivalent if $\varepsilon=1$ (respectively, if $\varepsilon=-1$).
\item For every $p \in \Irr(\F) \setminus (\Irr_0(\F) \cup \{t\})$ and every positive integer $k>0$, one has $n_{p,k}(u)=n_{p,k}(v)$.
\end{enumerate}
\end{theo}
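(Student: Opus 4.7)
Necessity of (i), (ii) and (iii) is immediate from the definitions of the invariants: an isometry of $(\varepsilon,-1)$-pairs carries $\Ker p(u)^k$ onto $\Ker p(v)^k$ and intertwines the bilinear pairings used in each construction, so the forms $(b,u)_{t,k}$ and $(b,u)_{p,k}$ and the Jordan numbers are all preserved. The plan for sufficiency is to reduce, via the primary decomposition of $u$, to three kinds of elementary $b$-orthogonal blocks, to classify each type separately, and then to reassemble using the compatibility of the invariants with orthogonal direct sums.

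Because $u^{\star}=-u$, we have $b(q(u)x,y)=b(x,q(-u)y)$ for every $q\in\F[t]$. Writing $W_p:=\Ker p(u)^N$ for $N$ large enough and $p\in\Irr(\F)$, this identity together with the fact that $p(-t)^N$ and $q(t)^N$ are coprime whenever $q\neq p^{\op}$ yields $b(W_p,W_q)=0$ for all such $q$; a dimension count (using $u\sim-u$, hence $\dim W_p=\dim W_{p^{\op}}$) upgrades this to $(W_p)^{\bot_b}=\sum_{q\neq p^{\op}}W_q$. Consequently $V$ splits as a $b$-orthogonal direct sum of three types of blocks: (a) the nilpotent block $W_t$, which is $b$-regular since $t=t^{\op}$; (b) one $b$-regular block $W_p$ for each $p\in\Irr_0(\F)$ (because $p=p^{\op}$ there); (c) one block $W_p\oplus W_{p^{\op}}$ for each unordered pair $\{p,p^{\op}\}$ with $p\in\Irr(\F)\setminus(\Irr_0(\F)\cup\{t\})$, in which $W_p$ and $W_{p^{\op}}$ are both totally $b$-isotropic and are put in non-degenerate duality by $b$.

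Each block inherits an $(\varepsilon,-1)$-pair structure, and it suffices to classify the pairs of each type. Type (c) is the easiest: the $b$-duality between $W_p$ and $W_{p^{\op}}$ shows that any similarity between $u|_{W_p}$ and $v|_{W_p}$ extends uniquely to an isometry of blocks by taking the contragredient on $W_{p^{\op}}$, so the pair is determined up to isometry by the Jordan numbers $n_{p,k}(u)$ alone; this accounts for (iii). Type (b) is handled along the lines of the $(\varepsilon,1)$-case reviewed in the previous section, the only modification being that the residue field $\L=\F[t]/(p)$ now carries the non-identity involution $x\mapsto x^{\bullet}$ induced by $t\mapsto-t$, so the extended form is Hermitian when $\varepsilon=1$ and skew-Hermitian when $\varepsilon=-1$; this accounts for (ii). Type (a) is the main obstacle: the forms $(b,u)_{t,k}$ alternate in parity as $k$ varies, and one must argue that together they constitute a complete set of invariants for nilpotent $b$-skew-selfadjoint endomorphisms. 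The cleanest route is to invoke the general classification of pairs consisting of a sesquilinear form and a nilpotent endomorphism established by Sergeichuk (Theorem 4 of \cite{Sergeichuk}) and Riehm \cite{Riehm}, and then to check that their invariants for the associated sesquilinear pair match our forms $(b,u)_{t,k}$ up to normalization; this translation is purely technical but represents the bulk of the work.
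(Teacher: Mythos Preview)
The paper does not give its own proof of this theorem: immediately before the statement it simply writes ``Again, the following theorem can be derived, with some effort, from Theorem 4 of \cite{Sergeichuk} or from \cite{Riehm}.'' So there is nothing to compare against beyond this citation.

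Your proposal is in fact more detailed than what the paper offers: you sketch the standard primary decomposition into the three kinds of $b$-orthogonal blocks (nilpotent, even $p$, and pairs $\{p,p^{\op}\}$), dispose of the hyperbolic-type blocks (c) by the contragredient argument, and then---just as the paper does---appeal to \cite{Sergeichuk} and \cite{Riehm} for the remaining work. This is a correct outline, and it is the customary route to the result. Two minor remarks: first, your treatment of type (b) (``handled along the lines of the $(\varepsilon,1)$-case'') is itself a citation in disguise, since that case was also only quoted (from \cite{Waterhouse}) rather than proved in the paper; second, Sergeichuk's Theorem 4 is general enough to cover all three block types at once, so one could shortcut the decomposition entirely and cite it wholesale---which is essentially what the paper does.
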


Moreover, if $\varepsilon=1$ (respectively, $\varepsilon=-1$) condition (i) is equivalent to the conjunction of the following two conditions:
\begin{itemize}
\item For every even (respectively, odd) integer $k \geq 1$, the Jordan numbers $n_{t,k}(u)$ and $n_{t,k}(v)$ are equal;
\item For every odd (respectively, even) integer $k \geq 1$, the bilinear forms $(b,u)_{t,k}$ and $(c,v)_{t,k}$ are equivalent.
\end{itemize}

\subsection{Main results}\label{section:results}

We are now ready to state our main results.

Unsurprisingly, the case of symplectic forms turns out to be the easier by far, and
the results obtained by de La Cruz in \cite{delaCruz} over the complex field hold true in an arbitrary field of characteristic different from $2$.

\begin{theo}\label{theo:altformantisym}
Let $\F$ be a field with $\chi(\F) \neq 2$, and $b$ be a symplectic form over $\F$.
Then every $b$-skew-selfadjoint endomorphism is the sum of two square-zero ones.
\end{theo}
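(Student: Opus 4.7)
The plan is to combine the classification of $(-1,-1)$-pairs from Section \ref{section:invariants} with Remark \ref{remark:orthogonalsumsplitting} in order to reduce the statement to a handful of ``atomic'' pairs, and then treat each atom by an explicit splitting. Every $(-1,-1)$-pair decomposes into an orthogonal direct sum of indecomposable atoms (using that in characteristic different from $2$ every symmetric or Hermitian form splits orthogonally into rank-one summands), and up to isometry each atom falls into one of the two shapes described below.

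\textbf{Hyperbolic atoms.} For atoms arising from an irreducible $p \in \Irr(\F)$ with $p \neq p^{\op}$, or from $p = t$ with $k$ odd, the indecomposable pair is isometric to $V = W \oplus W^*$ equipped with the canonical symplectic form $b((x,\varphi),(y,\psi)) := \psi(x) - \varphi(y)$ and with $u = A \oplus (-A^t)$ for some $A \in \End(W)$. Using the classical fact that every square matrix is conjugate to its transpose via a symmetric invertible matrix, I will pick $S : W^* \to W$ symmetric and invertible with $AS = SA^t$, and set
\[
a_1 := \begin{pmatrix} A/2 & S \\ -\tfrac{1}{4}\,S^{-1}A^2 & -A^t/2 \end{pmatrix}, \qquad a_2 := \begin{pmatrix} A/2 & -S \\ \tfrac{1}{4}\,S^{-1}A^2 & -A^t/2 \end{pmatrix}
\]
in the block decomposition $V = W \oplus W^*$. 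A direct computation, using $AS = SA^t$ and its immediate consequence $S^{-1}A^2 = (A^t)^2 S^{-1}$, yields $a_1+a_2 = u$, shows each $a_i$ to be $b$-skew-selfadjoint (because $S$ and $S^{-1}A^2$ are symmetric), and gives $a_i^2 = 0$.

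\textbf{Cyclic atoms with involution.} For atoms arising from $p \in \Irr_0(\F)$ with any $k \geq 1$, or from $p = t$ with $k$ even, the indecomposable pair is realized on $R := \F[t]/(p^k)$ with $u$ acting by multiplication by $t$, and the form $b$ comes from a rank-one skew-Hermitian (respectively symmetric) invariant via the extension construction of Section \ref{section:extensionbilinform}. Since $p$ is even, the involution $\bullet : t \mapsto -t$ of $\F[t]$ descends to $R$, giving $V = V_1 \oplus V_2$ with $V_1 := \{x \in R : x^\bullet = x\}$ and $V_2 := \{x \in R : x^\bullet = -x\}$. The $b$-skew-selfadjointness of $u$ yields the identity $b(\lambda z, w) = b(z, \lambda^\bullet w)$ for every $\lambda \in R$, and combined with $b$ being alternating this forces $b(x,y) = 0$ for all $x,y \in V_1$, and similarly for $V_2$: both $V_i$ are $b$-isotropic. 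Moreover $u$ exchanges $V_1$ and $V_2$, so it has the block form $\begin{pmatrix} 0 & u|_{V_2} \\ u|_{V_1} & 0 \end{pmatrix}$, and I will set
\[
a_1 := \begin{pmatrix} 0 & 0 \\ u|_{V_1} & 0 \end{pmatrix}, \qquad a_2 := \begin{pmatrix} 0 & u|_{V_2} \\ 0 & 0 \end{pmatrix}.
\]
These sum to $u$ and satisfy $a_i^2 = 0$ trivially; the $b$-skew-selfadjointness of each $a_i$ reduces to the symmetry of the bilinear form $(x,y) \mapsto b(x, u(y))$ restricted to $V_1 \times V_1$, which is immediate from $u$ being $b$-skew-selfadjoint and $b$ alternating.

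The main difficulty is the reduction to atoms of the above two shapes: this requires extracting, from the classification recalled in Section \ref{section:invariants}, the explicit list of indecomposable $(-1,-1)$-pairs and their model realizations, together with the splitting of Hermitian/symmetric invariants into rank-one orthogonal summands. Once the reduction is in place, the two explicit constructions above finish the proof.
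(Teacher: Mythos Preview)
Your argument is correct, and it is genuinely different from the paper's proof, especially for the ``hyperbolic atoms''.

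The paper also reduces to indecomposable $(-1,-1)$-pairs and breaks into the same four cases, but it treats them via the two general constructions of Section~\ref{section:construction}. For the nilpotent two-block case (your hyperbolic atom with $A$ nilpotent) it invokes the symplectic extension $H_{-1,-1}(v)$ together with Botha's theorem to split $v$; for the other three cases it realises each atom as a boxed sum $\bigl(H_W^{-1},B\boxplus C\bigr)$ of two \emph{symmetric} forms, so that the built-in decomposition $B\boxplus C=w_B+v_C$ does the job. Your treatment of the hyperbolic atoms is more elementary and more uniform: a single explicit $2\times 2$ block formula that works for any $A$, using only the classical fact that $A$ is self-adjoint for some non-degenerate symmetric form (equivalently $AS=SA^t$ for some symmetric invertible $S$). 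This avoids Botha's theorem entirely, and also avoids passing through the auxiliary polynomial $p_0$ with $p_0(t^2)=pp^{\op}$ that the paper needs for Case~2. Your treatment of the cyclic atoms is in fact the boxed-sum decomposition seen from the inside: your isotropic summands $V_1,V_2$ and your maps $a_1,a_2$ are exactly the two Lagrangian factors $W,W^\star$ and the maps $v_C,w_B$ from Section~\ref{section:boxedsum}, presented model-independently via the involution $t\mapsto -t$.

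What the paper gains from its route is machinery: boxed sums and their invariant computations are exactly what is needed later for the much harder $(1,1)$ and $(1,-1)$ cases (Theorems~\ref{theo:symsymautomorphism} and~\ref{theo:symaltautomorphism}). What your route gains is a self-contained proof of Theorem~\ref{theo:altformantisym} that uses nothing beyond the classification of $(-1,-1)$-pairs and the existence of a symmetric conjugator between $A$ and $A^t$.
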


In other words, over a field of characteristic different from $2$, every $(-1,-1)$-pair has the square-zero splitting property.

For the next theorem, in which there is no distinction of characteristic, remember that when $b$ is symplectic and $\chi(\F) \neq 2$
the $b$-selfadjoint endomorphisms are the $b$-alternating ones.

\begin{theo}\label{theo:altformalt}
Let $\F$ be a field, and $b$ be a symplectic form over $\F$.
Let $u$ be a $b$-alternating endomorphism.
The following conditions are equivalent:
\begin{enumerate}[(i)]
\item $u$ is the sum of two square-zero $b$-alternating endomorphisms;
\item $u$ is the sum of two square-zero endomorphisms;
\item Every invariant factor of $u$ is even or odd.
\end{enumerate}
Moreover, if $\chi(\F) \neq 2$, then these conditions are equivalent to each one of the following additional conditions:
\begin{itemize}
\item[(iv)] $u$ is similar to $-u$;
\item[(v)] $u$ is conjugated to $-u$ through an element of the symplectic group of $b$.
\end{itemize}
\end{theo}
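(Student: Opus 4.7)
The equivalence (ii)$\Leftrightarrow$(iii) is a direct application of Theorem~\ref{theoBotha}, and (i)$\Rightarrow$(ii) is immediate. The heart of the argument is (iii)$\Rightarrow$(i), which I would prove by constructing a canonical $(-1,1)$-pair $(b_0,v_0)$ isometric to $(b,u)$ in which a square-zero splitting is transparent, then transporting it back via Scharlau's theorem.

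My starting observation is that because $(b,u)$ is a $(-1,1)$-pair, each extended form $(b,u)_{p,k}$ is a non-degenerate \emph{alternating} form over the residue field $\F[t]/(p)$, so every Jordan number $n_{p,k}(u)$ is even. Consequently, for each $q \in \Irr(\F)$ the non-increasing sequence of $q$-exponents appearing in the invariant factors of $u$ has every value repeated an even number of times, and the invariant factors themselves pair up as $f_1=f_2,\ f_3=f_4,\ldots,\ f_{r-1}=f_r$ with $r$ even. I pick any endomorphism $w$ of an $\F$-vector space $W$ whose invariant factors are the representatives $f_1,f_3,\ldots,f_{r-1}$; equivalently $n_{q,k}(w)=n_{q,k}(u)/2$ for every $(q,k)$. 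By hypothesis~(iii) each $f_i$ is even or odd, hence so is each invariant factor of $w$, and Theorem~\ref{theoBotha} supplies a decomposition $w = n_1 + n_2$ with $n_1^2 = n_2^2 = 0$ in $\End(W)$.

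The canonical model is the hyperbolic pair: set $V_0 := W \oplus W^*$, endowed with the symplectic form $b_0\bigl((x,\phi),(y,\psi)\bigr) := \psi(x) - \phi(y)$ and the endomorphism $v_0(x,\phi) := \bigl(w(x), \phi \circ w\bigr)$, so that $v_0 = w \oplus w^t$ in block form. A direct computation yields $b_0\bigl((x,\phi),v_0(x,\phi)\bigr) = (\phi\circ w)(x) - \phi(w(x)) = 0$, so $v_0$ is $b_0$-alternating and $(b_0,v_0)$ is a $(-1,1)$-pair. Since $w^t$ has the same Jordan invariants as $w$, we get $n_{q,k}(v_0) = 2n_{q,k}(w) = n_{q,k}(u)$, and Scharlau's theorem then delivers an isometry $(b_0,v_0) \simeq (b,u)$. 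Setting $a_i := n_i \oplus n_i^t$, we obtain $v_0 = a_1 + a_2$ with $a_i^2 = n_i^2 \oplus (n_i^t)^2 = 0$, and the same computation as for $v_0$ shows each $a_i$ is $b_0$-alternating. Pulling back through the isometry yields the sought splitting of $u$.

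For the addenda in characteristic $\neq 2$: (v)$\Rightarrow$(iv) is trivial; (iii)$\Leftrightarrow$(iv) follows by rewriting ``each invariant factor is even or odd'' in terms of primary invariants as the symmetry $n_{q,k}(u) = n_{q^\op,k}(u)$ for all $(q,k)$, which is exactly the condition that $u$ and $-u$ are similar; and (iv)$\Rightarrow$(v) is another application of Scharlau's theorem to the two $(-1,1)$-pairs $(b,u)$ and $(b,-u)$, which share the same Jordan structure. The step I expect to require the most care is the pairing $f_{2i-1}=f_{2i}$, because it relies on the evenness of \emph{every} individual Jordan number $n_{p,k}(u)$---a strictly stronger property than their sums being even---and this evenness comes precisely from the $(-1,1)$-pair hypothesis.
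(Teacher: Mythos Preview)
Your proof is correct and follows essentially the same route as the paper: both reduce (iii)$\Rightarrow$(i) to constructing the symplectic extension $H_{-1,1}(w)=(H_W^{-1},w\oplus w^t)$ of a half-size endomorphism $w$, invoking Scharlau's classification of $(-1,1)$-pairs to identify it with $(b,u)$, and pushing the square-zero splitting of $w$ (from Theorem~\ref{theoBotha}) through $h_1$. The one genuine difference is your treatment of (iv)$\Rightarrow$(v): the paper builds an explicit symplectic conjugator $\Phi=\varphi\oplus(\varphi^{-1})^t$ on $W\times W^\star$ from a similarity $\varphi w\varphi^{-1}=-w$, whereas you observe directly that $(b,u)$ and $(b,-u)$ are $(-1,1)$-pairs with the same Jordan numbers and let Scharlau's theorem produce the symplectic conjugation---a cleaner argument that avoids the matrix computation.
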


For symmetric bilinear forms, the characterization is much more involving.
In that case it is necessary to separate the study into the one of pairs $(b,u)$ with $u$ nilpotent, and the one of pairs
$(b,u)$ with $u$ bijective.

To see that such a reduction is relevant, we use two very simple lemmas:

\begin{lemma}[Commutation lemma]\label{lemma:commutation}
Let $u$ be an endomorphism of a vector space $V$, and $a_1,a_2$ be square-zero endomorphisms of $V$ such that $u=a_1+a_2$.
Then $a_1$ and $a_2$ commute with $u^2$.
\end{lemma}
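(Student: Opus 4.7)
The plan is to exploit the nilpotency relations $a_1^2=a_2^2=0$ to simplify $u^2$ into a two-term expression, and then verify commutation by direct expansion.

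First I would expand the square: since $u=a_1+a_2$, we get $u^2=a_1^2+a_1a_2+a_2a_1+a_2^2$, and the outer terms vanish by hypothesis, leaving
\[
u^2=a_1a_2+a_2a_1.
\]
This is the crucial simplification that makes the commutation almost automatic.

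Next I would check that $a_1$ commutes with $u^2$ by direct computation. Multiplying on the left, $a_1 u^2=a_1(a_1a_2+a_2a_1)=a_1^2 a_2+a_1a_2a_1=a_1a_2a_1$, using $a_1^2=0$. Multiplying on the right, $u^2 a_1=(a_1a_2+a_2a_1)a_1=a_1a_2a_1+a_2a_1^2=a_1a_2a_1$, again using $a_1^2=0$. Both expressions collapse to $a_1a_2a_1$, giving $a_1u^2=u^2a_1$. The argument for $a_2$ is identical by symmetry, swapping the roles of $a_1$ and $a_2$.

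There is really no obstacle here: the whole content of the lemma is the elementary observation that two square-zero elements whose sum is $u$ interact with $u^2=a_1a_2+a_2a_1$ only through the ``sandwich'' $a_ia_ja_i$, which is manifestly the same whether one multiplies $u^2$ by $a_i$ on the left or on the right. No structural properties of $V$ or of the field $\F$ are used.
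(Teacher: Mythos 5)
Your proof is correct and is essentially identical to the paper's: both expand $u^2=a_1a_2+a_2a_1$ using the square-zero hypotheses and observe that $a_1u^2$ and $u^2a_1$ both collapse to $a_1a_2a_1$ (and symmetrically for $a_2$).
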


\begin{proof}
We expand $u^2=a_1a_2+a_2a_1$ and we compute $a_1u^2=a_1a_2a_1=u^2a_1$ and $a_2u^2=a_2a_1a_2=u^2a_2$.
\end{proof}

\begin{lemma}[Stabilization lemma]\label{lemma:stabilization}
Let $u$ be an endomorphism of a vector space $V$, and $a_1,a_2$ be square-zero endomorphisms of $V$ such that $u=a_1+a_2$.
Then $\Ker u^k$ and $\im u^k$ are stable under $a_1$ and $a_2$, for every integer $k \geq 0$.
\end{lemma}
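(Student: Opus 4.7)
The plan is to extract from the relation $u=a_1+a_2$ and the squarings $a_1^2=a_2^2=0$ two intertwining identities of the form
\[
a_1 u = u a_2 \quad \text{and} \quad a_2 u = u a_1,
\]
which come from the direct expansion $a_1u=a_1^2+a_1a_2=a_1a_2$ compared with $ua_2=a_1a_2+a_2^2=a_1a_2$, and symmetrically for the other. These swap the indices $1\leftrightarrow 2$ each time a factor $u$ is slid past an $a_i$. Applied twice they recover the Commutation lemma, so they refine it.

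From there I would argue by a simple induction on $k$, or equivalently by iterating the slide rule, to obtain the two identities
\[
u^k a_1 = \begin{cases} a_1 u^k & \text{if $k$ is even,} \\ a_2 u^k & \text{if $k$ is odd,}\end{cases}
\qquad
a_1 u^k = \begin{cases} a_1 u^k & \text{if $k$ is even,} \\ u^k a_2 & \text{if $k$ is odd,}\end{cases}
\]
and the two analogous identities with $a_1$ and $a_2$ exchanged on the left. For even $k$ these are actually immediate from the Commutation lemma applied to $u^k=(u^2)^{k/2}$; for odd $k=2m+1$ one writes $u^ka_1=u^{2m}(ua_1)=u^{2m}(a_2u)=(u^{2m}a_2)u=a_2u^k$ thanks to the even-exponent case, and similarly for $a_1u^k$.

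The stability assertions then drop out directly. For $\Ker u^k$: if $u^k(x)=0$, then the first displayed identity reads $u^k(a_1x)=a_\bullet u^k(x)=0$, where $a_\bullet$ is $a_1$ or $a_2$ according to the parity of $k$; hence $a_1x\in \Ker u^k$, and the argument for $a_2$ is symmetric. For $\im u^k$: if $y=u^k(z)$, then the second identity yields $a_1(y)=a_1u^k(z)=u^k(a_\bullet z)\in \im u^k$.

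There is really no serious obstacle here: the only subtlety is that, unlike $u^2$, the operator $u^k$ does not commute with $a_1$ when $k$ is odd, so one cannot invoke the Commutation lemma as a black box; one must keep track of the index swap. Once the slide rules above are recorded, both $\Ker u^k$ and $\im u^k$ are stable under $a_1$ and $a_2$ by inspection.
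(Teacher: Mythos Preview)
Your argument is correct, and it takes a slightly different route from the paper's. The paper uses the single identity $a_1u+ua_1=a_1a_2+a_2a_1=u^2$, multiplies by $u^{2m}$ (which commutes with $a_1$ by the Commutation lemma) to get $a_1u^{2m+1}+u^{2m+1}a_1=u^{2m+2}$, and reads off the stability of $\Ker u^{2m+1}$ and $\im u^{2m+1}$ from this \emph{sum} relation. You instead isolate the two sharper ``slide rules'' $a_1u=ua_2$ and $a_2u=ua_1$, which together imply the paper's identity but carry more information: they give exact formulas $u^ka_1=a_{\sigma(1)}u^k$ and $a_1u^k=u^ka_{\sigma(1)}$ with $\sigma$ the transposition or the identity according to the parity of $k$. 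Your approach buys a cleaner bookkeeping (no extra $u^{2m+2}$ term to absorb), at the cost of having to track the index swap; the paper's approach avoids the swap but needs the correction term. Both are short and either would serve.

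One cosmetic slip: in your second displayed case distinction, the even line reads $a_1u^k=a_1u^k$, which is a tautology; you evidently intended $a_1u^k=u^ka_1$. The subsequent application only uses the correct versions, so the argument is unaffected.
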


\begin{proof}
The commutation lemma readily shows that $\Ker u^{2k}$ and $\im u^{2k}$ are stable under $a_1$ and $a_2$, for every integer $k \geq 0$.

Next, note that $a_1 u+u a_1=a_1a_2+a_2a_1=u^2$.
For every integer $k \geq 0$, using the commutation of $a_1$ with $u^2$ entails that
$a_1 u^{2k+1}+u^{2k+1} a_1=u^{2k+2}$, and one easily deduces that $\Ker u^{2k+1}$ et $\im u^{2k+1}$ are stable under $a_1$.
And symmetrically they are also stable under $a_2$.
\end{proof}

Now, consider an $(\varepsilon,\eta)$-pair $(b,u)$ that has the square-zero splitting property.
Consider an associated pair $(a_1,a_2)$ of square-zero endomorphisms. Denote by $V$ the underlying vector space.
We introduce the Fitting decomposition of $u$, consisting of the core
$\Co(u):=\underset{n \in \N}{\bigcap} \im u^n$
and of the nilspace
$\Nil(u):=\underset{n \in \N}{\bigcup} \Ker u^n$
of $u$. Remember that $V=\Co(u) \oplus \Nil(u)$, that there exists an integer $n \geq 0$ such that
$\Co(u)=\im u^{2n}$ and $\Nil(u)=\Ker u^{2n}$, and that $u$ induces an automorphism of $\Co(u)$ and a nilpotent endomorphism of $\Nil(u)$.
Since $u^{2n}$ is $b$-selfadjoint, we deduce that $\Nil(u)=\Co(u)^{\bot_b}$.
Using the Stabilization Lemma, we see that $a_1$ and $a_2$ stabilize both summands of the Fitting decomposition of $u$,
and by Remark \ref{remark:squarezerosplittinginduced} in Section \ref{section:pairs} this shows that the induced pairs $(b,u)^{\Nil(u)}$ and $(b,u)^{\Co(u)}$ have the square-zero splitting property. Of course, if we start from an arbitrary $(\varepsilon,\eta)$-pair $(b,u)$ such that both induced pairs $(b,u)^{\Nil(u)}$ and $(b,u)^{\Co(u)}$ have the square-zero splitting property,
then $(b,u) \simeq (b,u)^{\Nil(u)} \bot (b,u)^{\Co(u)}$ has the square-zero splitting property.

This leads to the following general result:

\begin{prop}
Let $(b,u)$ be an $(\varepsilon,\eta)$-pair. For $(b,u)$ to have the square-zero splitting property,
it is necessary and sufficient that both $(b,u)^{\Co(u)}$ and $(b,u)^{\Nil(u)}$ have the square-zero splitting property.
\end{prop}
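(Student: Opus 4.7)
The proof is essentially already contained in the discussion preceding the statement; my plan is simply to assemble those observations into a clean necessary-and-sufficient argument.

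For the necessity direction, I start with an $(\varepsilon,\eta)$-pair $(b,u)$ having the square-zero splitting property, and fix a witnessing splitting $u=a_1+a_2$ with $(b,a_i)$ an $(\varepsilon,\eta)$-pair and $a_i^2=0$. I invoke the Stabilization Lemma (Lemma \ref{lemma:stabilization}) to see that for every $k\geq 0$ the subspaces $\Ker u^k$ and $\im u^k$ are stable under both $a_1$ and $a_2$. Picking an integer $n$ large enough so that $\Co(u)=\im u^{2n}$ and $\Nil(u)=\Ker u^{2n}$, I conclude that both Fitting summands are stable under $a_1$ and $a_2$, and hence in particular also under $u$. Remark \ref{remark:squarezerosplittinginduced} then yields directly that the induced pairs $(b,u)^{\Co(u)}$ and $(b,u)^{\Nil(u)}$ inherit the square-zero splitting property.

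For the sufficiency direction, the key point is that the Fitting decomposition is not merely a direct-sum decomposition but a $b$-orthogonal one. Since $u^\star=\eta u$, we have $(u^{2n})^\star=u^{2n}$, i.e.\ $u^{2n}$ is $b$-selfadjoint; hence $(\im u^{2n})^{\bot_b}=\Ker u^{2n}$, so $\Co(u)^{\bot_b}=\Nil(u)$. Combined with $V=\Co(u)\oplus \Nil(u)$, this makes both Fitting summands $b$-regular and mutually $b$-orthogonal. Therefore $(b,u)$ is isometric to the orthogonal direct sum $(b,u)^{\Co(u)} \bot (b,u)^{\Nil(u)}$, the induced pairs being defined on $\Co(u)$ and $\Nil(u)$ themselves.

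Finally, if each of the two induced pairs has the square-zero splitting property, then by Remark \ref{remark:orthogonalsumsplitting} their orthogonal direct sum also does, which transfers back to $(b,u)$ by isometry. There is really no obstacle here: the only mildly non-trivial ingredient is the $b$-orthogonality of the Fitting decomposition, which is an immediate consequence of $u^{2n}$ being $b$-selfadjoint in every one of the four types of $(\varepsilon,\eta)$-pairs. Everything else is a direct application of results already established: the Stabilization Lemma, Remark \ref{remark:squarezerosplittinginduced}, and Remark \ref{remark:orthogonalsumsplitting}.
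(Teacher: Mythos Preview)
Your proof is correct and follows essentially the same approach as the paper: the discussion immediately preceding the proposition already assembles precisely the ingredients you use (the Stabilization Lemma and Remark \ref{remark:squarezerosplittinginduced} for necessity; the $b$-selfadjointness of $u^{2n}$ to get $\Nil(u)=\Co(u)^{\bot_b}$, the resulting isometry $(b,u)\simeq (b,u)^{\Co(u)}\bot(b,u)^{\Nil(u)}$, and Remark \ref{remark:orthogonalsumsplitting} for sufficiency). There is no substantive difference between your argument and the paper's.
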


Now, this leaves us with four subproblems to solve: for each $\eta \in \{1,-1\}$,
characterize the $(1,\eta)$-pairs $(b,u)$, with $u$ nilpotent or bijective,
that have the square-zero splitting property.

We start with the case of automorphisms, as it is substantially easier.

\begin{theo}\label{theo:symaltautomorphism}
Assume that $\charac(\F) \neq 2$.
Let $(b,u)$ be a $(1,-1)$-pair in which $u$ is an automorphism.
The following conditions are equivalent:
\begin{enumerate}[(a)]
\item $(b,u)$ has the square-zero splitting property;
\item All the Hermitian invariants of $u$ are hyperbolic, and all the Jordan numbers of $u$ are even.
\end{enumerate}
\end{theo}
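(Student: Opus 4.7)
The plan is to reduce the problem to the existence of a certain Lagrangian decomposition, and then extract the invariants from that structure.

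\textbf{Step 1 (Lagrangian reformulation).} I would first establish that condition (a) is equivalent to the existence of a decomposition $V = K_1 \oplus K_2$ into $b$-Lagrangian subspaces with $u(K_1) = K_2$ and $u(K_2) = K_1$. In the forward direction, writing $u = a_1 + a_2$ with $a_i^2 = 0$ and $a_i^\star = -a_i$, the skew-selfadjoint and square-zero properties give $\ker a_i = (\im a_i)^{\bot}$, and the invertibility of $u$ forces $\rk(a_i) = \dim V/2$, so $K_i := \ker a_i = \im a_i$ is Lagrangian. The decomposition $V = K_1 \oplus K_2$ follows from $K_1 \cap K_2 \subset \ker u = \{0\}$, and $u(K_i) = K_{3-i}$ because $u|_{K_i} = a_{3-i}|_{K_i}$ takes values in $\im a_{3-i} = K_{3-i}$. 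Conversely, given such a decomposition, the prescription $a_1|_{K_1} = 0$, $a_1|_{K_2} = u|_{K_2}$ (and symmetrically for $a_2$) yields a valid splitting: the square-zero property uses $u(K_{3-i}) = K_i = \ker a_i$, and skew-selfadjointness follows from $u^\star = -u$ together with the Lagrangian condition on $K_i$.

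\textbf{Step 2 (Necessity).} Granted the Lagrangian-exchange decomposition, I introduce the $\F$-bilinear form $\beta(x,y) := b(x, u(y))$ on $K_1$. A short computation using $u^\star = -u$ and the Lagrangian property of $K_1$ shows that $\beta$ is alternating and non-degenerate on $K_1$, and that $u^2|_{K_1}$ is $\beta$-selfadjoint; hence $(\beta, u^2|_{K_1})$ is a $(-1,1)$-pair, and by Scharlau's theorem all Jordan numbers of $u^2|_{K_1}$ are even. Since $V = K_1 \oplus K_2$ is a direct sum of two isomorphic $\F[u^2]$-modules (intertwined by $u$), one has $n_{q,k}(u^2|_V) = 2\, n_{q,k}(u^2|_{K_1})$ for every $q\in \Irr(\F)$ and $k\geq 1$; translating via the identifications $V_{p,k}(u) = V_{\tilde p,k}(u^2)$ for $p \in \Irr_0(\F)$ with $\tilde p(t^2) = p(t)$, and $V_p(u) \oplus V_{p^{\op}}(u) = V_Q(u^2)$ for $p \notin \Irr_0(\F)$ with $p \neq t$ and $Q(t^2) = \pm p(t) p^{\op}(t)$ (checking that $Q$ is irreducible in $\F[s]$), one deduces that every Jordan number $n_{p,k}(u)$ is even. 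For hyperbolicity of $(b,u)_{p,k}$ when $p \in \Irr_0(\F)$, I analyze the image $\overline{K_1}$ of $K_1 \cap \ker p(u)^k$ in $V_{p,k}$: because $K_1$ is Lagrangian, the underlying $\F$-bilinear form $B(x,y) := b(x, p(u)^{k-1}y)$ vanishes on $K_1 \cap \ker p(u)^k$. Writing $\L = \F[t]/(p) = \K(\sqrt{\delta})$ with $\K$ the fixed field of the involution $t \mapsto -t$ and $\sqrt{\delta} := \overline{t}$, a direct calculation shows that the Hermitian form $H := (b,u)_{p,k}$ takes values in $\K\sqrt{\delta}$ on $\overline{K_1}$. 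Hermitian symmetry together with $\charac(\F) \neq 2$ then forces $H(x,x) = 0$ on $\overline{K_1}$, and $H|_{\overline{K_1}} = \sqrt{\delta}\, \omega$ for a non-degenerate alternating $\K$-bilinear form $\omega$. A symplectic $\K$-basis of $\omega$ $\L$-spans a Lagrangian subspace of $V_{p,k}$, so $H$ is hyperbolic.

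\textbf{Step 3 (Sufficiency).} Using the classification of $(1,-1)$-pairs recalled in Section \ref{section:invariants}, I would decompose $(b,u)$ as an orthogonal direct sum of elementary pairs: either (i) a pair with $p \in \Irr_0(\F)$, $n_{p,k} = 2$, and Hermitian invariant a hyperbolic $\L$-plane, or (ii) a pair with $p \notin \Irr_0(\F)$, $p \neq t$, and $n_{p,k} = n_{p^{\op},k} = 2$. By Remark \ref{remark:orthogonalsumsplitting}, it suffices to exhibit a Lagrangian-exchange decomposition on each elementary piece. In case (i), one concretely models the piece as $W \oplus W^*$ with $W = \F[t]/(p^k)$, $b$ the standard hyperbolic form, and $u$ acting appropriately on the two factors; direct inspection then exhibits two Lagrangian subspaces exchanged by $u$. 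In case (ii), the hyperbolic structure on $V_p \oplus V_{p^{\op}}$ combined with the evenness of $n_{p,k}$ permits the construction of a non-degenerate alternating form on $V_p$ whose graph provides a Lagrangian with the required exchange property.

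The most delicate point is Step 2 for the Hermitian invariants: it requires careful bookkeeping of the interaction between the $\K$-action (from $u^2$) and the $\L$-action (from $u$) on $V_{p,k}$, and uses essentially that $\charac(\F) \neq 2$ to conclude that values of $H$ simultaneously in $\K$ (by Hermitian symmetry) and in $\K\sqrt{\delta}$ (from the Lagrangian-exchange structure) must vanish.
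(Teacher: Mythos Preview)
Your proposal is correct. Steps 1 and 3 coincide with the paper's approach: the Lagrangian-exchange decomposition is exactly the intrinsic content of Proposition~\ref{prop:caracboxedsum} (the paper's boxed sum $(H_W^1, B\boxplus C)$ with $B,C$ symplectic has $W\times\{0\}$ and $\{0\}\times W^\star$ as the two Lagrangians exchanged by $B\boxplus C$), and your elementary-piece constructions in Step~3 amount to the boxed sums the paper builds in the converse direction in Section~4.1. Where you genuinely diverge is the hyperbolicity of the Hermitian invariants in Step~2. The paper (Lemma~\ref{lemma:hyperbolicboxedsum}) reduces to an indecomposable symplectic pair and exhibits a single explicit isotropic vector $\overline{(x,0)}$ by computing $H_V^1\bigl((x,0),\,q(t)\cdot(x,0)\bigr)$ term by term; your route via the $\K$-subspace $\overline{K_1}\subset V_{p,k}$ and the observation that $H$ takes values in $\K\sqrt{\delta}$ there (whence $H(x,x)\in\K\cap\K\sqrt{\delta}=\{0\}$) is more structural and produces an entire $\L$-Lagrangian at once. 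The paper's computation fits into a boxed-sum calculus reused uniformly across all $(\varepsilon,\eta)$-cases and the Hermitian case; your argument is cleaner and self-contained for this particular theorem. Your Step~3 is sketchy but fills in readily once one notes that the datum needed on $K_1$ is precisely a $(-1,1)$-pair $(\beta,\,u^2|_{K_1})$ with prescribed invariant factors, which always exists by Scharlau's classification.
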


For $b$-selfadjoint endomorphisms, the classification is more difficult. Some terminology on quadratic forms will help:

\begin{Def}
Let $B$ be a bilinear form on a vector space $V$. Let $\lambda \in \F$. We say that $B$ \textbf{represents}
$\lambda$ whenever there exists a vector $x$ of $V \setminus \{0\}$ such that $B(x,x)=\lambda$.
\end{Def}

First of all, let $p \in \Irr_0(\F)$. We can then write $p=q(t^2)$ for some $q \in \Irr(\F)$.
In $\L:=\F[t]/(p)$, we have the subfield $\K$ generated by the class of $t^2$,
and the automorphism $x \mapsto x^\bullet$ that takes the class of $r(t)$ to the one of $r(-t)$, for all $r\in \F[t]$.
It turns out that $\K=\{x \in \L : x^\bullet=x\}$ and that $e_p(x^\bullet)=e_p(x)$ for all $x \in \L$.

We say that a quadratic form $Q$ on a vector space $W$ over $\L$ is \textbf{skew-representable}
whenever there is a $Q$-orthogonal basis $(e_1,\dots,e_n)$ of $W$ such that $Q(e_i)^\bullet=-Q(e_i)$ for all $i \in \lcro 1,n\rcro$.

Next, let $p \in \Irr(\F) \setminus \Irr_0(\F)$.
The $\F$-algebras $\L:=\F[t]/(p)$ and $\L^{\op}:=\F[t]/(p^\op)$
are naturally isomorphic through the isomorphism $\varphi : \L^\op \rightarrow \L$ that takes the class of $t$ to the one of $-t$
(so that the class of $q$ is systematically mapped to the one of $q(-t)$).
Note that $e_{p^\op}(\alpha)=e_p(\varphi(\alpha))$ for all $\alpha \in \L^\op$.
Through this isomorphism, every bilinear form $B$ on an $\L^{\op}$-vector space $W$
yields a bilinear form $B^{\op}:=\varphi^{-1} \circ B$ on the $\L$-vector space $W^{\op}$ which has the same underlying structure of abelian group as
$W$ but in which the scalar multiplication is defined by $\lambda.x:=\varphi(\lambda).x$

\begin{theo}\label{theo:symsymautomorphism}
Assume that $\charac(\F) \neq 2$.
Let $(b,u)$ be a $(1,1)$-pair in which $u$ is bijective.
The following conditions are equivalent:
\begin{enumerate}[(a)]
\item $(b,u)$ has the square-zero splitting property;
\item For every $p \in \Irr_0(\F)$ and every integer $k \geq 1$, the quadratic form $(b,u)_{p,k}$
is skew-representable, and for every
$p \in \Irr(\F) \setminus \Irr_0(\F)$ and every integer $k \geq 1$, the quadratic invariants $(b,u)_{p,k}$ and
$(-1)^{1+(k-1)\deg p}(b,u)_{p^{\op},k}^\op$ are equivalent over the field $\F[t]/(p)$.
\end{enumerate}
\end{theo}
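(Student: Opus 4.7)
The plan rests on a crucial algebraic reformulation: the splitting property $u=a_1+a_2$ with each $a_i$ being $b$-selfadjoint and square-zero is equivalent to the existence of a single $b$-selfadjoint endomorphism $N$ of $V$ such that $N^2=-u^2$ and $uN+Nu=0$. Indeed, setting $N:=a_1-a_2$ yields $N^2=-(a_1a_2+a_2a_1)=-u^2$ and $uN+Nu=2(a_1^2-a_2^2)=0$, together with $N^\star=N$; conversely, $a_1:=(u+N)/2$ and $a_2:=(u-N)/2$ recover square-zero $b$-selfadjoint summands (using $\chi(\F)\neq 2$). Note that $N$ then automatically commutes with $u^2$.

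Using this, I perform the primary reduction: since $N$ commutes with $u^2$, it preserves every $\F[u^2]$-primary component of $V$. Because $u$ is bijective we avoid $p=t$, and the $\F[u^2]$-primary components of $V$ group the $\F[u]$-primary components $V_p$ according to whether $p\in\Irr_0(\F)$ (in which case $V_p$ itself is $\F[u^2]$-primary, using $p(u)\in\F[u^2]$) or $p\notin\Irr_0(\F)$ (in which case $V_p\oplus V_{p^\op}$ is $\F[u^2]$-primary, using $p(u)p^\op(u)\in\F[u^2]$). Every primary component of a $(1,1)$-pair is $b$-regular, and the components are mutually $b$-orthogonal, so $(b,u)$ decomposes as an orthogonal sum to which the splitting property is inherited. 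This reduces the problem to two cases: \emph{Case A}, where $V=V_p$ with $p\in\Irr_0(\F)$; and \emph{Case B}, where $V=V_p\oplus V_{p^\op}$ with $p\notin\Irr_0(\F)$.

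In Case B, the anticommutation $uN+Nu=0$ forces $N$ to swap $V_p$ and $V_{p^\op}$ (since $Nq(u)=q(-u)N$ for every polynomial $q$). Writing $N_+:=N|_{V_p}:V_p\to V_{p^\op}$, the requirement that $N$ be $b$-selfadjoint with $N^2=-u^2$ translates, after a brief calculation, to: $N_+$ is an $\F$-linear bijection intertwining $u|_{V_p}$ and $-u|_{V_{p^\op}}$ and satisfying the Gram relation $b|_{V_{p^\op}}(N_+x,N_+y)=-b|_{V_p}(ux,uy)$ for all $x,y$; the remaining block $N_-$ is then recovered as the $b$-adjoint of $N_+$. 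Viewing $V_{p^\op}$ as an $\F[t]$-module via $-u$ changes the residue field to $\F[t]/(p)$ through the isomorphism $\varphi$ (producing the $\op$ operation on forms), and rewriting $p(-u)^{k-1}=(-1)^{(k-1)\deg p}p^\op(u)^{k-1}$ introduces a sign $(-1)^{(k-1)\deg p}$; an additional $-1$ comes from the right-hand side of the Gram relation. Combining these, the existence of $N_+$ is equivalent to the equivalence $(b,u)_{p,k}\simeq(-1)^{1+(k-1)\deg p}(b,u)_{p^\op,k}^\op$ over $\F[t]/(p)$ for every $k\geq 1$. Sufficiency is handled by assembling $N_+$ block-by-block from bases realizing this invariant equivalence.

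Case A is the main obstacle. Here $\L=\F[t]/(p)$ is a nontrivial quadratic extension of its fixed subfield $\K$ (for the involution $\bullet$ induced by $t\mapsto -t$), and the anticommutation $uN=-Nu$ says exactly that $N$ is $\bullet$-semilinear with respect to the natural $\L$-module structure on $V_p$. We seek a $b$-selfadjoint $\bullet$-semilinear $N$ with $N^2=-u^2$. The plan is to induct on the Jordan filtration: $N$ descends to a $\bullet$-semilinear map $\overline{N}_k$ on each graded piece $V_{p,k}$ which, combined with $N^2=-u^2$ and the selfadjointness condition, corresponds exactly to a $(b,u)_{p,k}$-orthogonal basis whose diagonal values lie in the $(-1)$-eigenspace of $\bullet$ --- that is, to skew-representability of $(b,u)_{p,k}$. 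Conversely, starting from skew-representable bases of $(b,u)_{p,k}$ for every $k$, one must lift them to an actual $N$ on $V_p$, which is the hardest step: this requires a Witt-type cancellation argument in the Hermitian category to extend a partial $N$ from $\Ker p(u)^{k-1}$ to $\Ker p(u)^k$ while maintaining both $b$-selfadjointness and the equation $N^2=-u^2$. Inductive bookkeeping of the $\bullet$-semilinear framework, along lines analogous to the invariant-theoretic constructions of Scharlau and Sergeichuk, completes the argument.
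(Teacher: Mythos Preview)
Your reformulation via the anticommuting $N$ is correct and elegant, and the primary reduction is sound. However, the route you take is genuinely different from the paper's, which never introduces $N$ at all: the paper shows (Proposition~\ref{prop:caracboxedsum}) that every $(1,1)$-pair $(b,u)$ with $u$ bijective and the square-zero splitting property is isometric to a \emph{boxed sum} $(H_W^1,B\boxplus C)$ built from a pair $(B,C)$ of non-degenerate symmetric bilinear forms on some $W$, then computes the quadratic invariants of such boxed sums directly (Lemmas~\ref{lemma:skewrepresentabledim1} and \ref{lemma:representabledim1}) to obtain condition~(b). For the converse it constructs, for each admissible family of invariants, an explicit boxed sum realising them (Corollaries~\ref{prop:skewrepresentablereciproc} and \ref{cor:partcorquadratic}), and concludes by the classification of $(1,1)$-pairs. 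This is entirely constructive and sidesteps both the semilinear analysis and the lifting problem you flag.

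Your Case~B is essentially complete, and your sign accounting matches Proposition~\ref{prop:isomopposite2}. But Case~A has two genuine gaps. First, the assertion that the induced $\overline{N}_k$ on $V_{p,k}$ ``corresponds exactly to'' a skew orthogonal basis is not obvious: you need the observation that $M':=\overline{t}^{-1}\overline{N}_k$ is a $\bullet$-semilinear involution whose $(+1)$-eigenspace $W_+$ (a $\K$-subspace with $V_{p,k}=\L\otimes_\K W_+$) carries $\overline{t}^{-1}(b,u)_{p,k}$ as a $\K$-valued symmetric form, which one then diagonalises over $\K$. Second, the lifting you describe as ``the hardest step'' via Witt-type cancellation is unnecessary. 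Once you know each $(b,u)_{p,k}$ is skew-representable, the classification theorem (Theorem~\ref{theo:epsilon1pairs}) lets you replace $(b,u)|_{V_p}$ by an isometric orthogonal sum of cyclic pieces, each with a prescribed skew value $\gamma_i\in\L$; on a single cyclic piece $\F[t]/(p^k)$ one can choose $b$ so that the involution $\sigma:q(t)\mapsto q(-t)$ is $b$-skew-selfadjoint (take $b(q_1,q_2)=\ell(q_1q_2)$ with $\ell$ vanishing on even polynomials), and then $N:=u\sigma$ satisfies $N^2=-u^2$, $uN+Nu=0$, $N^\star=N$ directly. This is the point where the paper's boxed-sum machinery buys you an explicit model for free, while your approach has to manufacture one by hand.
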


It remains to consider the nilpotent endomorphisms. There, we need more subtle considerations on symmetric bilinear forms
over fields of characteristic different from $2$.
Remember that every such form $B$ (on a finite-dimensional vector space) is the orthogonal sum of a hyperbolic symmetric bilinear form and of a
nonisotropic symmetric bilinear form; the rank of the hyperbolic form equals $2\nu(B)$, where $\nu(B)$
is the Witt index of $B$, i.e.\ the greatest dimension for a totally $B$-isotropic space; the
equivalence ``class" of the nonisotropic form depends only on that of $B$, and we say that this nonisotropic form is a
\textbf{nonisotropic part} of $B$.
Finally, a \textbf{subform} of $B$ is simply its restriction to $W^2$ for some linear subspace $W$ of $V$.

\begin{Def}
Assume that $\charac(\F) \neq 2$.
Let $B$ and $B'$ be non-degenerate symmetric bilinear forms (on finite-dimensional vector spaces over $\F$).
The following conditions are equivalent:
\begin{itemize}
\item Every nonisotropic part of $B'$ is isomorphic to a subform of $-B$;
\item One has $\nu(B')+\nu(B' \bot B) \geq \rk(B')$.
\end{itemize}
When they are satisfied, we say that $B$ \textbf{Witt-simplifies} $B'$.
\end{Def}

To see that the conditions from this definition are equivalent, remember
that, given a non-degenerate symmetric bilinear form $\varphi$,
the (non-degenerate) form $\varphi \bot B'$ has its Witt index greater than or equal to $\rk(\varphi)$
if and only if $\varphi$ is equivalent to a subform of $-B'$.
So, letting $\varphi'$ be a nonisotropic part of $B'$, and letting $r$ denote the Witt index of $B'$,
we have $B' \simeq \varphi' \bot H'$ for some hyperbolic bilinear form $H'$ of rank $2r$,
and hence
$B \bot B' \simeq H' \bot (\varphi' \bot B)$, to the effect
that $\nu(B \bot B')=r+\nu(\varphi' \bot B)$, whereas $\rk B'=2r+\rk \varphi'$.
Hence, $\nu(B')+\nu(B' \bot B) \geq \rk(B')$ if and only if $\nu(\varphi' \bot B) \geq \rk(\varphi')$,
which is equivalent to having $\varphi'$ equivalent to a subform of $-B$.

\begin{theo}\label{theo:symformnilpotent}
Assume that $\charac(\F) \neq 2$.
Let $\eta \in \{-1,1\}$, and let $(b,u)$ be a $(1,\eta)$-pair in which $u$ is nilpotent.
The following conditions are equivalent:
\begin{enumerate}[(i)]
\item $(b,u)$ has the square-zero splitting property.
\item For every integer $k \geq 0$, the non-degenerate symmetric bilinear form
$\eta^k (b,u)_{t,2k+1}$ Witt-simplifies $\underset{i >k}{\bot}\eta^i (b,u)_{t,2i+1}$.
\end{enumerate}
\end{theo}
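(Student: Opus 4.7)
I prove (i) $\Leftrightarrow$ (ii) in two steps, relying throughout on the Stabilization and Commutation Lemmas and on the classification of $(1,\eta)$-pairs recalled in Section \ref{section:invariants}. First, assume $u=a_1+a_2$ with $a_i^2=0$ and $a_i^\star=\eta a_i$. By the Stabilization Lemma each subspace $\Ker u^k$ is stable under $a_1$ and $a_2$, so they induce square-zero endomorphisms $\overline{a_1},\overline{a_2}$ on every quotient $V_{t,k}$. On $V_{t,2k+1}$ the endomorphism of $u$ vanishes (because $u$ sends $\Ker u^{2k+1}$ into $\Ker u^{2k}$), hence $\overline{a_2}=-\overline{a_1}$, and a short computation using the Commutation Lemma to move $u^{2k}$ past $a_1$ shows that $\overline{a_1}$ is $\eta$-adjoint with respect to the form $(b,u)_{t,2k+1}$. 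In particular $\im\overline{a_1}$ is a totally isotropic subspace of $V_{t,2k+1}$ contained in its own orthogonal, so $(b,u)_{t,2k+1}$ visibly splits off a hyperbolic piece from data at that single level. The cross-level content of (ii) comes from the identity $u^{2k+1}=(a_1a_2)^k a_1+(a_2a_1)^k a_2$, where both summands are themselves square-zero and $\eta$-adjoint. Chasing these summands through the levels $i>k$ and comparing the pairings $b(\cdot,u^{2k}(\cdot))$ and $b(\cdot,u^{2i}(\cdot))$ produces, for each $k$, a totally isotropic subspace $L_k$ of $\underset{i>k}{\bot}\eta^i(b,u)_{t,2i+1}$ whose orthogonal quotient $L_k^{\bot}/L_k$ embeds isometrically into $-\eta^k(b,u)_{t,2k+1}$; this is exactly the Witt-simplification condition.

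\textbf{Sufficiency.} I would argue by strong induction on the largest integer $N$ such that $V_{t,2N+1}\neq 0$, peeling off $b$-regular $u$-stable orthogonal direct summands of $(b,u)$ that admit an explicit square-zero splitting and then invoking Remark \ref{remark:orthogonalsumsplitting}. Two elementary summand types suffice: a \emph{hyperbolic pair}, supported on two Jordan blocks of the same odd size $2k+1$ whose combined invariant at $(t,2k+1)$ is a hyperbolic plane, for which $a_1$ interchanges the two blocks and $a_2:=u-a_1$; and a \emph{matching pair}, supported on one block of size $2N+1$ and one of smaller odd size $2k'+1$ whose invariants satisfy, on their anisotropic parts, the sign-compatible equivalence $\eta^{k'}(b,u)_{t,2k'+1}\simeq -\eta^N(b,u)_{t,2N+1}$. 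Working downward from the top level and using the Witt-simplification condition to match nonisotropic pieces with partners at lower odd levels, one extracts at each step such an elementary summand and checks that the complementary pair still satisfies (ii). The even-indexed invariants are never obstructions, since on each $V_{t,2k}$ the induced $\overline{u}$ is zero and one may take $\overline{a_1}=\overline{a_2}=0$ there.

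\textbf{Main obstacle.} The principal difficulty is the sufficiency: from the abstract Witt-simplification condition on the invariants, one must exhibit a concrete orthogonal splitting of $V$ realizing a matching pair or a hyperbolic pair at each inductive step. This requires producing Jordan-like bases adapted simultaneously to $b$ and to the coupling between blocks of different odd sizes, while tracking the sign $\eta^k$ through each coupling; the bookkeeping is particularly delicate when $\eta=-1$, since consecutive odd levels then carry forms of opposite effective sign.
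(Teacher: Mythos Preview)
Your sufficiency argument contains a concrete error. You propose ``matching pairs'' consisting of one Jordan block of size $2N+1$ and one of an arbitrary smaller odd size $2k'+1$, claiming these have the square-zero splitting property whenever the sign condition $\eta^{k'}(b,u)_{t,2k'+1}\simeq -\eta^N(b,u)_{t,2N+1}$ holds. This is false. Over an algebraically closed field, the pair $B_3\bot B_7$ (one block of size $3$, one of size $7$) has $1$-dimensional invariants at levels $1$ and $3$ satisfying your sign condition, yet it fails condition (ii) at $k=2$: the form $\eta^2(b,u)_{t,5}$ is zero and so cannot Witt-simplify the nonzero $\eta^3(b,u)_{t,7}$. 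Hence $B_3\bot B_7$ does not have the square-zero splitting property, and your building block is invalid. The paper's construction uses only \emph{consecutive} odd sizes $2\ell+1$ and $2\ell-1$ (the ``twisted pairs'' of Lemma~\ref{lemma:twisted} and Corollary~\ref{cor:twistedpairs}), and the induction in Proposition~\ref{prop:Jordanoddsize} always peels off the top level against level $\ell-1$, never skipping levels. Your handling of even-sized blocks is also confused: choosing $\overline{a_1}=\overline{a_2}=0$ on the quotients $V_{t,2k}$ says nothing about constructing $a_1,a_2$ on $V$ itself; the paper disposes of these blocks separately via Proposition~\ref{prop:Jordanevensize} (boxed sums or hyperbolic extensions).

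Your necessity argument is not a proof but a wish. The identity $u^{2k+1}=(a_1a_2)^ka_1+(a_2a_1)^ka_2$ is correct, and the summands are indeed square-zero and $\eta$-adjoint, but ``chasing these summands through the levels'' does not explain how the totally isotropic subspace $L_k$ arises, nor why $L_k^{\bot}/L_k$ embeds into $-\eta^k(b,u)_{t,2k+1}$. The paper's route is entirely different and substantially harder: it first reduces to the case $u^3=0$ by repeated use of induced pairs on $\im u$ and $\Ker u^k$ (the descent and folding techniques of Sections~\ref{section:descent}--\ref{section:symmetrization}), tracking how the invariants transform under these passages, and then proves the Key Lemma~\ref{lemma:keylemmanilpotent} for $u^3=0$ via a six-step analysis showing successively that $a$ maps $V$ into $\Ker u^2$, then $\Ker u^2$ into $\Ker u+\im u$, and finally constructing an explicit map whose rank bounds $\nu(B_1\bot\eta B_3)$ from below. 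None of this structure is visible in your sketch, and I do not see how to shortcut it along the lines you suggest.
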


Notice the lack of constraints on the $(b,u)_{t,2k}$ invariants.
From there, we can combine the results on nilpotent endomorphisms and on automorphisms to obtain the full characterization:

\begin{theo}\label{theo:symsym}
Assume that $\charac(\F) \neq 2$.
Let $(b,u)$ be a $(1,1)$-pair. For $u$ to be the sum of two $b$-selfadjoint square-zero endomorphisms, it is necessary and sufficient that all the following
conditions hold:
\begin{enumerate}[(i)]
\item For every $p \in \Irr_0(\F)$ and every integer $r \geq 1$, the quadratic form $(b,u)_{p,r}$
is skew-representable.
\item For every $p \in \Irr(\F) \setminus (\Irr_0(\F) \cup \{t\})$ and every integer $k \geq 1$,
the quadratic invariants $(b,u)_{p,k}$ and $(-1)^{1+(k-1)\deg p}(b,u)_{p^{\op},k}^\op$ are equivalent over the field $\F[t]/(p)$.
\item For every integer $k \geq 0$, the non-degenerate symmetric bilinear form
$(b,u)_{t,2k+1}$ Witt-simplifies $\underset{i >k}{\bot} (b,u)_{t,2i+1}$.
\end{enumerate}
\end{theo}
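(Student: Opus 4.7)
The plan is to deduce Theorem~\ref{theo:symsym} from Theorems~\ref{theo:symsymautomorphism} and \ref{theo:symformnilpotent} by means of the Fitting decomposition. By the proposition on Fitting decompositions stated just above, $(b,u)$ has the square-zero splitting property if and only if both induced pairs $(b,u)^{\Co(u)}$ and $(b,u)^{\Nil(u)}$ do; moreover, since $\Nil(u)=\Co(u)^{\bot_b}$, both summands are $b$-regular and $(b,u)\simeq (b,u)^{\Co(u)}\bot(b,u)^{\Nil(u)}$, with $u$ bijective on $\Co(u)$ and nilpotent on $\Nil(u)$. Next, using the compatibility of the quadratic invariants with orthogonal direct sums (the remark following Theorem~\ref{theo:epsilon1pairs}), one has $(b,u)_{p,k}\simeq (b,u)^{\Co(u)}_{p,k}\bot(b,u)^{\Nil(u)}_{p,k}$ for every $p\in \Irr(\F)$ and every $k\geq 1$. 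Since $u|_{\Co(u)}$ is bijective, $(b,u)^{\Co(u)}_{t,k}=0$ for every $k$; and since $u|_{\Nil(u)}$ is nilpotent, the endomorphism $p(u|_{\Nil(u)})$ is invertible for every $p\in \Irr(\F)\setminus\{t\}$, so $(b,u)^{\Nil(u)}_{p,k}=0$ for such $p$. Hence $(b,u)_{t,k}\simeq (b,u)^{\Nil(u)}_{t,k}$, whereas $(b,u)_{p,k}\simeq (b,u)^{\Co(u)}_{p,k}$ whenever $p\neq t$.

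I would then apply Theorem~\ref{theo:symsymautomorphism} to $(b,u)^{\Co(u)}$: its condition~(b) asks for skew-representability of the invariants for $p\in\Irr_0(\F)$ and an equivalence of the shape $(b,u)^{\Co(u)}_{p,k}\simeq (-1)^{1+(k-1)\deg p}\bigl((b,u)^{\Co(u)}_{p^{\op},k}\bigr)^{\op}$ for $p\in\Irr(\F)\setminus\Irr_0(\F)$. Noting that $t\in \Irr(\F)\setminus \Irr_0(\F)$ satisfies $t^{\op}=t$ and that $(b,u)^{\Co(u)}_{t,k}=0$, the case $p=t$ is vacuous, so this last condition restricts without loss to $p\in \Irr(\F)\setminus(\Irr_0(\F)\cup\{t\})$. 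Via the decoupling of invariants just established, these translate into exactly conditions (i) and (ii) of Theorem~\ref{theo:symsym}. In parallel I would apply Theorem~\ref{theo:symformnilpotent} with $\eta=1$ to $(b,u)^{\Nil(u)}$: its Witt-simplification criterion on the odd invariants $(b,u)^{\Nil(u)}_{t,2k+1}$ becomes, via $(b,u)_{t,2k+1}\simeq (b,u)^{\Nil(u)}_{t,2k+1}$, precisely condition (iii) of Theorem~\ref{theo:symsym}.

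There is really no hard step in this argument: it is a packaging of the two prior classifications. The only verifications to discharge are the $b$-regularity of the Fitting summands (automatic from $\Nil(u)=\Co(u)^{\bot_b}$ together with the non-degeneracy of $b$) and the vanishing of the irrelevant invariants on each summand, both of which are immediate from bijectivity on one side and nilpotency on the other. Combining the two sets of conditions obtained yields the equivalence between the square-zero splitting property of $(b,u)$ and the simultaneous validity of (i), (ii) and (iii).
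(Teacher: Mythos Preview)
Your proposal is correct and follows exactly the approach the paper intends: Theorem~\ref{theo:symsym} is simply the combination of Theorem~\ref{theo:symsymautomorphism} (for the core) and Theorem~\ref{theo:symformnilpotent} with $\eta=1$ (for the nilspace), glued together by the Fitting-decomposition proposition and the compatibility of quadratic invariants with orthogonal sums. Your observation that the $p=t$ case in Theorem~\ref{theo:symsymautomorphism} is vacuous on the core, and that for $p\neq t$ the nilpotent summand contributes nothing, is precisely the bookkeeping needed to identify conditions (i)--(iii) with the conjunction of the two earlier criteria.
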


\begin{theo}\label{theo:symalt}
Assume that $\charac(\F) \neq 2$.
Let $(b,u)$ be a $(1,-1)$-pair. For $u$ to be the sum of two $b$-skew-selfadjoint square-zero endomorphisms, it is necessary and sufficient that all the following conditions hold:
\begin{enumerate}[(i)]
\item For every $p \in \Irr_0(\F)$ and every integer $r \geq 1$, the Hermitian form $(b,u)_{p,r}$
is hyperbolic.
\item For every $p \in \Irr(\F) \setminus (\Irr_0(\F) \cup \{t\})$ and every integer $k \geq 1$,
the Jordan number $n_{p,k}(u)$ is even.
\item For every integer $k \geq 0$, the non-degenerate symmetric bilinear form
$(-1)^k (b,u)_{t,2k+1}$ Witt-simplifies $\underset{i >k}{\bot}(-1)^i (b,u)_{t,2i+1}$.
\end{enumerate}
\end{theo}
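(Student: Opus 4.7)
The plan is to combine the Fitting-decomposition reduction stated just before Theorem \ref{theo:symaltautomorphism} with the two specialized results already proved: Theorem \ref{theo:symaltautomorphism} for the bijective part of $u$, and Theorem \ref{theo:symformnilpotent} (applied with $\eta=-1$) for the nilpotent part of $u$. By that Proposition, $(b,u)$ has the square-zero splitting property if and only if both $(b,u)^{\Co(u)}$ and $(b,u)^{\Nil(u)}$ do, and these induced pairs are again $(1,-1)$-pairs.

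The key bookkeeping step is to match invariants. For $p \in \Irr(\F)$ with $p(0) \neq 0$, the endomorphism $p(u)$ is invertible on $\Nil(u)$, so $\Ker p(u)^k \subset \Co(u)$ for all $k$; conversely $\Ker u^k \subset \Nil(u)$. Consequently, for $p \neq t$ the Hermitian invariants $(b,u)_{p,k}$ and the Jordan numbers $n_{p,k}(u)$ coincide with those of $(b,u)^{\Co(u)}$, while for $p = t$ the quadratic/symplectic invariants $(b,u)_{t,k}$ coincide with those of $(b,u)^{\Nil(u)}$. In particular the automorphism part has no $(b,u)_{t,k}$ invariants and the nilpotent part has no invariants at irreducible polynomials other than $t$.

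Applying Theorem \ref{theo:symaltautomorphism} to $(b,u)^{\Co(u)}$: this pair has the square-zero splitting property iff all its Hermitian invariants are hyperbolic and all its Jordan numbers are even. The former gives condition (i) of the theorem (for $p \in \Irr_0(\F)$). For the latter, note that for $p \in \Irr_0(\F)$ the dimension of the underlying space of the Hermitian form $(b,u)_{p,k}$ equals $n_{p,k}(u)$, and a hyperbolic Hermitian form over a field of characteristic different from $2$ automatically has even rank; so hyperbolicity already forces evenness of $n_{p,k}(u)$ for $p \in \Irr_0(\F)$, and the only non-redundant content left is evenness of $n_{p,k}(u)$ for $p \in \Irr(\F) \setminus (\Irr_0(\F) \cup \{t\})$, which is condition (ii).

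Applying Theorem \ref{theo:symformnilpotent} with $\eta = -1$ to $(b,u)^{\Nil(u)}$: its condition (ii) is precisely condition (iii) above, once one identifies $(b,u)^{\Nil(u)}_{t,k}$ with $(b,u)_{t,k}$. Putting the three pieces together yields the equivalence. I expect no real obstacle beyond this invariant-matching: the Fitting reduction is already packaged as a Proposition, Theorems \ref{theo:symaltautomorphism} and \ref{theo:symformnilpotent} deliver the two cases, and the only subtle point is verifying that condition (i) subsumes the evenness of $n_{p,k}(u)$ for $p \in \Irr_0(\F)$ so that condition (ii) is stated at its natural minimum scope.
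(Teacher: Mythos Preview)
Your proposal is correct and follows precisely the route the paper intends: the paper does not spell out a proof of Theorem~\ref{theo:symalt} but simply states that it is obtained by combining the Fitting-decomposition Proposition with Theorem~\ref{theo:symaltautomorphism} (for $(b,u)^{\Co(u)}$) and Theorem~\ref{theo:symformnilpotent} with $\eta=-1$ (for $(b,u)^{\Nil(u)}$). Your invariant-matching and the observation that hyperbolicity in (i) already forces evenness of $n_{p,k}(u)$ for $p\in\Irr_0(\F)$ are exactly the bookkeeping needed to make the combination precise.
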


\subsection{Examples}\label{section:examplesquadratic}

In our examples, all the fields are assumed to be of characteristic different from $2$.
We will not comment much on the situation of $(-1,\eta)$-pairs because, as seen in Theorems
\ref{theo:altformantisym} and \ref{theo:altformalt}, spotting which ones have the square-zero splitting property is quite easy.
We will only focus on the other two types of pairs.

\subsubsection{Algebraically closed fields}

Assume that $\F$ is algebraically closed. Then the irreducible polynomials over $\F$
are of degree $1$, and none of them is even. Moreover, non-degenerate symmetric bilinear forms over $\F$
are classified up to equivalence by their rank. Hence, $(1,\eta)$-pairs $(b,u)$
are classified up to isometry by the Jordan numbers of $u$. It follows that
a $(1,-1)$-pair $(b,u)$, with $u$ bijective, has the square-zero splitting property if and only
if all its Jordan numbers are even, whereas a $(1,1)$-pair $(b,u)$ with $u$ bijective has the square-zero splitting property if and only if $u$ is similar to its opposite.

For $(1,\eta)$-pairs $(b,u)$ with $u$ nilpotent, the characterization of Theorem \ref{theo:symformnilpotent}
takes a simplified form. Indeed, over $\F$ a non-degenerate symmetric bilinear form has
its nonisotropic parts of dimension $0$ if its rank is even, and of dimension $1$ otherwise,
and if it is $1$-dimensional then it is equivalent to a subform of every non-degenerate symmetric bilinear form of rank at least $1$. Hence, condition (ii) can be replaced by:
for all $k \geq 0$, if $\sum_{i>k} n_{t,2i+1}(u)$ is odd then $n_{t,2k+1}(u)>0$.
This yields the following characterization over algebraically closed fields:

\begin{theo}\label{theo:symsymalgclosed}
Assume that $\F$ is algebraically closed with $\chi(\F) \neq 2$.
Let $(b,u)$ be a $(1,1)$-pair. Then $(b,u)$ has the square-zero splitting property
if and only if the following conditions hold:
\begin{enumerate}[(i)]
\item $u$ is similar to $-u$, i.e.\ $n_{t-\lambda,k}(u)=n_{t+\lambda,k}(u)$ for all $k \geq 1$
and all $\lambda \in \F \setminus \{0\}$.
\item For all $k \geq 0$, if $\sum_{i>k} n_{t,2i+1}(u)$ is odd then $n_{t,2k+1}(u)>0$.
\end{enumerate}
\end{theo}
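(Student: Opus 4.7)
The plan is to derive this result by specializing Theorem~\ref{theo:symsym} to an algebraically closed field and checking that its three conditions collapse to the two listed here. Each reduction is a routine consequence of the classification of symmetric bilinear forms over algebraically closed fields, and there is no genuine obstacle in the argument.

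First, I would observe that over an algebraically closed field the monic irreducible polynomials are exactly the $t-\lambda$ with $\lambda\in\F$, and none of these lies in $\F[t^2]$. Hence $\Irr_0(\F)=\emptyset$, which makes condition (i) of Theorem~\ref{theo:symsym} vacuously true.

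Next, I would unpack condition (ii) of Theorem~\ref{theo:symsym}. The set $\Irr(\F)\setminus(\Irr_0(\F)\cup\{t\})$ consists of the $p=t-\lambda$ with $\lambda\neq 0$, for which $p^{\op}=t+\lambda$ and $\F[t]/(p)=\F$. Over an algebraically closed field of characteristic $\neq 2$, every non-degenerate symmetric bilinear form is determined up to equivalence by its dimension (in particular $-B\simeq B$ for every such $B$, and any nonzero scalar multiplication preserves the equivalence class), so the required equivalence of $(b,u)_{p,k}$ with $(-1)^{1+(k-1)\deg p}(b,u)_{p^{\op},k}^{\op}$ reduces to the equality of underlying dimensions, namely $n_{t-\lambda,k}(u)=n_{t+\lambda,k}(u)$. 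Combined with the trivially satisfied identity at $\lambda=0$, this is exactly condition (i) of the present theorem, rephrased as $u$ being similar to $-u$.

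Finally, I would treat condition (iii) of Theorem~\ref{theo:symsym}. Over an algebraically closed field, a non-degenerate symmetric bilinear form has its nonisotropic part of dimension $0$ when its rank is even and of dimension $1$ when its rank is odd, and any rank-$1$ non-degenerate symmetric bilinear form embeds as a subform of every non-degenerate symmetric bilinear form of positive rank (again using $-B\simeq B$). Consequently $(b,u)_{t,2k+1}$ Witt-simplifies $\underset{i>k}{\bot}(b,u)_{t,2i+1}$ if and only if, whenever the total dimension $\sum_{i>k}n_{t,2i+1}(u)$ of that orthogonal sum is odd, one has $n_{t,2k+1}(u)\geq 1$. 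This is precisely condition (ii) of the present theorem, and the three reductions together complete the proof.
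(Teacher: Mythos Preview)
Your proposal is correct and follows essentially the same route as the paper: the paper's discussion immediately preceding the theorem makes exactly these three observations (no even irreducibles, forms over $\F$ classified by rank so condition (ii) of Theorem~\ref{theo:symsym} collapses to equality of Jordan numbers, and the Witt-simplification condition reduces to the parity check on $\sum_{i>k} n_{t,2i+1}(u)$). There is no substantive difference between your argument and the paper's.
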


\begin{theo}\label{theo:symaltalgclosed}
Assume that $\F$ is algebraically closed with $\chi(\F) \neq 2$.
Let $(b,u)$ be a $(1,-1)$-pair. Then $(b,u)$ has the square-zero splitting property
if and only the following conditions hold:
\begin{enumerate}[(i)]
\item For all $\lambda \in \F \setminus \{0\}$ and all $k \geq 1$, the Jordan number $n_{t-\lambda,k}(u)$ is even.
\item For all $k \geq 0$, if $\sum_{i>k} n_{t,2i+1}(u)$ is odd then $n_{t,2k+1}(u)>0$.
\end{enumerate}
\end{theo}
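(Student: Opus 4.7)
The plan is to specialize Theorem \ref{theo:symalt} to the algebraically closed setting and simplify each of its three conditions.

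First observe that since every monic irreducible polynomial over an algebraically closed field is linear, and since no monic polynomial of the form $t-\lambda$ lies in $\F[t^2]$, the set $\Irr_0(\F)$ is empty. Consequently condition (i) of Theorem \ref{theo:symalt} is vacuous, and its condition (ii) reduces to: for every $\lambda \in \F \setminus \{0\}$ and every $k \geq 1$, the Jordan number $n_{t-\lambda,k}(u)$ is even. This is precisely condition (i) of the statement to be proved.

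Next I simplify condition (iii) of Theorem \ref{theo:symalt}. Over an algebraically closed field of characteristic different from $2$, non-degenerate symmetric bilinear forms are classified up to equivalence by their rank alone; in particular multiplying by a nonzero scalar such as $(-1)^i$ does not change the equivalence class, and the nonisotropic part of such a form has dimension $0$ when the rank is even and dimension $1$ (namely $\langle 1 \rangle$) when the rank is odd. A $1$-dimensional nonisotropic form embeds as a subform of $-B$ precisely when $\rk(B) \geq 1$. It follows that a non-degenerate symmetric bilinear form $B$ Witt-simplifies another such form $B'$ if and only if either $\rk(B')$ is even, or $\rk(B')$ is odd and $\rk(B) \geq 1$.

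Applying this with $B := (-1)^k (b,u)_{t,2k+1}$ and $B' := \underset{i>k}{\bot}\, (-1)^i (b,u)_{t,2i+1}$, whose respective ranks are $n_{t,2k+1}(u)$ and $\sum_{i>k} n_{t,2i+1}(u)$, condition (iii) of Theorem \ref{theo:symalt} translates immediately to: for all $k \geq 0$, if $\sum_{i>k} n_{t,2i+1}(u)$ is odd, then $n_{t,2k+1}(u) > 0$. This is condition (ii) of the present theorem. There is no substantial obstacle here beyond this bookkeeping, since the hard work is done in Theorem \ref{theo:symalt}; the argument mirrors exactly the one already given in the paragraph preceding Theorem \ref{theo:symsymalgclosed} for the $(1,1)$-case, with the sign twists $(-1)^i$ becoming inoperative over an algebraically closed field.
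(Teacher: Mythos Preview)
Your proof is correct and follows essentially the same approach as the paper: specialize Theorem~\ref{theo:symalt} to the algebraically closed case, noting that $\Irr_0(\F)$ is empty so condition~(i) is vacuous and condition~(ii) becomes the evenness of Jordan numbers, and then simplify the Witt-simplification condition using that non-degenerate symmetric bilinear forms are classified by rank. The paper presents this very argument in the paragraph immediately preceding Theorem~\ref{theo:symsymalgclosed}, and Theorems~\ref{theo:symsymalgclosed} and~\ref{theo:symaltalgclosed} are both obtained from it.
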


For example, for $n \geq 1$ denote by $B_n$ the pair consisting of the bilinear form on $(\F^n)^2$
and of the endomorphism represented, in the canonical basis,
respectively by
$$K_n:=\begin{bmatrix}
0 & & 1 \\
 & \ddots & \\
1 & & 0
\end{bmatrix} \in \Mat_n(\F) \quad \text{and} \quad J_n:=\begin{bmatrix}
0 & 1 & &  (0) \\
0 & \ddots & \ddots & \\
 & & \ddots & 1\\
(0) & & 0 & 0
\end{bmatrix} \in \Mat_n(\F),$$
so that $(K_n)_{i,j}=1$ if $i+j=n+1$, and $(K_n)_{i,j}=0$ otherwise.
Then $B_n$ is a $(1,1)$-pair with exactly one Jordan cell of order $n$.

Theorem \ref{theo:symsymalgclosed} shows that $B_1 \bot B_3 \bot B_5$ and $B_5 \bot B_7$ have the square-zero splitting property as $(1,1)$-pairs, but $B_3 \bot B_7$ does not because the second condition fails for $k=2$ (the pair $B_3 \bot B_7$ should have at least one Jordan cell for the pair $(t,5)$).

\subsubsection{Real numbers}

Here, the situation is already more complicated. For a non-degenerate symmetric bilinear form
$B$ over the reals, the equivalence class of $B$ is determined by the inertia $(s_+(B),s_-(B))$,
where $s_+(B)$ (respectively, $s_-(B)$) is the greatest dimension for a subspace on which $B$ is positive definite (respectively, negative definite). In that case the Witt index of $B$ equals
$\min(s_+(B),s_-(B))$, and its nonisotropic parts have inertia $(|s_+(B)-s_-(B)|,0)$ if $s_+(B) \geq 0$, and
$(0,|s_+(B)-s_-(B)|)$ otherwise.

Now, for any irreducible polynomial $p$ of $\R[t]$ with degree more than $1$,
we actually have $\deg(p)=2$ and the field $\R[t]/(p)$ is isomorphic to $\C$.
Let $p \in \Irr_0(\R)$, i.e.\ $p(t)=t^2+a$ for some positive real number $a$.
Since every non-zero complex number has a square root, it follows that every non-degenerate symmetric bilinear form over $\R[t]/(p)$ is skew-representable (the associated quadratic form takes all possible nonzero values in $\R[t]/(p)$ if its rank is nonzero).
Hence in Theorem \ref{theo:symsym}, condition (i) is automatically satisfied.
For condition (ii), we need to split the discussion, whether the irreducible $p \in \Irr(\R) \setminus \Irr_0(\R)$
has degree $1$ or $2$:
\begin{itemize}
\item If $p$ has degree 2 then, given an integer $r \geq 1$,
the quadratic invariants $(b,u)_{p,r}$ and $(-1)^{1+(r-1)\deg p}(b,u)_{p^{\op},r}^\op$ are equivalent over the field $\R[t]/(p) \simeq \C$
if and only if they have the same rank, i.e.\ $n_{p,r}(u)=n_{p^\op,r}(u)$.
\item If $p$ has degree $1$ then the fields $\R[t]/(p)$ and $\R[t]/(p^\op)$ are naturally isomorphic to $\R$,
the invariants $(b,u)_{p,r}$ and $(b,u)_{p^\op,r}$ are then naturally seen as real (non-degenerate) symmetric bilinear forms, and the condition that
$(b,u)_{p,r}$ and $(-1)^{1+(r-1)\deg p}(b,u)_{p^{\op},r}^\op$ are equivalent over the field $\R[t]/(p)$ means that
$(b,u)_{p,r}$ has the same inertia as $(-1)^r (b,u)_{p^\op,r}$.
\end{itemize}

Then, in Theorem \ref{theo:symalt}, conditions (i) and (ii) can be summed up in the fact that the Jordan number
$n_{p,r}(u)$ is even for all $p \in \Irr(\R) \setminus \{t\}$ and all $r \geq 1$.

Finally, condition (ii) in Theorem \ref{theo:symformnilpotent} can be translated into conditions on inertia, as follows:
\begin{itemize}
\item The case where $\eta=1$. For all $k \geq 0$,
if
$$\sum_{i=k+1}^{+\infty} s_+((b,u)_{t,2i+1}) \geq \sum_{i=k+1}^{+\infty} s_-((b,u)_{t,2i+1}),$$
then
$$s_-((b,u)_{t,2k+1}) \geq \sum_{i=k+1}^{+\infty} \Bigl[s_+((b,u)_{t,2i+1})-s_-((b,u)_{t,2i+1})\Bigr],$$
otherwise
$$s_+((b,u)_{t,2k+1}) \geq \sum_{i=k+1}^{+\infty} \Bigl[s_-((b,u)_{t,2i+1})-s_+((b,u)_{t,2i+1})\Bigr].$$

\item The case where $\eta=-1$. For all $k \geq 0$,
if
$$\sum_{i=1}^{+\infty} (-1)^i s_+((b,u)_{t,2k+2i+1})\geq \sum_{i=1}^{+\infty} (-1)^i s_-((b,u)_{t,2k+2i+1})$$
then
$$s_-((b,u)_{t,2k+1}) \geq \sum_{i=1}^{+\infty} (-1)^i \Bigl[s_+((b,u)_{t,2k+2i+1}) - s_-((b,u)_{t,2k+2i+1})\Bigr],$$
otherwise
$$s_+((b,u)_{t,2k+1}) \geq \sum_{i=1}^{+\infty} (-1)^i \Bigl[s_-((b,u)_{t,2k+2i+1}) - s_+((b,u)_{t,2k+2i+1})\Bigr].$$
\end{itemize}

\subsubsection{Finite fields}

Here, we assume that $\F$ is finite with $\charac(\F) \neq 2$. Then $\F^*/(\F^*)^{[2]}$ is a group of order $2$
(where $(\F^*)^{[2]}$ denotes the set of all squares in $\F^*$).
Given a non-degenerate symmetric bilinear form $b$ on an $n$-dimensional vector space over a finite field $\L$ with characteristic different from $2$, we denote by $\det_\L(b)$ the class in $\L^*/(\L^*)^{[2]}$ of $\det M$, where $M$ is an arbitrary matrix that represents $b$,
and we denote by $\delta_\L(b):=(-1)^{n(n-1)/2} \det_\L(b)$ the \emph{discriminant} of $b$ (using discriminants rather than determinants is a good idea because
the discriminant is unchanged when taking the orthogonal sum of $b$ with a hyperbolic form).
It is known that two non-degenerate symmetric bilinear forms on $\L$ are equivalent if and only if they have the same discriminant
and their underlying spaces have the same dimension (i.e.\ they have the same rank).

In contrast, over a finite field equipped with a non-identity involution, even possibly of characteristic $2$
(i.e.\ a field of cardinality $p^{2d}$ for some prime number $p$ and some integer $d \geq 1$),
non-degenerate Hermitian forms are classified solely by their rank (because in such a field $\F$, when we set $\K:=\{x \in \F : \; x^\bullet=x\}$,
the norm $x \mapsto xx^\bullet$ induces a surjection from $\F^*$ to $\K^*$). From there, in Theorem \ref{theo:symalt}, the combination of conditions (i) and (ii) is equivalent to having $n_{p,k}(u)$ even for all $p \in \Irr(\F) \setminus \{t\}$ and all $k \geq 1$.

Now, let $p \in \Irr_0(\F)$. Set $\K:=\{q \in \F[t]/(p) : \; q^\bullet=q\}$, where $q \mapsto q^\bullet$ is the
involution that takes the class of $t$ to the one of $-t$.
Then every element of $\K$ has a square-root in the field $\L:=\F[t]/(p)$, and so either all the elements of
$\{q \in \F[t]/(p) : \; q^\bullet=-q\}$ are squares in $\L$, or none is a square in $\L$.
Set $s:=|\F|$ and $d:=\frac{1}{2}\,\deg p$. Let $x$ be a generator of the cyclic group $(\L^*,\times)$.
Then, for all $k \geq 0$, the condition $(x^k)^\bullet=-x^k$ is equivalent to $(x^k)^{s^d}=-x^k$ and hence to $x^{k(s^d-1)}=-1$
i.e.\ to $x^{k(s^d-1)}=x^{\frac{s^{2d}-1}{2}}$, and we deduce that for $k:=\frac{s^d+1}{2}$ the element $x^k$
is skew-Hermitian. Moreover, this very element has a square-root in $\L$ if and only if $s^d \equiv -1\; [4]$,
i.e.\ $s \equiv  -1\; [4]$ and $d$ is odd.
Hence:
\begin{itemize}
\item If $s \equiv 1 \; [4]$ or $\deg p$ is a multiple of $4$, then a non-degenerate quadratic form of rank $r$ over $\L$
is skew-representable if and only if its determinant equals $[\gamma]^r$, where $\gamma$ is an arbitrary non-square in $\L$.

\item If $s \equiv -1 \; [4]$ and $\deg p$ is not a multiple of $4$, then a quadratic form over $\L$
is skew-representable if and only if its determinant equals $1$.

\end{itemize}
Besides, condition (ii) in Theorem \ref{theo:symsym} can be rephrased as saying that $u$ is similar to $-u$ and that
$\delta_\L((b,u)_{p,k})=(-1)^{(1+(k-1)\deg p)n_{p,k}(u)}\bigl(\delta_{\L^\op}((b,u)_{p^{\op},k})\bigr)^\op$ for all $p \in \Irr(\F) \setminus \Irr_0(\F)$
and all $k \geq 1$ (where $\L=\F[t]/(p)$).

Finally, condition (ii) in Theorem \ref{theo:symformnilpotent} can be rephrased, for all $k \geq 1$, as follows:
\begin{itemize}
\item If $\underset{i=k+1}{\overset{+\infty}{\sum}} n_{t,2i+1}(u)$ is odd, then
either $n_{t,2k+1}(u)>1$, or $n_{t,2k+1}(u)=1$ and $\delta((b,u)_{t,2k+1}) =-\delta\left(\underset{i>k}{\bot}\eta^{i-k} (b,u)_{t,2i+1}\right)$;
\item If $\underset{i=k+1}{\overset{+\infty}{\sum}} n_{t,2i+1}(u)$ is even and
$\delta\left(\underset{i>k}{\bot}\eta^{i-k} (b,u)_{t,2i+1}\right) \neq 1$, then
either $n_{t,2k+1}(u)>2$, or $n_{t,2k+1}(u)=2$ and $\delta((b,u)_{t,2k+1}) \neq 1$.
\end{itemize}

\subsection{Structure of the article}

The remainder of the article is structured as follows.
In Section \ref{section:construction}, we develop two main ways to construct interesting selfadjoint/skew-selfadjoint endomorphisms.
The simpler is the first, which we call the hyperbolic/symplectic extensions of an endomorphism, and it turns out that this method, combined
with the characterization of sums of two square-zero endomorphisms, is just enough to obtain Theorem \ref{theo:altformalt}.
For the other theorems, a more subtle construction is required: the boxed sums of a pair of symmetric/alternating bilinear forms on the same vector space. It turns out (Proposition \ref{prop:caracboxedsum}) that pairs $(b,u)$ with the square-zero splitting property in which $u$ is bijective are systematically isometric to boxed sums. And conversely, boxed sums satisfy the square-zero splitting property. This pushes us to engage in a thorough study of boxed sums and their quadratic/Hermitian invariants (Section \ref{section:invariantsboxedsum}).

Using these two constructions, we complete the study for symplectic forms (i.e.\ $(-1,\eta)$-pairs) in Section \ref{section:symplectic}, and the study of automorphisms for symmetric bilinear forms
in Section \ref{section:symmetricautomorphism}.

However, for symmetric forms the case of nilpotent endomorphisms is difficult, and hyperbolic/symplectic extensions and boxed sums are not sufficient to completely account for that case.
The difficult case of $(1,\eta)$-pairs $(b,u)$ in which $u$ is nilpotent is dealt with in Section \ref{section:nilpotent}.

The last section is devoted to the solution of the Hermitian version of the problem, in which we consider pairs
$(h,u)$ where $h$ is a non-degenerate Hermitian form and $u$ is an $h$-selfadjoint endomorphism.

\section{Two key constructions}\label{section:construction}

\subsection{Review of duality theory}\label{section:duality}

Here we recall some basic notation and facts on dual spaces. Let $V$ be a finite-dimensional vector space
over $\F$. We denote by $V^\star:=\Hom(V,\K)$ its dual space. It has the same dimension as $V$.
Given a linear subspace $W$ of $V$, we denote by $W^o:=\{\varphi \in V^\star : \; \forall x \in W, \; \varphi(x)=0\}$
its orthogonal in $V^\star$. Given a linear subspace $H$ of $V^\star$, we denote by ${}^o H:=\{x \in V : \; \forall \varphi \in H^\star, \; \varphi(x)=0\}$
its pre-orthogonal in $V$. One finds that $\dim W+\dim W^o=\dim V=\dim H+\dim {}^o H$, as well as the double-dual relations
${}^o(W^o)=W$ and $({}^o H)^o=H$.

Given an endomorphism $u$ of $V$, the transpose of $u$ is defined as
$$u^t : \varphi \in V^\star \mapsto \varphi \circ u \in V^\star.$$
A classical consequence of the Frobenius normal form is that $u^t$ is similar to $u$.

\subsection{Hyperbolic and symplectic extensions of an endomorphism}\label{expansionsection}

Let $V$ be a finite-dimensional vector space, and let $\varepsilon \in \{-1,1\}$.
On the product space $V \times V^\star$, we consider the bilinear form
$$H_V^\varepsilon : \begin{cases}
(V \times V^\star)^2 & \longrightarrow \F \\
((x,\varphi),(y,\psi)) & \longmapsto \varphi(y)+\varepsilon \psi(x).
\end{cases}$$
Note that $H_V^1$ is a non-degenerate hyperbolic symmetric bilinear form, and it can be seen as the
polar form\footnote{The polar form of a quadratic form $Q$ is defined as $(x,y) \mapsto Q(x+y)-Q(x)-Q(y)$. We avoid dividing by $2$ to take
fields of characteristic $2$ into account.} of the canonical hyperbolic quadratic form.
In contrast, $H_V^{-1}$ is a symplectic form.

Let $u \in \End(V)$ and $\eta \in \{1,-1\}$.
We consider the endomorphism
$$h_\eta(u) : \begin{cases}
V \times V^\star & \longrightarrow V \times V^\star \\
(x,\varphi) & \longmapsto (u(x),\eta\, u^t(\varphi)).
\end{cases}$$
Note that for all $(x,\varphi)$ and $(y,\psi)$ in $V \times V^\star$,
\begin{align*}
H_V^\varepsilon\bigl((x,\varphi),h_\eta(u)[y,\psi]\bigr) & =\varphi(u(y))+\varepsilon\, \eta \,u^t(\psi)(x) \\
& =\varepsilon\, \eta\, \psi(u(x))+u^t(\varphi)[y] \\
& =\eta\, H_V^\varepsilon(h_\eta(u)[x,\varphi],(y,\psi)).
\end{align*}
In particular, if $\varepsilon\eta=-1$ then $h_\eta$ is $H_V^\varepsilon$-alternating.
We conclude that $(H_V^\varepsilon,h_\eta(u))$ is an $(\varepsilon,\eta)$-pair.

Moreover, the mapping $h_\eta : \End(V) \rightarrow \End(V \times V^\star)$ is linear and takes any square-zero
operator to a square-zero operator. Consequently, we have the following result:

\begin{prop}\label{remark:expansionsquarezero}
Let $u \in \End(V)$ be the sum of two square-zero endomorphisms of $V$.
Then $(H_V^\varepsilon,h_\eta(u))$ has the square-zero splitting property for all $(\varepsilon,\eta) \in \{-1,1\}^2$.
\end{prop}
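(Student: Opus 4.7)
The plan is to assemble the proof directly from the three properties of the construction $h_\eta$ that were established, or are immediate, in the paragraphs preceding the proposition. No obstacle is expected: the construction is essentially designed so that this preservation result follows formally.

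First, I would write $u = a_1 + a_2$ with $a_1^2 = a_2^2 = 0$, which is available by hypothesis. Next, I would observe that the map $h_\eta : \End(V) \to \End(V \times V^\star)$ is $\F$-linear, since both $v \mapsto v$ on the first coordinate and $v \mapsto \eta\, v^t$ on the second coordinate are linear in $v$. Applying $h_\eta$ to the equality $u = a_1 + a_2$ therefore yields $h_\eta(u) = h_\eta(a_1) + h_\eta(a_2)$.

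Then I would verify that each $h_\eta(a_i)$ has square zero. This is a one-line calculation: for any $a \in \End(V)$ and any $(x,\varphi) \in V \times V^\star$,
\[
h_\eta(a)^2[x,\varphi] = h_\eta(a)\bigl(a(x), \eta\, a^t(\varphi)\bigr) = \bigl(a^2(x),\, \eta^2 (a^t)^2(\varphi)\bigr) = \bigl(a^2(x),\, (a^2)^t(\varphi)\bigr),
\]
which vanishes whenever $a^2 = 0$. Applied to $a = a_1$ and $a = a_2$, this gives $h_\eta(a_1)^2 = h_\eta(a_2)^2 = 0$.

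Finally, I would invoke the computation performed just before the proposition, which shows that for \emph{any} $v \in \End(V)$ the pair $(H_V^\varepsilon, h_\eta(v))$ is an $(\varepsilon,\eta)$-pair; this is a general property of the construction and does not use any special feature of $v$. Applying it to $v = a_1$ and $v = a_2$, we conclude that $(H_V^\varepsilon, h_\eta(a_1))$ and $(H_V^\varepsilon, h_\eta(a_2))$ are $(\varepsilon,\eta)$-pairs. Combined with the decomposition $h_\eta(u) = h_\eta(a_1) + h_\eta(a_2)$ and the vanishing of the squares, this exhibits the required square-zero splitting of $(H_V^\varepsilon, h_\eta(u))$.
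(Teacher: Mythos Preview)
Your proof is correct and follows exactly the approach the paper uses: the paper simply notes that $h_\eta$ is linear and sends square-zero operators to square-zero operators, and combines this with the earlier verification that $(H_V^\varepsilon,h_\eta(v))$ is an $(\varepsilon,\eta)$-pair for every $v$. Your write-up merely spells out these steps in slightly more detail.
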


The similarity class of the endomorphism $h_\eta(u)$ is easily deduced from the one of $u$ because $u^t$ is similar to $u$.
In particular, if the invariant factors of $u$ are $p_1,\dots,p_r$, then the invariants factors of $h_1(u)$ are $p_1,p_1,\dots,p_r,p_r$.
For $h_{-1}(u)$, the invariant factors are harder to compute and we prefer to think in terms of Jordan numbers:
simply $n_{p,k}(h_{-1}(u))=n_{p,k}(u) +n_{p^\op,k}(u)$ for all $p \in \Irr(\F)$ and all $k \geq 1$.

We shall further simplify the notation as follows:

\begin{Not}
For $(\varepsilon,\eta) \in \{-1,1\}^2$ and an endomorphism $u$ of a vector space $V$, we set
$$H_{\varepsilon,\eta}(u):=(H_V^\varepsilon,h_\eta(u)).$$
\end{Not}

Next, we examine the behavior of the pair $H_{\varepsilon,\eta}(u)$ with respect to similarity and direct sums.
Let $v \in \End(V)$ and $w \in \End(W)$ be similar endomorphisms, with an isomorphism $\varphi : V \overset{\simeq}{\rightarrow} W$
such that $w=\varphi \circ u \circ \varphi^{-1}$. Then, for the isomorphism $\Phi:=\varphi \oplus (\varphi^{-1})^t$
from $V \times V^\star$ to $W \times W^\star$, we have, for all $(x,f)$ and $(y,g)$ in $V \times V^\star$,
\begin{align*}
H_W^\varepsilon(\Phi(x,f),\Phi(y,g)) & =H_W^\varepsilon\bigl((\varphi(x),f \circ \varphi^{-1}),(\varphi(y),g \circ \varphi^{-1})\bigr) \\
& =(f \circ \varphi^{-1})(\varphi(y))+\varepsilon (g \circ \varphi^{-1})(\varphi(x)) \\
& =f(y)+\varepsilon g(x)=H_V^\varepsilon\bigl((x,f),(y,g)\bigr).
\end{align*}
Moreover, for all $(x,f)\in V \times V^\star$, we have
\begin{align*}
h_\eta(w)[\Phi(x,f)]
& =\bigl((\varphi \circ v \circ \varphi^{-1})[\varphi(x)],(\eta\,f \circ \varphi^{-1}) \circ (\varphi \circ v \circ \varphi^{-1})]\bigr) \\
& =(\varphi(v(x)),\eta\,f \circ v\circ \varphi^{-1})=\Phi(h_\eta(v)[x,f]),
\end{align*}
leading to $h_\eta(w)=\Phi \circ h_\eta(v) \circ \Phi^{-1}$.
Hence, we have proved that
$$H_{\varepsilon,\eta}(v) \simeq H_{\varepsilon,\eta}(w).$$
In other words, the isometry ``class" of the pair $H_{\varepsilon,\eta}(u)$ depends only on the pair
$(\varepsilon,\eta)$ and on the similarity ``class" of $u$.

Finally, we examine the behavior of $H_{\varepsilon,\eta}(u)$ with respect to direct sums.
So let $u_1 \in \End(V_1)$ and $u_2 \in \End(V_2)$ be endomorphisms.
We consider the direct sum $u:=u_1 \oplus u_2 \in \End(V_1 \times V_2)$.
We shall prove that
$$H_{\varepsilon,\eta}(u_1 \oplus u_2) \simeq H_{\varepsilon,\eta}(u_1) \bot H_{\varepsilon,\eta}(u_2).$$
We introduce the canonical injections $i_1 : V_1 \hookrightarrow V_1 \times V_2$ and $i_2 : V_2 \hookrightarrow V_1 \times V_2$,
and we consider the isomorphism
$$\Psi : \begin{cases}
(V_1 \times V_2) \times (V_1 \times V_2)^\star & \longrightarrow (V_1 \times V_1^\star) \times (V_2 \times V_2^\star) \\
((x_1,x_2),\varphi) & \longmapsto \bigl((x_1,\varphi \circ i_1),(x_2,\varphi \circ i_2)\bigr).
\end{cases}$$
Set $G:=H_{V_1}^{\varepsilon} \bot H_{V_2}^{\varepsilon}$.
For all $(x_1,x_2)$ and $(y_1,y_2)$ in $V_1 \times V_2$ and all $\varphi,\psi$ in $(V_1 \times V_2)^\star$, we see that
\begin{align*}
G(\Psi((x_1,x_2),\varphi),\Psi((y_1,y_2),\psi))
& =\varphi(y_1,0)+\varepsilon \psi(x_1,0)+\varphi(0,y_2)+\varepsilon \psi(0,x_2) \\
& =\varphi(y_1,y_2)+\varepsilon \psi(x_1,x_2)
\end{align*}
and hence $\Psi$ is an isometry from $H_{V_1 \times V_2}^{\varepsilon}$
to $H_{V_1}^{\varepsilon} \bot H_{V_2}^{\varepsilon}$.
Finally, for all $(x_1,x_2) \in V_1 \times V_2$ and all $\varphi \in (V_1 \times V_2)^\star$, we have
$$(h_\eta(u_1) \oplus h_\eta(u_2))\bigl(\Psi((x_1,x_2),\varphi)\bigr)
=\Bigl(\bigl(u_1(x_1),\eta\,\varphi \circ i_1 \circ u_1\bigr),\bigl(u_2(x_2),\eta\,\varphi \circ i_2 \circ u_2\bigr)\Bigr)$$
whereas
$$h_\eta(u_1\oplus u_2)[(x_1,x_2),\varphi]=
\bigl((u_1(x_1),u_2(x_2)),\eta\,\varphi \circ (u_1 \oplus u_2)\bigr)$$
so that
\begin{align*}
\Psi(h_\eta(u_1\oplus u_2)[(x_1,x_2),\varphi])
& =\bigl((u_1(x_1),\eta\,\varphi \circ (u_1 \oplus u_2) \circ i_1),(u_2(x_2),\eta\,\varphi \circ (u_1 \oplus u_2) \circ i_2)\bigr) \\
& =\bigl((u_1(x_1),\eta\,\varphi \circ i_1 \circ u_1),(u_2(x_2),\eta\,\varphi \circ i_2 \circ u_2)\bigr).
\end{align*}
Hence, we have proved that $h_\eta(u_1) \oplus h_\eta(u_2)=\Psi \circ h_\eta(u_1\oplus u_2) \circ \Psi^{-1}$,
which yields the claimed isometry.

We are now ready to investigate the invariants of the pair $H_{\varepsilon,\eta}(u)$.

\begin{prop}\label{prop:hyperbolicexpansion}
Let $u \in \End(V)$ and $(\varepsilon,\eta) \in \{-1,1\}^2$.
Then the quadratic/Hermitian invariants of $H_{\varepsilon,\eta}(u)$ are all hyperbolic.
\end{prop}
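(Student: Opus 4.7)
The plan is to exploit the obvious structure of $V \times V^\star$: the decomposition
\[
V \times V^\star = (V \times \{0\}) \oplus (\{0\} \times V^\star)
\]
is $h_\eta(u)$-stable, with $h_\eta(u)$ acting as $u$ on the first summand and as $\eta\, u^t$ on the second, and the defining formula $H_V^\varepsilon((x,\varphi),(y,\psi)) = \varphi(y) + \varepsilon \psi(x)$ shows that both summands are totally isotropic for $H_V^\varepsilon$. I would propagate this decomposition into each subquotient $V_{p,k}$ defining an invariant, and exhibit a pair of complementary Lagrangians in each.

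First, fix $p \in \Irr(\F)$ and $k \geq 1$. Since $h_\eta(u)$ preserves each product factor, so does $p(h_\eta(u))$, and one readily checks
\[
\Ker p(h_\eta(u))^k = \bigl(\Ker p(u)^k \times \{0\}\bigr) \oplus \bigl(\{0\} \times \Ker p(\eta\,u^t)^k\bigr),
\]
with analogous decompositions for $\Ker p(h_\eta(u))^{k-1}$ and for $\Ker p(h_\eta(u))^k \cap \im p(h_\eta(u))$. Passing to the subquotient yields $V_{p,k} = \overline{A} \oplus \overline{B}$, where $\overline{A}$ comes from the $V \times \{0\}$ part and $\overline{B}$ from the $\{0\} \times V^\star$ part. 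The $h_\eta(u)$-stability of each summand guarantees that both $\overline{A}$ and $\overline{B}$ are $\L$-submodules of $V_{p,k}$, with $\L := \F[t]/(p)$.

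The crucial step is to observe that the $\F$-bilinear form $B(X,Y) := H_V^\varepsilon(X, p(h_\eta(u))^{k-1}(Y))$ inducing the invariant vanishes on $\overline{A} \times \overline{A}$ and on $\overline{B} \times \overline{B}$: for $X,Y$ lifted into $V \times \{0\}$, the vector $p(h_\eta(u))^{k-1}(Y)$ still lies in $V \times \{0\}$, and then $H_V^\varepsilon(X, \cdot)$ vanishes on it by total isotropy; and symmetrically on the other side. Passing to the $\L$-extension $B^{\L}$ via the defining relation $B(x,\lambda y) = e_p(\lambda\, B^{\L}(x,y))$, or via its Hermitian analogue when $p \in \Irr_0(\F)$ and $\eta = -1$, the vanishing of $B(x,\lambda y)$ for every $\lambda \in \L$ and every $x,y \in \overline{A}$ forces $B^{\L}(x,y) = 0$ on $\overline{A} \times \overline{A}$, and similarly on $\overline{B} \times \overline{B}$.

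Since $B^{\L}$ is non-degenerate and $V_{p,k} = \overline{A} \oplus \overline{B}$ with both summands totally $B^{\L}$-isotropic, $\overline{A}$ and $\overline{B}$ must be complementary Lagrangians of equal $\L$-dimension, so $B^{\L}$ is hyperbolic. The argument then applies uniformly to every case: the quadratic invariants when $\eta = 1$, the Hermitian or skew-Hermitian invariants for $p \in \Irr_0(\F)$ when $\eta = -1$, and the symmetric quadratic invariants $(b,u)_{t,k}$ arising in the $(\varepsilon,-1)$-pair setting. I expect the only genuine obstacle to be bookkeeping: one must check carefully that the decomposition $V_{p,k} = \overline{A} \oplus \overline{B}$ is compatible with the $\L$-structure used in each invariant, which in the Hermitian case requires verifying that $\L$-stability in the sense of the module action is unaffected by the presence of the involution $\bullet$. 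This is immediate from the stability of both product factors under $h_\eta(u)$, but deserves an explicit mention.
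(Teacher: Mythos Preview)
Your proof is correct and follows a genuinely different route from the paper's.

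The paper's argument first reduces, via the primary canonical form and compatibility with direct sums, to the case where $u$ is cyclic with minimal polynomial $p^r$. In that special case the only potentially nonzero invariant is a $2$-dimensional form over $\L=\F[t]/(p)$, and the paper exhibits a single nonzero isotropic vector --- the class of $(x,0)$ for some $x\in V\setminus\im p(u)$ --- to conclude hyperbolicity. This requires a separate case discussion according to the value of $(\varepsilon,\eta)$ and whether $p=t$ or $p\in\Irr_0(\F)$.

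Your approach bypasses the reduction entirely: the product decomposition $V\times V^\star=(V\times\{0\})\oplus(\{0\}\times V^\star)$ into $h_\eta(u)$-stable, $H_V^\varepsilon$-totally-isotropic summands descends to every subquotient $V_{p,k}$, giving a pair of complementary $\L$-Lagrangians $\overline{A}\oplus\overline{B}$. Non-degeneracy then forces $\dim_\L\overline{A}=\dim_\L\overline{B}$ and hyperbolicity follows. This handles all quadratic and Hermitian invariants at once, uniformly in $(\varepsilon,\eta)$ and in the characteristic (wherever the invariants are defined). The only point worth making explicit is that, in the cases where invariants are actually attached (namely $\eta=1$ with arbitrary $p$, or $\eta=-1$ with $p=t$ or $p\in\Irr_0(\F)$), one has $p^{\op}=p$, so $n_{p,k}(\eta\,u^t)=n_{p,k}(u)$ and the two summands do have equal $\L$-dimension a priori; but as you observe, this equality is in fact forced by the form being non-degenerate, so the check is redundant.

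In short: the paper's proof is more hands-on and localised to a $2$-dimensional computation, while yours is more structural and avoids any case split. Both rest on the same underlying observation that $V\times\{0\}$ is simultaneously $h_\eta(u)$-stable and $H_V^\varepsilon$-isotropic.
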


\begin{proof}
By the primary canonical form, we can split $u \simeq u_1 \oplus \cdots \oplus u_r$ where every $u_i$  is cyclic with
minimal polynomial a power of some monic irreducible polynomial.
Using the above two principles, we deduce that
$$H_{\varepsilon,\eta}(u) \simeq H_{\varepsilon,\eta}(u_1) \bot \cdots \bot H_{\varepsilon,\eta}(u_r).$$
Remember that the quadratic/Hermitian invariants are turned to equivalent invariants if we replace a pair with an isometric pair,
and that they are compatible with orthogonal sums. Remember finally that the orthogonal sum of two hyperbolic quadratic or Hermitian forms
is hyperbolic.

Hence, in order to prove the result it suffices to consider the case where $u$ is cyclic and its minimal polynomial equals $p^r$
for some $p \in \Irr(\F)$ and some integer $r \geq 1$.

We start by proving the result for the quadratic invariants of $f:=H_{1,1}(u)$.
First of all, since the invariant factors of $h_1(u)$ are $p^r,p^r$, all the quadratic invariants of $f$ vanish
with the exception of $f_{p,r}$, which is a $2$-dimensional non-degenerate symmetric bilinear form over the field $\L:=\F[t]/(p)$.
To see that $f_{p,r}$ is hyperbolic, it suffices to exhibit a non-zero isotropic vector for this form.
Note that $q(h_1(u))=h_1(q(u))$ for every $q \in \F[t]$.
For an element $q \in \F[t]$, we denote its class modulo $p$ by $\overline{q}$.
We equip $V \times V^\star$ with the structure of $\F[t]$-module induced by $h_1(u)$.

Here, we have $\Ker \bigl(p(h_1(u))^{r-1}\bigr)=\im p(h_1(u))$ and $V \times V^\star=\Ker p(h_1(u))^r=\Ker p(h_1(u))^{r+1}$, so the
starting space of $f_{p,r}$ is the quotient space $(V \times V^\star)/\im p(h_1(u))$.
Obviously $\im p(h_1(u))=\im p(u) \times \im p(u)^t$.
For all $x,y$ in $V$, the vectors $(x,0)$ and $(y,0)$ are orthogonal for the
bilinear form
$$B: (X,Y) \mapsto H_V^1\bigl(X,h_1(u)^{r-1}(Y)\bigr)$$
on $V \times V^\star$.
Let us choose $x \in V \setminus \im p(u)$. Then for all $q(t) \in \F[t]$ the vector
$q(t)\,(x,0)=(q(u)[x],0)$ is $B$-orthogonal to $(x,0)$.
Now, denote by $X$ the class of $(x,0)$ modulo $\im p(h_1(u))$. Denote
by $\overline{B}$ the non-degenerate bilinear form induced by $B$ on the quotient $\F[t]$-module $(V \times V^\star)/\im p(h_1(u))$.
Then $q(t).X$ is $\overline{B}$-orthogonal to $X$ for all $q(t)\in \F[t]$.
Considering $(V \times V^\star)/\im p(h_1(u))$ with its structure of vector space over $\L:=\F[t]/(p)$,
this means that $\overline{B}(X,\lambda\,X)=0$ for all $\lambda \in \F$.
In other words, for all $\lambda \in \F$,
$$0=\overline{B}(X,\lambda\, X)=e_p\bigl(\lambda f_{p,r}(X,X)\bigr),$$
which yields $f_{p,r}(X,X)=0$. Hence $f_{p,r}$ is isotropic, and we conclude that it is hyperbolic.

There are no quadratic invariants for $H_{-1,1}(u)$, so we turn to $g:=H_{\varepsilon,-1}(u)$ for a given $\varepsilon \in \{-1,1\}$.
For the quadratic invariants, we note that if $p=t$ then $h_{-1}(u)$ is nilpotent with exactly two Jordan cells,
both of size $r$. Thus, all the quadratic invariants of $g$ vanish if $r = \frac{1-\varepsilon}{2}$ mod 2,
otherwise the sole quadratic invariant of $g$ is $g_{t,r}$, which is a $2$-dimensional non-degenerate symmetric bilinear form:
the proof that $g_{t,r}$ is hyperbolic is then entirely similar to the previous proof.

Finally, if $p$ is an even polynomial then one checks that $h_{-1}(u)$ has exactly two invariant factors, namely $p^r$ and $p^r$. Again, from there the proof that the $g_{p,r}$ invariants are all hyperbolic is similar to the previous proof.
\end{proof}

We can immediately give two applications of that construction for our problem:

\begin{cor}\label{extensionhyperboliccor1}
Let $\eta \in \{-1,1\}$.
Let $(b,u)$ be a $(1,\eta)$-pair in which $u$ is nilpotent.
Assume that all the quadratic invariants of $(b,u)$ are hyperbolic.
Then $(b,u)$ has the square-zero splitting property.
\end{cor}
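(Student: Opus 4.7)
My plan is to realize $(b,u)$ as a hyperbolic/symplectic extension $H_{1,\eta}(v)$ of a nilpotent endomorphism $v$ acting on a space of half the dimension, and then invoke the Wang--Wu--Botha theorem (Theorem \ref{theoBotha}) to split $v$ into two square-zero endomorphisms.

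First I would observe that the hypothesis forces every Jordan number $n_{t,k}(u)$ to be even. Indeed, when $(b,u)_{t,k}$ is symmetric, its being hyperbolic forces its rank $n_{t,k}(u)$ to be even; in the remaining situation (namely $\eta=-1$ with $k$ even, where $(b,u)_{t,k}$ is alternating) the rank is automatically even. So in all cases $n_{t,k}(u)/2$ is a nonnegative integer.

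Next, I would construct $v$ explicitly as a nilpotent endomorphism of some $\F$-vector space $W$ with Jordan numbers $n_{t,k}(v):=n_{t,k}(u)/2$ for all $k \geq 1$, for instance as a direct sum of Jordan cells, with $n_{t,k}(u)/2$ cells of size $k$ for each $k$. I would then form the $(1,\eta)$-pair $H_{1,\eta}(v)=(H_W^1,h_\eta(v))$. Using the identities $n_{p,k}(h_1(v))=2\,n_{p,k}(v)$ and $n_{p,k}(h_{-1}(v))=n_{p,k}(v)+n_{p^{\op},k}(v)$ (together with the fact that $t^{\op}=t$), we see that $h_\eta(v)$ is nilpotent and that $n_{t,k}(h_\eta(v))=n_{t,k}(u)$ for every $k \geq 1$. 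In particular $\dim(W \times W^{\star})=\dim V$.

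The main step is then to check that $(b,u) \simeq H_{1,\eta}(v)$ via the classification of $(1,\eta)$-pairs. Since both endomorphisms are nilpotent, only the invariants attached to $p=t$ matter: the Jordan numbers at other irreducibles are zero on both sides, and there are no Hermitian invariants to compare (those live at $p \in \Irr_0(\F)$). For every $k$ for which $(b,u)_{t,k}$ is symmetric, it is hyperbolic by hypothesis, and $H_{1,\eta}(v)_{t,k}$ is hyperbolic by Proposition \ref{prop:hyperbolicexpansion}; since both have the same rank $n_{t,k}(u)$, they are equivalent. For the alternating invariants (which occur only when $\eta=-1$ and $k$ is even), equivalence reduces to equality of ranks, which again holds.

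Finally, since $v$ is nilpotent, Theorem \ref{theoBotha} gives $v=a_1+a_2$ with $a_1^2=a_2^2=0$. Proposition \ref{remark:expansionsquarezero} then shows that $H_{1,\eta}(v)$ has the square-zero splitting property, and this property transfers to $(b,u)$ through any isometry. The delicate point in this plan is the invariant-matching step: one has to be sure that over the residue field $\F$ the quadratic invariants at $t$, together with the ranks of the alternating invariants in the $\eta=-1$ case, form a complete set of invariants for nilpotent $(1,\eta)$-pairs. This is exactly what the classification theorems in Section \ref{section:invariants} provide (in the $\eta=-1$ case under the standing assumption $\charac(\F) \neq 2$, which is the only setting where the hypothesis on ``quadratic invariants" is meaningful for a $(1,-1)$-pair).
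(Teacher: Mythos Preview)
Your proposal is correct and follows essentially the same approach as the paper's own proof: halve the Jordan numbers to build a nilpotent $v$, form $H_{1,\eta}(v)$, match the quadratic invariants (hyperbolic on both sides, hence determined by rank) to obtain the isometry, and conclude via Theorem~\ref{theoBotha} and Proposition~\ref{remark:expansionsquarezero}. Your added remarks on the Jordan-number formulas for $h_\eta(v)$ and on the $\charac(\F)\neq 2$ assumption in the $\eta=-1$ case are accurate and align with how the paper uses the classification theorems.
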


\begin{proof}
Since all the quadratic invariants of $(b,u)$ are hyperbolic, all the corresponding Jordan numbers are even.
Moreover, if $\eta=-1$ it is known that all the remaining Jordan numbers are even (because they are dimensions of spaces equipped with symplectic forms). Hence, all the Jordan numbers of $u$ are even.

Thus, we can consider a vector space $V$ together with a nilpotent endomorphism $v \in \End(V)$
such that $n_{t,k}(v)=\frac{1}{2} n_{t,k}(u)$ for every integer $k \geq 1$.
Then $h_\eta(v)$ is nilpotent and has the same Jordan numbers as $u$.
Moreover, all the quadratic invariants of $H_{1,\eta}(v)$ are hyperbolic.
Besides, two hyperbolic quadratic forms are equivalent whenever they have the same rank.
Hence, $(b,u)$ and $H_{1,\eta}(v)$ have the same quadratic invariants,
and we conclude that $(b,u) \simeq H_{1,\eta}(v)$.

Finally, since $v$ is nilpotent Theorem \ref{theoBotha} shows that it is the sum of two square-zero endomorphisms of $V$.
By Proposition \ref{remark:expansionsquarezero}, the pair $H_{1,\eta}(v)$ has the square-zero splitting property, and hence so does
$(b,u)$.
\end{proof}

\begin{cor}\label{extensionhyperboliccor2}
Let $(b,u)$ be a $(-1,-1)$-pair in which $u$ is nilpotent.
Assume that $u$ has only Jordan cells of odd size. Then $u$ is the sum of two square-zero $b$-skew-selfadjoint endomorphisms.
\end{cor}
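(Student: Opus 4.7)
The plan is to realize $(b,u)$ as a symplectic extension $H_{-1,-1}(v)$ for some nilpotent $v$, in complete parallel with the preceding proof of Corollary \ref{extensionhyperboliccor1}. First I would extract two consequences of the hypotheses: because every Jordan cell of $u$ has odd size, $n_{t,k}(u)=0$ for all even $k \geq 1$; and for every odd integer $k\geq 1$, the invariant $(b,u)_{t,k}$ has the same parity as $b$ (by the discussion of Section \ref{section:invariants}), hence is alternating, and therefore its underlying $\F$-vector space has even dimension $n_{t,k}(u)$.

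Next, I would construct a finite-dimensional vector space $W$ and a nilpotent $v\in \End(W)$ with $n_{t,k}(v)=\frac{1}{2}\,n_{t,k}(u)$ for every odd $k$ and $n_{t,k}(v)=0$ for every even $k$ (for example by taking $v$ to be an external direct sum of Jordan blocks of the appropriate sizes). Since $t^{\op}=t$, the formula $n_{p,k}(h_{-1}(v))=n_{p,k}(v)+n_{p^{\op},k}(v)$ recorded in Section \ref{expansionsection} yields $n_{t,k}(h_{-1}(v))=n_{t,k}(u)$ for every $k\geq 1$. I would then invoke the classification theorem for $(-1,-1)$-pairs to argue that $(b,u)\simeq H_{-1,-1}(v)$: since both $u$ and $h_{-1}(v)$ are nilpotent, the only non-vacuous condition is the equivalence of the $(t,k)$-invariants; for even $k$ both are trivial (their underlying spaces are zero-dimensional), while for odd $k$ both are alternating forms on a space of dimension $n_{t,k}(u)$, and alternating forms of the same dimension are equivalent.

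Finally, since $v$ is nilpotent, Theorem \ref{theoBotha} provides a splitting $v=a_1+a_2$ with $a_1^2=a_2^2=0$, and Proposition \ref{remark:expansionsquarezero} then delivers the square-zero splitting property for $H_{-1,-1}(v)$, whence for $(b,u)$ via the isometry. The only delicate step is the matching of the $(t,k)$-invariants of $H_{-1,-1}(v)$ with those of $(b,u)$; everything else is bookkeeping, and all the heavy lifting has already been done in Proposition \ref{prop:hyperbolicexpansion} and the preceding discussion of $h_\eta$.
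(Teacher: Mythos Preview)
Your proof is correct and follows the same approach as the paper's own proof, which simply says the argument is ``similar to the one of Corollary~\ref{extensionhyperboliccor1}'' using the evenness of $n_{t,2k+1}(u)$ (since $(b,u)_{t,2k+1}$ is symplectic). One small remark: your closing reference to Proposition~\ref{prop:hyperbolicexpansion} is superfluous here, since you never actually use hyperbolicity of any invariant---the matching of the odd-$k$ invariants relies only on the fact that alternating forms are classified by rank, and the even-$k$ invariants are zero-dimensional by hypothesis.
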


\begin{proof}
The proof is similar to the one of Corollary \ref{extensionhyperboliccor1}, but this time around we use the fact that
the Jordan number $n_{t,2k+1}(u)$ is even for every integer $k \geq 0$, because the quadratic invariant $(b,u)_{t,2k+1}$
is a symplectic form.
\end{proof}

\subsection{Boxed sums: an introduction}\label{section:boxedsum}

Throughout this section and in the remainder of Section \ref{section:construction}, we will systematically assume that
$\chi(\F) \neq 2$.

Here, we let $V$ be an arbitrary vector space, and we let $\varepsilon\in \{-1,1\}$.

Let $b$ be a bilinear form on $V$ (at this point we do not assume that it is symmetric or alternating, and $b$ can be degenerate as well).
We consider the associated linear map $L_b : x \in V \mapsto b(-,x) \in V^\star$, and we define the endomorphism
$$v_b : (x,\varphi) \in V \times V^\star \mapsto (0,L_b(x)) \in V \times V^\star,$$
which is obviously of square zero.
For all $(x,\varphi)$ and $(y,\psi)$ in $V \times V^\star$, we see that
$$H_V^\varepsilon \bigl(v_b(x,\varphi),(y,\psi)\bigr)=L_b(x)[y]=b(y,x).$$
Hence:
\begin{itemize}
\item If $\varepsilon=1$ then $v_b$ is $H_V^\varepsilon$-selfadjoint (respectively, skew-selfadjoint) if and only $b$ is symmetric (respectively,  skew-symmetric).
\item If $\varepsilon=-1$ then $v_b$ is $H_V^\varepsilon$-selfadjoint (respectively, skew-selfadjoint) if and only $b$ is skew-symmetric (respectively, symmetric).
\item In any case, $v_b$ is $H_V^\varepsilon$-alternating if and only if $b$ is alternating.
\end{itemize}
Assume furthermore that $b$ is non-degenerate, to the effect that $L_b$ is an isomorphism. In that case we consider the endomorphism
$$w_b : (x,\varphi) \in V \times V^\star \mapsto (L_b^{-1}(\varphi),0) \in V \times V^\star,$$
again obviously of square zero.
Let $(x,\varphi)$ and $(y,\psi)$ in $V \times V^\star$. Setting $x',y'$ in $V$ such that $L_b(x')=\varphi$
and $L_b(y')=\psi$, we find that
$$H_V^\varepsilon \bigl(w_b(x,\varphi),(y,\psi)\bigr)=\varepsilon\, \psi(x')=\varepsilon\, b(x',y').$$
Again, we have the following equivalences:
\begin{itemize}
\item If $\varepsilon=1$ then $w_b$ is $H_V^\varepsilon$-selfadjoint (respectively, skew-selfadjoint) if and only $b$ is symmetric (respectively,  skew-symmetric).
\item If $\varepsilon=-1$ then $w_b$ is $H_V^\varepsilon$-selfadjoint (respectively, skew-selfadjoint) if and only $b$ is skew-symmetric (respectively, symmetric).
\item In any case, $w_b$ is $H_V^\varepsilon$-alternating if and only if $b$ is alternating.
\end{itemize}

\begin{Def}
Let $b,c$ be two bilinear forms on a finite-dimensional vector space $V$, with $b$ non-degenerate.
We define their \textbf{boxed sum} as the endomorphism
$$b \boxplus c:=v_c+w_b \in \End(V \times V^\star).$$
\end{Def}

Note that $b \boxplus c$ is an automorphism of $V \times V^\star$ if and only if $c$ is non-degenerate (in which case its inverse is simply $c \boxplus b$).
Again, one checks that for all $(x,\varphi)$ and $(y,\psi)$ in $V \times V^\star$,
$$H_V^\varepsilon \bigl((b\boxplus c)(x,\varphi),(y,\psi)\bigr)=c(y,x)+\varepsilon\, b(x',y')$$
where $x':=L_b^{-1}(\varphi)$ and $y':=L_b^{-1}(\psi)$.
The following equivalences are then easily obtained:
\begin{itemize}
\item If $\varepsilon=1$ then $b \boxplus c$ is $H_V^\varepsilon$-selfadjoint (respectively, skew-selfadjoint) if and only both $b$ and $c$ are symmetric (respectively, skew-symmetric).
\item If $\varepsilon=-1$ then $b \boxplus c$ is $H_V^\varepsilon$-selfadjoint (respectively, skew-selfadjoint) if and only both $b$ and $c$ are skew-symmetric (respectively, symmetric).
\item In any case, $b \boxplus c$ is $H_V^\varepsilon$-alternating if and only if both $b$ and $c$ are alternating.
\end{itemize}

Moreover, from the above properties of $w_b$ and $v_c$, we obtain:
\begin{itemize}
\item If $b$ and $c$ are symmetric then $(H_V^\varepsilon,b \boxplus c)$ has the square-zero splitting property as a $(\varepsilon,\varepsilon)$-pair.
\item If $b$ and $c$ are alternating then $(H_V^\varepsilon,b \boxplus c)$ has the square-zero splitting property as a $(\varepsilon,-\varepsilon)$-pair.
\end{itemize}

Better still, there is almost a converse result:

\begin{prop}\label{prop:caracboxedsum}
Let $(\varepsilon,\eta) \in \{1,-1\}^2$, and let $(b,u)$ be an $(\varepsilon,\eta)$-pair with the square-zero splitting property in which $u$ is an automorphism.
\begin{itemize}
\item If $\varepsilon=\eta$ then $(b,u) \simeq (H_W^{\varepsilon},B \boxplus C)$ for some vector space $W$
and some pair $(B,C)$ of non-degenerate symmetric bilinear forms on $W$.
\item If $\varepsilon=-\eta$ then $(b,u) \simeq (H_W^{\varepsilon},B \boxplus C)$ for some vector space $W$
and some pair $(B,C)$ of symplectic forms on $W$.
\end{itemize}
\end{prop}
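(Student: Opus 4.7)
My plan is to extract from any square-zero splitting $u = a_1 + a_2$ a Lagrangian decomposition of $V$ which realizes $b$ as $H_W^\varepsilon$, and then to read off $B$ and $C$ from the restrictions of $a_1$ and $a_2$. First, I would set $K_i := \Ker a_i$ for $i \in \{1,2\}$. The inclusion $\im a_i \subseteq K_i$ gives $\dim K_i \geq \frac{1}{2}\dim V$, while the bijectivity of $u$ forces $K_1 \cap K_2 = \Ker u = \{0\}$. Dimension counting then yields $V = K_1 \oplus K_2$ with $\dim K_i = \frac{1}{2}\dim V$, and hence $\im a_i = K_i$; in particular $a_1$ restricts to an isomorphism $K_2 \to K_1$ and $a_2$ to an isomorphism $K_1 \to K_2$.

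Next, I would use the adjointness $a_i^\star = \eta a_i$ to show that both $K_1$ and $K_2$ are totally $b$-isotropic. For $x,y \in K_1$, writing $x = a_1(z)$ gives $b(x,y) = b(a_1(z),y) = \eta\, b(z,a_1(y)) = 0$, and symmetrically for $K_2$. Thus $W := K_1$ is a Lagrangian, and the map $\iota : K_2 \to W^\star$ defined by $\iota(y) = b(y,\cdot)_{\mid W}$ is an isomorphism. I would then consider the linear isomorphism $\Phi : W \oplus W^\star \to V$, $(x,\varphi) \mapsto x + \iota^{-1}(\varphi)$; a direct computation using the isotropy of $K_1$ and $K_2$ together with the $\varepsilon$-symmetry of $b$ shows that $b\bigl(\Phi(x,\varphi),\Phi(x',\varphi')\bigr) = \varphi(x') + \varepsilon\,\varphi'(x)$, so $\Phi$ is an isometry from $H_W^\varepsilon$ to $b$.

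Transporting $u$ through $\Phi$, the relations $a_1(W) = \{0\} = a_2(K_2)$ and $a_1(K_2) = W$, $a_2(W) = K_2$ give the block form $\Phi^{-1} u \Phi(x,\varphi) = (\tilde a_1(\varphi), \tilde a_2(x))$, where $\tilde a_1 := a_1 \circ \iota^{-1} : W^\star \to W$ and $\tilde a_2 := \iota \circ a_2 : W \to W^\star$ are both isomorphisms. Since $B \boxplus C(x,\varphi) = (L_B^{-1}(\varphi), L_C(x))$, I would then define non-degenerate bilinear forms $B$ and $C$ on $W$ by $L_B := \tilde a_1^{-1}$ and $L_C := \tilde a_2$; this gives at once $\Phi^{-1} u \Phi = B \boxplus C$.

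The remaining task, and where the main obstacle lies, is to verify that $B$ and $C$ have the symmetry type prescribed by the proposition, namely that they satisfy $B(x,y) = \varepsilon\eta\,B(y,x)$ and likewise for $C$. Tracing back through $\iota$, the symmetry check for $B$ reduces to comparing $b(z, a_1(w))$ with $b(w, a_1(z))$ for $z,w \in K_2$: one application of the adjoint relation $a_1^\star = \eta a_1$ contributes a factor $\eta$, and one application of the $\varepsilon$-symmetry of $b$ contributes a factor $\varepsilon$, giving exactly $\varepsilon\eta$; the argument for $C$ is identical. Hence $B$ and $C$ are symmetric when $\varepsilon = \eta$ (the first case of the proposition) and alternating when $\varepsilon = -\eta$, in which case the non-degeneracy already noted makes them symplectic (the second case). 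The delicate point is purely bookkeeping: the sign from $a_i^\star = \eta a_i$, the sign from the $\varepsilon$-symmetry of $b$, the direction in which $\iota$ is defined, and the convention $L_B(x)(y) = B(y,x)$ must all be tracked coherently so that the two signs combine into $\varepsilon\eta$ rather than cancel.
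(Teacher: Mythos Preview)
Your proof is correct and follows essentially the same route as the paper's: both extract the Lagrangian decomposition $V=\Ker a_1\oplus\Ker a_2$ (the paper phrases it via images, but $\im a_i=\Ker a_i$ here), identify one summand with the dual of the other through $b$, and read off $B$ and $C$ from the transported $a_1,a_2$. The only cosmetic difference is that the paper deduces the symmetry type of $B$ and $C$ by invoking the already-established equivalence between the parity of $B\boxplus C$ with respect to $H_W^\varepsilon$ and the parity of $B,C$, whereas you verify $B(x,y)=\varepsilon\eta\,B(y,x)$ by a direct sign chase.
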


\begin{proof}
Denote by $V$ the underlying vector space of the pair $(b,u)$.
There exist square-zero endomorphisms $u_1$ and $u_2$ of $V$ such that each $(b,u_i)$ is an
$(\varepsilon,\eta)$-pair and $u=u_1 +u_2$.

Let us consider the subspaces $V_1:=\im u_1$ and $V_2:=\im u_2$. We have $V_1 \subset \Ker u_1=V_1^{\bot_b}$
and likewise $V_2 \subset V_2^{\bot_b}$, i.e.\ $V_1$ and $V_2$ are totally $b$-singular, leading to
$\dim V_1 \leq \frac{1}{2} \dim V$ and  $\dim V_2 \leq \frac{1}{2} \dim V$.
Moreover, $V_1+V_2=V$ because $u$ is surjective, whence $V=V_1 \oplus V_2$,
$V_1=V_1^{\bot_b}$ and $V_2=V_2^{\bot_b}$. It follows that $u_1$ maps $V_2$ bijectively onto $V_1$,
and $u_2$ maps $V_1$ bijectively onto $V_2$.

Next, as $V_2$ is totally $b$-singular, the isomorphism $L_b: x \mapsto b(-,x)$ induces an
injective linear mapping $V_2 \rightarrow (V_2)^\circ$, which turns out to be an isomorphism because $2\dim V_2=\dim V$.
Composing it with the natural (restriction) isomorphism from $(V_2)^\circ$ to $V_1^\star$,
we obtain an isomorphism $f : V_2 \rightarrow V_1^\star$ that takes every $x \in V_2$ to the linear form $y \in V_1 \mapsto b(y,x)$.
Then, we consider the isomorphism
$$\Phi : \begin{cases}
V & \longrightarrow V_1 \times V_1^\star \\
x_1+x_2 & \longmapsto \bigl(\varepsilon x_1,f(x_2)\bigr) \quad \text{with $x_1 \in V_1$ and $x_2 \in V_2$.}
\end{cases}$$
Next, for all $x,y$ in $V$, we split $x=x_1+x_2$ and $y=y_1+y_2$ with $x_1,y_1$ in $V_1$ and $x_2,y_2$ in $V_2$,
and we see that
\begin{align*}
b(x,y) & =b(x_1,y_2)+b(x_2,y_1) \\
       & =b(x_1,y_2)+\varepsilon\,b(y_1,x_2) \\
       & =f(y_2)[x_1]+\varepsilon\, f(x_2)[y_1] \\
       & =H_{V_1}^\varepsilon\bigl(\Phi(x),\Phi(y)\bigr).
\end{align*}
Now, set $u'_1:=\Phi \circ u_1 \circ \Phi^{-1}$ and $u'_2 := \Phi \circ u_2 \circ \Phi^{-1}$.
As $u_1$ vanishes everywhere on $V_1$ and maps $V_2$ bijectively onto $V_1$,
we find that $u'_1$ vanishes everywhere on $V_1 \times \{0\}$ and maps $\{0\} \times V_1^\star$ bijectively onto $V_1 \times \{0\}$.
The associated isomorphism from $V_1^\star$ to $V_1$ then reads $L_B^{-1}$ for a unique non-degenerate bilinear form $B$ on $V_1$,
and hence $u'_1=w_B$.
Likewise, $u'_2$ vanishes everywhere on $\{0\} \times V_1^\star$ and maps $V_1 \times \{0\}$ bijectively onto $\{0\} \times V_1^\star$: the associated linear mapping from $V_1$ to $V_1^\star$ reads $L_C$ for some bilinear form $C$ on $V_1$, and hence $u'_2=v_C$.

Finally, $\Phi \circ u \circ \Phi^{-1}=B \boxplus C$, and we conclude that
$(b,u) \simeq (H_V^\varepsilon,B \boxplus C)$. It follows that $(H_V^\varepsilon,B \boxplus C)$ is an $(\varepsilon,\eta)$-pair.
From there, we deduce that if $\varepsilon=\eta$ then $B$ and $C$ are symmetric, otherwise $B$ and $C$ are alternating.
\end{proof}

Hence, in order to solve our problem for automorphisms, it essentially
remains to compute the quadratic/Hermitian invariants of a boxed sum!

Before we can do that, additional general considerations are required on the dependency of
$(H_V^\varepsilon,b \boxplus c)$ on the pair $(b,c)$.

\subsection{Boxed sums as functions of pairs of forms}

Let $(b,c)$ and $(b',c')$ be two pairs of bilinear forms, on respective vector spaces $V,V,V',V'$.

We say that $(b,c)$ is isometric to $(b',c')$ when there exists a vector space isomorphism
$\varphi : V \overset{\simeq}{\rightarrow} V'$ such that
$$\forall (x,y) \in V^2, \; b'\bigl(\varphi(x),\varphi(y)\bigr)=b(x,y) \; \text{and} \;
c'\bigl(\varphi(x),\varphi(y)\bigr)=c(x,y).$$
This defines an equivalence relation on the collection of pairs of bilinear forms with the same underlying vector space.

Next, the orthogonal direct sum of two such pairs $(b_1,c_1)$ and $(b_2,c_2)$ is defined as
$(b_1,c_1) \bot (b_2,c_2) :=(b_1 \bot b_2,c_1 \bot c_2)$, and one checks that it is compatible with isometry
(if we replace a summand with an isometric one, then the result is unchanged up to an isometry).

We shall say that a pair is indecomposable if it is non-trivial (i.e.\ defined on a non-zero vector space) and
not isometric to the orthogonal direct sum of two non-trivial pairs.

We shall now examine the effect, in boxed sums, of replacing a pair $(b,c)$ with an isometric one.

\begin{lemma}\label{cpequivalencelemma}
Let $\varepsilon \in \{-1,1\}$. Let $(b,c)$ be a pair of bilinear forms on a vector space $V$, and
$(b',c')$ be a pair of bilinear forms on a vector space $V'$. Assume that $b$ and $b'$ are non-degenerate and that
$(b,c) \simeq (b',c')$. Then
$$(H_V^{\varepsilon},b \boxplus c) \simeq (H_{V'}^{\varepsilon},b' \boxplus c').$$
\end{lemma}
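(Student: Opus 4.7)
The plan is to exhibit an explicit isometry $\Phi : V \times V^\star \to V' \times (V')^\star$ built from a chosen isometry $\varphi : V \overset{\simeq}{\to} V'$ between $(b,c)$ and $(b',c')$, and then verify on one hand that $\Phi$ is an isometry from $H_V^\varepsilon$ to $H_{V'}^\varepsilon$, and on the other hand that it intertwines $b \boxplus c$ with $b' \boxplus c'$. The natural candidate is
\[
\Phi : (x,f) \in V \times V^\star \longmapsto \bigl(\varphi(x),\, f \circ \varphi^{-1}\bigr) \in V' \times (V')^\star,
\]
which is the same type of map as the one introduced in Section \ref{expansionsection} to prove $H_{\varepsilon,\eta}(v) \simeq H_{\varepsilon,\eta}(w)$ when $v \simeq w$.

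First I would check that $\Phi$ is an isometry from $H_V^\varepsilon$ to $H_{V'}^\varepsilon$. This is a direct computation identical to the one already carried out in Section \ref{expansionsection}: for all $(x,f),(y,g)$ in $V \times V^\star$,
\[
H_{V'}^\varepsilon\bigl(\Phi(x,f),\Phi(y,g)\bigr) = (f \circ \varphi^{-1})(\varphi(y)) + \varepsilon (g \circ \varphi^{-1})(\varphi(x)) = f(y) + \varepsilon g(x),
\]
which equals $H_V^\varepsilon((x,f),(y,g))$; this step uses only $\varphi$ being a linear isomorphism, not the isometry property.

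Next I would verify the conjugation relation $b' \boxplus c' = \Phi \circ (b \boxplus c) \circ \Phi^{-1}$ by decomposing $b \boxplus c = v_c + w_b$ and treating the two summands separately. For $v_c$, the equality $\Phi \circ v_c = v_{c'} \circ \Phi$ unfolds to $L_c(x) \circ \varphi^{-1} = L_{c'}(\varphi(x))$ for every $x \in V$, which by evaluation at an arbitrary $y' \in V'$ reduces to $c(\varphi^{-1}(y'),x) = c'(y',\varphi(x))$, i.e.\ to the isometry condition $c(y,x) = c'(\varphi(y),\varphi(x))$. For $w_b$, the equality $\Phi \circ w_b = w_{b'} \circ \Phi$ unfolds similarly to $\varphi \circ L_b^{-1} = L_{b'}^{-1} \circ (- \circ \varphi^{-1})$, and setting $f = L_b(x)$ this is equivalent to $b'(\varphi(y),\varphi(x)) = b(y,x)$ for all $y,x \in V$, i.e.\ to the isometry condition on $b$; this step uses the non-degeneracy of $b$, and of $b'$ (which automatically follows from that of $b$ via $\varphi$).

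There is no serious obstacle here: both verifications are straightforward unfoldings of the definitions, and the two halves of the isometry hypothesis on $(b,c)$ are consumed by the two halves $w_b$ and $v_c$ of the boxed sum. Combining the two verifications yields $(H_V^\varepsilon, b \boxplus c) \simeq (H_{V'}^\varepsilon, b' \boxplus c')$ as claimed.
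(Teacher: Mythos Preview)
Your proof is correct and follows essentially the same approach as the paper: the paper uses the isometry $\Phi=\varphi \oplus (\varphi^{-1})^t$, which is exactly your map $(x,f)\mapsto (\varphi(x),f\circ\varphi^{-1})$, refers back to Section~\ref{expansionsection} for the fact that $\Phi$ is an isometry from $H_V^\varepsilon$ to $H_{V'}^\varepsilon$, and then verifies $v_c=\Phi^{-1}\circ v_{c'}\circ\Phi$ and $w_b=\Phi^{-1}\circ w_{b'}\circ\Phi$ separately. The only cosmetic difference is that the paper first records the identities $L_c=\varphi^t\circ L_{c'}\circ\varphi$ and $L_b=\varphi^t\circ L_{b'}\circ\varphi$ and computes with $\Phi^{-1}$, whereas you unfold the same identities directly in the forward direction.
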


\begin{proof}
Let us consider an isometry $\varphi : V \overset{\simeq}{\rightarrow} V'$ from $(b,c)$ to $(b',c')$.
Note in particular that $L_{c}=\varphi^t \circ L_{c'} \circ \varphi$ and $L_{b}=\varphi^t \circ L_{b'} \circ \varphi$.
We have already seen in Section \ref{expansionsection} that the vector space isomorphism
$\Phi:=\varphi \oplus (\varphi^{-1})^t$ yields an isometry from $H_V^{\varepsilon}$ to $H_{V'}^{\varepsilon}$.
Moreover, for all $(y,g)$ in $V' \times (V')^\star$,
\begin{multline*}
v_c(\Phi^{-1}(y,g))=v_c(\varphi^{-1}(y),g \circ \varphi) =\bigl(0,L_c(\varphi^{-1}(y))\bigr) \\
=(0,(\varphi^t \circ L_{c'})(y))=\Phi^{-1}(0,L_{c'}(y))=\Phi^{-1}(v_{c'}(y,g)),
\end{multline*}
that is
$$v_c=\Phi^{-1} \circ v_{c'} \circ \Phi.$$
Likewise, one checks that $w_b=\Phi^{-1} \circ w_{b'} \circ \Phi$.
Hence, by summing we obtain $b \boxplus c=\Phi^{-1} \circ (b' \boxplus c') \circ \Phi$, and the expected conclusion follows.
\end{proof}

\begin{lemma}\label{cpdirectsumlemma}
Let $\varepsilon \in \{-1,1\}$. Let $(b_1,c_1)$ and $(b_2,c_2)$ be pairs of bilinear forms on respective vectors spaces $V_1$ and $V_2$. Then,
$$\bigl(H_{V_1 \times V_2}^{\varepsilon},(b_1\bot b_2) \boxplus (c_1 \bot c_2)\bigr) \simeq
(H_{V_1}^\varepsilon,b_1 \boxplus c_1) \bot (H_{V_2}^\varepsilon,b_2 \boxplus c_2).$$
\end{lemma}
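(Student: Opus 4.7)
The plan is to reuse the isomorphism
$$\Psi : \bigl((x_1,x_2),\varphi\bigr) \longmapsto \bigl((x_1,\varphi \circ i_1),(x_2,\varphi \circ i_2)\bigr)$$
introduced in Section \ref{expansionsection}, which was already shown there to be an isometry from $H_{V_1 \times V_2}^\varepsilon$ to $H_{V_1}^\varepsilon \bot H_{V_2}^\varepsilon$. Since a boxed sum is defined as $w_b+v_c$, it suffices to check separately that, under conjugation by $\Psi$, the operator $v_{c_1 \bot c_2}$ becomes $v_{c_1} \oplus v_{c_2}$ and $w_{b_1 \bot b_2}$ becomes $w_{b_1} \oplus w_{b_2}$; adding the two will then yield the isometry claimed in the lemma.

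First I would handle $v_{c_1 \bot c_2}$. The key observation is that the natural ``decomposition of the dual'' $\varphi \mapsto (\varphi \circ i_1,\varphi \circ i_2)$ diagonalizes $L_{c_1 \bot c_2}$: a direct computation gives $L_{c_1 \bot c_2}(x_1,x_2) \circ i_j = L_{c_j}(x_j)$ for $j=1,2$. Composing with $\Psi$ on both sides then yields $\Psi \circ v_{c_1 \bot c_2} = (v_{c_1} \oplus v_{c_2}) \circ \Psi$.

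Next, for $w_{b_1 \bot b_2}$, I would unravel the definition of the inverse: given $\varphi \in (V_1 \times V_2)^\star$, the unique vector $(z_1,z_2)$ with $L_{b_1 \bot b_2}(z_1,z_2)=\varphi$ must satisfy $b_1(y_1,z_1)+b_2(y_2,z_2) = (\varphi \circ i_1)(y_1)+(\varphi \circ i_2)(y_2)$ for all $(y_1,y_2)$, hence $z_j = L_{b_j}^{-1}(\varphi \circ i_j)$. Applying $\Psi$ to $((z_1,z_2),0)$ then produces $\bigl((L_{b_1}^{-1}(\varphi\circ i_1),0),(L_{b_2}^{-1}(\varphi\circ i_2),0)\bigr) = (w_{b_1} \oplus w_{b_2})\bigl(\Psi((x_1,x_2),\varphi)\bigr)$, so $\Psi \circ w_{b_1 \bot b_2} = (w_{b_1}\oplus w_{b_2})\circ \Psi$.

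The lemma then follows by summing: $\Psi \circ ((b_1 \bot b_2)\boxplus(c_1 \bot c_2)) = ((b_1 \boxplus c_1)\oplus(b_2 \boxplus c_2)) \circ \Psi$. There is no substantial obstacle here; the only point requiring care is bookkeeping the identification $(V_1 \times V_2)^\star \simeq V_1^\star \times V_2^\star$ so that the inverse of $L_{b_1 \bot b_2}$ splits as $L_{b_1}^{-1} \oplus L_{b_2}^{-1}$, which is exactly how $\Psi$ was designed in Section \ref{expansionsection}.
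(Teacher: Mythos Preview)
Your proof is correct and follows essentially the same route as the paper: the same isomorphism $\Psi$ from Section \ref{expansionsection} is used as the isometry, and one verifies separately that it conjugates $v_{c_1\bot c_2}$ into $v_{c_1}\oplus v_{c_2}$ and $w_{b_1\bot b_2}$ into $w_{b_1}\oplus w_{b_2}$, via the identities $L_{c_1\bot c_2}(x_1,x_2)\circ i_j=L_{c_j}(x_j)$. The paper actually only writes out the $v$-case and declares the $w$-case similar, whereas you spell out both.
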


\begin{proof}
We introduce the canonical injections $i_1 : V_1 \hookrightarrow V_1 \times V_2$ and $i_2 : V_2 \hookrightarrow V_1 \times V_2$,
and we consider the isomorphism
$$\Phi : \begin{cases}
(V_1 \times V_2) \times (V_1 \times V_2)^\star & \longrightarrow (V_1 \times V_1^\star) \times (V_2 \times V_2^\star) \\
((x_1,x_2),\varphi) & \longmapsto \bigl((x_1,\varphi \circ i_1),(x_2,\varphi \circ i_2)\bigr).
\end{cases}$$
It was already proved in Section \ref{expansionsection} that
$\Phi$ yields an isometry from $H_{V_1 \times V_2}^{\varepsilon}$ to $H_{V_1}^\varepsilon \bot H_{V_2}^\varepsilon$.
It remains to check that $(b_1 \boxplus c_1) \oplus (b_2 \boxplus c_2) = \Phi \circ ((b_1 \bot b_2)\boxplus (c_1 \bot c_2)) \circ \Phi^{-1}$.
To see this, it suffices to check that $(v_{c_1}\oplus v_{c_2})  \circ \Phi=\Phi \circ v_{(c_1 \bot c_2)}$
and $(w_{b_1} \oplus  w_{b_2}) \circ \Phi=\Phi \circ w_{(b_1 \bot b_2)}$. We will only check the former relation, the proof of the latter being similar.
So, let $(x_1,x_2,\varphi) \in V_1 \times V_2 \times (V_1 \times V_2)^\star$.
Then,
$$(v_{c_1} \oplus  v_{c_2})(\Phi((x_1,x_2),\varphi))=\bigl(v_{c_1}(x_1,\varphi \circ i_1),v_{c_2}(x_2,\varphi \circ i_2)\bigr)
=\bigl((0,L_{c_1}(x_1)),(0,L_{c_2}(x_2))\bigr).$$
Besides,
$$\Phi\bigl(v_{c_1\bot c_2}((x_1,x_2),\varphi)\bigr)=\bigl((0,L_{c_1\bot c_2}(x_1,x_2) \circ i_1),(0,L_{c_1\bot c_2}(x_1,x_2) \circ i_2)\bigr)$$
and we conclude by noting that $L_{c_1\bot c_2}(x_1,x_2) \circ i_1=L_{c_1}(x_1)$ and
$L_{c_1\bot c_2}(x_1,x_2) \circ i_2=L_{c_2}(x_2)$.
\end{proof}

Now, we need to better understand the isometry type of pairs $(b,c)$ of bilinear forms with the same parity (both symmetric or both alternating),
with $b$ non-degenerate. It turns out that this question is a reformulation of the classification of $(\varepsilon,1)$-pairs!
Indeed, let $\varepsilon \in \{1,-1\}$, and let $(b,c)$ be a pair of bilinear forms on a vector space $V$, both symmetric if $\varepsilon=1$, and
both alternating if $\varepsilon=-1$. Then $u:=L_b^{-1} \circ L_c$ is an endomorphism of $V$, and one checks that
$(b,u)$ is a $(\varepsilon,1)$-pair. Conversely, given an $(\varepsilon,1)$-pair $(b,u)$, one sees that
$c : (x,y) \mapsto b(x,u(y))$ is a bilinear form that is symmetric if $\varepsilon=1$, and alternating if $\varepsilon=-1$.
Moreover, one checks that, given pairs $(b,c)$ and $(b',c')$ of bilinear forms, any isometry from $(b,c)$ to $(b',c')$
turns out to be an isometry from $(b,L_b^{-1}\circ L_c)$ to $(b',L_{b'}^{-1}\circ L_{c'})$. Conversely, given
$(\varepsilon,1)$-pairs $(b,u)$ and $(b',u')$, any isometry from $(b,u)$ to $(b',u')$ turns out to be an isometry from
$(b,c)$ to $(b',c')$ where $c : (x,y) \mapsto b(x,u(y))$ and $c' : (x,y) \mapsto b'(x,u'(y))$.

Finally, let $(b,c)$ and $(b',c')$ be two pairs of bilinear forms on respective vector spaces $V$ and $V'$, and assume that both $b$
and $b'$ are non-degenerate. Then, one checks that $L_{b \bot b'}^{-1}\circ L_{c \bot c'} =(L_b^{-1}\circ L_c) \oplus (L_{b'}^{-1}\circ L_{c'})$.
Hence, the previous correspondence literally takes orthogonal direct sums to orthogonal direct sums.

Hence, the classification of pairs of symmetric (respectively, alternating) bilinear forms (with the first form non-degenerate)
comes entirely down to the one of $(1,1)$-pairs (respectively, of $(-1,1)$-pairs).
In particular, pairs of symmetric bilinear forms with the first component non-degenerate are classified by quadratic invariants.

\begin{Rem}\label{squareboxed}
One checks that $(b \boxplus c)^2=h_1(L_b^{-1} \circ L_c)$.
\end{Rem}

\subsection{The invariants of a boxed sum}\label{section:invariantsboxedsum}

We start with an interesting observation which is especially relevant to the special case of $(1,1)$-pairs:

\begin{prop}\label{prop:isomopposite}
Let $(b,c)$ be a pair of bilinear forms on a vector space $V$, with $b$ non-degenerate.
Then the pairs $(H_V^1, b \boxplus c)$ and $(-H_V^1, -b \boxplus c)$ are isometric.
\end{prop}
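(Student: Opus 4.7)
The plan is to exhibit a single explicit isometry that simultaneously flips the sign of the bilinear form and the sign of the operator. The natural candidate is the involution
\[
\Phi \colon V \times V^\star \longrightarrow V \times V^\star, \qquad \Phi(x,\varphi) := (x,-\varphi).
\]

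First, I would check that $\Phi$ is an isometry from $H_V^1$ to $-H_V^1$. A direct substitution gives
\[
-H_V^1\bigl(\Phi(x,\varphi),\Phi(y,\psi)\bigr)=-\bigl((-\varphi)(y)+(-\psi)(x)\bigr)=\varphi(y)+\psi(x)=H_V^1\bigl((x,\varphi),(y,\psi)\bigr),
\]
so $\Phi$ intertwines the two forms as required.

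Next, since $\Phi^{-1}=\Phi$, the conjugation of $b\boxplus c$ is computed directly from $(b\boxplus c)(x,\varphi)=(L_b^{-1}\varphi,L_c(x))$:
\[
\Phi\circ(b\boxplus c)\circ\Phi^{-1}(x,\varphi)=\Phi\bigl(L_b^{-1}(-\varphi),L_c(x)\bigr)=\bigl(-L_b^{-1}(\varphi),-L_c(x)\bigr)=-(b\boxplus c)(x,\varphi).
\]
Thus $\Phi$ conjugates $b\boxplus c$ into its negative.

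Finally, unfolding the definitions of $v_c$ and $w_b$ and using $L_{-c}=-L_c$ and $L_{-b}^{-1}=-L_b^{-1}$, one has
\[
-(b\boxplus c)=-v_c-w_b=v_{-c}+w_{-b}=(-b)\boxplus(-c),
\]
which is precisely the endomorphism denoted ``$-b\boxplus c$'' in the statement (with the leading minus read as distributing over the entire boxed sum; note that the alternative parsing $(-b)\boxplus c$ would yield a false claim, as already a one-dimensional example with $b=c=1$ shows that $b\boxplus c$ and $(-b)\boxplus c$ need not even be similar as endomorphisms). Assembling the two verifications, $\Phi$ gives the desired isometry of pairs $(H_V^1,b\boxplus c)\simeq(-H_V^1,-(b\boxplus c))$.

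There is no real obstacle: the proof rests on the single observation that the sign involution on the dual-space coordinate simultaneously negates $H_V^1$ and $b\boxplus c$, so one map does both jobs. The only care needed lies in the bookkeeping of the notation ``$-b\boxplus c$''.
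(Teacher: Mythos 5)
Your proof is correct and uses essentially the same argument as the paper: your map $\Phi\colon(x,\varphi)\mapsto(x,-\varphi)$ is exactly the paper's isometry $h_{-1}(\id_V)$, and the paper likewise checks that it flips the sign of $H_V^1$ and conjugates $w_b,v_c$ into $w_{-b},v_{-c}$, hence $b\boxplus c$ into $-(b\boxplus c)$. Your reading of the notation $-b\boxplus c$ as $-(b\boxplus c)=(-b)\boxplus(-c)$ is also the intended one.
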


\begin{proof}
Consider the automorphism $\varphi:=h_{-1}(\id_V)$ of $V \times V^\star$.
It is skew-selfadjoint for $H_V^1$ and satisfies $\varphi^2=\id$, whence
$\varphi \varphi^\star=-\id$ (where the adjoint is taken with respect to $H_V^1$).
This leads to $H_V^1\bigl(\varphi(x,f),\varphi(y,g)\bigr)=-H_V^1\bigl((x,f),(y,g)\bigr)$
for all $x,y$ in $V$ and all $f,g$ in $V^\star$.
Next, one easily checks that $\varphi \circ w_b \circ \varphi^{-1}=-w_b=w_{-b}$ and that
$\varphi \circ v_c \circ \varphi^{-1}=v_{-c}$, which leads to $\varphi \circ (b \boxplus c) \circ \varphi^{-1}=-b \boxplus c$.
Hence, $\varphi$ is the required isometry.
\end{proof}

We are now ready to compute the invariants of $(H_V^\varepsilon,b \boxplus c)$
when $(b,c)$ is a pair of bilinear forms on the vector space $V$, both symmetric or both alternating, and with $b$ non-degenerate.
Using the compatibility of boxed sums with orthogonal direct sums and with isometry, we will limit the computation to the
case where $(b,c)$ is indecomposable.
There are only two special cases to consider:
\begin{itemize}
\item If $b$ and $c$ are symmetric, $b$ is non-degenerate and $(b,c)$ is indecomposable, then $u:=L_b^{-1} L_c$ is cyclic and its minimal
polynomial equals $p^r$ for some $p \in \Irr(\F)$ and some $r \geq 1$. In that case, exactly one quadratic invariant of $(b,c)$
is non-trivial, namely $(b,c)_{p,r}$, and it is a non-degenerate symmetric bilinear form on a $1$-dimensional vector space over the field $\F[t]/(p)$.

\item If $b$ and $c$ are symplectic, $b$ is non-degenerate and $(b,c)$ is indecomposable, then $u:=L_b^{-1} L_c$
is the direct sum of two cyclic endomorphisms with minimal polynomial $p^r$ for some $p \in \Irr(\F)$ and some $r \geq 1$.
\end{itemize}

\begin{lemma}\label{lemma:invariantsboxedsum}
Let $b$ and $c$ be bilinear forms on a vector space $V$, with $b$ non-degenerate.
Denote by $p_1,\dots,p_r$ the invariant factors of $L_b^{-1} L_c$. Then
the invariant factors of $b \boxplus c$ are $p_1(t^2),\dots,p_r(t^2)$.
\end{lemma}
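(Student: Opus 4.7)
The plan is to pass to a more convenient model on $V \oplus V$ and then reduce to the cyclic case via the rational canonical form of $u := L_b^{-1}\circ L_c$.

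First, I would use the non-degenerate form $b$ to identify $V^\star$ with $V$. Consider the isomorphism $\Phi : V \oplus V \to V \oplus V^\star$ defined by $\Phi(x,y) := (x, L_b(y))$. A direct check with the definitions of $v_c$ and $w_b$ shows that
\[
\Phi^{-1} \circ (b \boxplus c) \circ \Phi : (x,y) \longmapsto (y, u(x)),
\]
so it is enough to compute the invariant factors of the endomorphism $A$ of $V \oplus V$ defined by $A(x,y) := (y, u(x))$. Note in passing that $A^2 = u \oplus u$, which is consistent with Remark \ref{squareboxed}.

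Second, I would decompose $V$ via the rational canonical form of $u$: $V = V_1 \oplus \cdots \oplus V_r$, where each $V_i$ is $u$-stable and cyclic with minimal polynomial $p_i$. Each $V_i \oplus V_i \subset V \oplus V$ is then $A$-stable, and $A$ splits as the direct sum of the restrictions $A_i := A|_{V_i \oplus V_i}$. This reduces the problem to the case where $u$ is cyclic with minimal polynomial $p$, in which it suffices to prove that $A$ is cyclic with minimal polynomial $p(t^2)$.

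Third, in the cyclic case, I would fix a cyclic vector $x_0 \in V$ for $u$ and set $v_0 := (0, x_0) \in V \oplus V$. An immediate induction yields
\[
A^{2k}(v_0) = (0, u^k(x_0)) \quad \text{and} \quad A^{2k+1}(v_0) = (u^k(x_0), 0) \qquad (k \geq 0).
\]
Letting $d := \deg p$, the $2d$ vectors $v_0, A(v_0), \dots, A^{2d-1}(v_0)$ form a basis of $V \oplus V$, since the even-indexed ones constitute a basis of $\{0\} \oplus V$ and the odd-indexed ones a basis of $V \oplus \{0\}$. Hence $v_0$ is cyclic for $A$ and the minimal polynomial of $A$ has degree $2d$. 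Since $A^2 = u \oplus u$, the polynomial $p(t^2)$ annihilates $A$; being monic of degree $2d$, it must be its minimal polynomial, and it is then the unique invariant factor of $A$. Assembling, each $A_i$ has unique invariant factor $p_i(t^2)$, and since $p_{i+1} \mid p_i$ implies $p_{i+1}(t^2) \mid p_i(t^2)$, the required divisibility chain is preserved, so the invariant factors of $A$ (equivalently of $b \boxplus c$) are $p_1(t^2), \ldots, p_r(t^2)$.

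The main subtlety I anticipate lies in the choice of cyclic vector in Step~3: the more natural-looking $(x_0, 0)$ fails when $u$ is not invertible (for instance, if $p(t) = t$ then $A(x_0, 0) = 0$, and $(x_0, 0)$ is far from cyclic), whereas $(0, x_0)$ works in all cases because the first application of $A$ sends it into the first summand and thereafter the orbit alternates between the two summands, each time producing a fresh power of $u$ applied to $x_0$.
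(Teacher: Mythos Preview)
Your proof is correct and follows essentially the same approach as the paper: both conjugate $b \boxplus c$ via the isomorphism $L_b$ to the endomorphism of $V \oplus V$ with block matrix $\begin{bmatrix} 0 & A \\ I_n & 0 \end{bmatrix}$ (your map $(x,y) \mapsto (y,u(x))$), where $A$ represents $u = L_b^{-1}L_c$. The only difference is that the paper then cites an external reference for the invariant factors of this block matrix, whereas you supply a direct argument via the rational canonical form and an explicit cyclic vector, making your version more self-contained.
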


\begin{proof}
Consider a basis $(e_1,\dots,e_n)$ of $V$. Then,
$$\mathbf{B}:=((0,L_b(e_1)),\dots,(0,L_b(e_n)),(e_1,0),\dots,(e_n,0))$$
is a basis of $V \times V^\star$ and one checks that the matrix of $b \boxplus c$ in $\mathbf{B}$ equals
$$M:=\begin{bmatrix}
[0]_{n \times n} & A \\
I_n & [0]_{n \times n}
\end{bmatrix},$$
where $A$ stands for the matrix of $L_b^{-1} L_c$ in $(e_1,\dots,e_n)$. From there, it is known (see the proof of corollary
A.3 in \cite{dSPsum3})
that the invariants factors of $M$ are $p_1(t^2),\dots,p_r(t^2)$.
\end{proof}

Now, we finally compute the relevant quadratic/Hermitian invariants of $(H_V^\varepsilon,b \boxplus c)$ when
$(b,c)$ is indecomposable. We start with the case where $b,c$ are symplectic forms.

\begin{lemma}\label{lemma:hyperbolicboxedsum}
Let $(b,c)$ be an indecomposable pair of symplectic forms on a vector space $V$.
Then all the Hermitian invariants of $(H_V^1 ,b \boxplus c)$
are hyperbolic. Moreover, all the Jordan numbers of $b \boxplus c$ are even.
\end{lemma}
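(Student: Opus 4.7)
The strategy is first to pin down the invariant factors of $T := b \boxplus c$ using the indecomposability hypothesis, and then to exploit the splitting $V \times V^\star = W \oplus W^\star$, with $W := V \times \{0\}$ and $W^\star := \{0\} \times V^\star$, which produces a large isotropic subspace in the underlying space of each relevant invariant.

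First I would pin down the invariant factors of $T$. Via the correspondence recalled in Section \ref{section:boxedsum} between pairs of symplectic forms (with first component non-degenerate) and $(-1,1)$-pairs, indecomposability of $(b,c)$ becomes indecomposability of the $(-1,1)$-pair $(b, u)$ with $u := L_b^{-1} \circ L_c$. Since Scharlau's theorem classifies $(-1,1)$-pairs by Jordan numbers, and such Jordan numbers are always even, indecomposability forces a single non-zero Jordan number $n_{p,r}(u)=2$ for some $p \in \Irr(\F)$ and some $r \geq 1$. Hence $u$ has invariant factors $p^r, p^r$, and by Lemma \ref{lemma:invariantsboxedsum} the invariant factors of $T$ are $p(t^2)^r, p(t^2)^r$. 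Every primary invariant of $T$ is then repeated, so every Jordan number of $T$ is even -- this settles the second assertion of the lemma.

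For the Hermitian invariants, fix $q \in \Irr_0(\F)$ and $k \geq 1$, and write $q(t) = \widetilde{q}(t^2)$. By Remark \ref{squareboxed}, $T^2 = h_1(u)$, so $q(T) = \widetilde{q}(T^2)$ preserves both $W$ and $W^\star$, and hence so do all its powers. Consequently $\Ker q(T)^j$ and $\im q(T)^j$ split compatibly with the decomposition $V \times V^\star = W \oplus W^\star$, inducing an $\F$-splitting $V_{q,k} = V_{q,k}^+ \oplus V_{q,k}^-$, where $V_{q,k}^+$ (resp.\ $V_{q,k}^-$) is the image in $V_{q,k}$ of $\Ker q(T)^k \cap W$ (resp.\ $\Ker q(T)^k \cap W^\star$). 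Since $T$ is an $\F$-automorphism that swaps $W$ and $W^\star$, it induces an $\F$-isomorphism $V_{q,k}^+ \simeq V_{q,k}^-$, so $\dim_\F V_{q,k}^+ = \tfrac{1}{2}\dim_\F V_{q,k}$. Moreover, the subfield $\K_q \subseteq \L_q := \F[t]/(q)$ generated by the class of $t^2$ -- i.e.\ the fixed field of the involution $\bullet$ -- stabilises each $V_{q,k}^{\pm}$ (as it acts through $T^2$), so $\dim_{\K_q} V_{q,k}^+ = \tfrac{1}{2}\dim_{\K_q} V_{q,k}$. Crucially, the $\F$-bilinear form $B_{q,k}(x,y) := H_V^1(x, q(T)^{k-1} y)$ underlying $(H_V^1, T)_{q,k}$ vanishes on $V_{q,k}^+ \times V_{q,k}^+$: for lifts $\tilde x, \tilde y \in W$, the vector $q(T)^{k-1}\tilde y$ still lies in $W$, and $H_V^1$ vanishes on $W \times W$.

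Combining this vanishing with the $\K_q$-stability of $V_{q,k}^+$ yields $B_{q,k}(x, \mu y) = 0$ for every $\mu \in \K_q$ and every $x, y \in V_{q,k}^+$. The defining relation $B_{q,k}(x, \lambda y) = e_q(\lambda\, H(x,y))$ of the Hermitian extension $H := (B_{q,k})^{\L_q}$, together with the vanishing of $e_q$ on $t \K_q$ and the non-degeneracy of $(\lambda, \rho) \mapsto e_q(\lambda \rho)$ on $\K_q$, then forces $H(x,y) \in t\K_q$ for $x, y \in V_{q,k}^+$. Specialising $x = y$ and combining $H(x,x) \in \K_q$ (by Hermitianity) with $\K_q \cap t\K_q = \{0\}$ gives $H(x,x) = 0$. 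Hence $V_{q,k}^+$ is a half-dimensional $\K_q$-Lagrangian for the norm quadratic form $N_H(x) := H(x,x)$ on $V_{q,k}$ viewed as a $\K_q$-vector space, so $N_H$ is hyperbolic. By Jacobson's classical theorem on Hermitian forms over separable quadratic extensions (valid since $\chi(\F) \neq 2$), a Hermitian form is hyperbolic if and only if its norm form is, so $H$ is hyperbolic as required. The main obstacle of the argument is precisely this last step: since $V_{q,k}^+$ is only $\K_q$-stable and not $\L_q$-stable, it is not itself a Lagrangian for $H$ over $\L_q$, and some Jacobson-type descent is needed to detect hyperbolicity of $H$ through the norm form, without constructing an $\L_q$-Lagrangian by hand.
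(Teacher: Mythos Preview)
Your proof is correct, but it takes a genuinely different route from the paper's. Both arguments start from the same structural observation that $W=V\times\{0\}$ is totally isotropic for $H_V^1$ and is preserved by $q(T)$, but they diverge in how they convert this into hyperbolicity of the Hermitian invariant. The paper picks a single vector $X=(x,0)$ and shows directly that it is $\L_q$-isotropic for $(H_V^1,T)_{q,k}$; the crucial step is the computation $H_V^1\bigl((x,0),t^{2k+1}(x,0)\bigr)=b(x,u^{k+1}(x))=0$, which relies essentially on the \emph{alternating} nature of $b$ and $c$. Since in the non-trivial case the Hermitian form has rank~$2$, one isotropic vector suffices. Your approach, by contrast, never uses the alternating property in the Hermitian-invariant step: you only exploit that $H_V^1$ vanishes on $W\times W$, obtain a $\K_q$-Lagrangian for the norm form $N_H$, and then invoke Jacobson's theorem to descend to hyperbolicity of $H$ itself. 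The trade-off is that the paper's argument is self-contained and elementary, whereas yours is more structural and avoids the $b(x,u^k(x))=0$ computation at the cost of an external black box (Jacobson).

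Two minor expository points. First, your phrase ``since $T$ is an $\F$-automorphism'' is not literally justified by the hypotheses: if the irreducible $p$ attached to $u$ equals $t$, then $c$ is degenerate and $T$ fails to be bijective. This causes no real trouble, since then every $V_{q,k}$ with $q\in\Irr_0(\F)$ vanishes and there is nothing to prove, but it is worth saying so (or observing that $T$ is at least invertible on each $\Ker q(T)^k$ with $q\neq t$). Second, the Jacobson step silently uses that $\dim_{\L_q} V_{q,k}$ is even, which you have indeed secured in the first paragraph via the evenness of all Jordan numbers.
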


\begin{proof}
Here, there exists $p_0 \in \Irr(\F) \setminus \{t\}$ and an integer $r \geq 1$ such that
 $u:=L_b^{-1} L_c$ has exactly two primary invariants: $p_0^r$ and $p_0^r$.
 Lemma \ref{lemma:invariantsboxedsum} shows that the invariant factors of $b \boxplus c$ are $p_0(t^2)^r$ and $p_0(t^2)^r$.
 It easily follows that the Jordan numbers of $b \boxplus c$ are all even.

Now, we have two cases: either $p:=p_0(t^2)$ is irreducible or $p_0(t^2)=q q^{\op}$ for some $q \in \Irr(\F)$
such that $q\neq q^\op$. In the second case, all the Hermitian invariants of $(H_V^1 ,b \boxplus c)$ vanish.
So, we assume that the first case holds. Then $p^r$ and $p^r$ are the primary invariants of $(H_V^1 ,b \boxplus c)$,
and exactly one Hermitian invariant of $(H_V^1 ,b \boxplus c)$ is non-trivial, namely $(H_V^1 ,b \boxplus c)_{p,r}$:
it is a Hermitian form defined on a $2$-dimensional vector space over the field $\L:=\F[t]/(p)$ equipped with the involution that takes the class $\overline{t}$ to its opposite.

In the remainder of the proof, we equip $V$ with the $\F[t]$-module structure induced by $u$, and $V \times V^\star$
with the one induced by $b \boxplus c$.

In order to show that $(H_V^1 ,b \boxplus c)_{p,r}$ is hyperbolic, it suffices to exhibit a nonzero isotropic vector for it.
In our situation, the relevant quadratic form that is used to build $(H_V^1 ,b \boxplus c)_{p,r}$ is the bilinear form
$\overline{B}$ induced by
$$B : (X,Y) \mapsto H_V^1 (X,p^{r-1}\,Y)$$
on the quotient vector space  $(V \times V^\star) / \im p(b \boxplus c)$
(the details are the same as in the proof of Proposition \ref{prop:hyperbolicexpansion}).
Remembering (see Remark \ref{squareboxed}) that $(b \boxplus c)^2=h_1(u)$,
we find that $p(b \boxplus c)=h_1(p_0(u))$ and hence
$\im p(b \boxplus c)=\im p_0(u) \times \im p_0(u)^t$.
Let us choose $x \in V \setminus \im p_0(u)$, set $X:=(x,0)$ and denote by $\overline{X}$ the class of $X$ modulo $\im p(b \boxplus c)$.
Note that $\overline{X} \neq 0$. We shall prove that $X$ is isotropic for $(H_V^1 ,b \boxplus c)_{p,r}$, which will complete the proof.

Noting that $(b \boxplus c)^2(x,0)=(u(x),0)$, we obtain
$\forall q \in \F[t], \; q(t^2)\,(x,0)=(q\,x,0)$ and it follows that
$$\forall q \in \F[t], \; H_V^1\bigl((x,0),q(t^2)\,(x,0)\bigr)=0.$$
Let $q \in \F[t]$. Then, $tq(t^2).(x,0)=\bigl(0,L_c(q\,x)\bigr)$.
Hence $H_V^1\bigl((x,0),tq(t^2).(x,0)\bigr)=L_c(q(u)[x])[x]=c(x,q(u)[x])=b(x, u q(u)[x])$.
Since $b$ and $c$ are alternating and $u$ is $b$-selfadjoint, we see that $b(x,u^k(x))=0$ for every integer
$k \geq 0$: indeed, if $k=2l$ one writes $b(x,u^k(x))=b(u^l(x),u^l(x))=0$, if $k=2l+1$ one writes $b(x,u^k(x))=b(u^{l}(x),u^{l+1}(x))=c(u^l(x),u^{l}(x))=0$.
Hence,
$$H_V^1\bigl((x,0),tq(t^2).(x,0)\bigr)=b(x,uq(u)[x])=0.$$
Combining the above two points yields $H_V^1((x,0),\lambda\, (x,0))=0$ for all $\lambda \in \F[t]$.

It follows that, in the quotient $\F[t]$-module $V/\im p(b \boxplus c)$,
the class $\overline{X}$ generates a submodule that is orthogonal to the submodule $\F[t]\,\overline{X}$
for the bilinear form induced by $(Y,Z) \mapsto H_V^1(Y,p^{r-1}\,Z)$.

By coming back to the definition of the Hermitian invariant of order $r$ with respect to $p$ (see Section \ref{section:invariants}),
we get that $(H_V^1,b \boxplus c)_{p,r}(\overline{X},\overline{X})=0$, which completes the proof.
\end{proof}

We turn to boxed sums of symmetric forms, for which the situation is more complicated.

\begin{lemma}\label{lemma:skewrepresentabledim1}
Let $(b,c)$ be a pair of symmetric bilinear forms on a vector space $V$, with $b$ non-degenerate.
Assume that $u:=L_b^{-1}L_c$ is cyclic with minimal polynomial $p_0^r$ for some $p_0 \in \Irr(\F)$ and some $r \geq 1$, and assume furthermore
that $p:=p_0(t^2)$ is irreducible. Assume finally that the quadratic invariant $(b,u)_{p_0,r}$ represents, for some polynomial $s(t)\in \F[t]$ (not divisible by $p_0$), the class of $s(t)$ modulo $p_0$.
Then, for all $\varepsilon \in \{-1,1\}$, the pair $(H_V^\varepsilon,b \boxplus c)$ has exactly one quadratic/Hermitian invariant, namely $(H_V^\varepsilon,b \boxplus c)_{p,r}$, which is defined on a
$1$-dimensional vector space, and it represents the class of $\varepsilon t s(t^2)$ in the field $\F[t]/(p)$.
\end{lemma}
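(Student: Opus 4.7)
The plan is to identify the unique non-trivial invariant of $(H_V^\varepsilon,b \boxplus c)$ and compute it explicitly on a well-chosen test vector, using the hypothesis on $(b,u)_{p_0,r}$.

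First, I would establish the basic structure. Since $p=p_0(t^2)$ is assumed irreducible, we must have $p_0 \neq t$ (otherwise $p=t^2$ factors), so $u:=L_b^{-1}L_c$ is an automorphism of $V$ with single invariant factor $p_0^r$. By Lemma \ref{lemma:invariantsboxedsum}, the endomorphism $f:=b \boxplus c$ then has single invariant factor $p(t)^r$, and hence is cyclic with minimal polynomial $p^r$. Consequently, the only non-trivial primary invariant of $f$ is $p^r$, so all quadratic/Hermitian invariants of $(H_V^\varepsilon,f)$ vanish except $(H_V^\varepsilon,f)_{p,r}$, which is defined on the $1$-dimensional $\L$-vector space $(V \times V^\star)/\im p(f)$ with $\L:=\F[t]/(p)$, using the $\F$-bilinear form $\overline{B} : (X,Y) \mapsto H_V^\varepsilon(X,p(f)^{r-1}Y)$.

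Next, I would pick a test vector. Using the hypothesis, choose $x_0 \in V$ with $(b,u)_{p_0,r}(\overline{x_0},\overline{x_0})=\overline{s(t)}$; since this value is non-zero, $x_0 \notin \im p_0(u)$, and so $x_0$ is a cyclic vector for $u$. Set $X_0:=(x_0,0) \in V \times V^\star$. From Remark \ref{squareboxed}, $p(f)=p_0(f^2)=h_1(p_0(u))$, so $\im p(f)=\im p_0(u) \times \im (p_0(u))^t$, which shows $X_0 \notin \im p(f)$, i.e., $\overline{X_0} \neq 0$ in the quotient, and $\overline{X_0}$ is automatically an $\L$-basis of the $1$-dimensional target.

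The core calculation would be to evaluate $\overline{B}(\overline{X_0},\lambda\,\overline{X_0})$ for $\lambda=\overline{q(t)} \in \L$ and match it against $e_p(\lambda \cdot \overline{\varepsilon\,t\,s(t^2)})$. Writing $q(t)=a(t^2)+t\,c(t^2)$ and using $L_c=L_b \circ u$, a direct computation gives $q(f)(x_0,0)=\bigl(a(u)\,x_0,\,L_b(u\,c(u)\,x_0)\bigr)$; then applying $p(f)^{r-1}=h_1(p_0(u)^{r-1})$ together with the $b$-selfadjointness of $p_0(u)^{r-1}$ yields
\[
H_V^\varepsilon\bigl(X_0,p(f)^{r-1}q(f)\,X_0\bigr)=\varepsilon\, b\bigl(x_0,\,p_0(u)^{r-1}\,u\,c(u)\,x_0\bigr).
\]
Invoking the defining property of $(b,u)_{p_0,r}$ with the test polynomial $t\,c(t)$ and the hypothesis $(b,u)_{p_0,r}(\overline{x_0},\overline{x_0})=\overline{s(t)}$ then turns this into $\varepsilon\,e_{p_0}(\overline{t\,c(t)\,s(t)})$.

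To finish, I would verify the arithmetic identity $e_p(\overline{q(t)\cdot t\,s(t^2)})=e_{p_0}(\overline{t\,c(t)\,s(t)})$. Expanding $q(t)\cdot t\,s(t^2)=t\,a(t^2)\,s(t^2)+t^2\,c(t^2)\,s(t^2)$: the first summand contains only odd powers of $t$, and since $p=p_0(t^2)$ is even, reduction modulo $p$ preserves this property, so it has zero image under $e_p$; the second summand is of the form $g(t^2)$ with $g(Y):=Y\,c(Y)\,s(Y)$, and reducing $g$ modulo $p_0$ (to a remainder of $\F$-degree less than $\deg p_0$) shows that $e_p(\overline{g(t^2)})=e_{p_0}(\overline{g(t)})=e_{p_0}(\overline{t\,c(t)\,s(t)})$. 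These two little identities about $e_p$ are the main technical step of the argument and amount to the statement that under the decomposition $\L=\K \oplus t\,\K$ with $\K\simeq \F[t]/(p_0)$, the linear form $e_p$ is simply $e_{p_0}$ applied to the $\K$-component. Combined with non-degeneracy of the pairing $(\mu,\nu)\mapsto e_p(\mu\nu)$ on $\L$, the matching identity forces $(H_V^\varepsilon,b\boxplus c)_{p,r}(\overline{X_0},\overline{X_0})=\overline{\varepsilon\,t\,s(t^2)}$, completing the proof.
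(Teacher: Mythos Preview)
Your proof is correct and follows essentially the same approach as the paper's: pick $X_0=(x_0,0)$ with $x_0$ representing $\overline{s(t)}$ for $(b,u)_{p_0,r}$, split an arbitrary $q$ into even and odd parts, compute $H_V^\varepsilon(X_0,p(f)^{r-1}q(f)X_0)$ directly, and match it against $e_p(\overline{q}\cdot\varepsilon\,\overline{ts(t^2)})$ via the even/odd decomposition of $e_p$ in terms of $e_{p_0}$. One cosmetic point: you use the letter $c$ both for the given bilinear form and for the odd-part polynomial in $q(t)=a(t^2)+t\,c(t^2)$; renaming the latter (e.g.\ to $q_2$, as the paper does) would avoid confusion.
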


\begin{proof}
Again, the sole invariant factor of $b \boxplus c$ is $p^r$ because the sole one of $u$ is $p_0^r$.

We consider the fields $\mathbb{M}:=\F[t]/(p_0)$ and $\mathbb{L}:=\F[t]/(p)$.
For a polynomial $a(t) \in \F[t]$, we shall denote by $\overline{a(t)}^{\mathbb{L}}$ its class modulo $p$, and by
$\overline{a(t)}^{\mathbb{M}}$ its class modulo $p_0$.

Let $\varepsilon \in \{-1,1\}$.
Like in the proofs of previous lemmas, the form $\overline{B}$ that is used to construct the quadratic/Hermitian invariant
$(H_V^\varepsilon,b \boxplus c)_{p,r}$ is defined on the quotient space $(V \times V^\star)/(\im p_0(u) \times \im p_0(u)^t)$,
whereas the one that is used to construct the quadratic invariant $(b,u)_{p_0,r}$ is defined
on the quotient space $V /\im p_0(u)$. The assumption on $s(t)$ shows that we can choose
a vector $x$ of $V \setminus \im p_0(u)$ such that
$$\forall q \in \F[t], \; b(x,(qp_0^{r-1})(u)[x])=e_{p_0}(\overline{s(t)q(t)}^{\mathbb{M}}).$$
To simplify the notation, we now endow $V \times V^\star$ with the structure of $\F[t]$-module induced by $b \boxplus c$,
whereas $V$ is endowed with the structure of $\F[t]$-module associated with $u$.
Let us consider the vector $X:=(x,0) \in V \times V^\star$.
Note that $p^{r-1}\,X=(p_0^{r-1}\,x,0)$

Let $q \in \F[t]$, and split $q=q_1(t^2)+t q_2(t^2)$ for $q_1(t),q_2(t)$ in $\F[t]$.
Remembering that $(b \boxplus c)^2=h_1(u)$, we see that
$j(t^2)\,X=(j\,x,0)$ for all $j \in \F[t]$, and it follows that
$$(p^{r-1}q)\,X=q\,(p^{r-1}\,X)=\bigl(q_1 p_0^{r-1}\,x,L_c(q_2p_0^{r-1}\,x)\bigr).$$
Hence,
$$H_V^\varepsilon(X,qp^{r-1}\,X)=\varepsilon\,c(x,q_2p_0^{r-1}\,x)=\varepsilon\,b(x,tq_2 p_0^{r-1}\,x)=\varepsilon\, e_{p_0}(\overline{tq_2 s}^{\mathbb{M}}).$$
Now, if we perform the Euclidean division $t q_2  s=Q p_0+R$ with $\deg R<\deg p_0$, then
$t^2 q_2(t^2) s(t^2)=Q(t^2) p+R(t^2)$ and $\deg R(t^2)<\deg p$, which leads to
$$e_{p}(\overline{t^2 q_2(t^2) s(t^2)}^{\L})=R(0^2)=R(0)=e_{p_0}\bigl(\overline{t q_2 s}^{\M}\bigr).$$
Noting that $t \, q_1(t^2) s(t^2)$ is an odd polynomial, we see that its remainder mod $p$ is also odd (because $p$ is even), and hence
$$e_{p}\bigl(\overline{t q_1(t^2) s(t^2)}^{\L}\bigr)=0.$$
By summing, we end up with
$$H_V^\varepsilon(X,p^{r-1}\,(q\,X))=\varepsilon\,e_{p_0}(\overline{t q_2 s}^{\M})
=\varepsilon\,e_{p}(\overline{t q(t) s(t^2)}^{\L})
=e_p\bigl(\overline{q}^\L \varepsilon\,\overline{t s(t^2)}^\L\bigr).$$
Hence, by denoting by $\overline{X}$ the class of $X$ modulo $\im p(b \boxplus c)$,
we have found that
$$(H_V^\varepsilon,b \boxplus c)_{p,r}(\overline{X},\overline{X})=\varepsilon\,\overline{t s(t^2)}^{\L},$$
which yields the claimed result because $\overline{X} \neq 0$.
\end{proof}

\begin{cor}\label{cor:skewrepresentable}
Let $(b,c)$ be a pair of non-degenerate symmetric bilinear forms on a vector space $V$.
Let $p \in \Irr_0(\F)$ and $r \geq 1$ be an integer. Then the quadratic invariant $(H_V^1,b \boxplus c)_{p,r}$
is skew-representable over the field $\L:=\F[t]/(p)$ equipped with the involution that takes the class of $t$ to its opposite.
\end{cor}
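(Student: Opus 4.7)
The plan is to reduce to the indecomposable case and then read off the skew-representability directly from Lemma \ref{lemma:skewrepresentabledim1}. By the correspondence recalled at the end of Section \ref{section:boxedsum}, the pair $(b,c)$ is equivalent data to the $(1,1)$-pair $(b, L_b^{-1}L_c)$, which by Waterhouse's classification (Theorem \ref{theo:epsilon1pairs}) decomposes as an orthogonal direct sum of indecomposable $(1,1)$-pairs. In each such indecomposable summand the underlying endomorphism is cyclic with minimal polynomial a power of a monic irreducible polynomial. Transporting this back, I would write $(b,c) \simeq \underset{i}{\bot}\, (b_i,c_i)$, with each $b_i,c_i$ symmetric, each $b_i$ non-degenerate, and each $u_i := L_{b_i}^{-1}L_{c_i}$ cyclic of minimal polynomial $p_{0,i}^{r_i}$ for some $p_{0,i} \in \Irr(\F)$ and some $r_i \geq 1$.

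Next I would combine Lemma \ref{cpdirectsumlemma} with the compatibility of quadratic invariants with orthogonal direct sums (noted at the end of Section \ref{section:invariants}) to obtain
$$(H_V^1, b \boxplus c)_{p,r} \simeq \underset{i}{\bot}\, (H_{V_i}^1, b_i \boxplus c_i)_{p,r}.$$
By Lemma \ref{lemma:invariantsboxedsum}, the sole invariant factor of $b_i \boxplus c_i$ equals $p_{0,i}(t^2)^{r_i}$. Hence $(H_{V_i}^1, b_i \boxplus c_i)_{p,r}$ vanishes unless $p_{0,i}(t^2) = p$ and $r_i = r$. Indeed, if $p_{0,i}(t^2)$ factors nontrivially as $q\,q^{\op}$ with $q \neq q^{\op}$, the primary invariants of $b_i \boxplus c_i$ are $q^{r_i}$ and $(q^{\op})^{r_i}$, neither of which can equal the even irreducible polynomial $p$. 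Disposing of these summands cleanly is the one point that warrants care, but it is essentially immediate from this parity observation.

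For each contributing summand, the pair $(b_i, u_i)$ is an indecomposable $(1,1)$-pair with $u_i$ cyclic of minimal polynomial $p_{0,i}^r$, so its quadratic invariant $(b_i, u_i)_{p_{0,i}, r}$ is a $1$-dimensional non-degenerate symmetric bilinear form over $\F[t]/(p_{0,i})$, which necessarily represents some nonzero class $\overline{s_i(t)}$ with $s_i$ not divisible by $p_{0,i}$. Lemma \ref{lemma:skewrepresentabledim1} applied with $\varepsilon = 1$ then shows that $(H_{V_i}^1, b_i \boxplus c_i)_{p,r}$ is $1$-dimensional and represents the class $\overline{t\, s_i(t^2)}^{\L}$ in $\L = \F[t]/(p)$. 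Since the involution $\bullet$ sends the class of $t$ to its opposite and fixes every class of the form $\overline{s_i(t^2)}^\L$, this value satisfies
$$\bigl(\overline{t\, s_i(t^2)}^{\L}\bigr)^{\bullet} = -\,\overline{t\, s_i(t^2)}^{\L}.$$
Concatenating the resulting $1$-dimensional bases produces an orthogonal basis of the underlying space of $(H_V^1, b \boxplus c)_{p,r}$ on which every diagonal value is anti-invariant under $\bullet$, which is exactly the definition of skew-representability.
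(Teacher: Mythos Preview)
Your proof is correct and follows essentially the same approach as the paper: decompose $(b,c)$ into indecomposable pairs, use the compatibility of boxed sums and quadratic invariants with orthogonal direct sums, and then apply Lemma~\ref{lemma:skewrepresentabledim1} to each contributing summand. You are slightly more explicit than the paper in verifying that the non-contributing summands vanish and that the values $\overline{t\,s_i(t^2)}^{\L}$ are anti-invariant under $\bullet$, but the argument is the same.
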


\begin{proof}
Let us split the pair $(b,c) \simeq (b_1,c_1) \bot \cdots \bot (b_n,c_n)$
into an orthogonal sum of irreducible pairs of symmetric bilinear forms (defined on respective vector spaces $V_1,\dots,V_n$). Then we know that
$(H_V^1,b \boxplus c) \simeq (H_{V_1}^1,b_1 \boxplus c_1) \bot \cdots \bot (H_{V_n}^1,b_n \boxplus c_n)$, to the effect that
$(H_V^1,b \boxplus c)_{p,r} \simeq (H_{V_1}^1,b_1 \boxplus c_1)_{p,r} \bot \cdots \bot (H_{V_n}^1,b_n \boxplus c_n)_{p,r}$.
Hence, it suffices to prove the result when $(b,c)$ is indecomposable as a pair of symmetric bilinear forms.

So, assume now that $(b,c)$ is indecomposable. Then it is known that $u:=L_b^{-1} L_c$ is cyclic with minimal polynomial $q^s$
for some $q \in \Irr(\F)$ and some $s>0$. If $p=q(t^2)$, then Lemma \ref{lemma:skewrepresentabledim1} shows that
$(H_V^1,b \boxplus c)_{p,r}$ is a skew-representable symmetric bilinear form (of rank $1$).
Otherwise $p$ does not divide $q(t^2)$, and $(H_V^1,b \boxplus c)_{p,r}$ has rank $0$ (and is therefore skew-representable).
\end{proof}

Here is now a sort of converse statement:

\begin{cor}\label{prop:skewrepresentablereciproc}
Let $p \in \Irr_0(\F)$ and $r>0$. Let $B$ be a skew-representable symmetric bilinear form over the field $\L:=\F[t]/(p)$
(for the involution $x \mapsto x^\bullet$ that takes the class of $t$ to its opposite).
Then there exists a vector space $W$ (over $\F$) and a pair $(b,c)$ of non-degenerate symmetric bilinear forms on $W$
such that:
\begin{enumerate}[(i)]
\item The invariant factors of $b \boxplus c$ all equal $p^r$;
\item $(H_W^1,b \boxplus c)_{p,r}$ is equivalent to $B$.
\end{enumerate}
\end{cor}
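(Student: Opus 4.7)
The plan is to realize $B$ as an $\L$-orthogonal sum of one-dimensional skew-represented forms, handle each summand via Lemma \ref{lemma:skewrepresentabledim1}, and then glue the pieces via Lemma \ref{cpdirectsumlemma}. Since $B$ is intended to coincide with a quadratic invariant of some pair, it is implicitly non-degenerate, and we will treat it as such.

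First, since $p \in \Irr_0(\F)$, write $p = p_0(t^2)$ for a unique $p_0 \in \Irr(\F)$ (note that $p_0 \neq t$, because otherwise $p = t^2$ would be reducible), and set $\M := \F[t]/(p_0)$. The $-1$-eigenspace of $\bullet$ on $\L$ consists of the classes of odd polynomials, so every $\alpha \in \L$ with $\alpha^\bullet = -\alpha$ takes the form $\alpha = \overline{t\,s(t^2)}^\L$ for some $s(t) \in \F[t]$, with $\alpha \neq 0$ iff $p_0 \nmid s$. A skew-representing basis then decomposes $B \simeq \langle \alpha_1 \rangle \bot \cdots \bot \langle \alpha_n \rangle$ with $\alpha_i = \overline{t\,s_i(t^2)}^\L$ and $p_0 \nmid s_i$ for each $i$.

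Next, for each $i$ I will construct a pair $(b_i, c_i)$ of non-degenerate symmetric bilinear forms on an $\F$-space $V_i$ whose associated $(1,1)$-pair $(b_i, u_i)$, with $u_i := L_{b_i}^{-1} L_{c_i}$, has $u_i$ cyclic of minimal polynomial $p_0^r$ and sole nonzero quadratic invariant $(b_i, u_i)_{p_0, r} \simeq \langle \overline{s_i}^\M \rangle$. Invoking the realizability statement recalled after Theorem \ref{theo:epsilon1pairs}, I pick a $(1,1)$-pair $(b_i, u_i)$ whose quadratic invariants all vanish except at $(p_0, r)$, where it equals the one-dimensional non-degenerate $\M$-bilinear form $\langle \overline{s_i}^\M \rangle$ (non-degenerate because $p_0 \nmid s_i$). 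Then $n_{p_0, r}(u_i) = 1$ is the only nonzero Jordan number of $u_i$, so $u_i$ is cyclic with minimal polynomial $p_0^r$; and because $p_0 \neq t$, $u_i$ is also an automorphism. Setting $c_i(x,y) := b_i(x, u_i(y))$ yields a non-degenerate symmetric bilinear form on $V_i$ with $L_{b_i}^{-1} L_{c_i} = u_i$. Applying Lemma \ref{lemma:skewrepresentabledim1} to $(b_i, c_i)$, the sole invariant factor of $b_i \boxplus c_i$ is $p^r$, and $(H_{V_i}^1, b_i \boxplus c_i)$ has exactly one nonzero quadratic invariant, namely $(H_{V_i}^1, b_i \boxplus c_i)_{p, r} \simeq \langle \alpha_i \rangle$.

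Finally, I set $W := V_1 \oplus \cdots \oplus V_n$, $b := b_1 \bot \cdots \bot b_n$, and $c := c_1 \bot \cdots \bot c_n$. Lemma \ref{cpdirectsumlemma} gives $(H_W^1, b \boxplus c) \simeq (H_{V_1}^1, b_1 \boxplus c_1) \bot \cdots \bot (H_{V_n}^1, b_n \boxplus c_n)$, so every invariant factor of $b \boxplus c$ equals $p^r$, yielding (i); and the compatibility of quadratic invariants with orthogonal direct sums gives $(H_W^1, b \boxplus c)_{p, r} \simeq \langle \alpha_1 \rangle \bot \cdots \bot \langle \alpha_n \rangle \simeq B$, yielding (ii). The only real obstacle is the construction of each individual $(b_i, c_i)$ realizing a prescribed one-dimensional skew class at the $(p_0, r)$-slot; this obstacle is dissolved by combining the realizability of arbitrary finite families of quadratic invariants with the natural bijection $u \leftrightarrow \bigl((x,y) \mapsto b(x, u(y))\bigr)$ between $(1,1)$-pair structures on $(V,b)$ and non-degenerate symmetric forms on $V$.
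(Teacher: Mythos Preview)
Your proof is correct and follows essentially the same route as the paper's: decompose $B$ into one-dimensional skew-represented summands $\langle \overline{t\,s_i(t^2)}\rangle$, realize each via a $(1,1)$-pair $(b_i,u_i)$ with $u_i$ cyclic of minimal polynomial $p_0^r$ and quadratic invariant $\langle \overline{s_i}\rangle$ at $(p_0,r)$, apply Lemma~\ref{lemma:skewrepresentabledim1}, and assemble via Lemma~\ref{cpdirectsumlemma}. Your added observation that $p_0\neq t$ (hence $u_i$ is invertible and $c_i$ non-degenerate) is a welcome clarification that the paper leaves implicit.
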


\begin{proof}
We choose an orthogonal basis $(e_1,\dots,e_n)$ for $B$ such that $B(e_i,e_i)^\bullet=-B(e_i,e_i)$ for all $i \in \lcro 1,n\rcro$.
In other words, for all $i \in \lcro 1,n\rcro$, there is a polynomial $s_i(t) \in \F[t]$ such that $B(e_i,e_i)$ is the class of
$t s_i(t^2)$ modulo $p$, and $p$ does not divide $s_i(t^2)$.

Let $i \in \lcro 1,n\rcro$. We can choose an indecomposable pair $(b_i,c_i)$ of non-degenerate symmetric bilinear forms (with underlying space denoted by
$V_i$) such that $u_i:=L_{b_i}^{-1} L_{c_i}$ is cyclic with minimal polynomial $p_0^r$ (where $p=p_0(t^2)$)
and the quadratic form associated with $(b_i,u_i)_{p_0,r}$ takes the value $s_i(t)$.
By Lemma \ref{lemma:invariantsboxedsum}, the boxed sum $(H_{V_i},b_i \boxplus c_i)$ is such that $b_i \boxplus c_i$ is cyclic with minimal polynomial
$p^r$, and the (rank $1$) symmetric bilinear form $(H_{V_i},b_i \boxplus c_i)_{p,r}$ represents the value $\overline{t s_i(t^2)}$,
yielding that this form is equivalent to the form $B_i$ induced by $B$ on $\L e_i$.

Setting $(b,c):=(b_1,c_1) \bot \cdots \bot (b_n,c_n)$ (with underlying space denoted by $W$), we deduce that
$(H_W^1,b \boxplus c) \simeq (H_{V_1}^1,b_1 \boxplus c_1) \bot \cdots \bot (H_{V_n}^1,b_n \boxplus c_n)$
has a sole non-zero quadratic invariant, attached to the pair $(p,r)$ and equivalent to $B_1 \bot \cdots \bot B_n$
i.e.\ to $B$.
\end{proof}

\begin{lemma}\label{lemma:representabledim1}
Let $p \in \Irr(\F) \setminus (\Irr_0(\F) \cup \{t\})$ and $r>0$.
Set $p_0 \in \Irr(\F)$ such that $p_0(t^2)=p(t)\,p^\op(t)$.
Consider the field $\L:=\F[t]/(p)$.
Let $\beta \in \L \setminus \{0\}$.
Then there exists a vector space $V$ (over $\F$) equipped with a pair $(b,c)$ of non-degenerate symmetric bilinear forms such that:
\begin{enumerate}[(i)]
\item $b \boxplus c$ is cyclic with minimal polynomial $p_0(t^2)^r=p(t)^r p^\op(t)^r$;
\item The quadratic invariant $(H_V^1, b \boxplus c)_{p,r}$ has dimension $1$ and represents the value $\beta$.
\end{enumerate}
\end{lemma}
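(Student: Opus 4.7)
The plan is to build the pair $(b,c)$ by the same template as in the proof of Lemma~\ref{lemma:skewrepresentabledim1}, adapted to the fact that $p_0(t^2)$ now factors as $p(t)\,p^\op(t)$. I take $V$ to be a vector space of dimension $r\deg p$ equipped with a cyclic endomorphism $u$ of minimal polynomial $p_0(t)^r$, and choose a non-degenerate symmetric bilinear form $b$ on $V$ for which $u$ is $b$-selfadjoint; by Theorem~\ref{theo:epsilon1pairs}, the isometry class of $(b,u)$ is determined by its sole nontrivial quadratic invariant $(b,u)_{p_0,r}$, a rank-$1$ non-degenerate symmetric bilinear form on $\M := \F[t]/(p_0)$, and we can arrange for it to represent any prescribed class $\overline{s(t)}^\M$ with $s \in \F[t]$ not divisible by $p_0$. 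Setting $c(x,y) := b(x,u(y))$ yields $L_b^{-1}L_c = u$, so Lemma~\ref{lemma:invariantsboxedsum} gives that $w := b \boxplus c$ is cyclic with minimal polynomial $p(t)^r\,p^\op(t)^r$, which is condition~(i).

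The main new difficulty relative to Lemma~\ref{lemma:skewrepresentabledim1} is that $V \times V^\star$ now splits as the $H_V^1$-orthogonal direct sum of the two primary components $\Ker p(w)^r$ and $\Ker p^\op(w)^r$ (the orthogonality follows from the $H_V^1$-selfadjointness of $w$ combined with the coprimality of $p$ and $p^\op$), and the natural test vector $X_0 := (x,0)$ with $x$ a cyclic vector for $u$ no longer lies in $\Ker p(w)^r$. I project it via a Bezout identity: choose $A, B \in \F[t]$ with $A(t) p(t)^r + B(t) p^\op(t)^r = 1$ and set $Y := B(w)\,p^\op(w)^r\,X_0$, which then generates $\Ker p(w)^r$ as an $\F[t]$-module. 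Since $B(t) p^\op(t)^r$ is idempotent modulo $p(t)^r p^\op(t)^r$ and $w$ is $H_V^1$-selfadjoint, the pairing $H_V^1\bigl(Y, \lambda(w)\,p(w)^{r-1}\,Y\bigr)$ reduces to $\psi(\mu)$, where $\mu := \lambda\,B\,p^{r-1}\,(p^\op)^r$ and $\psi(Q) := H_V^1(X_0, Q(w)\,X_0) = b\bigl(x,\,u\,Q_o(u)\,x\bigr)$ (with $Q_o$ defined via $Q(t) = Q_e(t^2) + t\,Q_o(t^2)$, coming from the direct computations $w^{2j}X_0 = (u^j x, 0)$ and $w^{2j+1}X_0 = (0, L_c(u^j x))$). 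The odd-part trick $\mu(t) - \mu(-t) = 2t\,\mu_o(t^2)$ combined with $p(-t) = (-1)^{\deg p}\,p^\op(t)$ then rewrites the result as $\tfrac{1}{2}\,b\bigl(x,\,q(u)\,p_0(u)^{r-1}\,x\bigr) = \tfrac{1}{2}\,e_{p_0}\bigl(\overline{s\,q}^\M\bigr)$ for an explicit polynomial $q \in \F[t]$ depending linearly on $\lambda$, exactly paralleling the calculation in Lemma~\ref{lemma:skewrepresentabledim1}.

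To finish, I transport the formula back to $\L$: let $\theta := \overline{t}^\L$; then $p_0(\theta^2) = p(\theta)\,p^\op(\theta) = 0$, and since $p_0$ is irreducible of degree $\deg p = [\L:\F]$ we obtain $\L = \F[\theta^2]$, and $\M$ identifies with $\L$ via $\overline{t}^\M \mapsto \theta^2$. A careful comparison of the linear forms $e_{p_0}$ and $e_p$ under this identification then yields that $(H_V^1, w)_{p, r}$ represents $\kappa\,\overline{t\,s(t^2)}^\L$ for some specific nonzero $\kappa \in \F$. Because $\L = \F[\theta^2]$, every element of $\L$ can be written $s(\theta^2)$ for some $s \in \F[t]$; so for any prescribed $\beta \in \L \setminus \{0\}$ I can find $s \in \F[t]$ (necessarily not divisible by $p_0$, since $\beta \neq 0$) with $\overline{t\,s(t^2)}^\L = \beta/\kappa$, and choosing $(b,u)$ so that $(b,u)_{p_0,r}$ represents $\overline{s(t)}^\M$ then produces condition~(ii). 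The principal obstacle is the explicit computation, especially the careful comparison of $e_{p_0}$ with $e_p$ across the identification $\M = \L$: unlike in Lemma~\ref{lemma:skewrepresentabledim1}, where these linear forms coincide on $\M \subset \L$ (as $\deg p = 2\deg p_0$ left enough basis elements), here $\deg p = \deg p_0$ and the two forms are generically different, so the transport requires extra bookkeeping.
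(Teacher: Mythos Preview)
Your setup is essentially the paper's: build $(b,c)$ from a $(1,1)$-pair $(b,u)$ with $u$ cyclic of minimal polynomial $p_0^r$, then analyse the boxed sum. Your Bezout-projected test vector $Y=B(w)(p^\op)^r(w)X_0$ is just $\overline{B}^{\L}$ times the paper's vector $X=(p^\op)^r(w)X_0$, so the two computations run in parallel.

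The divergence is at the endgame, and there you have a genuine gap. You assert that the value represented equals $\kappa\cdot\overline{t\,s(t^2)}^{\L}$ for some fixed $\kappa\in\F$, deferring the justification to a ``careful comparison of $e_{p_0}$ and $e_p$'' that you do not carry out. That comparison does not give what you claim: under your identification $\M\cong\L$ via $\overline{t}^{\M}\mapsto\theta^2$, the form $e_{p_0}$ becomes $y\mapsto e_p(\delta y)$ for some $\delta\in\L$ that is \emph{not} a scalar in general. For instance with $\F=\Q$, $p=t^2-t-1$, $r=1$, one finds $\delta=1-\theta$ and the represented value works out to $-\tfrac{1}{2}\,s(\theta^2)$, not a scalar multiple of $\theta\,s(\theta^2)$. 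So the specific formula is false, and with it your concluding paragraph (which solves $\overline{t\,s(t^2)}^{\L}=\beta/\kappa$) does not apply as written.

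The paper sidesteps this entirely. Rather than computing $\gamma$ explicitly, it shows that the $\F$-linear map $\overline{q}^{\L}\mapsto\overline{t\,s_2}^{\M}$ (arising from the odd part of $(p^\op)^{r+1}q$) is injective, hence an isomorphism by equality of dimensions; dualising via $e_p$ and $e_{p_0}$ then yields an $\F$-linear isomorphism $\Lambda:\M\to\L$ with $\gamma=\Lambda(\alpha)$, and one simply takes $\alpha=\Lambda^{-1}(\beta)$. This is both shorter and avoids the delicate transport you flagged. Your route can be repaired---after identification, $\alpha\mapsto\gamma$ is in fact multiplication by a nonzero element of $\L$ (one checks $(p^\op)^{r+1}(-\theta)=0$ forces $s_1(\theta^2)=\theta\,s_2(\theta^2)$, which gives $\L$-linearity)---but you would still need to prove that this multiplier is nonzero, and the cleanest way to do that is exactly the paper's injectivity argument.
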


\begin{proof}
We consider the field $\M:=\F[t]/(p_0)$.
Let us start from an arbitrary $\alpha \in \M \setminus \{0\}$.
By the classification of pairs of symmetric bilinear forms, there exists a vector space $V$ equipped with a pair of non-degenerate
symmetric bilinear forms $(b,c)$ such that $u:=L_b^{-1} L_c$ is cyclic with minimal polynomial $p_0^r$
and the quadratic invariant $(b,u)_{p_0,r}$ represents the value $\alpha$.
By Lemma \ref{lemma:invariantsboxedsum} the endomorphism $b \boxplus c$ has $p_0(t^2)^r$ as its sole invariant factor.
As $p \neq p^{\op}$, it follows that $b \boxplus c$ has exactly two Jordan cells, both of size $r$, associated respectively with $p$ and $p^\op$,
and in particular the bilinear form $(H_V^1, b \boxplus c)_{p,r}$ has rank $1$. We shall compute a specific nonzero value taken by the quadratic form attached to
$(H_V^1, b \boxplus c)_{p,r}$, a value that will happen to be an $\F$-linear function of $\alpha$.
Then, we will see that this function maps $\M \setminus \{0\}$ onto $\L \setminus \{0\}$.

We set $W:=V \times V^\star$ and $v:=b \boxplus c$.
Throughout, we equip $W$ with the $\F[t]$-module structure induced by the endomorphism $v$, and
$V$ with the $\F[t]$-module structure induced by $u$.
We choose $x$ in $V \setminus \im p_0(u)$ such that $\alpha=(b,u)_{p_0,r}(\overline{x},\overline{x})$,
where $\overline{x}$ denotes the class of $x$ in the quotient space $V/\im p_0(u)$, and we set $X:=(p^\op)^r\,(x,0) \in W$.
Hence $p^r\, X=0$, that is $X \in \Ker p^r(v)$.

We will prove first that $X \not\in \Ker p^{r-1}(v)$.
Let us write $(p^\op)^r=r_1(t^2)+t r_2(t^2)$ for polynomials $r_1,r_2$ in $\F[t]$.
Then $X=(r_1\,x,L_c(r_2\,x))$. Now, assume on the contrary that $p^{r-1}\,X=0$.
Noting that $\im (pp^\op)(v)=\Ker p^{r-1}(v) \oplus \Ker (p^\op)^{r-1}(v)$
(because of the Jordan cells structure of $v$), we would deduce that $X \in \im (pp^\op)(v)$. Yet $(pp^\op)(v)=p_0(v^2)=h_1(p_0(u))$, and hence
we would find $X \in \im p_0(u) \times V^\star$.
Therefore $r_1 x \in \im p_0(u)$. Then, $p_0$ must divide $r_1$ because $x \not\in \im p_0(u)$ and $u$ is cyclic with minimal polynomial $p_0^r$.
Hence $pp^\op=p_0(t^2)$ must divide $r_1(t^2)$, and we successively deduce that $p^\op$ divides $t r_2(t^2)$ and that $p^\op$ divides $r_2(t^2)$.
Hence $p$ divides $r_2(t^2)^\op=r_2(t^2)$, and finally $p$ divides $r_1(t^2)+t\, r_2(t^2)=(p^\op)^r$, which is absurd.
We conclude that $X \in \Ker p^r(v) \setminus \Ker p^{r-1}(v)$.

In the next step, we obtain precious information on the non-zero scalar $\gamma:=(H_V^1, b \boxplus c)_{p,r}(X,X)$ of $\L$.
Let us start from an arbitrary polynomial $q \in \F[t]$.
We compute
\begin{align*}
e_p(\overline{q}^{\mathbb{L}} \gamma)& =H_V^1(X,p^{r-1}\,(q\,X)) \\
& =H_V^1\bigl((p^{\op})^rq\,(x,0),p^\op(p_0^{r-1}(t^2)\,(x,0))\bigr) \\
& =H_V^1\bigl((p^{\op})^rq\,(x,0),p^\op(p_0^{r-1}\,x,0)\bigr) \\
& =H_V^1\bigl((p^{\op})^{r+1}q\,(x,0),(p_0^{r-1}\,x,0)\bigr).
\end{align*}
Now, split $(p^{\op})^{r+1}q=s_1(t^2)+t s_2(t^2)$ with $s_1,s_2$ in $\F[t]$.
Then $(p^{\op})^{r+1}q\,(x,0)=(s_1\,x,L_c(s_2\,x))$ and hence
\begin{multline*}
e_p(\overline{q}^{\mathbb{L}} \gamma)=c(p_0^{r-1}\,x,s_2\,x)=c(s_2\,x,p_0^{r-1}\,x) \\
= b(s_2\,x, tp_0^{r-1}\,x)=b\bigl(x,p_0^{r-1}\,(ts_2\,x)\bigr)=e_{p_0}(\overline{ts_2}^{\mathbb{M}} \alpha).
\end{multline*}
In order to conclude, it suffices to prove that $\alpha$ can be chosen so that $\gamma=\beta$.
To see this, we shall prove that $\gamma$ can be obtained as an $\F$-linear function of $\alpha$ (from $\mathbb{M}$ to $\mathbb{L}$).

We consider the linear mapping $q \in \F[t] \mapsto \overline{t s_2}^{\mathbb{M}} \in \F[t]/(p_0)$ (with the above construction).
Let $q \in \F[t]$, and write $q(p^{\op})^{r+1}=s_1(t^2)+t s_2(t^2)$ with $s_1$ and $s_2$ in $\F[t]$.

Assume that $p$ divides $q$. Then $p_0(t^2)$ divides $q(p^{\op})^{r+1}$,
and one deduces that $p_0$ divides $s_1$ and $s_2$ (to see this, we split the quotient $z$ of $q(p^{\op})^{r+1}$ by $p_0(t^2)$ into
$z(t)=r_1(t^2)+t r_2(t^2)$ for $r_1$ and $r_2$ in $\F[t]$, we recover that $q(p^{\op})^{r+1}=p_0(t^2) r_1(t^2)+t p_0(t^2) r_2(t^2)$
and hence $s_1(t^2)=p_0(t^2) r_1(t^2)$ and $s_2(t^2)=p_0(t^2) r_2(t^2)$, so that $s_1=p_0 r_1$ and $s_2=p_0 r_2$).
Hence $\overline{t s_2}^{\mathbb{M}}=0$.

Conversely, assume that $p_0$ divides $t s_2$. Then $p_0$ divides $s_2$ and hence $pp^\op=p_0(t^2)$ divides $s_2(t^2)$.
It follows that $p^\op$ divides $s_1(t^2)=q(p^{\op})^{r+1}- t s_2(t^2)$, and since $s_1(t^2)$ is even it follows that
$p_0(t^2)=pp^\op$ divides $s_1(t^2)$. Finally $p_0(t^2)$ divides $q(p^{\op})^{r+1}$ and hence $p$ divides $q$.
We conclude that the previous map induces an injective $\F$-linear mapping
$$\varphi : \F[t]/(p) \hookrightarrow \F[t]/(p_0).$$
The $\F$-vector spaces $\F[t]/(p)$ and $\F[t]/(p_0)$ have the same (finite) dimension over $\F$, and we deduce that $\varphi$ is a vector space isomorphism.
Now, consider the transposed isomorphism
$$\varphi^t :  f \in \Hom_\F(\mathbb{M},\F) \overset{\simeq}{\longmapsto} (f \circ \varphi) \in \Hom_\F(\mathbb{L},\F).$$
We know that the mappings
$$\Phi_p : \begin{cases}
\L & \longrightarrow \Hom_\F(\L,\F) \\
a & \longmapsto [x \mapsto e_p(ax)]
\end{cases} \quad \text{and} \quad
\Psi_{p_0} : \begin{cases}
\mathbb{M} & \longrightarrow \Hom_\F(\mathbb{M},\F) \\
b & \longmapsto [x \mapsto e_{p_0}(bx)]
\end{cases}$$
are isomorphisms of $\F$-vector spaces.
Hence, the composite $\Lambda:=\Phi_p^{-1} \circ \varphi^t \circ \Psi_{p_0}$ is an isomorphism of $\F$-vector spaces from
$\mathbb{M}$ to $\L$, and from the previous computation we have found that this isomorphism maps $\alpha$ to $\gamma$.

The conclusion is finally obtained by taking $\alpha:=\Lambda^{-1}(\beta)$.
\end{proof}

With the same method as in the proof of Corollary \ref{cor:skewrepresentable}, we obtain the following corollary:

\begin{cor}\label{cor:partcorquadratic}
Let $p \in \Irr(\F) \setminus (\Irr_0(\F) \cup \{t\})$ and $r\geq 1$.
Set $p_0 \in \Irr(\F)$ such that $p_0(t^2)=p p^\op$.
Consider the field $\L:=\F[t]/(p)$. Let $B$ be a non-degenerate symmetric bilinear form on a finite-dimensional vector space over $\L$.
Then there exists a triple $(V,b,c)$ consisting of a finite-dimensional vector space $V$ over $\F$ and of a pair $(b,c)$
of non-degenerate symmetric bilinear forms on $V$ such that:
\begin{enumerate}[(i)]
\item All the invariant factors of $b \boxplus c$ equal $p_0(t^2)^r$;
\item The quadratic invariant $(H_V^1,b\boxplus c)_{p,r}$ is equivalent to $B$.
\end{enumerate}
\end{cor}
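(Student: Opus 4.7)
The plan is to mimic the proof of Corollary \ref{cor:skewrepresentable} by reducing, via diagonalization, to the rank-$1$ case which has already been handled in Lemma \ref{lemma:representabledim1}.

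First, I would choose a $B$-orthogonal basis $(e_1,\dots,e_n)$ of the underlying $\L$-vector space of $B$, and set $\beta_i := B(e_i,e_i) \in \L \setminus \{0\}$ for each $i \in \lcro 1,n\rcro$. Then, for each $i$, Lemma \ref{lemma:representabledim1} furnishes a triple $(V_i,b_i,c_i)$ consisting of a finite-dimensional $\F$-vector space $V_i$ and a pair of non-degenerate symmetric bilinear forms on $V_i$ such that $b_i \boxplus c_i$ is cyclic with minimal polynomial $p_0(t^2)^r = p^r (p^\op)^r$ and such that the quadratic invariant $(H_{V_i}^1,b_i \boxplus c_i)_{p,r}$ is a $1$-dimensional non-degenerate symmetric bilinear form over $\L$ representing $\beta_i$.

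Next, I would set $V := V_1 \times \cdots \times V_n$, $b := b_1 \bot \cdots \bot b_n$ and $c := c_1 \bot \cdots \bot c_n$. Applying Lemma \ref{cpdirectsumlemma} inductively yields the isometry
\[
(H_V^1, b \boxplus c) \simeq (H_{V_1}^1, b_1 \boxplus c_1) \bot \cdots \bot (H_{V_n}^1, b_n \boxplus c_n).
\]
For (i), each summand $b_i \boxplus c_i$ is cyclic with primary invariants $p^r$ and $(p^\op)^r$; in the direct sum these pile up to give $n$ primary invariants equal to $p^r$ and $n$ equal to $(p^\op)^r$. Since $p \neq p^\op$, the primary factors pair up to produce $n$ invariant factors, each equal to $p^r (p^\op)^r = p_0(t^2)^r$, as required.

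For (ii), I would invoke the compatibility of the quadratic invariants with orthogonal direct sums to get
\[
(H_V^1, b \boxplus c)_{p,r} \simeq (H_{V_1}^1, b_1 \boxplus c_1)_{p,r} \bot \cdots \bot (H_{V_n}^1, b_n \boxplus c_n)_{p,r}.
\]
The right-hand side is the orthogonal sum of $n$ one-dimensional non-degenerate symmetric bilinear forms over $\L$, representing the scalars $\beta_1,\dots,\beta_n$ respectively, and hence it is equivalent to the diagonal form $\langle \beta_1,\dots,\beta_n\rangle$, which is precisely $B$ by the choice of the orthogonal basis. There is no real obstacle here: the whole work was done in Lemma \ref{lemma:representabledim1}, and what remains is purely a packaging step using the compatibility of boxed sums and quadratic invariants with orthogonal direct sums.
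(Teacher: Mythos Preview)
Your proof is correct and follows essentially the same approach the paper intends: diagonalize $B$ over $\L$, apply Lemma \ref{lemma:representabledim1} to each diagonal entry to obtain rank-$1$ boxed sums, and assemble them via orthogonal direct sums using Lemma \ref{cpdirectsumlemma} and the compatibility of quadratic invariants with such sums. The paper does not spell out the argument but refers to the method of the preceding corollaries, which is exactly what you have written out.
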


We finish with an easier result.

\begin{lemma}\label{lemma:evensizenilpotent}
Let $r$ be an even positive integer, and let $\varepsilon \in \{-1,1\}$ and $\alpha \in \F \setminus \{0\}$.
Then there exist a triple $(V,b,c)$ consisting of a finite-dimensional vector space $V$ over $\F$ and of a pair $(b,c)$
of symmetric bilinear forms on $V$, with $b$ non-degenerate, such that:
\begin{enumerate}[(i)]
\item $b \boxplus c$ is nilpotent with a sole Jordan cell, of size $r$.
\item The quadratic invariant $(H_V^\varepsilon,b \boxplus c)_{t,r}$ represents the value $\alpha$.
\end{enumerate}
\end{lemma}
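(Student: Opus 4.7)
The plan is to construct $(V,b,c)$ explicitly on a cyclic nilpotent model and to read off both conditions by direct computation.

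Put $n:=r/2$ and $V:=\F^n$ with canonical basis $(e_1,\ldots,e_n)$ and dual basis $(e_1^\star,\ldots,e_n^\star)$. Let $u\in\End(V)$ be the nilpotent Jordan block with $u(e_1):=0$ and $u(e_i):=e_{i-1}$ for $i\geq 2$, and let $s\in\F\setminus\{0\}$ be a scalar to be tuned later. Define $b$ by $b(e_i,e_j):=s\,\delta_{i+j,n+1}$; this is the scaled anti-diagonal form $sK_n$, hence symmetric and non-degenerate. A direct check shows $b(u(e_i),e_j)=s\,\delta_{i+j,n+2}=b(e_i,u(e_j))$, so $u^\star=u$. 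Set $c(x,y):=b(x,u(y))$; by $b$-selfadjointness of $u$ the form $c$ is symmetric, and by construction $L_b^{-1}\circ L_c=u$. Since $u$ is cyclic with sole invariant factor $t^n$, Lemma \ref{lemma:invariantsboxedsum} yields that the sole invariant factor of $v:=b\boxplus c$ equals $(t^2)^n=t^r$, proving (i).

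For (ii) I compute the quadratic invariant $(H_V^\varepsilon,v)_{t,r}$ by evaluating it on a well-chosen cyclic generator. Pick $X:=(0,e_1^\star)\in V\times V^\star$. Since $v^2=h_1(u)$ and since $u^t(e_j^\star)=e_{j+1}^\star$ for $j\leq n-1$, an easy induction gives
$$v^{2k}(X)=(0,e_{k+1}^\star)\qquad (0\leq k\leq n-1).$$
Applying $v=v_c+w_b$ once more and using $L_c(0)=0$ together with $L_b^{-1}(e_n^\star)=s^{-1}e_1$, we obtain
$$v^{r-1}(X)=v(0,e_n^\star)=(s^{-1}e_1,0).$$
In particular $v^{r-1}(X)\neq 0$, which forces $X\notin\im v$. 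Since $v$ has a single Jordan cell of size $r$, we have $\Ker v^{r-1}=\im v$ and $\Ker v^r\cap\im v=\im v$, so the space supporting the quadratic invariant is $V_{t,r}=(V\times V^\star)/\im v$, in which $\overline{X}$ is non-zero. The residue field $\F[t]/(t)$ equals $\F$, so the $\L$-extension procedure of Section \ref{section:extensionbilinform} is trivial and
$$(H_V^\varepsilon,v)_{t,r}(\overline{X},\overline{X})=H_V^\varepsilon\bigl((0,e_1^\star),(s^{-1}e_1,0)\bigr)=e_1^\star(s^{-1}e_1)+\varepsilon\cdot 0=s^{-1}.$$
The $\varepsilon$-term vanishes because the second coordinate of $v^{r-1}(X)$ is zero, so the construction is uniform in $\varepsilon\in\{-1,1\}$. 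Choosing $s:=\alpha^{-1}$ makes this one-dimensional form represent $\alpha$, proving (ii).

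The argument is mechanical; the only mild subtlety is choosing a cyclic generator that projects non-trivially onto the one-dimensional quotient $V_{t,r}$. A naive candidate like $(e_n,0)$ fails since $v^{r-1}(e_n,0)=0$, whereas $(0,e_1^\star)$ works because its $V^\star$-component $e_1^\star$ lies outside $L_c(V)=\Vect(e_2^\star,\ldots,e_n^\star)$, which is precisely the $V^\star$-part of $\im v$.
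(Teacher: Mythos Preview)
Your proof is correct. The approach is close in spirit to the paper's but more explicit: you build the pair $(b,c)$ concretely on $\F^n$ with a scalar parameter $s$ in $b$, then compute the invariant at a chosen cyclic generator to find it equals $s^{-1}$, and tune $s$. The paper instead invokes the classification of pairs of symmetric bilinear forms to obtain some $(b,c)$ with $L_b^{-1}L_c$ a nilpotent Jordan block, then argues abstractly that replacing $c$ by $\lambda c$ multiplies the invariant by $\lambda^{r-1}$; since $r$ is even, rescaling the vector by $\lambda^{-r/2}$ shows the invariant represents $\lambda^{-1}\beta$ for the original value $\beta$, so any nonzero $\alpha$ is reached. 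Your route avoids appealing to the classification theorem and gives the actual value of the invariant rather than only its variation under scaling; the paper's route is shorter and does not require identifying a specific cyclic generator or computing $v^{r-1}(X)$ explicitly.
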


\begin{proof}
By the classification of pairs of symmetric bilinear forms, we can find a pair $(b,c)$ of symmetric bilinear forms on a vector space $V$
such that $b$ is non-degenerate and $u:=L_b^{-1} L_c$ is cyclic with minimal polynomial $t^{r/2}$.
By Lemma \ref{lemma:invariantsboxedsum}, $b \boxplus c$ is cyclic with minimal polynomial $t^r$, i.e.\ it is nilpotent with a sole Jordan cell, of size $r$.
Note that for all $\lambda \in \F \setminus \{0\}$, the endomorphism $b \boxplus (\lambda c)$ has the same properties
since $L_b^{-1} L_{\lambda c}=\lambda u$.

Let $\beta$ be a non-zero value represented by the quadratic invariant $(H_V^\varepsilon,b \boxplus c)_{t,r}$.
Then, for all $\lambda \in \F \setminus \{0\}$, the quadratic invariant $(H_V^\varepsilon,b \boxplus (\lambda c))_{t,r}$
equals $\lambda^{r-1} (H_V^\varepsilon,b \boxplus c)_{t,r}$, and hence it represents $\lambda^{r-1} \beta$.
Since $r$ is even, this quadratic invariant represents $\lambda^{-r} \lambda^{r-1} \beta$. Hence it suffices to take $\lambda=\beta \alpha^{-1}$,
and the pair $(b,\lambda c)$ then satisfies the expected conclusion.
\end{proof}

\section{The case of a symplectic form}\label{section:symplectic}

\subsection{Alternating endomorphisms}

Here, we will prove Theorem \ref{theo:altformalt} by taking advantage of the symplectic extensions technique.

Throughout this part, we let $(b,u)$ be a $(-1,1)$-pair.
In Theorem \ref{theo:altformalt}, the implication (i) $\Rightarrow$ (ii) is obvious.
Moreover, the implication (ii) $\Rightarrow$ (iii) is known by Theorem \ref{theoBotha}.

Let us prove that condition (iii) implies condition (i).
Assume that all the invariant factors of $u$ are even or odd. The classification of $(-1,1)$-pairs (which holds regardlessly of the characteristic of $\F$)
shows that the invariant factors of $u$ can be written $p_1,p_1,p_2,p_2,\dots,p_r,p_r,\dots$,
hence with all the $p_i$'s even or odd.
Now, let us choose an endomorphism $w$ of a vector space $W$ whose invariant factors are $p_1,p_2,\dots,p_r,\dots$.
As $h_1(w)=w \oplus w^t$ and $w^t$ is similar to $w$, we gather that
$h_1(w)$ and $u$ have the same invariant factors.
Noting that $H_{-1,1}(w)$ is a $(-1,1)$-pair, we gather from the characterization of $(-1,1)$-pairs
that $(b,u) \simeq H_{-1,1}(w)$.
And since every invariant factor of $w$ is even or odd, we deduce from Theorem \ref{theoBotha} that
$w$ is the sum of two square-zero endomorphisms of $W$, and we deduce
from Proposition \ref{remark:expansionsquarezero} that $H_{-1,1}(w)$ has the square-zero splitting property.
Hence $(b,u) \simeq H_{-1,1}(w)$ also has the square-zero splitting property.

Assume furthermore that $\charac(\F) \neq 2$. Then, by Theorem \ref{theoBotha}, there exists an automorphism $\varphi$ of $W$
such that $\varphi  w  \varphi^{-1}=-w$.
Set $\Phi:=\varphi \oplus (\varphi^{-1})^t$, which is an automorphism of $W \times W^\star$.
Note that
$$\Phi \, h_1(w) \, \Phi^{-1}=(\varphi \, w \, \varphi^{-1}, (\varphi^{-1})^t \, w^t\, \varphi^t)
=h_1(\varphi \, w \, \varphi^{-1})=h_1(-w)=-h_1(w).$$
Finally, for all $(x,f) \in W \times W^\star$ and all $(y,g) \in W \times W^\star$, we find
\begin{multline*}
H_W^{-1}\bigl(\Phi(x,f),\Phi(y,g)\bigr)
=H_W^{-1}\bigl((\varphi(x),f \circ \varphi^{-1}),(\varphi(y),g \circ \varphi^{-1})\bigr) \\
=f(y)- g(x)=H_W^{-1}\bigl((x,f),(y,g)\bigr).
\end{multline*}
Hence, $\Phi$ belongs to the symplectic group of $H_W^{-1}$.
By the isometry $(b,u) \simeq H_{-1,1}(w)$, we deduce that $u$ is conjugated to $-u$ through an element of the symplectic group of $b$.

Hence, we have proved that, in Theorem \ref{theo:altformalt}, condition (iii) implies condition (i) and that it implies condition (v) if $\charac(\F) \neq 2$.
Finally, condition (v) obviously implies condition (iv), and it known by Theorem \ref{theoBotha} that
condition (iv) implies condition (iii) if $\charac(\F) \neq 2$.
This completes the proof of Theorem \ref{theo:altformalt}.

\subsection{Skew-selfadjoint endomorphisms}

Here, we prove Theorem \ref{theo:altformantisym}.
So, we assume that $\charac(\F) \neq 2$ and
we take a pair $(b,u)$ consisting of a symplectic form $b$ and of a $b$-skew-selfadjoint endomorphism $u$.
We shall prove that $u$ is the sum of two square-zero $b$-skew-selfadjoint operators.

To do so, we can limit our study to the case where $(b,u)$ is indecomposable as a $(-1,-1)$-pair.
Then there are four possibilities:
\begin{itemize}
\item Case 1. The invariant factors of $u$ read $t^r,t^r$ for some odd integer $r \geq 1$.
\item Case 2. The primary invariants of $u$ read $p^r,(p^{\op})^r$ for some $p \in \Irr(\F) \setminus (\Irr_0(\F) \cup \{t\})$ and some
integer $r \geq 1$.
\item Case 3. $u$ is cyclic with minimal polynomial $t^r$ for some even integer $r \geq 2$.
\item Case 4. $u$ is cyclic with minimal polynomial $p^r$ for some $p \in \Irr_0(\F)$ and some integer $r \geq 1$.
\end{itemize}

\noindent \textbf{Case 1: $u$ is nilpotent with exactly two Jordan cells, both of size $r$ for some odd integer $r \geq 1$.} \\
Let us choose a cyclic endomorphism $v$ of a vector space $W$, with minimal polynomial $t^r$.
Then, $H_{-1,-1}(v)$ is a $(-1,-1)$-pair and $h_{-1}(v)$ is nilpotent with exactly two Jordan cells, both of size $r$.
By the classification of $(-1,-1)$-pairs, we deduce that $(b,u) \simeq H_{-1,-1}(v)$.
As $v$ is nilpotent, Theorem \ref{theoBotha} shows that we can split it into the sum of two square-zero endomorphisms of $W$,
and hence Proposition \ref{remark:expansionsquarezero} shows that $H_{-1,-1}(v)$ has the square-zero splitting property.
Using the isometry $(b,u) \simeq H_{-1,-1}(v)$, we deduce that $(b,u)$ has the square-zero splitting property.

\vskip 3mm
\noindent \textbf{Case 2: The invariant factors of $u$ read $p^r,(p^{\op})^r$ for some $p \in \Irr(\F) \setminus (\Irr_0(\F) \cup \{t\})$ and some
integer $r \geq 1$.} \\
Then $p p^\op=p_0(t^2)$ for some $p_0 \in \Irr(\F)$.
By the classification of pairs of symmetric bilinear forms, we can choose a pair $(B,C)$ of symmetric bilinear forms
on a vector space $W$ such that $B$ is symmetric and non-degenerate, and $L_B^{-1} L_C$ is cyclic with minimal polynomial $p_0^r$.
By Lemma \ref{lemma:invariantsboxedsum}, the boxed sum $B \boxplus C$ has $p_0(t^2)^r$ as its sole invariant factor, i.e.\ its
primary invariants are $p^r$ and $(p^\op)^r$.
Hence, by the classification of $(-1,-1)$-pairs, we find the isometry $(b,u) \simeq (H_W^{-1},B \boxplus C)$, and hence
$(b,u)$ has the square-zero splitting property.

\vskip 3mm
\noindent \textbf{Case 3: $u$ is nilpotent with a sole Jordan cell, of even size $r$.} \\
Let $\alpha \in \F \setminus \{0\}$ be represented by the quadratic invariant $(b,u)_{t,r}$.
By Lemma \ref{lemma:evensizenilpotent}, there exists a triple $(W,B,C)$ in which $W$ is a vector space and
$B,C$ are symmetric bilinear forms, with $B$ non-degenerate, such that $B \boxplus C$ is nilpotent with a single Jordan cell, of size $r$, and
the quadratic invariant $(H_W^{-1},B \boxplus C)_{t,r}$ represents the value $\alpha$.
Hence, the quadratic invariants $(H_W^{-1},B \boxplus C)_{t,r}$ and $(b,u)_{t,r}$ are equivalent. By the classification of
$(-1,-1)$-pairs, we conclude that $(b,u) \simeq (H_W^{-1},B \boxplus C)$.
From there, we conclude as in Case 2.

\vskip 3mm
\noindent \textbf{Case 4: $u$ is cyclic with minimal polynomial $p^r$ for some $p \in \Irr_0(\F)$ and some integer $r \geq 1$.} \\
Let us write $p=p_0(t^2)$ for some $p_0 \in \Irr(\F)$.
Set $\mathbb{M}:=\F[t]/(p_0)$ and $\mathbb{L}:=\F[t]/(p)$.
We equip $\mathbb{L}$ with the non-identity involution $x \mapsto x^\bullet$ that takes the class of $t$ to its opposite.

The Hermitian invariant $R:=(b,u)_{p,r}$ is a skew-Hermitian form (on a $1$-dimensional vector space),
so $R(x,x)^\bullet=-R(x,x)$ for all relevant $x$.
In particular, the mapping $x \mapsto R(x,x)$ takes the value $\overline{t s(t^2)}^{\mathbb{L}}$
for some $s \in \F[t]$ that is not divisible by $p_0$.

Next, by the classification of pairs of symmetric bilinear forms, we can choose such a pair $(B,C)$
on a vector space $W$ such that $B$ is non-degenerate, $v:=L_B^{-1}  L_C$ is cyclic with minimal polynomial
$p_0^r$, and the quadratic invariant $(B,v)_{p_0,r}$ represents the value $-\overline{s(t)}^{\mathbb{M}}$.

By Lemma \ref{lemma:skewrepresentabledim1}, $B \boxplus C$ is cyclic with minimal polynomial $p^r$,
and the Hermitian invariant $(H_W^{-1}, B \boxplus C)_{p,r}$ represents $\overline{t s(t^2)}^{\mathbb{L}}$
(and is defined on a $1$-dimensional vector space over $\L$).
By the classification of $(-1,-1)$-pairs, it follows that
$(b,u) \simeq (H_W^{-1}, B \boxplus C)$, and once more we conclude that
$(b,u)$ has the square-zero splitting property.

\section{Symmetric forms with automorphisms}\label{section:symmetricautomorphism}

Throughout this section, we assume that $\charac(\F) \neq 2$.

\subsection{Skew-selfadjoint automorphisms}

Here, we shall prove Theorem \ref{theo:symaltautomorphism}.
So, let $(b,u)$ be a $(1,-1)$-pair in which $u$ is bijective.

Assume first that $(b,u)$ has the square-zero splitting property.
By Proposition \ref{prop:caracboxedsum}, it is isometric to $(H^1_W,B \boxplus C)$ for some
vector space $W$ and some pair $(B,C)$ of symplectic forms on $W$.
We set $v:=L_B^{-1} L_C$. We know from the classification of pairs of alternating bilinear forms, with the first one non-degenerate, that
the invariant factors of $v$ are of the form $p_1,p_1,p_2,p_2,\dots$
Hence, by Lemma \ref{lemma:invariantsboxedsum} the ones of $B \boxplus C$ are $p_1(t^2),p_1(t^2),p_2(t^2),p_2(t^2),\dots$
Hence, $B \boxplus C \simeq w \oplus w$ for some endomorphism $w$ with invariant factors $p_1(t^2),p_2(t^2),\dots$,
and we gather that the Jordan numbers of $B \boxplus C$ are all even. Hence, so are the ones of $u$.

Finally, by Proposition \ref{lemma:hyperbolicboxedsum} the Hermitian invariants of $(H^1_W,B \boxplus C)$ are all hyperbolic, and hence
so are the ones of $(b,u)$.

\vskip 3mm
Conversely, assume that all the Jordan numbers of $(b,u)$ are even and that all its Hermitian invariants are hyperbolic.
Remember that $n_{p,k}(u)=n_{p^\op,k}(u)$ for all $k \geq 1$ and all $p \in \Irr(\F)$, because $u$ is $b$-skew-selfadjoint (and hence similar to its opposite).
Now, we can consider a vector space $W$ equipped with an automorphism $w$ such that:
\begin{itemize}
\item For all $p \in \Irr(\F) \setminus \{t\}$ such that $p(t^2)$ splits into $qq^\op$ for some $q \in \Irr(\F) \setminus \Irr_0(\F)$, and for all $k \geq 1$, one has $n_{p,k}(w)=n_{q,k}(u)$.
\item For all $p \in \Irr(\F) \setminus \{t\}$ such that $p(t^2)$ is irreducible, one has $n_{p,k}(w)=n_{p(t^2),k}(u)$.
\end{itemize}
Since all the Jordan numbers of $u$ are even, all the ones of $w$ are even and it follows from the classification of pairs of symplectic forms that there exists a pair
$(B,C)$ of symplectic forms on $W$ such that $L_B^{-1} L_C=w$.
It follows from the definition of $w$ that $B \boxplus C$ has the same Jordan numbers as $u$.
Finally, for every $p \in \Irr_0(\F)$ and every $k \geq 1$, the Hermitian forms $(H_W^1,B \boxplus C)_{p,k}$
and $(b,u)_{p,k}$ are hyperbolic (by Proposition \ref{lemma:hyperbolicboxedsum} for the former, by assumption for the latter), and they have the same rank. Hence $(b,u)_{p,k} \simeq (H_W^1,B \boxplus C)_{p,k}$.
It follows that $(b,u) \simeq (H_W^1,B \boxplus C)$, and we conclude that
$(b,u)$ has the square-zero splitting property.

The proof of Theorem \ref{theo:symaltautomorphism} is now complete.

\subsection{Selfadjoint automorphisms}

In that case, we need an additional result to deal with the $(b,u)_{p,r}$ invariant when $p$ is neither even nor odd.

\begin{prop}\label{prop:isomopposite2}
Let $(b,u)$ be a $(1,1)$-pair.
Let $p \in \Irr(\F) \setminus (\Irr_0(\F) \cup \{t\})$ and $r \geq 1$. Denote by $d$ the degree of $p$.
Then $(-b,-u)_{p,r}=(-1)^{1+d(r-1)} \bigl((b,u)_{p^\op,r}\bigr)^{\op}$.
\end{prop}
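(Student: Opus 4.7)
The plan is to unwind the definition of the invariant $(\cdot,\cdot)_{p,r}$ on both sides and show that, on a common underlying $\F$-vector space with a common $\L$-structure, the two Hermitian/quadratic forms agree up to the sign $(-1)^{1+d(r-1)}$.

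First I would identify the underlying data. Set $d:=\deg p$. Since $p^{\op}(t)=(-1)^d p(-t)$, we have $p(-u)^k=(-1)^{dk}\,p^{\op}(u)^k$ for every integer $k\geq 0$, so $\Ker p(-u)^k=\Ker p^{\op}(u)^k$ and $\im p(-u)=\im p^{\op}(u)$. Hence the $\F$-vector space underlying $V_{p,r}(-b,-u)$ coincides with the one underlying $V_{p^{\op},r}(b,u)$. Next, the $\L=\F[t]/(p)$-module structure on $V_{p,r}(-b,-u)$ is the one in which the class $\tau$ of $t$ acts as $-u$, while the $\L^{\op}=\F[t]/(p^{\op})$-module structure on $V_{p^{\op},r}(b,u)$ is the one in which the class $\sigma$ of $t$ acts as $u$. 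Under the isomorphism $\varphi : \L^{\op}\overset{\simeq}{\rightarrow}\L$ sending $\sigma$ to $-\tau$, the class of $q(t)$ in $\L$ is sent to the class of $q(-t)$ in $\L^{\op}$, which acts on $V_{p^{\op},r}(b,u)$ as $q(-u)$. This is precisely how $\tau$ acts in $V_{p^{\op},r}(b,u)^{\op}$ (via the twisted scalar multiplication defining the $\op$-construction); hence $V_{p,r}(-b,-u)=V_{p^{\op},r}(b,u)^{\op}$ as $\L$-vector spaces.

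Second, I would compare the $\F$-bilinear forms on this common quotient. The form defining $(-b,-u)_{p,r}$ is
\[
\bigl(-b\bigr)\bigl(x,p(-u)^{r-1}(y)\bigr)=(-1)^{1+d(r-1)}\,b\bigl(x,p^{\op}(u)^{r-1}(y)\bigr),
\]
which is exactly $(-1)^{1+d(r-1)}$ times the form used to build $(b,u)_{p^{\op},r}$.

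Third, I would promote this identity to the $\L$-extensions. Writing $\lambda=[q(t)]\in\L$, the defining relation of the $\L$-extension gives
\[
\overline{(-b)_{p,r}}(x,\lambda\cdot y)=e_p\bigl(\lambda\,(-b,-u)_{p,r}(x,y)\bigr),
\]
while for $\mu=\varphi^{-1}(\lambda)\in\L^{\op}$ one has $\mu\cdot_u y=q(-u)(y)$ and
\[
\overline{b_{p^{\op},r}}\bigl(x,q(-u)(y)\bigr)=e_{p^{\op}}\bigl(\mu\,(b,u)_{p^{\op},r}(x,y)\bigr)=e_p\bigl(\lambda\,(b,u)_{p^{\op},r}^{\op}(x,y)\bigr),
\]
using $e_{p^{\op}}=e_p\circ\varphi$ together with the construction of the $\op$-form. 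Since the action $\lambda\cdot y$ on the common space is the same as $q(-u)(y)$, the $\F$-bilinear identity from the previous step yields
\[
e_p\bigl(\lambda\,(-b,-u)_{p,r}(x,y)\bigr)=(-1)^{1+d(r-1)}\,e_p\bigl(\lambda\,(b,u)_{p^{\op},r}^{\op}(x,y)\bigr)
\]
for every $\lambda\in\L$. Because $\alpha\in\L\mapsto e_p(\lambda\alpha)$ separates points of $\L$, I conclude that $(-b,-u)_{p,r}=(-1)^{1+d(r-1)}(b,u)_{p^{\op},r}^{\op}$, as desired.

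The mathematics is a direct unwinding of definitions; the main obstacle is purely bookkeeping, namely keeping straight the three simultaneous twists at play (the swap $p\leftrightarrow p^{\op}$, the $\op$-construction on the field/module/bilinear form, and the sign $(-1)^{1+d(r-1)}$ coming from $p(-t)=(-1)^d p^{\op}(t)$ and the factor of $-b$). Once the three twists are properly aligned, the computation reduces to a single line.
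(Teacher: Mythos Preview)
Your proof is correct and follows essentially the same approach as the paper's: both identify the common underlying $\F$-vector space via $p(-u)=(-1)^d p^{\op}(u)$, compute the sign $(-1)^{1+d(r-1)}$ at the level of the $\F$-bilinear forms $B'=(-1)^{1+d(r-1)}B$, and then promote this identity to the $\L$-extensions using the defining relation of $e_p$ together with $e_{p^{\op}}=e_p\circ\varphi$. Your write-up is slightly more explicit than the paper's about the identification of the $\L$-module structures (the paper simply notes that the two cokernels coincide), but the substance of the argument is the same.
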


\begin{proof}
We consider the fields $\L:=\F[t]/(p)$ and $\L':=\F[t]/(p^\op)$ and
the isomorphism $\varphi : \L \overset{\simeq}{\longrightarrow} \L'$ that takes the class of $t$ mod $p$ to the one of $-t$ mod $p^\op$.
For a polynomial $\lambda \in \F[t]$, we denote its class in $\F[t]/(p)$ by $\overline{\lambda}^\L$, and its class in
$\F[t]/(p^\op)$ by $\overline{\lambda}^{\L'}$.

Note that $p(-u)=(-1)^d p^\op(u)$. It follows that
$\Ker p(-u)^k=\Ker p^\op(u)^k$ for all $k \geq 1$, to the effect that the quotients
$\Ker p(-u)^{k+1}/\Ker p(-u)^{k}$ and $\Ker p^{\op}(u)^{k+1}/\Ker p^\op(u)^{k}$ are equal for all $k \geq 1$.
Moreover, the cokernel of the linear map $\Ker p(-u)^{r+1}/\Ker p(-u)^{r} \rightarrow \Ker p(-u)^{r}/\Ker p(-u)^{r-1}$
induced by $p(-u)$ equals the cokernel of the linear map $\Ker p^\op(u)^{r+1}/\Ker p^\op(u)^{r} \rightarrow \Ker p^\op(u)^{r}/\Ker p^\op(u)^{r-1}$ induced by $p^\op(u)$.

Hence, the quadratic invariants $(b,u)_{p^\op,r}$ and $(-b,-u)_{p,r}$ are defined on the same $\F$-vector space $W$. We consider the induced bilinear forms $$B : (\overline{x},\overline{y})\in W^2 \mapsto b(x,p^\op(u)^{r-1}(y))$$
and
$$B' : (\overline{x},\overline{y})\in W^2 \mapsto (-b)(x,p(-u)^{r-1}(y)).$$
Note that $B'=(-1)^{1+d(r-1)}B$.
Let $x,y$ in $W$. Let $\lambda(t) \in \F[t]$.
Then
\begin{align*}
B(x,\lambda(u)[y]) & =(-1)^{1+d(r-1)} B'(x, \lambda(u)[y]) \\
& =(-1)^{1+d(r-1)} B'(x,\lambda(-t)(-u)[y]) \\
& =(-1)^{1+d(r-1)} e_{p}\bigl(\overline{\lambda(-t)}^{\L} (-b,-u)_{p,r}(x,y)\bigr) \\
& =(-1)^{1+d(r-1)} e_{p^\op}\bigl(\varphi\bigl(\overline{\lambda(-t)}^{\L} (-b,-u)_{p,r}(x,y)\bigr)\bigr) \\
& =(-1)^{1+d(r-1)} e_{p^\op}\Bigl(\overline{\lambda}^{\L'} \varphi\bigl((-b,-u)_{p,r}(x,y)\bigr)\Bigr) \\
& =e_{p^\op}\Bigl(\overline{\lambda}^{\L'}\,(-1)^{1+d(r-1)}  \varphi\bigl((-b,-u)_{p,r}(x,y)\bigr)\Bigr).
\end{align*}
It follows that
$$(b,u)_{p^\op,r}(x,y)=(-1)^{1+d(r-1)}  \varphi\bigl((-b,-u)_{p,r}(x,y)\bigr)=(-1)^{1+d(r-1)} \bigl((-b,-u)_{p,r}\bigr)^{\op}(x,y).$$
\end{proof}

We are now ready to prove Theorem \ref{theo:symsymautomorphism}.
Let $(b,u)$ be a $(1,1)$-pair in which $u$ is an automorphism.

Assume first that $(b,u)$ has the square-zero splitting property. Then, by Proposition \ref{prop:caracboxedsum} it is isometric to
$(H^1_W,B \boxplus C)$ for some vector space $W$ and some pair $(B,C)$ of non-degenerate symmetric bilinear forms on $W$.
By Proposition \ref{prop:isomopposite}, the pairs $(H^1_W,B \boxplus C)$ and $(-H^1_W,-B \boxplus C)$ are isometric, and hence
$(b,u)$ is isometric to $(-b,-u)$. It follows from Proposition \ref{prop:isomopposite2} that
$(b,u)_{p,r}\simeq (-1)^{1+d(r-1)} (b,u)_{p^\op,r}^{\op}$ for every $r \geq 1$ and every $p \in \Irr(\F) \setminus (\Irr_0(\F) \cup \{t\})$
of degree $d$.

Besides, Corollary \ref{cor:skewrepresentable} directly shows that, for all $p \in \Irr_0(\F)$, the quadratic invariant
$(H^1_W,B \boxplus C)_{p,r}$ is skew-representable over $\F[t]/(p)$ equipped with the involution that takes the class of $t$ to its opposite.

Conversely, assume that $(b,u)$ satisfies condition (ii) from Theorem \ref{theo:symsymautomorphism}.
First of all, we can split $(b,u) \simeq (b_1,u_1) \bot \cdots \bot (b_n,u_n)$
so that, for each $i \in \lcro 1,n\rcro$, one of the following conditions holds:
\begin{itemize}
\item The invariant factors of $u_i$ all equal $p^r$ for some $p \in \Irr_0(\F)$ and some $r \geq 1$, and further still
$(b_i,u_i)_{p,r} \simeq (b,u)_{p,r}$; in that case this quadratic invariant is skew-representable.
\item The invariant factors of $u_i$ all equal $p^r$ or $p^{\op}$ for some $p \in \Irr(\F) \setminus \Irr_0(\F)$, and further still
$(b_i,u_i)_{p,r} \simeq (b,u)_{p,r}$ and $(b_i,u_i)_{p^\op,r} \simeq (b,u)_{p^\op,r}$.
\end{itemize}
It suffices to prove that each $(b_i,u_i)$ has the square-zero splitting property.
Hence, in the remainder of the proof, we can reduce the situation to only two cases.

\vskip 3mm
\noindent \textbf{Case 1.} The pair $(b,u)$ has a sole quadratic invariant, attached to $(p,r)$ for some $p \in \Irr_0(\F)$ and some $r \geq 1$,
and this invariant is skew-representable over $\F[t]/(p)$. \\
Then, we know from Proposition \ref{prop:skewrepresentablereciproc} that $(b,u)$ has the same quadratic invariants as some $(1,1)$-boxed sum, the latter having the square-zero splitting property. Hence, $(b,u)$ has the square-zero splitting property.

\vskip 3mm
\noindent \textbf{Case 2.} The pair $(b,u)$ has exactly two quadratic invariants, attached to $(p,r)$ and $(p^\op,r)$ for some $p \in \Irr(\F) \setminus (\Irr_0(\F) \cup \{t\})$ and some $r \geq 1$,
and these invariants satisfy $(b,u)_{p,r} \simeq (-1)^{1+d(r-1)} (b,u)_{p^\op,r}^{\op}$ where $d:=\deg(p)$. \\
Then, we apply Corollary \ref{cor:partcorquadratic} to the bilinear form $(b,u)_{p,r}$.
We can find a triple $(W,B,C)$ in which $W$ is a vector space and $(B,C)$ is a pair of symmetric bilinear forms on $W$, with $B$ non-degenerate,
such that $B \boxplus C$ has only two non-zero quadratic invariants, attached to $(p,r)$ and $(p^\op,r)$, and
$(H_W^1,B \boxplus C)_{p,r} \simeq (b,u)_{p,r}$. Yet, by Proposition \ref{prop:isomopposite}, we know that
$(H_W^1,B \boxplus C) \simeq (-H_W^1, - B \boxplus C)$, and hence Proposition \ref{prop:isomopposite2} yields
$$(H_W^1,B \boxplus C)_{p^\op,r} \simeq (-1)^{1+(r-1) d} ((H_W^1,B \boxplus C)_{p,r})^{\op}
\simeq (-1)^{1+(r-1) d} ((b,u)_{p,r})^{\op} \simeq (b,u)_{p^\op,r}.$$
Hence, $(b,u)$ and $(H_W^1,B \boxplus C)$ have equivalent quadratic invariants, and we conclude that they are isometric.
Finally, $(b,u)$ has the square-zero splitting property.
This completes the proof of Theorem \ref{theo:symsymautomorphism}.

\section{Symmetric forms with nilpotent endomorphisms}\label{section:nilpotent}

This section is devoted to the proof of Theorem \ref{theo:symformnilpotent}. Throughout, we assume that $\charac(\F) \neq 2$.
The most difficult part is to prove the implication (i) $\Rightarrow$ (ii), so we will start with the implication (ii) $\Rightarrow$ (i).
One of the main keys for this implication is the ability to construct pairs $(b,u)$
with the square-zero splitting property in which $u$ has exactly two Jordan cells, one of size $2k+1$ and one of size $2k-1$:
we shall say that such pairs are \emph{twisted} (and it is a consequence of Theorem \ref{theo:symformnilpotent} that, in such a pair,
the bilinear form $b$ must be hyperbolic). Before we study twisted pairs, we will start with a classical result that relates the
Witt type of $b$ to the ones of the $(b,u)_{t,2k+1}$ invariants.

\subsection{On quadratic invariants and the type of $b$}

Let $(b,u)$ be a $(1,\eta)$-pair in which $u$ is nilpotent. Here, we discuss the relationship between the Witt type of $b$
and the quadratic invariants of $(b,u)$.

We start with the case where all the quadratic invariants of $(b,u)$ but one vanish.
So, assume that, for some $k \geq 1$, the endomorphism $u$ is nilpotent with only Jordan cells of size $k$.

\noindent \textbf{Case 1:} $k=2l$ for some $l \geq 1$. Then $\Ker u^i=\im u^{k-i}$ for all $i \in \lcro 0,k\rcro$ (this is obvious on a Jordan cell of size $k$).
In particular $\Ker u^l=\im u^l=(\Ker u^l)^{\bot_b}$, and it follows that $b$ is hyperbolic.

\noindent \textbf{Case 2:} $k=2l+1$ for some $l \geq 0$. This time around, we find $(\Ker u^l)^{\bot_b}=\im u^l=\Ker u^{l+1}$.
Classically, this yields that $b$ is Witt-equivalent to the non-degenerate bilinear form it induces on the quotient space
$\Ker u^{l+1}/\Ker u^l$. Now, the bilinear form $B : (x,y)\in V^2 \mapsto b(x,u^{k-1}(y))$ reads
$(x,y) \in V^2 \mapsto \eta^l b(u^l(x),u^l(y))$
Hence, the form induced by $B$ on $V/\im u=\Ker u^{2l+1}/\Ker u^{2l}$
is equivalent to the form induced by $\eta^l b$ on the quotient space $\Ker u^{l+1}/\Ker u^{l}$ through the isomorphism
$\Ker u^{2l+1}/\Ker u^{2l} \overset{\simeq}{\longrightarrow} \Ker u^{l+1}/\Ker u^{l}$ induced by $u^l$.
Hence, $b$ is Witt-equivalent to $\eta^l (b,u)_{t,k}$.

\vskip 3mm
In the general case of a $(1,\eta)$-pair $(b,u)$ in which $u$ is nilpotent, the classification theorem shows that for every integer $k \geq 1$,
there exists a $(1,\eta)$-pair $(b,u)^{(k)}$ in which $u$ is nilpotent, the quadratic invariant $(b,u)^{(k)}_{t,i}$ vanishes
for every positive integer $i \neq k$, and $(b,u)^{(k)}_{t,k}$ is equivalent to $(b,u)_{t,k}$.
It follows that $(b,u) \simeq \underset{k \geq 1}{\bot} (b,u)^{(k)}$.
Using the previous special cases, we conclude:

\begin{prop}
Let $(b,u)$ be a $(1,\eta)$-pair in which $u$ is nilpotent.
Then $b$ is Witt-equivalent to $\underset{l \geq 0}{\bot} \eta^l (b,u)_{t,2l+1}$.
\end{prop}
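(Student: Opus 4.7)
The plan is essentially to combine the two special cases that the author has already analysed in the paragraphs immediately preceding the proposition with the orthogonal decomposition by Jordan cell size.

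First I would invoke the classification of $(1,\eta)$-pairs with $u$ nilpotent to write
\[
(b,u) \;\simeq\; \underset{k \geq 1}{\bot}\, (b,u)^{(k)},
\]
where, for each positive integer $k$, the pair $(b,u)^{(k)}$ has only Jordan cells of size $k$ and its sole non-vanishing quadratic invariant (at $(t,k)$) is equivalent to $(b,u)_{t,k}$. This decomposition is legitimate because the quadratic invariants are a complete system of isometry invariants for $(1,\eta)$-pairs (Theorem~\ref{theo:epsilon1pairs}) and they are additive under orthogonal direct sums. Since orthogonal direct sums of bilinear forms respect Witt-equivalence, the underlying form $b$ is Witt-equivalent to $\underset{k\geq 1}{\bot} b^{(k)}$, where $b^{(k)}$ denotes the bilinear form of $(b,u)^{(k)}$.

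Next I would handle each summand using the two cases that have already been worked out just above the proposition. For even $k=2l$, the argument $\Ker u^l=\im u^l=(\Ker u^l)^{\bot_b}$ shows that $b^{(k)}$ is hyperbolic, so it contributes nothing to the Witt class. For odd $k=2l+1$, the same kind of duality argument identifies $b^{(k)}$, up to Witt equivalence, with the form it induces on $\Ker u^{l+1}/\Ker u^l$, and this induced form is in turn equivalent (via the isomorphism induced by $u^l$) to $\eta^l (b,u)^{(k)}_{t,2l+1}$, which by construction is equivalent to $\eta^l (b,u)_{t,2l+1}$.

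Assembling the pieces, only the odd-size summands survive in the Witt class, and one obtains
\[
b \;\sim_{\mathrm{Witt}}\; \underset{l \geq 0}{\bot}\, \eta^l (b,u)_{t,2l+1},
\]
which is the claim. No step looks genuinely difficult: the substantive analytic work (the identifications in the even- and odd-cell cases) has already been done, and the main thing to be careful about is simply to quote the orthogonal decomposition correctly, to note that it is compatible both with Witt equivalence of $b$ and with the quadratic invariants of $(b,u)$, and to observe that hyperbolic summands drop out.
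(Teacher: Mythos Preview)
Your proposal is correct and follows essentially the same approach as the paper: decompose $(b,u)$ orthogonally according to Jordan cell size using the classification theorem, then apply the two special cases already analysed (even cells give hyperbolic summands, odd cells of size $2l+1$ contribute $\eta^l(b,u)_{t,2l+1}$) and combine. One very minor point: your reference to Theorem~\ref{theo:epsilon1pairs} only covers the case $\eta=1$; for $\eta=-1$ the relevant classification is the one for $(\varepsilon,-1)$-pairs stated later in Section~\ref{section:invariants}, but this does not affect the argument.
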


\subsection{Twisted pairs have the square-zero splitting property}

\begin{lemma}\label{lemma:twisted}
Let $k \in \N^*$. Let $V$ be a $4k$-dimensional vector space
together with a hyperbolic symmetric bilinear form $b : V^2 \rightarrow \F$.
Let $\eta \in \{-1,1\}$. There exists $u \in \End(V)$ such that $(b,u)$ is a $(1,\eta)$-pair with the square-zero splitting property, and
$u$ is nilpotent with exactly two Jordan cells, one of size $2k+1$ and one of size $2k-1$.
\end{lemma}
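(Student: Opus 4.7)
The plan is to give an explicit construction using the hyperbolic-extension machinery from Section~\ref{expansionsection}. Up to isometry, every hyperbolic symmetric form of dimension $4k$ is isometric to $H_W^1$ on $V := W \oplus W^\star$ for a $2k$-dimensional $W$, so I would reduce to the case $b = H_W^1$. Fix a basis $(w_0,\dots,w_{2k-1})$ of $W$ in which a single Jordan block $v \in \End(W)$ of size $2k$ acts as $v(w_i)=w_{i+1}$, and let $(w_0^\star,\dots,w_{2k-1}^\star)$ be the dual basis. Define the rank-two perturbation $p \in \End(V)$ by
\[
p(w_0) = w_{2k-1}^\star, \qquad p(w_{2k-1}) = \eta\, w_0^\star,
\]
and $p = 0$ on every other basis vector. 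The candidate is $u := h_\eta(v) + p$.

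The preliminary properties are routine: $p^2 = 0$ and by the block-matrix criterion for $H_W^1$-self- or skew-adjointness, $p$ has the same parity as $\eta$ with respect to $b$, so $(b,u)$ is a $(1,\eta)$-pair. A short induction computes $u^i(w_0) = w_i + \varepsilon_i\, w_{2k-i}^\star$ for suitable signs $\varepsilon_i \in \{\pm 1\}$ and $1 \le i \le 2k-1$, followed by $u^{2k}(w_0) = \pm 2\, w_0^\star \ne 0$ and $u^{2k+1}(w_0) = 0$. Thus $w_0$ generates a Jordan cell of size exactly $2k+1$. The quotient of $V$ by this cell is spanned by the classes of $w_1^\star,\dots,w_{2k-1}^\star$, on which $u$ acts as a single chain of length $2k-1$, giving the required Jordan type.

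For the square-zero splitting, I would apply Theorem~\ref{theoBotha} to $v$ (whose sole invariant factor $t^{2k}$ is even) to obtain the standard Botha splitting $v = v_1 + v_2$ with
\[
v_1(w_i) = \begin{cases} w_{i+1} & \text{if } i \text{ is even,} \\ 0 & \text{otherwise,} \end{cases} \qquad
v_2(w_i) = \begin{cases} w_{i+1} & \text{if } i \text{ is odd,} \\ 0 & \text{otherwise.} \end{cases}
\]
Then $v_i^2 = 0$ for each $i$, so the hyperbolic extensions $h_\eta(v_1), h_\eta(v_2)$ are square-zero and of the correct parity. I would then set
\[
a_1 := h_\eta(v_1), \qquad a_2 := h_\eta(v_2) + p.
\]
Each $a_i$ is automatically in the correct parity class, and $a_1+a_2 = h_\eta(v) + p = u$. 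Since $h_\eta(v_i)^2 = h_\eta(v_i^2) = 0$, the identity $a_1^2 = 0$ is immediate; the only remaining condition, using $p^2 = 0$, is the anti-commutation
\[
h_\eta(v_2)\, p + p\, h_\eta(v_2) = 0.
\]

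The main verification is this anti-commutation, which I would check term by term on basis vectors. The key observation is a parity mismatch: $v_2$ maps only odd-indexed $w_j$ forward and never acts nontrivially on $w_0$ or $w_{2k-1}$, while the transpose $v_2^\mathrm{t}$ sends only even-indexed $w_j^\star$ forward (and never $w_0^\star$ or $w_{2k-1}^\star$). Since $p$ is supported on $\{w_0, w_{2k-1}\}$ with output in $\{w_{2k-1}^\star, w_0^\star\}$, and since $v_2$ (resp.\ $v_2^\mathrm{t}$) annihilates the sources (resp.\ targets) of $p$, every cross-term on every basis vector vanishes. This yields $a_2^2 = 0$ and completes the construction of a square-zero splitting of $u$ into two $b$-self- or skew-adjoint operators.
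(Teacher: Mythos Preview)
Your construction is correct and gives a clean alternative to the paper's bare-hands argument. Both proofs build $a_1,a_2$ from an even/odd parity splitting of a single length-$2k$ Jordan chain together with a rank-two ``coupling'' term, but you organize this through the hyperbolic-extension functor $h_\eta$ of Section~\ref{expansionsection}: you write $u=h_\eta(v)+p$ and split via $v=v_1+v_2$, attaching the perturbation $p$ to $h_\eta(v_2)$. The paper instead works directly in a fixed hyperbolic basis $(e_1,\dots,e_{2k},f_1,\dots,f_{2k})$, places the rank-two coupling from the $f$-block into the $e$-block (rather than from $W$ into $W^\star$ as you do), and then exhibits an explicit $u$-invariant complement $W_2$ to the big cell. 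Your framing makes the $(1,\eta)$-parity of $a_1,a_2$ automatic from the general properties of $h_\eta$; the paper's version is more self-contained but requires checking the block-matrix symmetry by hand. The verification that $h_\eta(v_2)$ and $p$ actually annihilate each other (not merely anti-commute) is correct: $v_2$ kills $w_0$ and $w_{2k-1}$ while $v_2^t$ kills $w_0^\star$ and $w_{2k-1}^\star$, exactly the support and range of $p$.

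One genuine gap: knowing that $u|_{W_1}$ is a single Jordan cell of size $2k+1$ and that the induced map on $V/W_1$ is a single cell of size $2k-1$ does \emph{not} by itself force the Jordan type of $u$ to be $(2k+1,2k-1)$. For instance, a single Jordan cell of size $4k$ admits an invariant subspace on which it restricts to a cell of size $2k+1$ with quotient a cell of size $2k-1$. You need one extra line: either observe that $u^{2k+1}=0$ on all of $V$ (immediate, since $u^{j+1}(0,w_j^\star)=0$ for every $j$ and $u^{2k-i+1}(w_i,0)=0$ for every $i\geq 1$), or compute $\dim\Ker u=2$ directly. Either of these pins the nilindex at $2k+1$ and the number of cells at two, whence the type is $(2k+1,2k-1)$. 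The paper sidesteps this issue by producing an explicit $u$-invariant direct-sum decomposition $V=W_1\oplus W_2$.
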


\begin{proof}
Let us choose a $b$-hyperbolic basis $\bfB=(e_1,\dots,e_{2k},f_1,\dots,f_{2k})$ of $V$.
The matrix of $b$ in that basis is
$S=\begin{bmatrix}
0 & I_{2k} \\
I_{2k} & 0
\end{bmatrix}$.

Next, we define $a_1 \in \End(V)$ by $a_1(e_{2i})=e_{2i-1}$ and
$a_1(f_{2i-1})=\eta f_{2i}$ for all $i \in \lcro 1,k\rcro$, and $a_1$ maps all the other vectors of $\bfB$ to $0$.
We define $a_2 \in \End(V)$ by $a_2(e_{2i+1})=e_{2i}$ and
$a_2(f_{2i})=\eta f_{2i+1}$ for all $i \in \lcro 1,k-1\rcro$,
$a_2(f_1)=e_{2k}$ and $a_2(f_{2k})=\eta e_1$, and $a_2$ maps all the other vectors of $\bfB$ to $0$.
It is easily checked on the vectors of $\bfB$ that $a_1^2=0$ and $a_2^2=0$.

The respective matrices $M_1$ and $M_2$ of $a_1$ and $a_2$ in $\bfB$ are of the form
$$M_1=\begin{bmatrix}
A_1 & 0 \\
0 & \eta A_1^T
\end{bmatrix} \quad \text{and} \quad
M_2=\begin{bmatrix}
A_2 & C \\
0 & \eta A_2^T
\end{bmatrix}$$
where $C^T=\eta C$.
One checks that both $SM_1$ and $SM_2$ are symmetric if $\eta=1$, and skew-symmetric otherwise.
Hence, both $a_1$ and $a_2$ are $b$-symmetric if $\eta=1$, and $b$-skew-symmetric otherwise.

To conclude, it suffices to check that $u:=a_1+a_2$ is nilpotent and that it has exactly two Jordan cells, one of size $2k+1$
and one of size $2k-1$.
To see this, note first that $u(e_i)=e_{i-1}$ for all $i \in \lcro 2,2k\rcro$, whereas $u(e_1)=0$,
$u(f_1)=\eta f_2+e_{2k}$, $u(f_i)=\eta f_{i+1}$ for all $i \in \lcro 2,2k-1\rcro$, and
$u(f_{2k})=\eta e_1$.

From $u(f_1)=\eta f_2+e_{2k}$, we gather that
$u^i(f_1)=\eta^i f_{i+1}+e_{2k-i+1}$ for all $i \in \lcro 1,2k-1\rcro$,
in particular $u^{2k-1}(f_1)=\eta f_{2k}+e_2$, hence $u^{2k}(f_1)=2e_1$ and finally $u^{2k+1}(f_1)=0$.
Hence, the subspace $W_1:=\Vect(f_1,e_1) \oplus \Vect(\eta^i f_{i+1}+e_{2k-i+1})_{1 \leq i \leq 2k-1}$
is stable under $u$ and the resulting endomorphism is a Jordan cell of size $2k+1$.

Besides, one checks that $u^i(\eta f_{2}-e_{2k})=\eta^{i+1} f_{i+2}-e_{2k-i}$ for all $i \in \lcro 1,2k-2\rcro$,
and in particular $u^{2k-2}(\eta f_{2}-e_{2k})=\eta f_{2k}-e_2$ and then $u^{2k-1}(\eta f_2-e_{2k})=\eta^2 e_1-e_1=0$.
Obviously, $W_2:=\Vect(\eta^i f_{i+1}-e_{2k-i+1})_{1 \leq i \leq 2k-1}$ is a $(2k-1)$-dimensional subspace, it is stable
under $u$ and the resulting endomorphism is a Jordan cell of size $2k-1$.
Finally, one checks that $V=W_1 \oplus W_2$ (note that here the assumption that $\charac(\F) \neq 2$ is critical), and the claimed result follows.
\end{proof}

\begin{cor}\label{cor:twistedcor}
Let $(b,u)$ be a $(1,\eta)$-pair in which $u$ is nilpotent with exactly two Jordan cells, of respective sizes $2k+1$ and
$2k-1$. Assume that $(b,u)_{t,2k+1} \simeq -\eta (b,u)_{t,2k-1}$. Then
$(b,u)$ has the square-zero splitting property.
\end{cor}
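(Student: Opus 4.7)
The plan is to reduce $(b,u)$ to an isometric pair produced by a one-parameter refinement of the construction in Lemma \ref{lemma:twisted}, from which the square-zero splitting property is immediate. As a preliminary step, I would first prove that $b$ is hyperbolic. Since the only non-trivial quadratic invariants of $(b,u)$ are $(b,u)_{t,2k+1}$ and $(b,u)_{t,2k-1}$, the preceding proposition identifies $b$ up to Witt-equivalence with $\eta^{k-1}(b,u)_{t,2k-1} \bot \eta^{k}(b,u)_{t,2k+1}$. The hypothesis collapses this to $\eta^{k-1}(b,u)_{t,2k-1} \bot (-\eta^{k-1})(b,u)_{t,2k-1}$, which is a hyperbolic plane, so the nonisotropic part of $b$ vanishes; since $\dim V = 4k$, the form $b$ is hyperbolic.

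Next, I would refine Lemma \ref{lemma:twisted} into a one-parameter family. Fixing a hyperbolic basis $(e_1,\dots,e_{2k},f_1,\dots,f_{2k})$ of $b$, for each $\alpha \in \F^*$ I keep $a_1$ exactly as in Lemma \ref{lemma:twisted} but modify $a_2$ only on $f_1$ and $f_{2k}$, setting $a_2(f_1):=\alpha\,e_{2k}$ and $a_2(f_{2k}):=\eta\alpha\,e_1$. A direct verification in the spirit of Lemma \ref{lemma:twisted} (the top-right block $C$ of the matrix of $a_2$ has only entries $C_{2k,1}=\alpha$ and $C_{1,2k}=\eta\alpha$, and still satisfies $C^T=\eta C$) shows that $a_1,a_2$ remain square-zero and $\eta$-adjoint, and that $u_\alpha:=a_1+a_2$ splits $V$ into a Jordan $(2k+1)$-chain generated by $f_1$, with $u_\alpha^{2k}(f_1)=2\alpha\, e_1$, and a Jordan $(2k-1)$-chain generated by $\eta f_2-\alpha\,e_{2k}$. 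In particular, $(b,u_\alpha)$ is a $(1,\eta)$-pair with the square-zero splitting property.

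For the heart of the argument I then compute the quadratic invariants of $(b,u_\alpha)$. From $u_\alpha^{2k}(f_1)=2\alpha\, e_1$ I obtain $(b,u_\alpha)_{t,2k+1}(\overline{f_1},\overline{f_1})=2\alpha$, and from the analogous computation $u_\alpha^{2k-2}(\eta f_2-\alpha\,e_{2k})=\eta f_{2k}-\alpha\, e_2$ a short bilinear evaluation yields $(b,u_\alpha)_{t,2k-1}\simeq\langle-2\eta\alpha\rangle$. Writing $(b,u)_{t,2k+1}\simeq\langle\beta\rangle$ with $\beta\in\F^*$ and choosing $\alpha:=\beta/2$, I obtain $(b,u_\alpha)_{t,2k+1}\simeq(b,u)_{t,2k+1}$, and the hypothesis $(b,u)_{t,2k+1}\simeq -\eta(b,u)_{t,2k-1}$ then gives $(b,u_\alpha)_{t,2k-1}\simeq\langle-\eta\beta\rangle\simeq(b,u)_{t,2k-1}$. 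Hence $(b,u)$ and $(b,u_\alpha)$ are two $(1,\eta)$-pairs on the same underlying space with the same Jordan structure and equivalent quadratic invariants, so the classification theorem from Section \ref{section:invariants} yields an isometry $(b,u)\simeq(b,u_\alpha)$; conjugating the square-zero splitting $u_\alpha=a_1+a_2$ by this isometry produces the desired splitting of $u$.

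The principal difficulty lies in the perturbed construction: one must verify both that the replacement of the scalar $1$ by $\alpha$ in $a_2$ preserves the relation $C^T=\eta C$ encoding $\eta$-adjointness, and that the Jordan type is unchanged (the two explicit chains exhibited span $V$ precisely because $\alpha\neq 0$). Once this is in place, the other ingredients are a short Witt-theoretic reduction, a one-line invariant computation, and a direct appeal to the classification.
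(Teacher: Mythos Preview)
Your proof is correct and follows essentially the same strategy as the paper: produce a one-parameter family of model $(1,\eta)$-pairs with the square-zero splitting property and the right Jordan type, then match the two rank-$1$ quadratic invariants and invoke the classification theorem. The only difference is in how the parameter is introduced: you insert a scalar $\alpha$ into the construction of $a_2$ and re-verify adjointness and the Jordan structure, whereas the paper uses Lemma~\ref{lemma:twisted} as a black box and simply rescales the form $b'$ to $b'':=\lambda b'$ (observing that $b'$-selfadjointness and $b''$-selfadjointness coincide), which multiplies every quadratic invariant by $\lambda$ at no computational cost.
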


\begin{proof}
By the previous lemma, there is a $(1,\eta)$-pair $(b',u')$ in which $u$ is nilpotent with exactly two Jordan cells, of respective sizes $2k+1$ and $2k-1$, and $b'$ is hyperbolic.
Since $b'$ is Witt-equivalent to $\eta^k (b',u')_{t,2k+1} \bot \eta^{k-1}  (b',u')_{t,2k-1}$,
it follows that $\eta^k (b',u')_{t,2k+1} \simeq -\eta^{k-1}  (b',u')_{t,2k-1}$,
that is $(b',u')_{t,2k+1} \simeq -\eta (b',u')_{t,2k-1}$.
Now, choose a nonzero value $\alpha$ (respectively, $\beta$) represented by $(b,u)_{t,2k-1}$ (respectively, by $(b',u')_{t,2k-1}$).
Then, for $b'':=\alpha \beta^{-1}b'$, we see that $(b'',u')$ is a $(1,\eta)$-pair, with sole non-trivial invariants $(b'',u')_{t,2k-1}= \alpha \beta^{-1} (b',u')_{t,2k-1}$ and
$(b'',u')_{t,2k+1}= \alpha \beta^{-1} (b',u')_{t,2k+1}$. The first one represents $\alpha$, and the second one $-\eta \alpha$.
It follows from our assumptions that $(b'',u')_{t,2k-1} \simeq (b,u)_{t,2k-1}$ and $(b'',u')_{t,2k+1} \simeq (b,u)_{t,2k+1}$,
and we deduce from the classification of $(1,\eta)$-pairs that $(b'',u')$ is isometric to $(b,u)$.
Of course, the $b'$-selfadjoint (respectively, $b'$-alternating) endomorphisms are also $b''$-selfadjoint (respectively, $b''$-alternating) and hence $u'$ is the sum of two such square-zero endomorphisms. The conclusion then follows from the previous isometry.
\end{proof}

\begin{cor}\label{cor:twistedpairs}
Let $(b,u)$ be a $(1,\eta)$-pair in which $u$ is nilpotent with only Jordan cells of sizes $2k+1$ and
$2k-1$. Assume that $(b,u)_{t,2k+1} \simeq -\eta (b,u)_{t,2k-1}$. Then
$(b,u)$ has the square-zero splitting property.
\end{cor}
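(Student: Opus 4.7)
The plan is to reduce to Corollary \ref{cor:twistedcor} by decomposing $(b,u)$ orthogonally into ``twisted'' summands, each carrying exactly one Jordan cell of size $2k+1$ and one of size $2k-1$, and then applying the corollary summand by summand.

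First I would diagonalize the quadratic invariant $(b,u)_{t,2k+1}$. Since this is a non-degenerate symmetric bilinear form on the $\F$-vector space $V_{t,2k+1}$ of dimension $m:=n_{t,2k+1}(u)$, standard diagonalization yields scalars $\alpha_1,\dots,\alpha_m \in \F\setminus\{0\}$ with $(b,u)_{t,2k+1} \simeq \langle \alpha_1,\dots,\alpha_m\rangle$. The hypothesis $(b,u)_{t,2k+1} \simeq -\eta (b,u)_{t,2k-1}$ then forces $n_{t,2k-1}(u)=m$ and $(b,u)_{t,2k-1} \simeq \langle -\eta\alpha_1,\dots,-\eta\alpha_m\rangle$. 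If $m=0$ then $u=0$ and the conclusion is trivial via $u=0+0$; henceforth assume $m\geq 1$.

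Next, for each $i \in \lcro 1,m\rcro$ I would appeal to the classification of $(1,\eta)$-pairs to construct a $(1,\eta)$-pair $(b_i,u_i)$ in which $u_i$ is nilpotent with exactly one Jordan cell of size $2k+1$ and one of size $2k-1$, and whose only nontrivial quadratic invariants are $(b_i,u_i)_{t,2k+1}\simeq \langle \alpha_i\rangle$ and $(b_i,u_i)_{t,2k-1}\simeq \langle -\eta\alpha_i\rangle$. When $\eta=1$ this is immediate from the realizability addendum to Theorem \ref{theo:epsilon1pairs}; when $\eta=-1$ it follows from the analogous realizability for $(1,-1)$-pairs (all remaining Hermitian invariants are trivially zero because $p=t$ and $u_i$ has only Jordan cells at $t$), and it can also be obtained explicitly by rescaling the construction from Lemma \ref{lemma:twisted} in the manner of Corollary \ref{cor:twistedcor}. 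Because quadratic invariants are additive under orthogonal direct sums, the pair $(b_1,u_1) \bot \cdots \bot (b_m,u_m)$ has the same Jordan structure and the same quadratic invariants as $(b,u)$, so the classification yields
$$(b,u) \simeq (b_1,u_1) \bot \cdots \bot (b_m,u_m).$$

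Finally, for every $i$,
$$(b_i,u_i)_{t,2k+1} \simeq \langle \alpha_i\rangle \simeq -\eta\,\langle -\eta\alpha_i\rangle \simeq -\eta\,(b_i,u_i)_{t,2k-1},$$
so Corollary \ref{cor:twistedcor} applies and each $(b_i,u_i)$ has the square-zero splitting property. Remark \ref{remark:orthogonalsumsplitting} then transports the square-zero splitting property to the orthogonal direct sum, and the above isometry carries it to $(b,u)$. The one point requiring care is the realizability/isometry input in the middle step; once these classification facts are in hand, the reduction to Corollary \ref{cor:twistedcor} is immediate.
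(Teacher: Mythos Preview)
Your proposal is correct and follows essentially the same approach as the paper: diagonalize $(b,u)_{t,2k+1}$, use the hypothesis to match the diagonalization of $(b,u)_{t,2k-1}$, invoke the classification of $(1,\eta)$-pairs to split $(b,u)$ into an orthogonal sum of pairs each carrying one Jordan cell of size $2k+1$ and one of size $2k-1$ with matching invariants, and then apply Corollary \ref{cor:twistedcor} to each summand.
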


\begin{proof}
Let us take an orthogonal basis $(e_1,\dots,e_n)$ for the form $(b,u)_{t,2k+1}$, and denote by $\alpha_1,\dots,\alpha_n$
the values of the associated quadratic form at the vectors $e_1,\dots,e_n$.
By assumption, we can find an orthogonal basis $(f_1,\dots,f_n)$ for the form $(b,u)_{t,2k-1}$
such that the corresponding values of the associated quadratic form are $-\eta \alpha_1,\dots,-\eta \alpha_n$.
By the classification of $(1,\eta)$-pairs, we can find, for all $i \in \lcro 1,n\rcro$,
such a pair $(b_i,u_i)$ in which $u_i$ is nilpotent with exactly two Jordan cells, one of size $2k+1$ and one of size $2k-1$,
and $(b^{(i)},u^{(i)})_{t,2k+1}$ represents $\alpha_i$ and $(b_i,u_i)_{t,2k-1}$ represents $-\eta \alpha_i$.
Then, $(b,u) \simeq (b^{(1)},u^{(1)}) \bot \cdots \bot (b^{(n)},u^{(n)})$.
By Corollary \ref{cor:twistedcor}, each pair $(b^{(i)},u^{(i)})$ has the square-zero splitting property, and we conclude
that so does $(b,u)$.
\end{proof}

\subsection{Reconstructing pairs having the square-zero splitting property}

We now have all the tools to prove the implication (i) $\Rightarrow$ (ii) in Theorem \ref{theo:symformnilpotent}.

We start with simple cases:

\begin{prop}\label{prop:Jordanevensize}
Let $\eta \in \{-1,1\}$.
Let $(b,u)$ be a $(1,\eta)$-pair in which $u$ is nilpotent with only Jordan cells of even size.
Then $(b,u)$ has the square-zero splitting property.
\end{prop}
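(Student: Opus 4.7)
The hypothesis that every Jordan cell of $u$ has even size translates into $n_{t,k}(u)=0$ for all odd $k$. The plan is to treat the two values of $\eta$ separately, since the set of nontrivial quadratic invariants differs sharply between them.

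When $\eta=-1$, the quadratic invariants of $(b,u)$ are precisely the symmetric forms $(b,u)_{t,k}$ with $k$ odd (the forms at even $k$ being alternating, and those attached to $p\neq t$ being trivial by nilpotency of $u$). All of them are trivial under our hypothesis, hence vacuously hyperbolic, so Corollary~\ref{extensionhyperboliccor1} applies directly and yields the square-zero splitting property at once.

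When $\eta=1$, every $(b,u)_{t,2k}$ is a possibly non-hyperbolic non-degenerate symmetric $\F$-bilinear form, so Corollary~\ref{extensionhyperboliccor1} is insufficient. Instead, I plan to realize $(b,u)$ up to isometry as a boxed sum $(H_V^1,B\boxplus C)$ of non-degenerate symmetric bilinear forms $B,C$; such a pair is automatically a $(1,1)$-pair with the square-zero splitting property, by the discussion in Section~\ref{section:boxedsum}. For each $k\geq 1$, I diagonalize $(b,u)_{t,2k}$ with respect to an orthogonal basis, extracting scalars $\alpha_{k,1},\dots,\alpha_{k,n_k}\in\F\setminus\{0\}$ where $n_k:=n_{t,2k}(u)$. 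Lemma~\ref{lemma:evensizenilpotent}, applied with $\varepsilon=1$, $r=2k$ and $\alpha=\alpha_{k,i}$, produces for each $(k,i)$ a pair $(B_{k,i},C_{k,i})$ of symmetric bilinear forms on a space $V_{k,i}$, with $B_{k,i}$ non-degenerate, such that $B_{k,i}\boxplus C_{k,i}$ is a single Jordan cell of size $2k$ and the unique nontrivial quadratic invariant $(H_{V_{k,i}}^1,B_{k,i}\boxplus C_{k,i})_{t,2k}$ has rank one and represents $\alpha_{k,i}$. Setting $V:=\bigoplus_{k,i} V_{k,i}$, $B:=\underset{k,i}{\bot}\, B_{k,i}$ and $C:=\underset{k,i}{\bot}\, C_{k,i}$, and invoking Lemma~\ref{cpdirectsumlemma} together with the compatibility of quadratic invariants with orthogonal direct sums, I assemble these atomic pieces into a single boxed sum whose quadratic invariant at $(t,2k)$ is the orthogonal direct sum of the rank-one forms representing $\alpha_{k,1},\dots,\alpha_{k,n_k}$, hence equivalent to $(b,u)_{t,2k}$, with trivial invariants elsewhere. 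Theorem~\ref{theo:epsilon1pairs} then identifies $(H_V^1,B\boxplus C)$ with $(b,u)$ up to isometry, transferring the square-zero splitting property to $(b,u)$.

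The main subtlety is in the $\eta=1$ case: Lemma~\ref{lemma:evensizenilpotent} only prescribes a single nonzero value represented by the quadratic invariant of a cyclic boxed sum, not its full equivalence class, so I must first diagonalize each $(b,u)_{t,2k}$ and then glue atomic pieces orthogonally. The orthogonal-sum compatibility of boxed sums (Lemma~\ref{cpdirectsumlemma}) and of quadratic invariants is precisely what makes this gluing step succeed, and once the invariants are matched the classification of $(1,1)$-pairs closes the argument.
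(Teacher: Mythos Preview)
Your proof is correct and follows essentially the same approach as the paper. For $\eta=-1$ you invoke Corollary~\ref{extensionhyperboliccor1} where the paper repeats its construction inline, and for $\eta=1$ both you and the paper match each diagonal value of $(b,u)_{t,2k}$ to a cyclic boxed sum via Lemma~\ref{lemma:evensizenilpotent}; the only cosmetic difference is that the paper decomposes $(b,u)$ into indecomposable pieces first and matches each one separately, whereas you assemble a single global boxed sum and invoke Theorem~\ref{theo:epsilon1pairs} once at the end.
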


\begin{proof}
Assume first that $\eta=-1$. The assumptions yield that all the Jordan numbers of $u$ are even.
We can consider a vector space $W$ equipped with a nilpotent endomorphism $v$ such that $n_{t,k}(v)=\frac{1}{2} n_{t,k}(u)$
for all $k \geq 1$. Then the pair $H_{1,\eta}(v)$ has the same Jordan numbers as $u$, with only Jordan cells of even size, all attached to the irreducible polynomial $t$. By the classification of $(1,-1)$-pairs, we deduce that $H_{1,\eta}(v) \simeq (b,u)$. Besides, since $v$ is nilpotent it is the sum of two square-zero
endomorphisms of $W$, and hence $H_{1,\eta}(v)$ has the square-zero splitting property. We conclude that so does $(b,u)$.

Assume now that $\eta=1$. Then, we can split $(b,u) \simeq (b_1,u_1) \bot \cdots \bot (b_n,u_n)$
so that each $u_i$ is a Jordan cell of even size $k_i$.
Let $i \in \lcro 1,n\rcro$, and choose a non-zero value $\alpha_i \in \F \setminus \{0\}$
that is represented by the quadratic invariant $(b_i,u_i)_{t,k_i}$.
By Lemma \ref{lemma:evensizenilpotent}, there exists a triple $(W,B,C)$ consisting of a vector space $W$ and of a pair
$(B,C)$ of symmetric bilinear forms on $W$, with $B$ non-degenerate, such that $B \boxplus C$
is cyclic with minimal polynomial $t^{k_i}$, and the quadratic invariant $(H_W^1,B \boxplus C)_{t,k_i}$ represents $\alpha_i$;
as this quadratic invariant is the sole non-trivial invariant of $(H_W^1,B \boxplus C)$, it is equivalent to
$(b_i,u_i)_{t,k_i}$ and we conclude that $(b_i,u_i) \simeq (H_W^1,B \boxplus C)$.
Hence, $(b_i,u_i)$ has the square-zero splitting property.
We conclude that so does $(b,u)$.
\end{proof}

In the next step, we tackle the pairs $(b,u)$, where $u$ is nilpotent with only Jordan cells of odd size,
and whose quadratic invariants are all hyperbolic.

\begin{prop}\label{prop:nilpotenthyperbolicodd}
Let $\eta \in \{-1,1\}$.
Let $(b,u)$ be a $(1,\eta)$-pair in which $u$ is nilpotent with only Jordan cells of odd size.
Assume that $(b,u)_{t,2k+1}$ is hyperbolic for all $k \geq 0$. Then $(b,u)$ has the square-zero splitting property.
\end{prop}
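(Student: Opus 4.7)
The plan is to deduce the proposition directly from Corollary \ref{extensionhyperboliccor1}; all the nontrivial work has already been done there, so only the hypothesis of that corollary needs to be checked, namely that every quadratic invariant of $(b,u)$ is hyperbolic.

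First I would identify the quadratic invariants of $(b,u)$. Since $u$ is nilpotent, every non-trivial invariant attached to $(p,k)$ with $p \in \Irr(\F)$ must have $p=t$; in particular no Hermitian invariants appear, since $t \notin \Irr_0(\F)$. For $\eta=1$ every $(b,u)_{t,k}$ is symmetric and hence a quadratic invariant. For $\eta=-1$ it was recalled in Section \ref{section:invariants} that $(b,u)_{t,k}$ is symmetric exactly when $k$ is odd and alternating when $k$ is even, so only the $(b,u)_{t,2k+1}$ count as quadratic invariants.

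Next I would combine this with the hypothesis that $u$ has only odd-sized Jordan cells, which forces $(b,u)_{t,k}$ to be defined on the zero vector space (and therefore trivial) whenever $k$ is even. Thus, in both cases $\eta=\pm 1$, the potentially non-trivial quadratic invariants of $(b,u)$ are exactly the forms $(b,u)_{t,2k+1}$ with $k \geq 0$, and these are hyperbolic by the standing assumption of the proposition. Corollary \ref{extensionhyperboliccor1} therefore applies and yields the square-zero splitting property for $(b,u)$.

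There is no real obstacle: all the difficulty has been absorbed in the proof of Corollary \ref{extensionhyperboliccor1}, which uses the hyperbolic/symplectic extension construction $H_{1,\eta}(v)$ to halve the Jordan numbers (all of which are forced to be even by the hyperbolicity of the quadratic invariants) and then appeals to Theorem \ref{theoBotha} to split the resulting nilpotent endomorphism as a sum of two square-zero endomorphisms on the smaller vector space, before transporting that splitting back through Proposition \ref{remark:expansionsquarezero}.
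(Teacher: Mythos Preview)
Your proposal is correct and takes essentially the same approach as the paper: both rely on the hyperbolic extension $H_{1,\eta}(v)$ to realize $(b,u)$ once all Jordan numbers are seen to be even, and then invoke Theorem~\ref{theoBotha} and Proposition~\ref{remark:expansionsquarezero}. The paper simply reproduces the argument of Corollary~\ref{extensionhyperboliccor1} inline, whereas you cite that corollary directly after verifying its hypothesis (noting that the even-index invariants vanish, hence are trivially hyperbolic), which is slightly more economical.
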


\begin{proof}
The assumptions show that all the Jordan numbers of $u$ are even.
Let us choose a nilpotent endomorphism $v$ of a vector space such that $n_{t,k}(v)=\frac{1}{2} n_{t,k}(u)$
for all $k \geq 1$ (and in particular $v$ has only Jordan cells of odd size).
Then the quadratic invariants of $H_{1,\eta}(v)$ attached to the pairs $(t,2k+1)$ are all hyperbolic, and all the other ones vanish.
Moreover $n_{t,k}(h_\eta(v))=2\, n_{t,k}(v)=n_{t,k}(u)$ for all $k \geq 1$.
Since hyperbolic symmetric bilinear forms are characterized by their rank up to equivalence, it follows from the classification
of $(1,\eta)$-pairs that $(b,u) \simeq H_{1,\eta}(v)$. Finally, $H_{1,\eta}(v)$ has the square-zero splitting property
because so does $v$ by Botha's theorem (see Proposition \ref{remark:expansionsquarezero}).
We conclude that $(b,u)$ has the square-zero splitting property.
\end{proof}

Now, we extend our study to the situation where $u$ has Jordan cells of odd size only.

\begin{prop}\label{prop:Jordanoddsize}
Let $\eta \in \{-1,1\}$.
Let $(b,u)$ be a $(1,\eta)$-pair in which $u$ is nilpotent with only Jordan cells of odd size,
and $\eta^k (b,u)_{t,2k+1}$ Witt-simplifies $\underset{i > k}{\bot}\eta^i (b,u)_{t,2i+1}$ for every integer $k \geq 0$.
Then $(b,u)$ has the square-zero splitting property.
\end{prop}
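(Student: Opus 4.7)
The plan is to proceed by induction on the largest integer $m\ge 0$ with $n_{t,2m+1}(u) > 0$, treating $u = 0$ as the trivial base case. The inductive step uses the observation, inspired by Corollary~\ref{cor:twistedpairs}, that one can peel off an orthogonal ``twisted'' summand pairing cells of sizes $2m+1$ and $2m-1$ so as to absorb the nonisotropic part of the top quadratic invariant. What remains will then split further into a piece whose top invariant is hyperbolic (handled by Proposition~\ref{prop:nilpotenthyperbolicodd}) and a piece of strictly smaller maximum Jordan size (handled by induction).

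Concretely, for $m\ge 1$, I will apply the Witt-simplification hypothesis at $k = m-1$ (all higher terms vanish by maximality) to obtain that the nonisotropic part $\varphi$ of $\eta^m(b,u)_{t,2m+1}$ embeds as a subform of $-\eta^{m-1}(b,u)_{t,2m-1}$. Writing $\eta^m(b,u)_{t,2m+1} \simeq \varphi \bot H$ with $H$ hyperbolic and $-\eta^{m-1}(b,u)_{t,2m-1} \simeq \varphi \bot \chi$, and setting $\psi := \eta^m\varphi$, I will rewrite these as
\[
(b,u)_{t,2m+1} \simeq \psi \bot H_1 \quad\text{and}\quad (b,u)_{t,2m-1} \simeq -\eta\psi \bot \chi',
\]
with $H_1 := \eta^m H$ hyperbolic and $\chi' := -\eta^{m-1}\chi$. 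By Theorem~\ref{theo:epsilon1pairs}, there is a $(1,\eta)$-pair $(b_1,u_1)$ whose only nonzero quadratic invariants are $(b_1,u_1)_{t,2m+1}\simeq \psi$ and $(b_1,u_1)_{t,2m-1}\simeq -\eta\psi$; Corollary~\ref{cor:twistedpairs} grants it the square-zero splitting property. The same theorem then yields a complementary pair $(b',u')$ with $(b,u) \simeq (b_1,u_1)\bot (b',u')$, whose invariants are $H_1$ at level $2m+1$, $\chi'$ at level $2m-1$, and the originals of $(b,u)$ at all other levels.

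I will then split $(b',u') \simeq (b'',u'')\bot (b''',u''')$, where $(b''',u''')$ gathers precisely the Jordan cells of size $2m+1$, so that its unique nonzero invariant $H_1$ is hyperbolic and Proposition~\ref{prop:nilpotenthyperbolicodd} applies; the complement $(b'',u'')$ carries the remaining cells, all of sizes $\le 2m-1$, with invariant $\chi'$ at level $2m-1$. To invoke induction, I must verify that $(b'',u'')$ still satisfies the Witt-simplification hypothesis. The conditions at levels $k \ge m-1$ are trivial, as the right-hand side vanishes. For $k < m-1$, the left-hand side is unchanged, while the right-hand side differs from that of $(b,u)$ only by the block $\eta^{m-1}(b,u)_{t,2m-1} \bot \eta^m(b,u)_{t,2m+1}$ being replaced with $\eta^{m-1}\chi'$; a short computation gives
\[
\eta^{m-1}(b,u)_{t,2m-1} \bot \eta^m(b,u)_{t,2m+1} \simeq \eta^{m-1}\chi' \bot H',
\]
with $H'$ hyperbolic, because the $\pm\varphi$ summands produce a hyperbolic plane. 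Since adding a hyperbolic summand to the form being simplified preserves the Witt-simplification statement (both terms in the inequality $\nu(B')+\nu(B'\bot B)\ge \rk(B')$ gain the same contribution), the hypothesis descends to $(b'',u'')$. Induction then yields the splitting property for $(b'',u'')$, and Remark~\ref{remark:orthogonalsumsplitting} combines the three summands to conclude.

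The main obstacle is the descent of the Witt-simplification hypothesis to the reduced pair $(b'',u'')$: it requires precisely identifying the Witt-theoretic discrepancy between the original and modified right-hand sides as a hyperbolic form, and recognizing that Witt-simplification is invariant under orthogonal addition of hyperbolic summands. All other ingredients---constructing the twisted summand, splitting off the hyperbolic top, and reassembling the decomposition---are direct applications of Corollary~\ref{cor:twistedpairs}, Proposition~\ref{prop:nilpotenthyperbolicodd}, Theorem~\ref{theo:epsilon1pairs}, and Remark~\ref{remark:orthogonalsumsplitting}.
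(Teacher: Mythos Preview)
Your argument is correct and uses the same toolkit as the paper's proof (Corollary~\ref{cor:twistedpairs} for the nonisotropic part of the top invariant, Proposition~\ref{prop:nilpotenthyperbolicodd} for the hyperbolic part, and the observation that the Witt-simplification condition is unchanged when a hyperbolic summand is added to the form being simplified). The paper inducts on the dimension of the underlying space and splits into two cases according to whether $B_\ell$ is isotropic (peeling off a single hyperbolic plane at level $2\ell+1$) or nonisotropic (peeling off a full twisted block at levels $2\ell\pm 1$); you streamline this by inducting on the maximum odd cell size $m$ and performing both peelings in a single step, which avoids the case distinction and drops the nilindex by two at every stage rather than merely lowering the dimension. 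One minor citation point: Theorem~\ref{theo:epsilon1pairs} is stated for $(\varepsilon,1)$-pairs, so for $\eta=-1$ the realization of $(b_1,u_1)$ and of the complementary summand should appeal to the classification of $(1,-1)$-pairs described in Section~\ref{section:invariants}.
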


\begin{proof}
We shall say that a $(1,\eta)$-pair $(b',u')$ satisfies condition $(\calC)$ whenever
$\eta^k (b',u')_{t,2k+1}$ Witt-simplifies $\underset{i > k}{\bot} \eta^i\,(b',u')_{t,2i+1}$ for every integer $k \geq 0$.

We prove the result by induction on the dimension of the underlying space. The result is obvious for $0$-dimensional spaces, and more generally
when $u=0$ (in that case it suffices to take the summands of $u$ equal to zero).

So now we assume that $u \neq 0$. The nilindex of $u$ reads $2\ell+1$ for some $\ell \geq 1$.
Write $B_i:=(b,u)_{t,2i+1}$ for all $i \geq 0$  and note that $B_{\ell}$ is non-zero, whereas $B_k=0$ for all $k>\ell$.

\noindent \textbf{Case 1: $B_\ell$ is isotropic.} \\
Then we can split $B_\ell=H \bot B'$ where $H$ is a hyperbolic form on a $2$-dimensional space, and $B'$ is a non-degenerate symmetric bilinear form. Using the classification of $(1,\eta)$-pairs, this helps us split $(b,u)\simeq (b_1,u_1) \bot (b_2,u_2)$
in which:
\begin{itemize}
\item $(b_1,u_1)$ is a $(1,\eta)$-pair with exactly two Jordan cells, both of size $2\ell+1$, and
$(b_1,u_1)_{t,2\ell+1}$ is hyperbolic;
\item $(b_2,u_2)$ is a $(1,\eta)$-pair with only Jordan cells of odd size,
$(b_2,u_2)_{t,2\ell+1} \simeq B'$ and $(b_2,u_2)_{t,2k+1} \simeq (b,u)_{t,2k+1}$ for all $k \neq \ell$.
\end{itemize}

By Proposition \ref{prop:nilpotenthyperbolicodd}, $(b_1,u_1)$ has the square-zero splitting property.

Next, we show that $(b_2,u_2)$ satisfies condition $(\calC)$: indeed, for all $k\neq \ell$,
the forms $\underset{i>k}{\bot} \eta^i (b_2,u_2)_{t,2i+1}$ and $\underset{i>k}{\bot} \eta^i B_i$
have the same non-isotropic parts up to equivalence, whereas $\eta^k (b_2,u_2)_{t,2k+1} \simeq \eta^k B_k$;
moreover $\underset{i>\ell}{\bot} \eta^i (b_2,u_2)_{t,2i+1}$ equals zero, and hence its non-isotropic part is a subform of
$\eta^\ell (b_2,u_2)_{t,2\ell+1}$. Using the fact that $(b,u)$  satisfies condition $(\calC)$, we deduce that so does
$(b_2,u_2)$.

Hence, by induction $(b_2,u_2)$ has the  square-zero splitting property, and we conclude that so does $(b,u)$.

\vskip 3mm
\noindent \textbf{Case 2: $B_\ell$ is nonisotropic.} \\
By condition $(\calC)$, we can split $B_{\ell-1} \simeq (-\eta B_\ell) \bot \varphi$ for some non-degenerate symmetric bilinear form $\varphi$.
This helps us split $(b,u)\simeq (b_1,u_1) \bot (b_2,u_2)$
in which:
\begin{itemize}
\item $(b_1,u_1)$ is a $(1,\eta)$-pair with only Jordan cells of size $2\ell-1$ or $2\ell+1$
$(b_1,u_1)_{t,2\ell+1} \simeq B_\ell$ and $(b_1,u_1)_{t,2\ell-1} \simeq -\eta B_\ell$;
\item $(b_2,u_2)$ is a $(1,\eta)$-pair with only Jordan cells of odd size at most $2\ell-1$,
$(b_2,u_2)_{t,2\ell-1} \simeq \varphi$ and
$(b_2,u_2)_{t,2k+1} \simeq B_k$ for every $k \in \lcro 0,\ell-2\rcro$.
\end{itemize}
By Corollary \ref{cor:twistedpairs}, $(b_1,u_1)$ has the square-zero splitting property.

To check that $(b_2,u_2)$ satisfies condition $(\calC)$, we note first that, for all
$k \leq \ell-2$, we have
$\underset{i >k}{\bot}\eta^i B_i \simeq \bigl((-\eta^\ell B_\ell) \bot (\eta^\ell B_\ell)\bigr)
\bot \underset{i >k}{\bot}\eta^i (b_2,u_2)_{t,2i+1}$, and since the form
$(-\eta^\ell B_\ell) \bot (\eta^\ell B_\ell)$ is hyperbolic this shows that the nonisotropic parts of
$\underset{i >k}{\bot}\eta^i B_i$ are equivalent to those of $\underset{i >k}{\bot}\eta^i (b_2,u_2)_{t,2i+1}$.
This is enough to check condition $(\calC)$ for $(b_2,u_2)$ at each integer $k \leq \ell-2$.
Besides, for $k\geq \ell-1$ we have $\underset{i >k}{\bot}\eta^i (b_2,u_2)_{t,2i+1}=0$ and hence condition
$(\calC)$ is trivially satisfied at $k$ for $(b_2,u_2)$. We conclude that $(b_2,u_2)$ satisfies condition $(\calC)$.
Hence, by induction $(b_2,u_2)$ has the square-zero splitting property, and we conclude that so does $(b,u)$.

Hence, our inductive proof is complete.
\end{proof}

Now, we complete the proof of (i) $\Rightarrow$ (ii) in Theorem \ref{theo:symformnilpotent}.
So, let $(b,u)$ be a $(1,\eta)$-pair in which $u$ is nilpotent and, for all
$k \geq 0$ the bilinear form $\eta^k\,(b,u)_{t,2k+1}$ Witt-simplifies $\underset{i>k}{\bot} \eta^i\, (b,u)_{t,2i+1}$.
Using the classification of indecomposable $(1,\eta)$-pairs, we can split
$(b,u) \simeq (b_1,u_1) \bot (b_2,u_2)$ so that:
\begin{itemize}
\item $(b_1,u_1)$ is a $(1,\eta)$-pair in which $u_1$ is nilpotent with all its Jordan cells of odd size;
\item $(b_2,u_2)$ is a $(1,\eta)$-pair in which $u_2$ is nilpotent with all its Jordan cells of even size.
\end{itemize}
By Proposition \ref{prop:Jordanevensize}, the pair $(b_2,u_2)$ has the square-zero splitting property.
Next, we see that $(b_2,u_2)_{t,2k+1} \simeq (b,u)_{t,2k+1}$ for every integer $k \geq 0$, and it follows that
$(b_2,u_2)$ satisfies the assumptions of Proposition \ref{prop:Jordanoddsize}.
Hence, $(b_2,u_2)$ has the square-zero splitting property, and we conclude that so does $(b,u)$.

Thus, it is now proved that in Theorem \ref{theo:symformnilpotent} condition (i) implies condition (ii).
The proof of the converse implication is spread over the next four sections.

\subsection{Three reduction techniques to lower the nilindex}

Here, we shall examine three techniques to reduce the nilindex in the analysis of a $(1,\eta)$-pair $(b,u)$
in which $u$ is nilpotent.

\subsubsection{Descent}\label{section:descent}

Let $(b,u)$ be a $(1,\eta)$-pair in which $u$ is nilpotent with nilindex $\nu \geq 3$ (and underlying vector space $V$).

Here, we compute the invariants of the induced pair $(b,u)^{\im u}$, which we will locally denote by $(\overline{b},\overline{u})$.
Noting that $(\im u)^{\bot_b}=\Ker u$, we see that the underlying vector space of the pair $(b,u)^{\im u}$ is
$W:=\im u/(\Ker u \cap \im u)$.

Obviously $\overline{u}^{\nu-1}=0$. Let us look at the quadratic invariants of $(b,u)$ (for pairs of type $(t,r)$ with $r \geq 1$).
To this end, we start by examining the case where all the Jordan cells of $u$ have the same size.

So, assume that, for some integer $r \geq 1$, all the Jordan cells of $u$ have size $r$.
If $r=1$ then $\im u=\{0\}$ and $(\overline{b},\overline{u})$ is the zero pair.
If $r=2$ then $\im u=\Ker u$ and again $(\overline{b},\overline{u})$ is the zero pair.

Assume now that $r>2$.
We have $\Ker u^k=\im u^{r-k}$ for all $k \in \lcro 0,r\rcro$, and $\Ker u^k=V$ for all $k \geq r$.
Hence, the source space of $\overline{u}$ is $\Ker u^{r-1}/\Ker u$.
Then, $\Ker \overline{u}^i=\Ker u^{i+1}/\Ker u$ for all $i \in \lcro 0,r-2\rcro$, and
$\Ker \overline{u}^i=\Ker u^{r-1}/\Ker u$ for all $i>r-2$.
It follows that all the Jordan cells of $\overline{u}$ have size $r-2$ (and there are as many as there are Jordan cells of $u$).
Hence, $(\overline{b},\overline{u})_{t,r-2}$ is the sole non-zero quadratic invariant of $(\overline{b},\overline{u})$.
It is defined as the bilinear form induced by $(x,y) \mapsto \overline{b}(x,\overline{u}^{r-3}(y))$ on the quotient space of $\Ker u^{r-1}/\Ker u$ with $\Ker u^{r-2}/\Ker u$, and it is therefore naturally equivalent to the bilinear form
induced by $(x,y) \mapsto b(x,u^{r-3}(y))$ on the quotient space $\Ker u^{r-1}/\Ker u^{r-2}$.

The linear map $u$ induces an isomorphism from $V/\Ker u^{r-1}$ to $\Ker u^{r-1}/\Ker u^{r-2}$.
Noting that $b(u(x),u^{r-3}(u(y)))=\eta\, b(x,u^{r-1}(y))$ for all $x,y$ in $V$, we find that this isomorphism defines an isometry from
$V/\Ker u^{r-1}$ equipped with $(\overline{x},\overline{y})\mapsto b(x,u^{r-1}(y))$ to
$\Ker u^{r-1}/\Ker u^{r-2}$ equipped with $(\overline{x},\overline{y})\mapsto \eta\,\overline{b}(x,u^{r-3}(y))$.
Therefore, $(\overline{b},\overline{u})_{t,r-2} \simeq \eta\, (b,u)_{t,r}$.

Let us come back to the general case where $u$ is nilpotent.
The pair $(b,u)$ is known to split into $(b,u) \simeq \underset{i \geq 1}{\bot} (b_i,u_i)$
where each $(b_i,u_i)$ is a $(1,\eta)$-pair, $(b_i,u_i)_{t,k}$ vanishes for all $k \neq i$, and $(b_i,u_i)_{t,i} \simeq (b,u)_{t,i}$.
Clearly, the previous reduction is compatible with isomorphisms and orthogonal direct sums. Piecing together the information we have gathered on
the above special case, we obtain the following conclusion.

\begin{prop}\label{proposition:descentenilpotente1}
Let $(b,u)$ be a $(1,\eta)$-pair in which $u$ is nilpotent with nilindex $\nu \geq 1$.
Then:
\begin{enumerate}[(i)]
\item For all $r \geq 1$, one has $((b,u)^{\im u})_{t,r} \simeq \eta\, (b,u)_{t,r+2}$.
\item The nilindex of the endomorphism component of $(b,u)^{\im u}$ equals $\min(1,\nu-2)$.
\end{enumerate}
\end{prop}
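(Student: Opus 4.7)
The plan is to reduce to the case where all Jordan cells of $u$ have a common size, then perform an explicit computation in that homogeneous case, and finally reassemble by orthogonal additivity. Concretely, using the classification of $(1,\eta)$-pairs I would split $(b,u) \simeq \underset{r \geq 1}{\bot} (b_r,u_r)$ where each $u_r$ is nilpotent with only Jordan cells of size $r$, and $(b_r,u_r)_{t,r} \simeq (b,u)_{t,r}$ with all other quadratic invariants of $(b_r,u_r)$ vanishing. Because both the construction $(b,u) \mapsto (b,u)^{\im u}$ and the quadratic invariants are compatible with isometry and with orthogonal direct sums (the former because $\im(u_1 \oplus u_2) = \im u_1 \oplus \im u_2$ and the latter because stated in Section \ref{section:invariants}), it suffices to prove both (i) and (ii) when $u$ is homogeneous.

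So fix $r \geq 1$ and assume every Jordan cell of $u$ has size $r$. Since $u$ is $b$-selfadjoint or $b$-skew-selfadjoint (so $u^\star = \eta u$), one has $(\im u)^{\bot_b} = \Ker u^\star = \Ker u$, hence the underlying vector space of $(b,u)^{\im u}$ is $W = \im u/(\im u \cap \Ker u)$. For $r \leq 2$ a direct inspection shows $\im u \subseteq \Ker u$, whence $W = 0$ and both sides of (i) vanish (the right-hand side because $r+2 > r$ exceeds the nilindex). For $r \geq 3$ the homogeneity yields the crucial identities $\Ker u \subseteq \im u$, $\Ker u^{r-1} = \im u$, and $\Ker u^{r-2} = \im u^2$, so that $W = \im u/\Ker u$ and a direct filtration analysis shows $\overline{u}$ has all its Jordan cells of size $r-2$.

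The heart of the argument is the computation of $((b,u)^{\im u})_{t,r-2}$. I would unwind the definition: it is the bilinear form on $\im u/\im u^2$ induced by $(x,y) \mapsto \overline{b}(x,\overline{u}^{r-3}(y)) = b(x,u^{r-3}(y))$ for $x,y \in \im u$. Writing $x = u(x')$ and $y = u(y')$, this becomes
\[
b(u(x'),u^{r-2}(y')) = b(x',u^\star u^{r-2}(y')) = \eta\, b(x',u^{r-1}(y')),
\]
which is $\eta$ times the form defining $(b,u)_{t,r}$ on $V/\Ker u^{r-1} = V/\im u$. The map $u$ induces a vector space isomorphism $V/\im u \overset{\simeq}{\longrightarrow} \im u/\im u^2$ (again by homogeneity), and through this isomorphism the two bilinear forms are identified up to the factor $\eta$. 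This yields $((b,u)^{\im u})_{t,r-2} \simeq \eta\,(b,u)_{t,r}$, and after reassembling the orthogonal direct sum this gives (i) in general. Statement (ii) follows immediately since the largest Jordan cell of $\overline{u}$ has size $\nu - 2$ when $\nu \geq 3$ and is empty otherwise.

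The main obstacle I anticipate is the careful bookkeeping of the three equalities $\Ker u \subseteq \im u$, $\Ker u^{r-1} = \im u$, $\Ker u^{r-2} = \im u^2$ in the homogeneous case (all of which fail in general without homogeneity) and the verification that the isomorphism $u : V/\im u \to \im u/\im u^2$ is well-defined and bijective; these are precisely what makes the reduction to the homogeneous case indispensable. Once that framework is in place, the identity $b(u(x'),u^{r-2}(y')) = \eta\, b(x',u^{r-1}(y'))$ is the one substantive calculation, and it is a direct application of $u^\star = \eta u$.
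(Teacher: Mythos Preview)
Your proposal is correct and follows essentially the same approach as the paper: reduce via orthogonal decomposition to the homogeneous case where all Jordan cells have a common size $r$, dispose of $r\leq 2$ trivially, and for $r\geq 3$ use the identities $\Ker u^{r-1}=\im u$, $\Ker u^{r-2}=\im u^2$ together with the isomorphism induced by $u$ from $V/\im u$ to $\im u/\im u^2$ and the single adjoint computation $b(u(x'),u^{r-2}(y'))=\eta\,b(x',u^{r-1}(y'))$. The paper's argument is the same in structure and in its key identity.
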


\subsubsection{Twisted descent}\label{section:twisteddescent}

Here, we use a slightly different process.
Again, we let $(b,u)$ be a $(1,\eta)$-pair in which $u$ is nilpotent with nilindex $\nu \geq 3$ (and underlying vector space $V$).
This time around, we consider the induced pair $(\overline{b},\overline{u}):=(b,u)^{\Ker u+\im u}$.

Noting that $(\Ker u+\im u)^{\bot_b}=\im u \cap \Ker u$ is included in $\Ker u+\im u$,
we see that the underlying vector space of the induced pair is the quotient space
$(\Ker u +\im u)/(\Ker u \cap \im u)$.
From there, the results are very similar to the one of the previous section, with the notable exception
of the case where $u$ has only Jordan cells of size $1$ (i.e.\ $u=0$).
In that case indeed, we have $(\overline{b},\overline{u})=(b,u)$, leading to
$(\overline{b},\overline{u})_{t,1}=(b,u)_{t,1}$ (instead of $(\overline{b},\overline{u})_{t,1}=0$).

Using the same method as in the previous paragraph, we obtain:

\begin{prop}\label{proposition:descentenilpotente2}
Let $(b,u)$ be a $(1,\eta)$-pair in which $u$ is nilpotent.
Then:
\begin{enumerate}[(i)]
\item For all $r \geq 2$, $((b,u)^{\Ker u+\im u})_{t,r} \simeq \eta\, (b,u)_{t,r+2}$.
\item $((b,u)^{\Ker u+\im u})_{t,1} \simeq (b,u)_{t,1}\, \bot \,\eta\, (b,u)_{t,3}$.
\end{enumerate}
\end{prop}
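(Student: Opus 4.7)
The plan is to mimic closely the proof of Proposition \ref{proposition:descentenilpotente1}, exploiting that the twisted descent differs from the plain descent only in what happens on the ``small'' Jordan blocks. First I would verify the identity $(\Ker u + \im u)^{\bot_b} = \Ker u \cap \im u$: since $u^\star = \eta u$, one has $(\Ker u)^{\bot_b} = \im u$ and $(\im u)^{\bot_b} = \Ker u$, and the claim follows by intersecting. Thus $(\overline{b},\overline{u})$ lives on $W := (\Ker u + \im u)/(\Ker u \cap \im u)$.

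Next, I would note (as in Section \ref{section:descent}) that the whole construction is compatible with $b$-orthogonal direct sums and with isometry, so both $(\overline{b},\overline{u})$ and its quadratic invariants split across the orthogonal decomposition $(b,u) \simeq \underset{i \geq 1}{\bot} (b_i,u_i)$ in which $(b_i,u_i)_{t,k}$ vanishes for $k \neq i$ and $(b_i,u_i)_{t,i} \simeq (b,u)_{t,i}$. Therefore it suffices to compute the twisted descent for a pair $(b,u)$ in which all Jordan cells of $u$ have the same size $r$, and then to sum the contributions.

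I would then handle the four regimes separately: \emph{case $r=1$} (so $u=0$), where $\Ker u + \im u = V$ and $\Ker u \cap \im u = 0$, so $(\overline{b},\overline{u}) = (b,u)$ and only $(\overline{b},\overline{u})_{t,1}$ is non-trivial, matching $(b,u)_{t,1}$; \emph{case $r=2$}, where $\Ker u = \im u$ forces $W=0$ and everything vanishes (and indeed $(b,u)_{t,r+2}=0$ in every relevant instance); \emph{case $r=3$}, where $\Ker u^2 = \im u + \Ker u$ and $\Ker u = \im u^2 = \Ker u \cap \im u$, giving $W = \Ker u^2/\Ker u$ on which $\overline{u}$ vanishes, so only the invariant at $(t,1)$ survives: the computation $b(x,u^2(y)) = \eta\,b(u(x),u(y))$ together with the isomorphism $V/\Ker u^2 \overset{\simeq}{\to} \Ker u^2/\Ker u$ induced by $u$ yields $(\overline{b},\overline{u})_{t,1} \simeq \eta (b,u)_{t,3}$; \emph{case $r \geq 4$}, where $\Ker u + \im u = \Ker u^{r-1}$ and $\Ker u \cap \im u = \Ker u$, so $W = \Ker u^{r-1}/\Ker u$, the induced $\overline{u}$ has all its Jordan cells of size $r-2$, and the same adjointness trick $b(x,u^{r-1}(y)) = \eta\,b(u(x),u^{r-2}(y))$ combined with the isomorphism $V/\Ker u^{r-1} \overset{\simeq}{\to} \Ker u^{r-1}/\Ker u^{r-2}$ induced by $u$ gives $(\overline{b},\overline{u})_{t,r-2} \simeq \eta (b,u)_{t,r}$.

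Summing over the summands of the primary decomposition, the contributions to $(\overline{b},\overline{u})_{t,r}$ for $r \geq 2$ come solely from the summand with Jordan cells of size $r+2$, yielding (i), whereas for $(\overline{b},\overline{u})_{t,1}$ both the size-$1$ summand and the size-$3$ summand contribute, yielding (ii). The only genuine obstacle is the bookkeeping in the boundary cases $r \in \{1,2,3\}$, especially checking that the size-$3$ computation produces an invariant at index $1$ (rather than at index $r-2$, which is non-positive); this is precisely why the $(t,1)$ invariant requires the extra summand and justifies the ``twisted'' terminology.
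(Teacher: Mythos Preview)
Your proposal is correct and follows essentially the same approach as the paper: reduce to the case where all Jordan cells have the same size $r$, compute there, and reassemble. The paper is terser, simply remarking that for $r\geq 2$ one has $\Ker u\subset\im u$ so twisted descent coincides with the plain descent of Section~\ref{section:descent}, and only the case $r=1$ (where $(\overline{b},\overline{u})=(b,u)$) is new; your explicit treatment of $r=3$ is already subsumed in the plain-descent computation, but isolating it makes the contribution to the $(t,1)$ invariant transparent.
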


Note that if $u^3=0$ then the endomorphism component of $(b,u)^{\Ker u+\im u}$ vanishes,
and $(b,u)^{\Ker u+\im u}\simeq ((b,u)^{\Ker u+\im u})_{t,1} \simeq (b,u)_{t,1} \bot \eta (b,u)_{t,3}$.

\subsubsection{Folding}\label{section:symmetrization}

Here, we use yet another type of induced pairs.
Again, we let $(b,u)$ be a $(1,\eta)$-pair in which $u$ is nilpotent.
Let $k \geq 1$.

Here, we consider the induced pair $(b,u)^{\Ker u^k}$, which is defined on the quotient space
$W:=\Ker u^k/(\im u^k\cap \Ker u^k)$ because $(\Ker u^k)^{\bot_b}=\im u^k$.

Again, we compute the quadratic invariants of $(b,u)^{\Ker u^k}$ as functions of those
of $(b,u)$ by considering the special case where $u$ has only Jordan cells of size $r$ for some $r \geq 1$.
If $k\geq r$ then $\im u^k \cap \Ker u^k=\{0\}$ and $\Ker u^k=V$, and so
$(b,u)^{\Ker u^k}=(b,u)$.

Assume now that $k<r$. Then $\im u^k=\Ker u^{r-k}$ and hence
$\im u^k \cap \Ker u^k=\Ker u^{r-k} \cap \Ker u^k=\Ker u^{\min(r-k,k)}$.
\begin{itemize}
\item If $k \leq r-k$ then $W=\{0\}$ and $(b,u)^{\Ker u^k}$ vanishes.
\item Assume that $k>r-k$. Then $W=\Ker u^k/\Ker u^{r-k}$, and $\overline{u}$ has only Jordan cells of size $2k-r$. The $((b,u)^{\Ker u^k})_{t,2k-r}$ invariant is the bilinear form induced by
$(\overline{x},\overline{y}) \mapsto b(x,u^{2k-r-1}(y))$ on $\Ker u^k/\Ker u^{k-1}$.
Noting that $b(u^{r-k}(x),u^{2k-r-1}(u^{r-k}(y)))=\eta^{r-k} b(x,u^{r-1}(y))$ for all $x,y$ in $V$, we obtain that
the linear map $x \mapsto u^{r-k}(x)$ induces an isometry
from $(b,u)_{t,r}$ to $\eta^{r-k}((b,u)^{\Ker u^k})_{t,2k-r}$.
Hence, $((b,u)^{\Ker u^k})_{t,2k-r} \simeq \eta^{r-k} (b,u)_{t,r}$, and it is the sole possible non-vanishing quadratic invariant of $(b,u)^{\Ker u^k}$.
\end{itemize}

As in Section \ref{section:descent}, we piece the previous results together to obtain the general form of the quadratic invariants of $(\overline{b},\overline{u})$:

\begin{prop}\label{prop:symmetrization}
Let $(b,u)$ be a $(1,\eta)$-pair in which $u$ is nilpotent. Let $k \geq 1$.
Then:
\begin{enumerate}[(i)]
\item For all $r \in \lcro 1,k-1\rcro$, $\bigl((b,u)^{\Ker u^k}\bigr)_{t,r} \simeq
(b,u)_{t,r} \bot \eta^{k+r}(b,u)_{t,2k-r}$.
\item $\bigl((b,u)^{\Ker u^k}\bigr)_{t,k} \simeq (b,u)_{t,k}$.
\item For all $r >k$, $\bigl((b,u)^{\Ker u^k}\bigr)_{t,r}=0$.
\item The endomorphism component $\overline{u}$ of $(b,u)^{\Ker u^k}$ satisfies $\overline{u}^k=0$.
\end{enumerate}
\end{prop}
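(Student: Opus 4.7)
The plan is to split $(b,u)$ via the classification of $(1,\eta)$-pairs into an orthogonal sum $(b,u) \simeq \underset{r' \geq 1}{\bot} (b_{r'},u_{r'})$, where each $u_{r'}$ is nilpotent with all Jordan cells of a single size $r'$ and $(b_{r'},u_{r'})_{t,r'} \simeq (b,u)_{t,r'}$, and then to reduce to those homogeneous summands. For this I would first check that the construction $(b,u) \mapsto (b,u)^{\Ker u^k}$ is compatible with orthogonal direct sums: given $(b,u) = (b_1,u_1)\bot(b_2,u_2)$, the kernel $\Ker(u_1\oplus u_2)^k$ is the direct sum of the component kernels, its $b$-orthogonal coincides with $\im u_1^k \oplus \im u_2^k$ (using that each $u_i$ is adjoint or skew-adjoint for $b_i$), and the radical splits accordingly, giving $(b,u)^{\Ker u^k} \simeq (b_1,u_1)^{\Ker u_1^k} \bot (b_2,u_2)^{\Ker u_2^k}$. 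The same routine verification also makes the formation of quadratic invariants compatible with orthogonal direct sums.

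Next, I would treat the homogeneous case where all Jordan cells of $u$ have a single size $r'$. There, $\im u^k = \Ker u^{r'-k}$ (with $\Ker u^j := V$ for $j\geq r'$), so $\Ker u^k \cap \im u^k = \Ker u^{\min(k,r'-k)}$. If $k \geq r'$, then $(b,u)^{\Ker u^k}=(b,u)$ and the sole invariant lives at index $r'$. If $k < r'$ and $2k \leq r'$, then $W := \Ker u^k / (\Ker u^k \cap \im u^k)$ vanishes. If $k < r' < 2k$, then $W = \Ker u^k/\Ker u^{r'-k}$, the induced endomorphism $\overline{u}$ has all its Jordan cells of size $2k-r'$, and the sole invariant lives at index $2k-r'$; a direct identification shows that it equals the form induced by $(x,y) \mapsto b(x,u^{2k-r'-1}(y))$ on $\Ker u^k/\Ker u^{k-1}$. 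The delicate step, which I expect to be the main technical obstacle, is to establish in this last subcase the isometry
$$\bigl((b,u)^{\Ker u^k}\bigr)_{t,2k-r'} \;\simeq\; \eta^{r'-k}\,(b,u)_{t,r'}.$$
I would prove it by checking that $u^{r'-k}$ induces a vector-space isomorphism $V/\Ker u^{r'-1}\overset{\simeq}{\longrightarrow}\Ker u^k/\Ker u^{k-1}$, and then using $u^\star = \eta u$ to compute $b(u^{r'-k}x,\,u^{2k-r'-1}u^{r'-k}y) = \eta^{r'-k}\,b(x,u^{r'-1}y)$, which is precisely the needed rescaling between the two bilinear forms at hand.

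To assemble (i)--(iii), I identify which homogeneous summands contribute at each index $r$ of $(b,u)^{\Ker u^k}$: the summand of size $r' = r$ contributes whenever $r \leq k$ (first subcase), and the summand of size $r' = 2k-r$ contributes whenever $0 < r < k$ (third subcase, applied with $r'=2k-r$). For $1 \leq r \leq k-1$ both contribute, giving $(b,u)_{t,r}\bot\eta^{k-r}(b,u)_{t,2k-r}$; since $\eta^2 = 1$, one has $\eta^{k-r} = \eta^{k+r}$, yielding (i). For $r = k$ only the first contribution survives, giving (ii). For $r > k$ neither contributes, giving (iii). Finally, (iv) is immediate: $u^k$ vanishes on $\Ker u^k$, hence induces the zero map on the quotient $W$, so $\overline{u}^k = 0$.
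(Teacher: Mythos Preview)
Your proposal is correct and follows essentially the same approach as the paper: reduce to the homogeneous case (all Jordan cells of one size) via the orthogonal decomposition of $(1,\eta)$-pairs and compatibility of the $\Ker u^k$-induction with orthogonal sums, perform the same three-way case split on the cell size, and in the nontrivial subcase use the isomorphism induced by $u^{r'-k}$ together with the identity $b(u^{r'-k}x,u^{2k-r'-1}u^{r'-k}y)=\eta^{r'-k}b(x,u^{r'-1}y)$ to identify the invariant. Your assembly of the contributions at each index $r$, including the observation $\eta^{k-r}=\eta^{k+r}$, matches the paper's conclusion.
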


\subsection{The key lemma: statement and consequences}

We are ready for the main key in our study of nilpotent endomorphisms.
Let us first recall some notation and terminology. The rank of a non-degenerate symmetric bilinear form is simply the
dimension of the underlying vector space. The Witt index of such a form $B$, denoted by $\nu(B)$,
is the greatest dimension for a totally $B$-isotropic subspace.
Remember that, given two such forms $B$ and $B'$, we say that $B$ Witt-simplifies $B'$
whenever every nonisotropic part of $B'$ is equivalent to a subform of $-B$.

Here is our key result.

\begin{lemma}[Key lemma]\label{lemma:keylemmanilpotent}
Let $\eta \in \{1,-1\}$.
Let $(b,u)$ be a $(1,\eta)$-pair with the square-zero splitting property, in which $u^3=0$.
Then $(b,u)_{t,1}$ Witt-simplifies $\eta\,(b,u)_{t,3}$.
\end{lemma}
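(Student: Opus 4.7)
The plan is to extract the Witt-simplification from the square-zero decomposition $u = a_1 + a_2$ by exploiting the twisted descent reduction together with the compatibility relations that $a_1^2 = 0$ and $a_1^\star = \eta\, a_1$ impose across the $u$-filtration of $V$.

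First I will establish that both $a_1$ and $a_2$ stabilize $\Ker u$. Indeed, for $y \in \Ker u$, the relation $u(y) = 0$ gives $a_1(y) = -a_2(y)$, so
\[
u(a_1(y)) = a_1^2(y) + a_2(a_1(y)) = a_2(-a_2(y)) = -a_2^2(y) = 0,
\]
hence $a_1(y) \in \Ker u$, and symmetrically for $a_2$. Combined with the Stabilization Lemma, both $a_1$ and $a_2$ preserve the filtration
\[
\{0\} \;\subset\; \Ker u \cap \im u \;\subset\; \Ker u \;\subset\; \Ker u + \im u \;\subset\; V.
\]

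Next I will apply twisted descent (Proposition \ref{proposition:descentenilpotente2}). Since $u^3 = 0$, the induced pair $(b,u)^{\Ker u + \im u}$ has zero endomorphism and its underlying form is $(b,u)_{t,1}\bot \eta(b,u)_{t,3}$ on the quotient $(\Ker u + \im u)/(\Ker u \cap \im u)$, which decomposes $b$-orthogonally as $\bar K \oplus \bar I$ with $\bar K := \Ker u/(\Ker u \cap \im u)$ carrying $(b,u)_{t,1}$ and $\bar I := \im u/(\Ker u \cap \im u)$ carrying $\eta(b,u)_{t,3}$. By Remark \ref{remark:squarezerosplittinginduced}, this induced pair inherits the square-zero splitting property, yielding operators $\tilde a_1$ and $\tilde a_2 = -\tilde a_1$, each square-zero and of the correct adjoint type. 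The stability of $\Ker u$ from the previous step shows that $\tilde a_1$ preserves $\bar K$; because of the $b$-orthogonality of $\bar K$ and $\bar I$, the identity $\tilde a_1^\star = \eta\, \tilde a_1$ then forces $\tilde a_1$ also to preserve $\bar I$. Thus $\tilde a_1 = \alpha \oplus \delta$ is block-diagonal, with $\alpha$ a square-zero $(b,u)_{t,1}$-self/skew-adjoint endomorphism of $\bar K$ and $\delta$ a square-zero $\eta(b,u)_{t,3}$-self/skew-adjoint endomorphism of $\bar I$.

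The heart of the proof, and the main obstacle, is then to pass from these partial conclusions to the Witt-simplification. The elementary bounds $\nu((b,u)_{t,1}) \geq \rk \alpha$ and $\nu((b,u)_{t,3}) \geq \rk \delta$ do not suffice on their own, so I must use the global action of $a_1$ on $V$ beyond its restriction to $\Ker u + \im u$. My plan is to work with the refined filtration
\[
\{0\} \subset \im u^2 \subset \Ker u \cap \im u \subset \Ker u \subset \Ker u + \im u \subset V,
\]
using that $u^3 = 0$ forces $\im u^2 \subset \Ker u \cap \im u$, and that $b$ places the outer layers $\im u^2$ and $V/(\Ker u + \im u)$ into non-degenerate duality since $(\Ker u + \im u)^{\bot_b} = \Ker u \cap \im u$. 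Writing $a_1$ in block upper-triangular form with respect to this filtration, the equations $a_1^2 = 0$ and $a_1^\star = \eta a_1$ couple the diagonal blocks $\alpha, \delta$ to the strictly upper-triangular "connecting" components. Using the isomorphism $u: V/\Ker u^2 \xrightarrow{\sim} \bar I$ that pulls the form on $\bar I$ back to $\eta\,(b,u)_{t,3}$, I expect to unpack these compatibility relations into an explicit linear map that embeds a complement of $\Ker \delta$ in $\bar I$ into $\bar K$, reversing the sign of the form, and hence exhibits a nonisotropic part of $\eta(b,u)_{t,3}$ as a subform of $-(b,u)_{t,1}$. The shortness of the filtration imposed by $u^3 = 0$ is what keeps the cross-layer relations manageable, but verifying that the produced map is injective on the non-isotropic part and realizes the correct form equivalence is the delicate step that will require the most care.
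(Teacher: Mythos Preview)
Your setup through the twisted descent is correct, and you rightly identify that the block-diagonal decomposition $\tilde a_1 = \alpha \oplus \delta$ on $\bar K \oplus \bar I$ decouples $B_1$ from $B_3$ and hence cannot by itself yield the Witt-simplification. The problem is that your final paragraph does not supply the missing mechanism, and the specific claim you make there does not match what must be proved: a complement of $\Ker \delta$ has dimension $\rk \delta \leq \nu(B_3)$, whereas a nonisotropic part of $\eta B_3$ has dimension $\rk B_3 - 2\nu(B_3)$, so ``embedding a complement of $\Ker \delta$'' does not exhibit a nonisotropic part of $\eta B_3$ inside $-B_1$. There is also a slip in the duality claim: $(\Ker u + \im u)^{\bot_b} = \Ker u \cap \im u$ pairs $V/(\Ker u+\im u)$ with $\Ker u \cap \im u$, not with $\im u^2$ (these differ whenever $u$ has Jordan cells of size $2$).

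More substantially, the paper's argument hinges on an idea absent from your outline: one first reduces (via subadditivity of the Witt index) to the case where $(b,u)$ is \emph{indecomposable as a $(1,\eta)$-pair with the square-zero splitting property} and $B_3$ is non-hyperbolic. Under those hypotheses one argues by contradiction that the induced action of $a$ on $V/\Ker u^2$ vanishes (Step~3) and that $a$ maps $\Ker u^2$ into $\Ker u + \im u$ (Step~4); each time, failure would produce an explicit $b$-regular subspace stable under both $a$ and $u-a$, splitting $(b,u)$. Only after these diagonal blocks are forced to zero does the off-diagonal part of $a$ define a meaningful map $f$ from a space of dimension $\geq \rk B_3$ into $(\Ker u + \im u)/(\Ker u \cap \im u)$ whose image is totally isotropic for $B_1 \bot \eta B_3$, and whose kernel projects to a totally $B_3$-isotropic subspace of $V/\Ker u^2$. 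Your plan to read the conclusion directly off the block structure of $a_1$ along the filtration, without first killing the top diagonal blocks via an indecomposability argument, leaves you with no control over where $a(V)$ sits, and the construction of $f$ then has no reason to land in $\Ker u + \im u$ nor to have small kernel.
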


The proof of this lemma is the most technical part of the present manuscript, so we wait until the next section to give it.
Here, we shall show how this lemma, combined with the processes described in the previous paragraphs,
helps one recover the necessary condition featured in Theorem \ref{theo:symformnilpotent}.

\begin{prop}\label{prop:CNnilpotentecomplete}
Let $(b,u)$ be a $(1,\eta)$-pair with the square-zero splitting property.
Then $\eta^k (b,u)_{t,2k+1}$ Witt-simplifies $\underset{i >k}{\bot} \eta^i (b,u)_{t,2i+1}$ for all $k \geq 0$.
\end{prop}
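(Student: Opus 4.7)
My plan is to combine the key lemma with the three induced-pair constructions of Section~\ref{section:construction} (descent, twisted descent, folding), all of which preserve the square-zero splitting property: the subspaces involved ($\im u^k$, $\Ker u^k$, and their sums) are stable under any square-zero splitting of $u$ by Lemma~\ref{lemma:stabilization}, so Remark~\ref{remark:squarezerosplittinginduced} transmits the property to the induced pairs.

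The first step is to reduce the level-$k$ statement to level $0$ by iterated descent. Applying Proposition~\ref{proposition:descentenilpotente1} $k$ times produces a $(1,\eta)$-pair with the square-zero splitting property whose $(t,r)$-invariant is $\eta^k(b,u)_{t,r+2k}$, converting the level-$k$ claim for $(b,u)$ into the level-$0$ claim for the $k$-fold descent. Hence it suffices to prove: for every $(1,\eta)$-pair $(b,u)$ with the square-zero splitting property and $u$ nilpotent, $(b,u)_{t,1}$ Witt-simplifies $\underset{i\geq 1}{\bot}\eta^i(b,u)_{t,2i+1}$.

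The second step is to prove this level-$0$ claim by strong induction on the nilindex $\nu$ of $u$. The base case $\nu\leq 3$ is precisely Lemma~\ref{lemma:keylemmanilpotent}, since all higher right-hand-side terms vanish. For the inductive step $\nu\geq 4$, I combine two inputs: (a) applying the key lemma to $(b,u)^{\Ker u^3}$---which has nilindex $\leq 3$ by Proposition~\ref{prop:symmetrization}(iv) and retains the square-zero splitting property---and using Proposition~\ref{prop:symmetrization}(i)-(ii) to identify its invariants, I obtain the three-term relation
\[
(b,u)_{t,1}\bot(b,u)_{t,5}\ \text{Witt-simplifies}\ \eta(b,u)_{t,3};
\]
and (b) applying the inductive hypothesis to $(b,u)^{\im u}$ (of nilindex $\nu-2$) and translating via the shift formula of Proposition~\ref{proposition:descentenilpotente1} yields every level-$m$ Witt-simplification for $(b,u)$ with $m\geq 1$.

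The last and most delicate step combines (a) and (b) to produce level $0$. Setting $C_i:=\eta^i(b,u)_{t,2i+1}$ and denoting the nonisotropic part of a form $X$ by $[X]$, item (a) gives an isometric embedding $[C_1]\hookrightarrow -C_0\bot -C_2$ and item (b) gives an isometry $-C_1\simeq [C_{\geq 2}]\bot Y$ for some form $Y$ (with $C_{\geq 2}:=\underset{i\geq 2}{\bot}C_i$). A short Witt-ring computation shows that both $C_{\geq 1}$ and $[C_1]\bot[C_{\geq 2}]$ are Witt-equivalent to $-[Y]$, whence $[C_{\geq 1}]=-[Y]$ and $-[Y]\hookrightarrow [C_1]\bot[C_{\geq 2}]$; composing with the three-term embedding yields $-[Y]\hookrightarrow -C_0\bot -C_2\bot[C_{\geq 2}]$, and since $-C_2\bot[C_{\geq 2}]$ is Witt-equivalent to $[C_{\geq 3}]$---which itself embeds in $-C_2$ by the level-$2$ inductive hypothesis---an iterated hyperbolic-cancellation argument (valid because $\charac(\F)\neq 2$) extracts the required embedding $[C_{\geq 1}]\hookrightarrow -C_0$. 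The hard part is precisely this final assembly: orchestrating the three-term relation from (a), the level-$1$ decomposition of $-C_1$ from (b), and the tower of higher-level inductive embeddings into a single embedding into $-C_0$ alone, while the remaining work---subspace stability, inheritance of the splitting property, and identification of the induced invariants---is routine from Section~\ref{section:construction}.
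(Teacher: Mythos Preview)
Your reduction to level $0$ via iterated descent (Step~1) and your use of the induction hypothesis on $(b,u)^{\im u}$ to obtain all levels $m\geq 1$ (Step~2(b)) are correct and match the paper's argument. The gap is in the final assembly.

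The three-term relation from folding at $\Ker u^3$ together with all the level-$m\geq 1$ relations does \emph{not} suffice to deduce level~$0$: there is no ``hyperbolic cancellation for embeddings''. Concretely, over $\R$ take $C_0=0$, $C_1=\langle 1,-1\rangle$, $C_2=\langle 1\rangle$, $C_i=0$ for $i\geq 3$. Then your (a) reads $[C_1]=0\hookrightarrow -C_0\bot-C_2$, and (b) at level~$1$ reads $[C_2]=\langle 1\rangle\hookrightarrow -C_1$; all higher levels are trivial. Tracing your computation gives $-[Y]=[C_{\geq 1}]=\langle 1\rangle$ and $-[Y]\hookrightarrow -C_0\bot(-C_2\bot[C_{\geq 2}])=\langle -1,1\rangle$, with $-C_2\bot[C_{\geq 2}]$ hyperbolic. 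But $\langle 1\rangle\not\hookrightarrow -C_0=0$. So the ``iterated hyperbolic-cancellation argument'' you invoke is invalid: an anisotropic form embedding in $X\bot H$ with $H$ hyperbolic need not embed in $X$.

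The fix is to fold near the \emph{top} of the Jordan structure rather than at the bottom. If the nilindex is $2k+1$, fold first at $2k$ and then at $2k-1$: by Proposition~\ref{prop:symmetrization} this produces a pair $(b'',u'')$ with $(u'')^{2k-1}=0$ whose odd invariants satisfy $(b'',u'')_{t,2i+1}\simeq (b,u)_{t,2i+1}$ for $i\leq k-2$ and $(b'',u'')_{t,2k-1}\simeq (b,u)_{t,2k-1}\bot\eta(b,u)_{t,2k+1}$, so that $\underset{i\geq 1}{\bot}\eta^i(b'',u'')_{t,2i+1}\simeq \underset{i\geq 1}{\bot}\eta^i(b,u)_{t,2i+1}$ and $(b'',u'')_{t,1}\simeq (b,u)_{t,1}$. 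The induction hypothesis on $(b'',u'')$ then gives level~$0$ for $(b,u)$ directly, with no Witt-ring gymnastics needed. This is exactly the paper's route.
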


\begin{proof}[Proof of Proposition \ref{prop:CNnilpotentecomplete}, assuming the validity of Lemma
\ref{lemma:keylemmanilpotent}]
We shall prove the result by induction on the nilindex of $u$, by steps of two.
If $u^3=0$, the result follows directly from Lemma \ref{lemma:keylemmanilpotent}.

Next, assume that $u^{2k+1}=0$ for some integer $k \geq 2$.
Let us choose square-zero endomorphisms $a_1,a_2$ such that $u=a_1+a_2$ and
$(b,a_1)$ and $(b,a_2)$ are $(1,\eta)$-pairs.

First of all, we use the descent technique applied to $(b,u)$.
This yields the $(1,\eta)$-pair $(\overline{b},\overline{u}):=(b,u)^{\im u}$ with underlying space
$\im u/(\Ker u\cap \im u)$. Since $\im u$ is stable under $a_1$ and $a_2$ (see Lemma \ref{lemma:stabilization}),
we obtain (see Section \ref{section:pairs}) that $(\overline{b},\overline{u})$ has the square-zero splitting property.
Then, $(\overline{b},\overline{u})_{t,2\ell+1} \simeq \eta (b,u)_{t,2\ell+3}$ is trivial for every integer $\ell \geq k$.
By induction, we deduce that $\eta^\ell (\overline{b},\overline{u})_{t,2\ell+1}$ Witt-simplifies $\underset{i >\ell}{\bot} \eta^i (\overline{b},\overline{u})_{t,2i+1}$ for all $\ell \geq 0$. Using Proposition \ref{proposition:descentenilpotente1}, we deduce that $\eta^{\ell+1} (b,u)_{t,2\ell+1}$ Witt-simplifies $\underset{i >\ell}{\bot} \eta^{i+1} (b,u)_{t,2i+1}$ for all $\ell \geq 1$, and multiplying by $\eta$ shows that $\eta^\ell (b,u)_{t,2\ell+1}$ Witt-simplifies $\underset{i >\ell}{\bot} \eta^{i} (b,u)_{t,2i+1}$ for all $\ell \geq 1$.

It only remains to obtain the conclusion for $\ell=0$. To do this, we go back to
$(b,u)$ and apply the folding technique twice, first at the integer $2k$, and then at $2k-1$.
So, we successively take $(b',u'):=(b,u)^{\Ker u^{2k}}$ and $(b'',u''):=(b',u')^{\Ker (u')^{2k-1}}$.
By Lemma \ref{lemma:stabilization}, the subspace $\Ker u^{2k}$ is stable under $a_1$ and $a_2$, and hence
$(b',u')$ has the square-zero splitting property. Likewise, we apply Lemma \ref{lemma:stabilization}
to find that $(b'',u'')$ has the square-zero splitting property.
Since $(u'')^{2k-1}=0$, we find by induction that
$(b'',u'')_{t,1}$ Witt-simplifies $\underset{i >0}{\bot} \eta^{i} (b'',u'')_{t,2i+1}$.
Yet, since $(u')^{2k+1}=0$, Proposition \ref{prop:symmetrization} shows that
$(b'',u'')_{t,2i+1} \simeq (b',u')_{t,2i+1}$ for every integer $i \geq 0$, and hence
$(b',u')_{t,1}$ Witt-simplifies $\underset{i >0}{\bot} \eta^{i} (b',u')_{t,2i+1}$.

Finally, remembering that $(b,u)_{t,2l+1}=0$ for all $l > k$, we deduce from Proposition \ref{prop:symmetrization} that
$(b',u')_{t,2k-1} \simeq \eta (b,u)_{t,2k+1} \bot (b,u)_{t,2k-1}$, whereas
$(b',u')_{t,2i+1} \simeq (b,u)_{t,2i+1}$ for all $i \in \lcro 0,k-2\rcro$.
Hence $\underset{i >0}{\bot} \eta^{i} (b,u)_{t,2i+1} \simeq \underset{i >0}{\bot} \eta^{i} (b',u')_{t,2i+1}$,
whereas $(b,u)_{t,1} \simeq (b',u')_{t,1}$, and we conclude that
$(b,u)_{t,1}$ Witt-simplifies $\underset{i >0}{\bot} \eta^{i} (b,u)_{t,2i+1}$.

This completes our inductive proof.
\end{proof}

The proof of (ii) $\Rightarrow$ (i) in Theorem \ref{theo:symformnilpotent} is now entirely reduced to the proof of Lemma \ref{lemma:keylemmanilpotent}, which is given in the next section.

\subsection{Proof of the key lemma}

Here, we finally prove Lemma \ref{lemma:keylemmanilpotent}.
Throughout, we set $B_1=(b,u)_{t,1}$ and $B_3:=(b,u)_{t,3}$.

First of all, as seen in Section \ref{section:results}, the conclusion of the lemma means that
$\nu(\eta B_3)+\nu(B_1 \bot \eta B_3) \geq \rk(\eta B_3)$, and it is precisely this inequality we shall prove.
Note that it is obviously true if $B_3$ is hyperbolic.

\vskip 3mm
\noindent \textbf{Step 1: Reduction to the indecomposable case.}

Here, we will see that it suffices to consider the case where $(b,u)$ is \emph{indecomposable} as
a $(1,\eta)$-pair with the square-zero splitting property, meaning that
there do not exist non-trivial $(1,\eta)$-pairs $(b_1,u_1)$ and $(b_2,u_2)$
such that $(b,u) \simeq (b_1,u_1) \bot (b_2,u_2)$ and each $(b_i,u_i)$ has the square-zero splitting property.

So, assume that such a decomposition exists and that the conclusion of Lemma \ref{lemma:keylemmanilpotent} is valid for both $(b_1,u_1)$ and $(b_2,u_2)$.
Note that $(b,u)_{t,1} \simeq (b_1,u_1)_{t,1} \bot (b_2,u_2)_{t,1}$ and
$(b,u)_{t,3} \simeq (b_1,u_1)_{t,3} \bot (b_2,u_2)_{t,3}$.
Thus, we have
\begin{align*}
\nu(\eta (b,u)_{t,3})+\nu((b,u)_{t,1}\bot \eta (b,u)_{t,3})
& \geq \nu\bigl(\eta (b_1,u_1)_{t,3}\bigr)+\nu\bigl(\eta (b_2,u_2)_{t,3}\bigr)\\
& \hskip 10mm +\nu\bigl((b_1,u_1)_{t,1}\bot \eta (b_1,u_1)_{t,3}\bigr) \\
& \hskip 10mm +\nu\bigl((b_2,u_2)_{t,1}\bot \eta (b_2,u_2)_{t,3}\bigr)
\end{align*}
and hence
$$\nu\bigl(\eta (b,u)_{t,3}\bigr)+\nu\bigl((b,u)_{t,1}\bot \eta (b,u)_{t,3}\bigr)
\geq \rk\bigl(\eta\,(b_1,u_1)_{t,3}\bigr)+\rk\bigl(\eta\,(b_2,u_2)_{t,3}\bigr)=\rk\bigl(\eta\,(b,u)_{t,3}\bigr).$$
Therefore, by induction on the dimension of the underlying vector space, we see that it suffices to prove the conclusion
under the following additional conditions:
\begin{itemize}
\item[(I)] $(b,u)$ is indecomposable as a $(1,\eta)$-pair with the square-zero splitting property;
\item[(II)] The invariant $B_3$ is non-hyperbolic.
\end{itemize}
So, in the rest of the proof, we assume that conditions (I) and (II) are valid.

Before we can proceed, we will need basic results on $(1,\eta)$-pairs $(b',u')$ such that $(u')^2=0$.

\vskip 3mm
\noindent \textbf{Step 2: General considerations on $(\varepsilon',\eta')$-pairs $(b',u')$ in which $(u')^2=0$.}
Let $(\varepsilon',\eta')\in \{-1,1\}^2$.
\begin{itemize}
\item If $(\varepsilon',\eta')=(1,1)$ (respectively, if $(\varepsilon',\eta')=(-1,-1)$), for every indecomposable $(1,\eta')$-pair $(b',u')$ in which $(u')^2=0$ and $u'\neq 0$,
there exists a $b'$-hyperbolic (respectively, $b'$-symplectic) basis $(x,y)$ such that $u'(y)=\lambda x$ for some $\lambda \in \F \setminus \{0\}$ (to the effect that $u'(x)=0$).
\item If $(\varepsilon',\eta')=(1,-1)$ (respectively, if $(\varepsilon',\eta')=(-1,1)$), for every indecomposable $(1,\eta')$-pair $(b',u')$ in which $(u')^2=0$ and $u'\neq 0$,
there exists a $b'$-hyperbolic (respectively, $b'$-symplectic) basis $(x_1,x_2,y_1,y_2)$
(so that $b'(x_i,y_i)=1=b'(y_i,x_i)$ for all $i \in \{1,2\}$, and all other pairs in that basis are mapped to zero by $b'$)
such that $u'(x_2)=x_1$ and $u'(y_1)=\eta' y_2$. An example of such a pair is $H_{\varepsilon',\eta'}(v)$
where $v$ is a nilpotent Jordan cell of size $2$.
\end{itemize}
By using the decomposition of a pair into indecomposable ones, we recover that for every $(\varepsilon',\eta')$-pair $(b',u')$ in which
$(u')^2=0$ and $u' \neq 0$:
\begin{itemize}
\item If $(\varepsilon',\eta')=(1,1)$ (respectively, $(\varepsilon',\eta')=(-1,-1)$)
then there exists a hyperbolic (respectively, symplectic) family $(x,y)$ for $b'$
and a nonzero scalar $\lambda$ such that $u'(y)=\lambda x$.
\item If $(\varepsilon',\eta')=(1,-1)$ (respectively, $(\varepsilon',\eta')=(-1,1)$)
then there exists a hyperbolic (respectively, symplectic) family $(x_1,x_2,y_1,y_2)$ for $b'$
such that $u'(x_2)=x_1$ and $u'(y_1)=\eta' y_2$, leading to $u'(x_1)=0$ and $u'(y_2)=0$.
\end{itemize}

\vskip 3mm
We are now ready to analyze the situation.
So, we let $a$ be an endomorphism of $V$ that is $b$-selfadjoint if $\eta=1$ and $b$-alternating otherwise, and
we assume that both $a$ and $u-a$ have square zero. Expanding $(u-a)^2=0$, we find
$$au+ua=u^2.$$

\vskip 3mm
\noindent \textbf{Step 3: $a$ maps $V$ into $\Ker u^2$.}

Remember that $a$ stabilizes $\Ker u^2$ (Lemma \ref{lemma:stabilization})
and hence it induces an endomorphism $\overline{a}$ of the quotient space $V/\Ker u^2$.
It turns out $(B_3,\overline{a})$ is a $(1,\eta)$-pair: indeed,
as $a$ commutes with $u^2$ (Lemma \ref{lemma:commutation}), we have, for all $(x,y)\in V^2$,
$$b\bigl(x,u^2(a(y))\bigr)=b\bigl(x,a(u^2(y))\bigr)=\eta\, b(a(x),u^2(y)).$$
Now, we assume that $\overline{a} \neq 0$ and we seek to find a contradiction by applying the results of Step 2.

First of all, let $x \in V$ be an arbitrary vector. We shall see that
$$W_x:=\Vect(x,a(x),u(x),u(a(x)),u^2(x),u^2(a(x)))$$
is stable under both $a$ and $u$.
The stability under $u$ is obvious because $u^3=0$.
For the stability under $a$, simply note that $a(u(x))=-u(a(x))+u^2(x)$
and likewise $a(u(a(x))=-u(a^2(x))+u^2(a(x))=u^2(a(x))$, and finally use the fact that $a$ commutes with $u^2$
to obtain $a(u^2(x))=u^2(a(x))$ and $a(u^2(a(x)))=a^2(u^2(x))=0$.

Next, we will use the $W_x$ spaces to contradict the indecomposability of $(b,u)$, by taking advantage of the fact that
$(B_3,\overline{a})$ is a $(1,\eta)$-pair.

\textbf{Case 1: $\eta=1$.}

In that case, we recover a $B_3$-hyperbolic pair $(\overline{x},\overline{y})$ in $V/\Ker u^2$ together with a nonzero scalar $\lambda$ such that
$\overline{a}(\overline{y})=\lambda \overline{x}$.
We lift $\overline{y}$ to a vector $y$ of $V$, and then we set $x:=\lambda^{-1} a(y)$, so that
$a(y)=\lambda x$ and $a(x)=0$.
Then, we see that $W_y=\Vect(x,y,u(x),u(y),u^2(x),u^2(y))$.
Since $u^3=0$, we have $\im u^2 \subset \Ker u$ and hence $\im u^2 \bot_b \im u$.
Hence, the matrix of $b$ in the \emph{family} $(x,y,u(x),u(y),u^2(x),u^2(y))$ (at this point we still do not know whether that family is a basis of $W_y$)
reads
$$M=\begin{bmatrix}
? & ? & A \\
? & B & [0]_{2 \times 2} \\
A^T & [0]_{2 \times 2} & [0]_{2 \times 2}
\end{bmatrix}$$
where $A$ and $B$ are $2$-by-$2$ matrices. Moreover, since $u$ is $b$-selfadjoint, we find $B=A$.
Finally,
$$A=\begin{bmatrix}
b(x,u^2(x)) & b(x,u^2(y)) \\
b(y,u^2(x)) & b(y,u^2(y))
\end{bmatrix}=\begin{bmatrix}
B_3(\overline{x},\overline{x}) & B_3(\overline{x},\overline{y}) \\
B_3(\overline{y},\overline{x}) & B_3(\overline{y},\overline{y})
\end{bmatrix}=\begin{bmatrix}
0 & 1 \\
1 & 0
\end{bmatrix}.$$
It follows that $M$ is invertible, yielding that $\bigl(x,y,u(x),u(y),u^2(x),u^2(y)\bigr)$
is a basis of $W_y$ and that $W_y$ is $b$-regular.
Hence $V=W_y \overset{\bot_b}{\oplus} W_y^{\bot_b}$, and $W_y^{\bot_b}$
is stable under both $u$ and $a$ because they are $b$-selfadjoint. It follows that the induced pairs
$(b,u)^{W_y}$ and $(b,u)^{W_y^{\bot_b}}$ have the square-zero splitting property.
By assumption (I), we deduce that $W_y=V$ (because $W_y \neq \{0\}$).
Now, in that situation it is clear that $B_3$ is hyperbolic,
thereby contradicting assumption (II).

\vskip 3mm
\textbf{Case 2: $\eta=-1$.}

In that case, we recover a $B_3$-hyperbolic family $(\overline{x_1},\overline{x_2},\overline{y_1},\overline{y_2})$ in $V/\Ker u^2$
such that $\overline{a}(\overline{x_2})=\overline{x_1}$ and $\overline{a}(\overline{y_1})=-\overline{y_2}$.
We choose arbitrary representatives $x_2$ and $y_1$ of $\overline{x_2}$ and $\overline{y_1}$ in $V$, and then we set $x_1:=a(x_2)$ and $y_2:=-a(y_1)$.
Now, we consider the space $U:=W_{x_2}+W_{y_1}$, which is stable under $u$ and $a$.
Again, we will prove that $U$ is $b$-regular.
To see this, we consider the $12$-tuple
\begin{multline*}
(z_i)_{1 \leq i \leq 12}:=
\bigl(x_1,x_2,y_1,y_2,u(x_1),u(x_2),u(y_1),u(y_2),u^2(x_1),u^2(x_2),u^2(y_1),u^2(y_2)\bigr).
\end{multline*}
Just like in Case 1, we find that the matrix of $b$ in that family has the form
$$M=\begin{bmatrix}
? & ? & A \\
? & B & [0]_{4 \times 4} \\
A^T & [0]_{4 \times 4} & [0]_{4 \times 4}
\end{bmatrix}$$
where each block is a $4$-by-$4$ matrix, and $B=-A$ because $u$ is $b$-skew-selfadjoint.
Moreover, $A$ is precisely the matrix of $B_3$ in the hyperbolic family $(\overline{x_1},\overline{x_2},\overline{y_1},\overline{y_2})$, and
hence it is invertible. From this, we derive that $M$ is invertible.
Just like in Case 1, we deduce from assumption (I) that $U=V$, and it follows that $B_3$ is hyperbolic, in contradiction with assumption (II).

\vskip 2mm
We conclude that $\overline{a}=0$, which means that $a$ maps $V$ into $\Ker u^2$.
It follows that $u^2a=0$ and hence $au^2=0$. Thus, $a$ vanishes everywhere on $\im u^2$.

\vskip 3mm
\noindent \textbf{Step 4: $a$ maps $\Ker u^2$ into $\im u+\Ker u$.}

Remember that $a$ stabilizes $\Ker u^2$, $\im u$ and $\Ker u$ by Lemma \ref{lemma:stabilization}.
It follows that $a$ induces an endomorphism
$\overline{a}$ of the quotient space $V':=\Ker u^2/(\im u+\Ker u)$, the underlying space of the bilinear form $B_2:=(b,u)_{t,2}$.
Using $au+ua=u^2$, we obtain, for all $x,y$ in $\Ker u^2$,
$$b(x,u(a(y)))=-b(x,a(u(y)))=-\eta\,b(a(x),u(y)),$$
and hence $(B_2,\overline{a})$ turns out to be an $(\eta,-\eta)$-pair.
Note that $(\overline{a})^2=0$.

From there, we use a line of reasoning that is very similar to the one we have used in Step 3.
For $x \in \Ker u^2$, we set
$$U_x:=\Vect(x,a(x),u(x),u(a(x)))$$
and, like in Step 3, we check that $U_x$ is stable under both $a$ and $u$ (here, we use the fact that $u^2(x)=0$
in addition to the identities $a^2=0$ and $ua+au=u^2$).

Assume now that $\overline{a} \neq 0$.

Then, we adapt Case 2 in Step 3 to obtain a family $(x_1,x_2,y_1,y_2)$ of vectors of $\Ker u^2$
whose family of classes modulo $\Ker u+\im u$ is a $B_2$-symplectic family if $\eta=-1$, a $B_2$-hyperbolic one if $\eta=1$,
and for which $a(x_2)=x_1$ and $a(y_1)=-\eta y_2$.
Then the family $(x_1,x_2,y_1,y_2,u(x_1),u(x_2),u(y_1),u(y_2))$ generates $U_{y_1}+U_{x_2}$
and the matrix of $b$ in that family equals
$$M=\begin{bmatrix}
? & K' \\
(K')^T & [0]_{4 \times 4}
\end{bmatrix}$$ for $K':=\begin{bmatrix}
[0]_{2 \times 2} & I_2 \\
-\eta I_2 & [0]_{2 \times 2}
\end{bmatrix}$. Hence $M$ is invertible.
Just like in Step 3, this leads to $U_{y_1}+U_{x_2}=V$ and further to $u^2=0$. In particular $B_3=0$, contradicting assumption (II).

We conclude that $\overline{a}=0$, which means that
$a$ maps $\Ker u^2$ into $\Ker u+\im u$. Note that a similar line of reasoning would also show that $a$ must map $\Ker u$ into $\Ker u \cap \im u$, but
we will not use this result in the remainder of the proof.

\vskip 3mm
\noindent \textbf{Step 5: Introducing an important linear map}

Now, we look at the endomorphism $\overline{a}$ of $V/(\Ker u+\im u)$ induced by $a$.
We know that its range is included in $(\Ker u^2)/(\Ker u+\im u)$ and that
it vanishes everywhere on $(\Ker u^2)/(\Ker u+\im u)$.
We choose direct summands
$$\Ker \overline{a} \oplus D_1=V/(\Ker u+\im u) \quad \text{and} \quad
\Ker \overline{a}=(\Ker u^2)/(\Ker u+\im u) \oplus D_{2.}$$
Then, we lift $D_1$ and $D_2$ to subspaces $V_1$ and $V_2$ of $V$ (so that the canonical projection from $V$ onto $V/(\Ker u+\im u)$ maps bijectively $V_i$ onto $D_i$ for all $i \in \{1,2\}$). Hence
$$\Ker u^2 \oplus V_1 \oplus V_2=V, \quad a(V_2) \subset \Ker u+\im u,$$
$$(\Ker u+\im u)\oplus a(V_1) \subset \Ker u^2 \quad \text{and} \quad \dim a(V_1)=\dim V_1.$$
Next, we consider once more the non-degenerate bilinear form
$$B_2 : (\overline{x},\overline{y}) \mapsto b(x,u(y))$$ on
$(\Ker u^2)/(\Ker u+\im u)$ (which is symmetric if $\eta=1$, and alternating otherwise).
We consider the projection $\overline{a(V_1)}$ of $a(V_1)$ on $(\Ker u^2)/(\Ker u+\im u)$,
and then a complementary subspace $D_3$ of the $B_2$-orthogonal $\overline{a(V_1)}^{\bot_{B_2}}$,
and finally we lift $D_3$ to a subspace $V_3$ of $\Ker u^2$ such that
$\dim V_3=\dim D_3=\dim(a(V_1))=\dim V_1$. Hence, the bilinear form $(x,y) \in V_3 \times a(V_1) \mapsto b(x,u(y))$
is non-degenerate on both sides.

Next, we consider the quotient space $Z:=(\Ker u+\im u)/(\Ker u \cap \im u)$.
We have seen in Section \ref{section:twisteddescent} that $b$ induces a non-degenerate symmetric bilinear form $\overline{b}$ on $Z$
that is equivalent to $B_1 \bot \eta B_3$.
We have $a(V_2) \subset \Ker u+\im u$, whereas $a(V_3) \subset \Ker u+\im u$ (by Step 4).
Hence, we can consider the induced linear mapping
$$f : \begin{cases}
V_3\oplus V_2 & \longrightarrow Z \\
x & \longmapsto \overline{a(x)}.
\end{cases}$$
Since $a^2=0$, we have $\im a \subset \Ker a=(\im a)^{\bot_b}$, and hence
the range of $f$ is totally $\overline{b}$-isotropic.
It follows that
$$\nu(B_1 \bot \eta B_3) \geq \rk(f).$$
In order to conclude, it suffices to prove that $\rk(f) \geq \rk(B_3)-\nu(B_3)$, and it is exactly what we will do
in the next and final step.

\vskip 3mm
\noindent \textbf{Step 6: Proving that $\rk(f) \geq  \rk(B_3)-\nu(B_3)$.}

We start by checking that $\Ker f \cap V_3=\{0\}$. Let $x \in \Ker f \cap V_3$.
Then $a(x) \in \Ker u \cap \im u$ and in particular $u(a(x))=0$.
It follows that $a(u(x))=-u(a(x))+u^2(x)=0$.
As the range and kernel of $a$ are $b$-orthogonal, this yields
$b(z,u(x))=0$ for all $z \in a(V_1)$. From the choice of $V_3$ we deduce that $x=0$.

Now, denote by $\pi$ the projection from $V_3 \oplus V_2$ onto $V_2$ along $V_3$.
Let $x \in \Ker f$.
Note that $a(x) \in \Ker u$ and hence $b(a(x),u(x))=\eta\,b(u(a(x)),x)=0$.
Then,
$$b(x,u^2(x))=b(x,(ua+au)(x))=\eta\, b(a(x),u(x))=0.$$
Since $V_3 \subset \Ker u^2$, it follows that the projection $\overline{\pi(\Ker f)}$ in $V/(\Ker u^2)$
of the subspace $\pi(\Ker f)$ is totally $B_3$-isotropic, yielding
$$\dim \overline{\pi(\Ker f)} \leq \nu(B_3).$$
Yet $\dim \overline{\pi(\Ker f)}=\dim \pi(\Ker f)$ because $V_2 \cap \Ker u^2=\{0\}$, whereas
$\dim \pi(\Ker f)=\dim \Ker f$ because $\Ker f \cap V_3=\{0\}$.
We conclude by the rank theorem that
\begin{align*}
\rk f & =\dim V_2+\dim V_3-\dim \Ker f \\
& \geq \dim V_2+\dim V_3-\nu(B_3) \\
& \geq \dim V_2+\dim V_1-\nu(B_3)=\dim(V/\Ker u^2)-\nu(B_3)=\rk (B_3)-\nu(B_3).
\end{align*}
Hence, the proof of the key lemma is finally complete, as well as the proof of Theorem \ref{theo:symformnilpotent}.

\section{The Hermitian case}

\subsection{The problem in the Hermitian case}

In this final section, we consider the problem in the Hermitian case.
Throughout, the field $\F$ is equipped with a non-identity involution $x \mapsto x^\bullet$,
so that $\F$ is a separable extension of degree $2$ of the subfield $\K:=\{x \in \F : \; x^\bullet=x\}$.
The given involution is extended to an involution $p \mapsto p^\bullet$ of $\F[t]$ by applying it
coefficient-wise. Given a vector space $V$, we denote by $V^{\star 1/2}$ the vector space of all semi-linear forms on $V$.

Remember our convention that a Hermitian form on a vector space over $\F$ is right-linear and left-semi-linear.
Given such a Hermitian form $b$, on a vector space $V$, we obtain a linear mapping
$$L_b : x \in V \mapsto b(-,x) \in V^{\star 1/2},$$
and this mapping is an isomorphism if and only if $b$ is non-degenerate.

A $1/2$-pair is a pair $(b,u)$ consisting of a non-degenerate Hermitian form $b$ on an $\F$-vector space $V$
and of a $b$-selfadjoint endomorphism $u$ of $V$, so that
$$\forall (x,y)\in V^2, \; b(u(x),y)=b(x,u(y)).$$
In that case, $c : (x,y)\mapsto b(x,u(y))$ is another Hermitian form on $V$.

A Hermitian pair is a pair $(b,c)$ of Hermitian forms on a given vector space (over $\F$, of finite-dimension).
In such a pair, if $b$ is non-degenerate then $c$ reads $(x,y) \mapsto b(x,u(y))$ for a unique $b$-selfadjoint endomorphism
$u$. Hence, $1/2$-pairs naturally correspond to Hermitian pairs whose first component is non-degenerate,
and $1/2$-pairs whose second components are automorphisms naturally correspond to Hermitian pairs in which both components are non-degenerate.

\begin{Def}
Let $(b,u)$ be a $1/2$-pair with underlying space $V$. We say that it has the \textbf{square-zero splitting property} when there are square-zero
$b$-selfadjoint endomorphisms $a_1$ and $a_2$ such that $u=a_1+a_2$.
\end{Def}

For $1/2$-pairs, we have the following notions, defined in the same way as for $(\varepsilon,\eta)$-pairs:
\begin{itemize}
\item isometric pairs;
\item the induced pair $(b,u)^W$ for a linear subspace $W$ of the underlying space;
\item orthogonal sums of pairs.
\end{itemize}

Again, we have the following principles:
\begin{enumerate}[(i)]
\item If two $1/2$-pairs are isometric and one has the square-zero splitting property, then so does the other one.
\item If two $1/2$-pairs have the square-zero splitting property, then so does their orthogonal sum.
\item Let $(b,u)$ be a $1/2$-pair, with underlying space $V$. Let $a_1$ and $a_2$ be square-zero $b$-selfadjoint endomorphisms of $V$
such that $u=a_1+a_2$. Let finally $W$ be a linear subspace of $V$ that is stable under both $a_1$ and $a_2$.
Then the induced pair $(b,u)^W$ has the square-zero splitting property.
\end{enumerate}

Our aim is to classify, up to isometry, the $1/2$-pairs with the square-zero splitting property.
To do so, we need to recall first the relevant invariants of such a pair $(b,u)$ under isometry, beyond the Jordan numbers
of $u$ (which, in general, do not suffice to characterize the pair $(b,u)$ up to isometry).

So, let $p \in \Irr(\F)$ be such that $p^\bullet=p$, i.e.\ $p$ is irreducible over $\F$ and has all its coefficients in $\K$.
Let $r \geq 1$ be an integer. Since $p^{r-1}$ has its coefficients in $\K$,
$(x,y) \mapsto b(x,p(u)^{r-1}(y))$ is a Hermitian form on $V^2$.
It induces a non-degenerate Hermitian form $B$ on the cokernel $W_{p,r}$ of the injective linear map
$$\Ker p(u)^{r+1}/\Ker p(u)^r \longrightarrow \Ker p(u)^{r}/\Ker p(u)^{r-1}$$
induced by $p(u)$.
This cokernel has a naturally induced structure of vector space over $\L:=\F[t]/(p)$,
which is equipped with the involution induced by the one of $\F$ (this works because $p^\bullet=p$),
and for this structure one uses the fact that $u$ is $b$-selfadjoint to find that
$B(q^\bullet\,x,y)=B(x,q\,y)$ for all $q \in \F[t]$ and all $(x,y)\in (W_{p,r})^2$.

Now, we use the extension process of Section \ref{section:extensionbilinform}.
Let $(x,y)\in (W_{p,r})^2$.
The mapping
$$\lambda \in \L \mapsto B(x,\lambda\,y)\in \F$$
is $\F$-linear, and hence there is a unique scalar $B^\L(x,y) \in \L$ such that
$$\forall \lambda \in \L, \; B(x,\lambda\,y)=e_p\bigl(\lambda\, B^\L(x,y)\bigr).$$
Note that
$$\forall \lambda \in \L, \; B(y,\lambda x)=B(\lambda x,y)^\bullet=B(x,\lambda^\bullet y)^\bullet
=e_p\bigl(\lambda^\bullet B^\L(x,y)\bigr)^\bullet=e_p\bigl(\lambda B^\L(x,y)^\bullet\bigr)$$
and hence $B^\L(y,x)=B^\L(x,y)^\bullet$.
As $B$ is right-$\F$-linear, it is clear that $B^\L$ is right-$\L$-linear, and we conclude that $B^\L$
is Hermitian. Finally, since $b$ is non-degenerate it is clear that so is $B^\L$.
We set $(b,u)_{p,r}:=B^\L$ and call it the \textbf{Hermitian invariant} of $(b,u)$ attached to the pair $(p,r)$.

In contrast with the case of $(1,1)$-pairs, note that such invariants only appear with selfadjoint monic irreducible polynomials, not with all monic irreducible polynomials!

Now, we can state the classification theorem for $1/2$-pairs. It can be retrieved with some effort from Theorem 4 of \cite{Sergeichuk}
(where there is no need to assume that $\chi(\F) \neq 2$).

\begin{theo}
Let $(b,u)$ and $(b',u')$ be two $1/2$-pairs over $\F$.
These pairs are isometric if and only if both the following conditions hold:
\begin{enumerate}[(i)]
\item $n_{p,r}(u)=n_{p,r}(u')$ for every $p \in \Irr(\F)$ such that $p \neq p^\bullet$, and every integer $r \geq 1$.
\item The Hermitian forms $(b,u)_{p,r}$ and $(b',u')_{p,r}$ are equivalent for every $p \in \Irr(\F)$ such that $p=p^\bullet$, and every
integer $r \geq 1$.
\end{enumerate}
\end{theo}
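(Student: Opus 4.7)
Proof proposal.

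The necessity direction is immediate: both the Jordan numbers $n_{p,r}(u)$ (which depend only on the similarity class of $u$) and the Hermitian forms $(b,u)_{p,r}$ (whose construction is manifestly functorial under isometry) are invariants of the isometry class of $(b,u)$, so if the two pairs are isometric they must agree on all these invariants.

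For sufficiency, the plan is to reduce step by step to indecomposable pieces. First I would perform a \emph{primary decomposition orthogonally.} For each irreducible $p \in \Irr(\F)$, set $V_p := \Ker p(u)^{\infty}$ (the generalized eigenspace). Because $u$ is $b$-selfadjoint, $q(u)$ is $b$-selfadjoint for all $q \in \F[t]$ with $q^\bullet = q$. For a pair $\{p,p^\bullet\}$ with $p \neq p^\bullet$, one checks that the image of $p(u)^k$ under the $b$-adjoint operation involves $p^\bullet$; this forces $V_p \bot_b V_q$ whenever $\{p,p^\bullet\} \neq \{q,q^\bullet\}$, so $V$ splits as the $b$-orthogonal direct sum of the regular subspaces $V_p \oplus V_{p^\bullet}$ for non-self-conjugate $p$ (taken once per pair) and $V_p$ for $p = p^\bullet$. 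This reduces the classification to each of these primary blocks, and since the invariants themselves respect orthogonal sums, it suffices to prove that within one primary block the invariants in the statement determine the pair up to isometry.

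Next I would handle the two types of primary block separately. For a pair $\{p, p^\bullet\}$ with $p \neq p^\bullet$, the block $V_p \oplus V_{p^\bullet}$ is totally $b$-degenerate on each summand while $b$ pairs $V_p$ with $V_{p^\bullet}$ nondegenerately; the restriction of $u$ to $V_p$ (respectively, $V_{p^\bullet}$) has a canonical structure of $\F[t]$-module, and the $b$-pairing realizes $V_{p^\bullet}$ as the $\F$-dual (composed with the involution) of $V_p$. Under such a pairing, the entire block is determined, up to isometry, by the similarity class of $u_{|V_p}$ alone, which is read off from the Jordan numbers $n_{p,r}(u)$. For a self-conjugate $p = p^\bullet$, the block is $b$-regular by itself, and I would argue by induction on the nilindex of $p(u)_{|V_p}$: using the relation $(\Ker p(u)^k)^{\bot_b} = \im p(u)^k$, one peels off the top layer $\Ker p(u)^r / \Ker p(u)^{r-1}$ where $r$ is the nilindex, extracts the Hermitian form $(b,u)_{p,r}$ on the cokernel of the map induced by $p(u)$, chooses an orthogonal basis adapted to this Hermitian form, lifts it to $V_p$ to produce a cyclic $b$-orthogonal decomposition of the top stratum, and then applies induction to an appropriate complement.

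The technical heart of the argument lies in this last inductive step: one must construct, for each nonzero vector $e$ of the cokernel with $(b,u)_{p,r}(e,e) = \alpha$, a cyclic $u$-invariant subspace on which the restriction of $b$ is prescribed by $\alpha$ (in effect, a $b$-regular $\F[t]$-submodule isomorphic to $\F[t]/(p^r)$ realizing the given Hermitian length), and then exhibit a $b$-orthogonal complement that is stable under $u$. The main obstacle is ensuring that the complement is again $u$-invariant and carries the residual Hermitian form one expects; this requires a careful choice of lift avoiding $\im p(u)$ and the standard argument that in the presence of a $b$-regular cyclic summand with a compatible generator, a $b$-orthogonal $u$-invariant complement exists. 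Once that lemma is in place, the induction closes and the two listed conditions suffice to determine the pair up to isometry.
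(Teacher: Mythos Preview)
The paper does not give its own proof of this classification theorem: it simply states that the result ``can be retrieved with some effort from Theorem~4 of \cite{Sergeichuk}'' and moves on. So there is no in-paper argument to compare your proposal against.

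That said, your outline is sound in structure and follows the standard route (essentially the one behind Sergeichuk's and Riehm's classifications). A few remarks:

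Your orthogonal primary decomposition is correct once you use that the $b$-adjoint of $q(u)$ is $q^\bullet(u)$ for $q \in \F[t]$, so that $(\Ker p(u)^N)^{\bot_b}=\im (p^\bullet(u))^N$; this indeed forces the $\{p,p^\bullet\}$-blocks to be mutually $b$-orthogonal and each such block to be $b$-regular.

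For a non-self-conjugate pair $\{p,p^\bullet\}$, your claim that the block is determined by the similarity class of $u_{|V_p}$ is right: the pairing realises $V_{p^\bullet}$ as the semi-dual of $V_p$, and under this identification $u_{|V_{p^\bullet}}$ is forced to be the half-transpose of $u_{|V_p}$. This is precisely the ``hyperbolic/Hermitian extension'' $H_{1/2}(u_{|V_p})$ that the paper constructs later, so the block is determined by the Jordan numbers $n_{p,r}(u)$ alone.

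For a self-conjugate $p$, your inductive peeling of the top stratum is the standard move. The lemma you flag as the ``technical heart'' is genuine: given a nonzero class $\overline{e}$ in $V_p/\Ker p(u)^{r-1}$ with $(b,u)_{p,r}(\overline{e},\overline{e})=\alpha\neq 0$, one must produce a lift $e$ such that the cyclic $\F[t]$-submodule $\F[t]\cdot e$ is $b$-regular with the prescribed Hermitian type, and then observe that its $b$-orthogonal complement is automatically $u$-stable (because $u$ is $b$-selfadjoint). The nonzero value $\alpha$ guarantees regularity of the cyclic span; the existence of a lift realising \emph{exactly} the form one wants (and not merely one in the same class) may require adjusting $e$ by elements of $\Ker p(u)^{r-1}$, but this correction is routine. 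After splitting off all size-$r$ cyclic summands, the complement has strictly smaller nilindex and carries the remaining Hermitian invariants unchanged, so the induction closes.

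In short: your sketch is correct and is essentially the argument underlying the cited reference; the paper itself offers no alternative proof to contrast it with.
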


Moreover, there is little limitation on the possible Jordan numbers of $u$ and the possible Hermitian invariants of $(b,u)$:
one limitation is that $n_{p,r}(u)=n_{p^\bullet,r}(u)$ for all $p \in \Irr(\F)$ and all $r \geq 1$.
In other words, if we take any family $(m_{p,r})_{p \in \Irr(\F) \setminus \K[t], r \geq 1}$ of natural numbers,
and any family $(H_{p,r})_{p \in \Irr(\F) \cap \K[t], r \geq 1}$ of non-degenerate Hermitian forms (where $H_{p,r}$ is a Hermitian form over $\F[t]/(p)$),
and both those families have only finitely many non-zero terms and $m_{p,r}=m_{p^\bullet,r}$ for all $p \in \Irr(\F) \setminus \K[t]$ and all $r \geq 1$,
then there exists a $1/2$-pair $(b,u)$ such that $n_{p,r}(u)=m_{p,r}$ for all $p \in \Irr(\F) \setminus \K[t]$ and all $r \geq 1$,
and $(b,u)_{p,r}$ is equivalent to $H_{p,r}$ for all $p \in \Irr(\F) \cap \K[t]$ and all $r \geq 1$.

Consequently, indecomposable $1/2$-pairs are the ones of the following two types:
\begin{itemize}
\item The pairs $(b,u)$ in which $u$ is cyclic with minimal polynomial $p^r$ for some $p \in \Irr(\F) \cap \K[t]$ and some $r \geq 1$;
\item The pairs $(b,u)$ in which $u$ is cyclic with minimal polynomial $p^r (p^\bullet)^r$ for some $p \in \Irr(\F) \setminus \K[t]$ and some $r \geq 1$.
\end{itemize}

\subsection{The solution}

As we shall see, our theorem for $1/2$-pairs is very reminiscient to the theorem on $(1,1)$-pairs.
First of all, we use the Fitting decomposition to split the problem into two subproblems.
Like in Section \ref{section:pairs}, one proves that if a $1/2$-pair $(b,u)$ has the square zero splitting property, then so
do $(b,u)^{\Co(u)}$ and $(b,u)^{\Nil(u)}$, while in general $(b,u) \simeq (b,u)^{\Co(u)} \bot (b,u)^{\Nil(u)}$.
This leads to the following principle:

\begin{prop}
Let $(b,u)$ be a $1/2$-pair. For $(b,u)$ to have the square-zero splitting property, it is necessary and sufficient
that both $(b,u)^{\Co(u)}$ and $(b,u)^{\Nil(u)}$ have the square-zero splitting property.
\end{prop}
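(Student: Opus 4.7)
The plan is to mirror exactly the argument sketched for $(\varepsilon,\eta)$-pairs at the end of Section \ref{section:results}, adapted to the Hermitian setting. The three principles stated just above (isometry-invariance of the square-zero splitting property, its stability under orthogonal sums, and its inheritance by $(b,u)^W$ when $W$ is stable under both summands $a_1,a_2$) are the only tools needed, together with the Fitting decomposition and the Stabilization Lemma.

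For the forward implication, I would assume a splitting $u=a_1+a_2$ with $a_1^2=a_2^2=0$ and each $a_i$ being $b$-selfadjoint. The Stabilization Lemma (Lemma \ref{lemma:stabilization}) depends only on the algebraic identities $a_i^2=0$ and $u=a_1+a_2$, not on any bilinear form, so it applies verbatim and gives that $\Ker u^k$ and $\im u^k$ are stable under $a_1$ and $a_2$ for every $k\geq 0$. Choosing $n$ large enough so that $\Co(u)=\im u^{2n}$ and $\Nil(u)=\Ker u^{2n}$, both Fitting summands are stable under $a_1$ and $a_2$. Principle (iii) then yields that $(b,u)^{\Co(u)}$ and $(b,u)^{\Nil(u)}$ both have the square-zero splitting property.

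For the backward implication, I would first argue that the Fitting decomposition is $b$-orthogonal. Since $u$ is $b$-selfadjoint, so is every power $u^k$ (because $b$ is Hermitian and powers of a selfadjoint endomorphism remain selfadjoint), and in particular $u^{2n}$ is $b$-selfadjoint. This gives $(\im u^{2n})^{\bot_b}=\Ker u^{2n}$, i.e.\ $\Nil(u)=\Co(u)^{\bot_b}$. Combined with $V=\Co(u)\oplus \Nil(u)$, this shows that both Fitting summands are $b$-regular subspaces and mutually $b$-orthogonal, so $(b,u)\simeq (b,u)^{\Co(u)}\bot (b,u)^{\Nil(u)}$. Principle (ii) (square-zero splitting property is preserved under orthogonal sums of $1/2$-pairs, proved in exactly the same way as Remark \ref{remark:orthogonalsumsplitting}) then yields the conclusion.

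There is no genuine obstacle: every step transports from the $(\varepsilon,\eta)$-case without change, because the only structural facts used are that $b$-adjunction sends $u$ to $u$ (hence sends powers of $u$ to powers of $u$) and that $b$-selfadjointness is preserved under taking restrictions and quotients by radicals, both of which hold equally for Hermitian forms. The minor point worth mentioning is that, in the Hermitian setting, checking that the induced pair $(b,u)^W$ is again a $1/2$-pair requires noting that the quotient Hermitian form $\overline{b}$ on $W/(W\cap W^{\bot_b})$ is well-defined and non-degenerate, and that $\overline{u}$ remains $\overline{b}$-selfadjoint; this is exactly the content of the parenthetical remark preceding the statement and does not require a separate argument.
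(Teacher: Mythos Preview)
Your proposal is correct and follows exactly the argument the paper intends: it explicitly says ``Like in Section \ref{section:pairs}, one proves that\ldots'' and gives no separate proof, relying on precisely the Stabilization Lemma, the $b$-selfadjointness of $u^{2n}$, and principles (ii) and (iii) that you invoke. There is nothing to add.
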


For automorphisms, we need an additional notion:

\begin{Def}
Let $p \in \Irr_0(\F) \cap \K[t]$. A Hermitian form $\varphi$ over $\L:=\F[t]/(p)$
is called \emph{odd-representable} whenever there is a $\varphi$-orthogonal basis $(e_1,\dots,e_n)$ such that, for all $i \in \lcro 1,n\rcro$,  $\varphi(e_i,e_i)$ is the class of an odd polynomial with entries in $\K$ modulo $p$ (i.e.\ there exists $s_i \in \K[t]$ such that $\varphi(e_i,e_i)=\overline{ts_i(t^2)}^{\L}$).
\end{Def}

\begin{theo}\label{theo:hermitianautomorphismcharnot2}
Assume that $\charac(\F) \neq 2$.
Let $(b,u)$ be a $1/2$-pair where $u$ is an automorphism. For $(b,u)$ to have the square-zero splitting property, it is necessary and sufficient that
all the following conditions hold:
\begin{enumerate}[(i)]
\item For all $p \in \Irr_0(\F) \cap \K[t]$ and all $r \geq 1$, the Hermitian invariant $(b,u)_{p,r}$ is odd-representable.
\item For all $p \in \Irr(\F) \cap \K[t]$ such that $p^\op \neq p$, and for all $r \geq 1$, one has
$(b,u)_{p,r} \simeq (-1)^{1+d(r-1)} (b,u)_{p^\op,r}^{\op}$ where $d:=\deg p$.
\item For all $p \in \Irr(\F)\setminus \K[t]$ and all $r \geq 1$, the Jordan numbers $n_{p,r}(u)$ and $n_{p^{\op},r}(u)$ are equal.
\end{enumerate}
\end{theo}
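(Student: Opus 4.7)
The plan is to mirror the approach used for Theorem \ref{theo:symsymautomorphism} (selfadjoint automorphisms), adapting every step from bilinear forms to Hermitian forms. First I would set up the Hermitian version of the key constructions of Section \ref{section:construction}. On the product $V \times V^{\star 1/2}$ I would introduce the hyperbolic Hermitian form $H_V^{1/2}((x,\varphi),(y,\psi)) := \varphi(y) + \psi(x)^\bullet$, and for any Hermitian form $b$ on $V$ I would define the square-zero operators $v_b(x,\varphi) := (0,L_b(x))$ and, when $b$ is non-degenerate, $w_b(x,\varphi) := (L_b^{-1}(\varphi),0)$. Routine checks give that both $v_b$ and $w_b$ are $H_V^{1/2}$-selfadjoint whenever $b$ is Hermitian, so their sum $b \boxplus c := v_c + w_b$ is a $1/2$-pair with the square-zero splitting property. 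The analog of Proposition \ref{prop:caracboxedsum} is then proved by the same totally-singular-subspaces argument: if $(b,u)$ is a $1/2$-pair with $u$ bijective and the square-zero splitting property, the ranges of the two nilpotent summands are complementary totally $b$-singular subspaces, and the identification of one with the semi-dual of the other via $b$ yields $(b,u) \simeq (H_W^{1/2}, B \boxplus C)$ for some pair $(B,C)$ of non-degenerate Hermitian forms on $W$.

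Next I would compute the Hermitian invariants of a boxed sum $(H_V^{1/2}, b\boxplus c)$ in terms of the classification invariants of the Hermitian pair $(b,c)$. The identity $(b\boxplus c)^2 = h_1(L_b^{-1} L_c)$ still holds, so by the matrix argument of Lemma \ref{lemma:invariantsboxedsum} the invariant factors of $b \boxplus c$ are $p_i(t^2)$ where $p_i$ are those of $v:=L_b^{-1}L_c$ (which is $b$-selfadjoint, hence controlled by the Hermitian pair classification). From there, the three cases of the theorem correspond exactly to the decomposition of an indecomposable Hermitian pair: (a) $p_i^r$ with $p_i \in \K[t]$, $p_i$ even, giving via a Hermitian version of Lemma \ref{lemma:skewrepresentabledim1} a rank-one Hermitian invariant at $(p_i,r)$ whose diagonal value is of the form $\overline{ts(t^2)}$, which is precisely \emph{odd-representable}; (b) $p_i^r$ with $p_i \in \K[t]$, $p_i$ non-even, producing paired invariants at $p_i$ and $p_i^{\op}$, linked by a Hermitian analog of Proposition \ref{prop:isomopposite} (use the isometry $\varphi := h_{-1}(\id_V)$ between $(H_V^{1/2}, b\boxplus c)$ and $(-H_V^{1/2}, -b\boxplus c)$) combined with the Hermitian analog of Proposition \ref{prop:isomopposite2}; (c) $p_i^r$ with $p_i \notin \K[t]$, producing only Jordan numbers at four associated polynomials $\{q,q^\bullet,q^\op,(q^\op)^\bullet\}$, all equal, and no Hermitian invariants, giving condition (iii).

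For necessity, I would combine the three above computations on a general indecomposable summand of an arbitrary boxed sum; the decomposition of Hermitian pairs into indecomposable pieces (and the compatibility of $\boxplus$ with orthogonal sums, proved exactly as in Lemma \ref{cpdirectsumlemma}) reduces the question to these three cases. For sufficiency, I would split the given pair $(b,u)$ into isotypic blocks, each supported either at a single selfadjoint $p \in \Irr_0(\F) \cap \K[t]$, at a conjugate selfadjoint pair $\{p,p^\op\}$ with $p \in \K[t] \setminus \Irr_0(\F)$, or at the non-selfadjoint orbit of some $p \in \Irr(\F) \setminus \K[t]$; then a Hermitian version of Corollaries \ref{cor:skewrepresentable}, \ref{prop:skewrepresentablereciproc} and \ref{cor:partcorquadratic} constructs, for each block, a boxed sum with matching invariants. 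The classification theorem for $1/2$-pairs then identifies each block with a boxed sum, and the square-zero splitting property for boxed sums finishes the proof.

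The main obstacle, I expect, is the Hermitian analog of Lemma \ref{lemma:skewrepresentabledim1}: one must show that in case (a) the boxed-sum invariant $(H_V^{1/2}, b \boxplus c)_{p,r}$ represents precisely the odd values modulo $p$, and that all such values are attained as one varies the underlying Hermitian pair $(b,c)$. This requires a careful bookkeeping of the two layers of extension (first from $\F$-valued to $\L$-valued via $e_p$, then respecting the involution $\bullet$), and an explicit $\K$-linear isomorphism $\mathbb{M} \to \L$ (where $\mathbb{M} := \F[t]/(p_0)$ with $p = p_0(t^2)$) analogous to the one constructed in Lemma \ref{lemma:representabledim1}, but now in the Hermitian setting where $\K \subsetneq \F$; this forces one to be careful that all the intermediate linear forms $\lambda \mapsto e_p(\lambda \alpha)$ interact correctly with both involutions. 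Once this technical lemma is in place, the reconstruction argument parallels Case 1 and Case 2 of the proof of Theorem \ref{theo:symsymautomorphism} line by line.
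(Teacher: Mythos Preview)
Your proposal is correct and follows essentially the same route as the paper: set up Hermitian boxed sums, prove the Hermitian analog of Proposition~\ref{prop:caracboxedsum}, compute the invariants of indecomposable boxed sums, and use the isometry $(H_V^{1/2}, b\boxplus c)\simeq(-H_V^{1/2}, -b\boxplus c)$ together with a Hermitian version of Proposition~\ref{prop:isomopposite2} to obtain condition~(ii).

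One small difference worth flagging: for sufficiency, the paper does \emph{not} use boxed sums exclusively. When the orbit of $p$ under $(\bullet,\op)$ contains non-selfadjoint polynomials and hence carries no Hermitian invariants (the paper's Subcase~3.1 and Case~4), the paper instead uses the hyperbolic Hermitian extension $H_{1/2}(v)$ of a suitable endomorphism~$v$, appealing to Botha's theorem to split~$v$. Your approach of covering these cases by boxed sums as well is also valid --- one checks that for each such orbit one can choose $p_0\in\Irr(\F)$ (possibly non-selfadjoint, in which case the indecomposable Hermitian pair has minimal polynomial $(p_0 p_0^\bullet)^r$) so that the boxed sum has the required Jordan numbers --- but it requires a slightly more careful case analysis than you have written. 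The paper's use of $H_{1/2}(v)$ sidesteps this by directly matching Jordan numbers. Also note that for necessity the paper obtains condition~(iii) in one line from Botha's theorem, rather than by summing contributions from indecomposable summands as you suggest; both arguments are fine.

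Finally, your identification of the ``main obstacle'' slightly conflates two lemmas: the direct Hermitian analog of Lemma~\ref{lemma:skewrepresentabledim1} (for $p\in\Irr_0(\F)\cap\K[t]$) goes through without the $\K$-linear isomorphism trick, while that trick is needed for the analog of Lemma~\ref{lemma:representabledim1} (for $p\in\K[t]$ with $p\neq p^\op$), where one must additionally check that the isomorphism $\Lambda:\mathbb{M}\to\L$ maps selfadjoint elements onto selfadjoint elements.
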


\begin{theo}\label{theo:hermitianautomorphismchar2}
Assume that $\charac(\F)=2$.
Let $(b,u)$ be a $1/2$-pair where $u$ is an automorphism. For $(b,u)$ to have the square-zero splitting property, it is necessary and sufficient that
all the following conditions hold:
\begin{enumerate}[(i)]
\item For all $p \in \Irr(\F) \setminus \Irr_0(\F)$ and all odd $r \geq 1$, one has $n_{p,r}(u)=0$.
\item For all $p \in \Irr_0(\F) \cap \K[t]$ and all $r \geq 1$, the Hermitian invariant $(b,u)_{p,r}$ is odd-representable.
\end{enumerate}
\end{theo}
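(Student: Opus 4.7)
The plan is to mirror the proof of Theorem \ref{theo:symsymautomorphism} via a Hermitian analog of the boxed-sum construction. For a non-degenerate Hermitian form $b$ on a vector space $W$, the product $W \times W^{\star 1/2}$ carries a natural ``hyperbolic'' non-degenerate Hermitian form $H_W^{1/2}$, and for any Hermitian pair $(B,C)$ on $W$ with $B$ non-degenerate the endomorphisms $v_C: (x,\varphi) \mapsto (0,L_C(x))$ and $w_B: (x,\varphi) \mapsto (L_B^{-1}\varphi,0)$ are $H_W^{1/2}$-selfadjoint of square zero, so $B \boxplus C := v_C + w_B$ gives a $1/2$-pair with the square-zero splitting property by construction. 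The argument of Proposition \ref{prop:caracboxedsum}, which relies only on the existence of a splitting $u = a_1 + a_2$ with the $a_i$ of square zero and $b$-selfadjoint and $u$ bijective (yielding maximal totally $b$-isotropic subspaces $V_i := \im a_i$ with $V = V_1 \oplus V_2$), is characteristic-free and adapts verbatim to the Hermitian setting. Hence every $1/2$-pair $(b,u)$ with $u$ bijective and the square-zero splitting property is isometric to some $(H_W^{1/2}, B \boxplus C)$, and the problem reduces to computing the invariants of such Hermitian boxed sums.

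The characteristic-free computation of Lemma \ref{lemma:invariantsboxedsum} yields that the invariant factors of $B \boxplus C$ are $p_1(t^2), \dots, p_s(t^2)$ when those of $v := L_B^{-1} L_C$ are $p_1, \dots, p_s$. In characteristic $2$, for any $p_i \in \F[t]$ the polynomial $p_i(t^2)$ is a perfect square in $\overline{\F}[t]$ (since $t^2 - \alpha = (t - \sqrt{\alpha})^2$ in any characteristic-$2$ algebraic closure), so every irreducible $p \in \Irr(\F) \setminus \Irr_0(\F)$ (which is separable, being non-inseparable and hence not lying in $\F[t^2]$) can divide $p_i(t^2)$ only with even multiplicity. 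This yields condition (i). For condition (ii), the Hermitian analog of Lemma \ref{lemma:skewrepresentabledim1} shows that when $v$ is cyclic with primary invariant $p_0^r$ for $p_0 \in \Irr(\F) \cap \K[t]$ with $p := p_0(t^2) \in \Irr_0(\F)$, the one-dimensional Hermitian invariant $(H_W^{1/2}, B \boxplus C)_{p,r}$ is represented on a distinguished vector by the class of $t\,s(t^2)$ in $\F[t]/(p)$, where $s \in \K[t]$ is a representative of the value taken by the Hermitian invariant $(B,v)_{p_0,r}$; orthogonal decomposition then gives odd-representability in general.

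For the converse, I split $(b,u)$ into indecomposable $1/2$-pair summands via the classification theorem and realize each as a Hermitian boxed sum. When the summand's primary invariant is $p^r$ with $p \in \Irr_0(\F) \cap \K[t]$, condition (ii) combined with a Hermitian analog of Corollary \ref{prop:skewrepresentablereciproc} (constructed as an orthogonal sum of indecomposable boxed sums matching each diagonal entry $\overline{t\,s_i(t^2)}$ of the odd-representable invariant) produces the required $(B,C)$. When $p \in \Irr(\F) \setminus \Irr_0(\F)$, condition (i) forces the exponent to be $2s$ even, and I build $(B,C)$ with $v$ cyclic with primary invariant $\sigma(p)^s$ (or $\sigma(p)^s\,(\sigma(p)^\bullet)^s$ for type-(B) indecomposables with $p \notin \K[t]$), where $\sigma$ denotes the ring endomorphism of $\F[t]$ squaring each coefficient; a short check shows $\sigma(p)$ is irreducible in $\F[t]$ and satisfies $\sigma(p)(t^2) = p(t)^2$, so the boxed sum has primary invariant $p^{2s}$. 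A characteristic-$2$ Hermitian analog of Lemma \ref{lemma:representabledim1} then permits matching the prescribed Hermitian invariant at $(p,2s)$ by suitable choice of Hermitian invariant of $(B,v)$ at $(\sigma(p),s)$. The main technical obstacle will be this matching step: computing the Hermitian invariant of an indecomposable Hermitian boxed sum and identifying the induced $\F$-linear map $\F[t]/(\sigma(p)) \to \F[t]/(p)$ as bijective requires a Euclidean-division argument adapted to the inseparable degree-$2$ extension $\F[t]/(p) \supset \F[t^2]/(p)$ that is specific to characteristic $2$, in place of the separable structure exploited in Lemma \ref{lemma:representabledim1}.
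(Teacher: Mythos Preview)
Your approach is essentially correct and closely parallels the paper's, with one case missing and one methodological difference worth noting.

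\textbf{The missing case.} Your converse case analysis covers $p \in \Irr_0(\F) \cap \K[t]$ and $p \in \Irr(\F) \setminus \Irr_0(\F)$, but omits the type-(B) indecomposables with $p \in \Irr_0(\F) \setminus \K[t]$ (even $p$ not fixed by the involution). Here there is no Hermitian invariant to match, only the Jordan numbers $n_{p,r}(u)=n_{p^\bullet,r}(u)=1$, and no constraint is imposed by (i) or (ii). Writing $p=p_0(t^2)$ with $p_0 \in \Irr(\F)$, one has $p_0 \notin \K[t]$, and a Hermitian pair $(B,C)$ with $L_B^{-1}L_C$ cyclic of minimal polynomial $(p_0 p_0^\bullet)^r$ gives a boxed sum with the right primary invariants $p^r$ and $(p^\bullet)^r$. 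This fills the gap within your all-boxed-sums framework.

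\textbf{Comparison with the paper.} The paper handles the cases $p \notin \K[t]$ (both even and non-even) not via boxed sums but via the Hermitian extension $H_{1/2}(v)$, choosing $v$ with invariant factors $p(t^2)^r$ (even case) or $p^{2r}$ (non-even case). Since these cases carry no Hermitian invariant, only Jordan numbers need matching, and $h_{1/2}(v) \simeq v \oplus v^{t/2}$ does this directly with no computation. Your boxed-sum route works as well but requires verifying that $\sigma(p)$ is irreducible; this is indeed true for non-even $p$ (if $\sigma(p)=fg$ with $f,g$ nonconstant, then $p^2=f(t^2)g(t^2)$ forces $f(t^2)$ and $g(t^2)$ each to be associates of $p$, impossible since they lie in $\F[t^2]$ while $p$ does not). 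For $p \in \K[t]$ non-even, your ``main technical obstacle'' is precisely the content of the paper's dedicated characteristic-$2$ lemma: the computation of the Hermitian invariant of the boxed sum at $(p,2s)$, and the verification that the resulting $\F$-linear map $\F[t]/(\sigma(p)) \to \F[t]/(p)$ restricts to a bijection between the $\K$-subspaces of self-conjugate elements. The paper establishes this by the same Euclidean-division argument you outline, together with a dimension count over $\K$.
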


Now, we turn to nilpotent endomorphisms. Remember that Witt theory applies in a similar way to Hermitian forms as to symmetric bilinear forms.
In particular, we have the Witt index $\nu(H)$ of a non-degenerate Hermitian form $H$.

\begin{Def}
Let $H$ and $H'$ be two non-degenerate Hermitian forms (on finite-dimensional vector spaces over $\F$).
The following conditions are equivalent:
\begin{itemize}
\item Each nonisotropic part of $H'$ is equivalent to a subform of $-H$;
\item One has $\nu(H')+\nu(H' \bot H) \geq \rk(H')$.
\end{itemize}
When they are satisfied, we say that $H$ \textbf{Witt-simplifies} $H'$.
\end{Def}

\begin{theo}\label{theo:hermitiannilpotent}
Let $(b,u)$ be a $1/2$-pair in which $u$ is nilpotent.
The following conditions are equivalent:
\begin{enumerate}[(i)]
\item $u$ is the sum of two square-zero $b$-Hermitian endomorphisms.
\item For every integer $k \geq 0$, the Hermitian form
$(b,u)_{t,2k+1}$ Witt-simplifies $\underset{i >k}{\bot} (b,u)_{t,2i+1}$.
\end{enumerate}
\end{theo}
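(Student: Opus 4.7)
The plan is to follow closely the strategy used for Theorem \ref{theo:symformnilpotent}, since Theorem \ref{theo:hermitiannilpotent} is its natural Hermitian analogue, noting that the involution on $\F$ plays the role that $\eta$ played in the symmetric case and removes the need for parity-based case distinctions. The proof splits into the two implications, with (i) $\Rightarrow$ (ii) being much harder.

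For (ii) $\Rightarrow$ (i), I would reduce to three types of ``building blocks'' which each have the square-zero splitting property and which, by the classification of $1/2$-pairs, can be orthogonally assembled to realize any prescribed invariants satisfying (ii): (a) pairs where $u$ has only even-sized Jordan cells, realized via a Hermitian analogue of the boxed-sum construction (Lemma \ref{lemma:evensizenilpotent}); (b) pairs where $u$ has only odd-sized Jordan cells and all Hermitian invariants are hyperbolic, realized as hyperbolic Hermitian extensions $(H_W,h(v))$ of some nilpotent $v$ which is itself a sum of two square-zero endomorphisms by Theorem \ref{theoBotha}; (c) ``twisted'' pairs in which $u$ has only Jordan cells of sizes $2k-1$ and $2k+1$ in matching numbers, with $(b,u)_{t,2k+1} \simeq -(b,u)_{t,2k-1}$, realized through an explicit construction analogous to Lemma \ref{lemma:twisted}. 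Once these are in hand, the inductive argument of Proposition \ref{prop:Jordanoddsize} transfers verbatim: the Witt-simplification hypothesis lets us peel off, at each step, either a hyperbolic summand from the top invariant (if isotropic) or a twisted pair realizing a subform of $-(b,u)_{t,2k+1}$ within $(b,u)_{t,2k-1}$, reducing the nilindex in each case.

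For (i) $\Rightarrow$ (ii), the plan is to emulate Proposition \ref{prop:CNnilpotentecomplete} via three reduction techniques --- descent on $(b,u)^{\im u}$, folding on $(b,u)^{\Ker u^k}$, and twisted descent on $(b,u)^{\Ker u + \im u}$ --- which transfer to the Hermitian setting since the computations in Sections \ref{section:descent}, \ref{section:twisteddescent} and \ref{section:symmetrization} use only generic properties of selfadjoint endomorphisms and non-degenerate forms. The resulting Hermitian formulas are identical to those of Propositions \ref{proposition:descentenilpotente1}, \ref{proposition:descentenilpotente2} and \ref{prop:symmetrization} with the $\eta$ factors dropped (their role being absorbed by the involution). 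Given these analogues, Lemma \ref{lemma:stabilization} still guarantees that the relevant subspaces are stable under the square-zero summands of $u$, so the inductive scheme reduces everything to the Hermitian key lemma: \emph{if $(b,u)$ is a $1/2$-pair with the square-zero splitting property and $u^3=0$, then $(b,u)_{t,1}$ Witt-simplifies $(b,u)_{t,3}$.}

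The main obstacle is this key lemma, which mirrors Lemma \ref{lemma:keylemmanilpotent}. I would carry out the same six-step outline: (1) reduce to the case where $(b,u)$ is indecomposable and $(b,u)_{t,3}$ is non-hyperbolic; (2) classify indecomposable $1/2$-pairs $(b',u')$ with $(u')^2=0$, which in the Hermitian case are governed by vectors $(x,y)$ with $b'(x,x)=b'(y,y)=0$, $b'(x,y)=1$ and $u'(y)=\lambda x$ for some $\lambda \in \K$ (this is the simplification the Hermitian setting provides: no $(1,\pm 1)$ dichotomy); (3) write $u=a+(u-a)$ with both summands square-zero and $b$-selfadjoint, and prove $a(V) \subset \Ker u^2$ by analyzing the induced $1/2$-pair on $V/\Ker u^2$ carrying $(b,u)_{t,3}$, contradicting indecomposability via an explicit $b$-regular $u,a$-stable subspace $W_x = \Vect(x,a(x),u(x),u(a(x)),u^2(x),u^2(a(x)))$ if $a$ were nonzero on the quotient; (4) similarly, show $a(\Ker u^2) \subset \Ker u+\im u$ by analyzing the induced pair on $(\Ker u^2)/(\Ker u+\im u)$; (5) introduce the linear map $f: V_3 \oplus V_2 \to (\Ker u+\im u)/(\Ker u \cap \im u)$ defined by suitable direct complements, whose image is totally isotropic for the Hermitian form equivalent to $(b,u)_{t,1}\bot(b,u)_{t,3}$ because $\im a \subset \Ker a=(\im a)^{\bot_b}$; (6) prove the rank inequality $\rk(f) \geq \rk((b,u)_{t,3}) - \nu((b,u)_{t,3})$ via the same $b(x,u^2(x)) = b(x,(ua+au)(x)) = b(a(x),u(x))$ computation, which yields the Witt-simplification. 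The chief novelty is verifying that the explicit ``hyperbolic basis'' constructions in the analogues of Step 2 go through over $\F$ with the involution --- which is straightforward because non-degenerate Hermitian forms always decompose into hyperbolic planes plus a nonisotropic part, and here the coefficient $\lambda \in \K$ exactly plays the role of the scalar appearing in the symmetric case.
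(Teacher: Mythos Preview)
Your plan is essentially the paper's own, and the overall architecture is sound. Two technical points, however, are where the Hermitian case genuinely differs from the symmetric one with $\eta=1$, and your outline glosses over both.

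First, the twisted-pair construction (your building block (c)): the proof of Lemma \ref{lemma:twisted} relies on $u^{2k}(f_1)=2e_1\neq 0$ at the end, which fails when $\charac(\F)=2$. Since Theorem \ref{theo:hermitiannilpotent} is stated for all characteristics, you need a modified construction: replace the off-diagonal entries by $\alpha$ and $\alpha^\bullet$ for some $\alpha\in\F$ with $\alpha^\bullet\neq -\alpha$ (such $\alpha$ exists because the involution is non-identity), so that $u^{2k}(f_1)=(\alpha+\alpha^\bullet)e_1\neq 0$.

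Second, in Step 4 of your key lemma, the induced endomorphism $\overline{a}$ on $\Ker u^2/(\Ker u+\im u)$ is \emph{not} $B_2$-selfadjoint: from $au+ua=u^2$ one computes $b(x,u(a(y)))=-b(a(x),u(y))$, so $\overline{a}$ is $B_2$-skew-selfadjoint. Thus $(B_2,\overline{a})$ is not a $1/2$-pair, and your Step 2 classification does not apply directly. The fix is to pick a nonzero $\alpha\in\F$ with $\alpha^\bullet=-\alpha$ and observe that $\alpha\overline{a}$ is then $B_2$-selfadjoint with square zero; the classification from Step 2 applies to $(B_2,\alpha\overline{a})$ and the argument proceeds as before. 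So your remark that the Hermitian setting removes the $(1,\pm 1)$ dichotomy is slightly too optimistic: the dichotomy resurfaces here, but the involution lets you pass between the two cases by scalar multiplication.
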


Note that here the result holds whatever the characteristic of $\F$.

Combining the above two theorems finally leads to the full classification:

\begin{theo}\label{theo:hermitiangeneralcharnot2}
Assume that $\charac(\F) \neq 2$.
Let $(b,u)$ be a $1/2$-pair.
For $(b,u)$ to have the square-zero splitting property, it is necessary and sufficient that all the following conditions hold:
\begin{enumerate}[(i)]
\item For all $p \in \Irr_0(\F) \cap \K[t]$ and all $r \geq 1$, the Hermitian invariant $(b,u)_{p,r}$ is odd-representable.
\item For all $p \in \Irr(\F) \cap \K[t]$ such that $p^\op \neq p$, and for all $r \geq 1$, one has
$(b,u)_{p,r} \simeq (-1)^{1+d(r-1)} (b,u)_{p^\op,r}^{\op}$ where $d:=\deg p$.
\item For all $p \in \Irr(\F) \setminus \K[t]$ and all $r \geq 1$, the Jordan numbers $n_{p,r}(u)$ and $n_{p^{\op},r}(u)$ are equal.
\item For every integer $k \geq 0$, the Hermitian form
$(b,u)_{t,2k+1}$ Witt-simplifies $\underset{i >k}{\bot} (b,u)_{t,2i+1}$.
\end{enumerate}
\end{theo}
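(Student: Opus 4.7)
The plan is to reduce Theorem~\ref{theo:hermitiangeneralcharnot2} to Theorems \ref{theo:hermitianautomorphismcharnot2} and \ref{theo:hermitiannilpotent} by means of the Fitting decomposition proposition stated just before the theorem. Concretely, I would split $(b,u)\simeq (b,u)^{\Co(u)} \bot (b,u)^{\Nil(u)}$, remark that in the induced pair on $\Co(u)$ the endomorphism is an automorphism and in the induced pair on $\Nil(u)$ it is nilpotent, and then invoke the Fitting proposition to see that $(b,u)$ has the square-zero splitting property if and only if both induced pairs do.

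The next step is to verify that the Hermitian invariants distribute correctly across this orthogonal sum. Since invariants are compatible with orthogonal direct sums and $u^{2n}$ is $b$-selfadjoint (so $\Co(u)$ and $\Nil(u)$ are $b$-orthogonal $b$-regular subspaces), for every $p \in \Irr(\F)$ with $p=p^\bullet$ and every $r \geq 1$ we have
\[
(b,u)_{p,r}\simeq ((b,u)^{\Co(u)})_{p,r}\, \bot\, ((b,u)^{\Nil(u)})_{p,r},
\]
while the Jordan numbers add analogously. Because $u$ is bijective on $\Co(u)$, only $p \neq t$ contributes there; because $u$ is nilpotent on $\Nil(u)$, only $p=t$ contributes there. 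Therefore $((b,u)^{\Co(u)})_{p,r} \simeq (b,u)_{p,r}$ for $p \neq t$ and $((b,u)^{\Nil(u)})_{t,r} \simeq (b,u)_{t,r}$ for all $r$, while all crossover invariants vanish.

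Applying Theorem \ref{theo:hermitianautomorphismcharnot2} to $(b,u)^{\Co(u)}$ then gives conditions (i), (ii) and (iii) of the present theorem (noting that $t \not\in \Irr_0(\F)$, that every $p \in \Irr(\F) \cap \K[t]$ with $p^{\op}=p$ is either in $\Irr_0(\F)$ or equals $t$, and that $\Co(u)$ has no invariants at $t$). Applying Theorem \ref{theo:hermitiannilpotent} to $(b,u)^{\Nil(u)}$ yields condition (iv), after identifying $((b,u)^{\Nil(u)})_{t,k}$ with $(b,u)_{t,k}$. Conjoining the two characterizations gives the four conditions in the statement.

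No serious obstacle is expected here: the result is essentially a bookkeeping consequence of the Fitting reduction and the two earlier theorems. The only point requiring any care is the matching of invariants under the Fitting decomposition, which one checks by noting that for $p \neq t$ the filtration $\Ker p(u)^k$ lies entirely in $\Co(u)$, whereas for $p=t$ the filtration $\Ker u^k$ lies entirely in $\Nil(u)$, so the respective quotient spaces and induced Hermitian forms computed inside $\Co(u)$ or $\Nil(u)$ agree with those computed inside $V$.
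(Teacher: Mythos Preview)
Your proposal is correct and is exactly the approach the paper takes: the paper simply states that Theorem~\ref{theo:hermitiangeneralcharnot2} follows by ``combining the above two theorems'' (namely Theorems~\ref{theo:hermitianautomorphismcharnot2} and~\ref{theo:hermitiannilpotent}) via the Fitting-decomposition proposition, and you have spelled out precisely this combination, including the bookkeeping that matches the invariants of $(b,u)$ with those of $(b,u)^{\Co(u)}$ and $(b,u)^{\Nil(u)}$.
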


\begin{theo}\label{theo:hermitiangeneralchar2}
Assume that $\charac(\F) = 2$.
Let $(b,u)$ be a $1/2$-pair.
For $(b,u)$ to have the square-zero splitting property, it is necessary and sufficient that all the following conditions hold:
\begin{enumerate}[(i)]
\item For all $p \in \Irr(\F) \setminus \Irr_0(\F)$ and all odd $r \geq 1$, one has $n_{p,r}(u)=0$.
\item For all $p \in \Irr_0(\F) \cap \K[t]$ and all $r \geq 1$, the Hermitian invariant $(b,u)_{p,r}$ is odd-representable.
\item For every integer $k \geq 0$, the Hermitian form
$(b,u)_{t,2k+1}$ Witt-simplifies $\underset{i >k}{\bot} (b,u)_{t,2i+1}$.
\end{enumerate}
\end{theo}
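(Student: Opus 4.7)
The plan is to invoke the Fitting decomposition principle stated as a proposition just before the present theorem, reducing the square-zero splitting property for the full $1/2$-pair $(b,u)$ to the conjunction of the same property for $(b,u)^{\Co(u)}$ and $(b,u)^{\Nil(u)}$. Since $u^{2n}$ is $b$-selfadjoint for large $n$, we have $\Nil(u) = \Co(u)^{\bot_b}$, so these two induced pairs live on $b$-regular complementary subspaces and $(b,u) \simeq (b,u)^{\Co(u)} \bot (b,u)^{\Nil(u)}$. On the first summand $u$ restricts to a $b$-selfadjoint automorphism; on the second, to a $b$-selfadjoint nilpotent endomorphism.

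The next step is to track how the Jordan numbers and Hermitian invariants of $(b,u)$ distribute across this orthogonal decomposition. For any $p \in \Irr(\F) \setminus \{t\}$, the operator $p(u)$ is invertible on $\Nil(u)$ (because $p(0) \neq 0$ while $u$ is nilpotent there), so the entire $(p,r)$-primary structure is confined to $\Co(u)$; symmetrically, $u$ is invertible on $\Co(u)$, hence the whole $t$-primary structure is confined to $\Nil(u)$. Combined with the compatibility of Hermitian invariants with orthogonal sums, this yields the clean identifications $n_{p,r}(u) = n_{p,r}(u|_{\Co(u)})$ and $(b,u)_{p,r} \simeq ((b,u)^{\Co(u)})_{p,r}$ for $p \neq t$, as well as $(b,u)_{t,r} \simeq ((b,u)^{\Nil(u)})_{t,r}$.

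With that dictionary in place, I would close by applying Theorem \ref{theo:hermitianautomorphismchar2} to $(b,u)^{\Co(u)}$ and Theorem \ref{theo:hermitiannilpotent} to $(b,u)^{\Nil(u)}$. The former asserts that the square-zero splitting property for the automorphism part is equivalent to conditions (i) and (ii) of the present theorem restricted to $p \neq t$; the latter asserts that the splitting property for the nilpotent part is equivalent to condition (iii). Piecing the two equivalences together produces the stated biconditional.

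The main obstacle will be entirely bookkeeping: carefully matching the ranges of indices so that the invariants appearing in (i)--(iii) are precisely those produced by the two Fitting summands, and noting that condition (i), though formally quantified over $\Irr(\F) \setminus \Irr_0(\F)$ (which contains $t$), is effectively vacuous at $p = t$ because every $t$-primary Jordan block of $u$ lives in $\Nil(u)$ and is already governed by condition (iii) alone. Since the substantive analytic work has been carried out in the two cited theorems, no genuinely hard step remains here.
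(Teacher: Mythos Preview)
Your approach is exactly the paper's: the theorem is presented there as an immediate combination of Theorem~\ref{theo:hermitianautomorphismchar2} and Theorem~\ref{theo:hermitiannilpotent} via the Fitting-decomposition proposition for $1/2$-pairs, with no further argument given. Your observation about $p=t$ in condition~(i) is well taken: as literally stated, condition~(i) would force $n_{t,r}(u)=0$ for all odd $r$, which is not a consequence of the square-zero splitting property (take $u=0$); the intended reading, consistent with Theorem~\ref{theo:hermitianautomorphismchar2} and with Botha's theorem in characteristic~$2$, is that condition~(i) concerns only $p\neq t$, and your bookkeeping handles this correctly.
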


\subsection{Examples}

\subsubsection{Complex numbers}

We consider the complex field $\C$ with the standard involution $z \mapsto \overline{z}$.
The only irreducible polynomials over $\C$ are the ones of degree 1, so in Theorem \ref{theo:hermitiangeneralcharnot2}
condition (i) is void.

Condition (iii) can be reformulated as saying that for all $\lambda \in \C \setminus \R$ and all $r \geq 1$, the Jordan numbers
$n_{t-\lambda,r}(u)$ and $n_{t+\lambda,r}(u)$ are equal.

Now, remember that non-degenerate Hermitian forms $H$ over $\C$ (on finite-dimensional vector spaces)
are classified by their inertia $(s_+(H),s_-(H))$.

Hence, condition (ii) from Theorem \ref{theo:hermitiangeneralcharnot2} can be reformulated as follows: for all $\lambda \in \R \setminus \{0\}$ and all $r \geq 1$,
either $r$ is even and $s_+((b,u)_{t-\lambda,r})=s_+((b,u)_{t+\lambda,r})$ and $s_-((b,u)_{t-\lambda,r})=s_-((b,u)_{t+\lambda,r})$,
or $r$ is odd and $s_+((b,u)_{t-\lambda,r})=s_-((b,u)_{t+\lambda,r})$ and $s_-((b,u)_{t-\lambda,r})=s_+((b,u)_{t+\lambda,r})$.

Finally, condition (iv) from Theorem \ref{theo:hermitiangeneralcharnot2} can be rephrased in terms of inertia of Hermitian invariants, just like
condition (iii) from Theorem \ref{theo:symsym} was rephrased over the field of real numbers in Section \ref{section:examplesquadratic} (with $\eta=1$).

\subsubsection{Finite fields}

Over finite fields with a non-identity involution, non-degenerate Hermitian forms are classified by their rank.
Hence, in Theorem \ref{theo:hermitiangeneralcharnot2}, conditions (i) to (iii) can be simplified as
saying that $u$ is similar to $-u$, and condition (iv) is simplified as follows:
for all $k \geq 0$,  $n_{t,2k+1}(u)>0$ if $\sum_{i=k+1}^{+\infty} n_{t,2i+1}(u)$ is odd.

Likewise, in Theorem \ref{theo:hermitiangeneralchar2}, conditions (i) to (ii) can be simplified
as saying that all the invariant factors of $u$ are even or odd, and condition (iii) is simplified just like for fields of characteristic different from $2$.

\subsection{Hermitian extensions}

Let $V$ be a finite-dimensional vector space over $\F$.
We equip the space $W:=V \times V^{\star 1/2}$ with the hyperbolic Hermitian form
$$H_V^{1/2} : \bigl((x,\varphi),(y,\psi)\bigr) \longmapsto \varphi(y)^\bullet+\psi(x).$$
Let $u \in \End(V)$. We consider its half-transpose, denoted by $u^{t/2}$ and defined as follows:
$$\varphi \in V^{\star 1/2} \longmapsto \varphi \circ u \in V^{\star 1/2}.$$
We set
$$h_{1/2}(u):=u \oplus u^{t/2}.$$
One checks that $h_{1/2}(u)$ is $H_V^{1/2}$-selfadjoint, and we denote by
$H_{1/2}(u)$ the $1/2$-pair $(H_V^{1/2},h_{1/2}(u))$.

Now, assume that $u=u_1+u_2$ for square-zero endomorphisms $u_1$ and $u_2$ of $V$.
Then, one checks that $h_{1/2}(u)=h_{1/2}(u_1)+h_{1/2}(u_2)$ and that $\bigl(h_{1/2}(u_1)\bigr)^2=0$ and
$\bigl(h_{1/2}(u_2)\bigr)^2=0$. It follows that $H_{1/2}(u)$ has the square-zero splitting property.

Besides, one checks that, for endomorphisms $u_1$ and $u_2$ of respective vector spaces $V_1$ and $V_2$, we have
an isometry
$$H_{1/2}(u_1 \oplus u_2) \simeq H_{1/2}(u_1) \bot H_{1/2}(u_2).$$

We can now state an important result on the Hermitian invariants of $H_{1/2}(u)$.

\begin{prop}
Let $u$ be an endomorphism of a vector space.
Then all the Hermitian invariants of $H_{1/2}(u)$ are hyperbolic.
\end{prop}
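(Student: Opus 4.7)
The plan is to follow the template of Proposition \ref{prop:hyperbolicexpansion} almost verbatim, adapting only what is needed for the semi-linear setting. First I would use the primary canonical form to split $u \simeq u_1 \oplus \cdots \oplus u_r$ with each $u_i$ cyclic of minimal polynomial a power of a monic irreducible polynomial. Since $H_{1/2}$ is compatible with direct sums (stated in the excerpt just before the proposition), this yields $H_{1/2}(u) \simeq H_{1/2}(u_1) \bot \cdots \bot H_{1/2}(u_r)$, and as Hermitian invariants are compatible with orthogonal sums and the orthogonal sum of hyperbolic Hermitian forms is hyperbolic, it suffices to treat the case where $u$ is cyclic with minimal polynomial $p^r$ for some $p \in \Irr(\F)$ and $r \geq 1$.

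Next I would record the key similarity fact: for every $q \in \F[t]$ and every semi-linear form $\varphi$, one has $q(u^{t/2})(\varphi) = \varphi \circ q^\bullet(u)$; consequently $n_{q,k}(u^{t/2}) = n_{q^\bullet,k}(u)$ for every $q \in \Irr(\F)$ and every $k \geq 1$. If $p \neq p^\bullet$, then $h_{1/2}(u) = u \oplus u^{t/2}$ has no Jordan cell whose primary invariant lies in $\K[t]$, so every Hermitian invariant $H_{1/2}(u)_{q,k}$ is trivial; in particular all are hyperbolic. So the substance is the case $p = p^\bullet$.

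In that case, $h_{1/2}(u)$ has primary invariants $p^r, p^r$, hence every Hermitian invariant of $H_{1/2}(u)$ vanishes except $H_{1/2}(u)_{p,r}$, which is a non-degenerate Hermitian form of rank $2$ over the residue field $\L := \F[t]/(p)$ with the involution induced from $\F$. It will then suffice to exhibit a single nonzero isotropic vector, as a $2$-dimensional non-degenerate Hermitian form admitting an isotropic nonzero vector is hyperbolic. As in the proof of Proposition \ref{prop:hyperbolicexpansion}, I would endow $V \times V^{\star 1/2}$ with the $\F[t]$-module structure induced by $h_{1/2}(u)$, pick any $x \in V \setminus \im p(u)$, set $X := (x,0)$, and let $\overline{X}$ denote its class in the quotient $(V \times V^{\star 1/2})/\im p(h_{1/2}(u)) = (V/\im p(u)) \times (V^{\star 1/2}/\im p(u^{t/2}))$, which is nonzero by the choice of $x$.

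The one-line computation that closes the argument is:
\[
H_V^{1/2}\bigl(X,\, h_{1/2}(u)^{r-1}(q(h_{1/2}(u))\,X)\bigr)
= H_V^{1/2}\bigl((x,0),\, (u^{r-1}q(u)[x],0)\bigr) = 0
\]
for every $q \in \F[t]$, since $H_V^{1/2}$ evaluated on two vectors whose $V^{\star 1/2}$-components are zero vanishes by definition. Denoting by $\overline{B}$ the Hermitian form induced by $(Y,Z) \mapsto H_V^{1/2}(Y, h_{1/2}(u)^{r-1}(Z))$ on the quotient, one deduces $e_p(\lambda\, H_{1/2}(u)_{p,r}(\overline{X},\overline{X})) = \overline{B}(\overline{X}, \lambda\cdot \overline{X}) = 0$ for every $\lambda \in \L$, whence $H_{1/2}(u)_{p,r}(\overline{X},\overline{X}) = 0$ by non-degeneracy of $(\mu,\nu) \mapsto e_p(\mu\nu)$. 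This yields the required isotropic vector, completes the cyclic case, and thereby the proposition. No step appears to be a genuine obstacle; the only mild care needed is in verifying the semi-linear analogue of the minimal polynomial and Jordan number identities for $u^{t/2}$, which is routine once one writes semi-linearity explicitly.
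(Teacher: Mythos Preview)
Your proposal is correct and follows precisely the approach the paper intends: the paper's own proof consists of the single sentence ``The proof is essentially similar to the one of Proposition~\ref{prop:hyperbolicexpansion}, and the adaptation is effortless,'' and your write-up is exactly that adaptation, including the additional (and necessary) case split on whether $p=p^\bullet$, which has no counterpart in the bilinear setting. One small slip: in the displayed computation and in the definition of $\overline{B}$ you write $h_{1/2}(u)^{r-1}$ where $p(h_{1/2}(u))^{r-1}$ is meant; this does not affect the argument, since any polynomial in $h_{1/2}(u)$ maps $V\times\{0\}$ into itself and that is all you use.
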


The proof is essentially similar to the one of Proposition \ref{prop:hyperbolicexpansion}, and the adaptation is effortless.

\subsection{Hyperbolic boxed sums}

Now, we turn to the adaptation of boxed sums.
Let $(b,c)$ be a pair of sesquilinear forms on a vector space $V$, with $b$ non-degenerate.
We obtain \emph{linear} mappings
$$L_c : x \in V \mapsto c(-,x) \in V^{\star 1/2} \quad \text{and} \quad L_b : x \in V \mapsto b(-,x) \in V^{\star 1/2},$$
the latter of which is an isomorphism. This yields two endomorphisms
$$w_b : (x,\varphi) \in V \times V^{\star 1/2} \longmapsto (L_b^{-1}(\varphi),0) \in  V \times V^{\star 1/2}$$
and
$$v_c : (x,\varphi) \in V \times V^{\star 1/2} \longmapsto (0,L_c(x)) \in  V \times V^{\star 1/2}.$$
The Hermitian boxed sum of $b$ and $c$ is defined as the endomorphism
$$b \boxplus c:=w_b+v_c : (x,\varphi) \mapsto \bigl(L_b^{-1}(\varphi),L_c(x)\bigr),$$
of $V \times V^{\star 1/2}$, and one checks that $b \boxplus c$ is $H_V^{1/2}$-Hermitian if and only if both $b$ and $c$ are Hermitian.

One checks that, given two pairs $(b,c)$ and $(b',c')$ of Hermitian forms on respective vector spaces $V$ and $V'$, with the left
component non-degenerate, one has
$$(b,c) \simeq (b',c') \Rightarrow \bigl(H_V^{1/2},b \boxplus c\bigr) \simeq \bigl(H_{V'}^{1/2},b' \boxplus c'\bigr)$$
and
$$\bigl(H_{V \oplus V'}^{1/2},(b \bot b') \boxplus (c \bot c')\bigr) \simeq \bigl(H_V^{1/2},b \boxplus c\bigr) \bot \bigl(H_{V'}^{1/2},b' \boxplus c'\bigr).$$
Moreover, just like in Section \ref{section:boxedsum}, we obtain that Hermitian boxed sums entirely
account for $1/2$-pairs $(b,u)$ with the square-zero splitting property in which $u$ is an automorphism:

\begin{prop}\label{prop:caracboxedsumhermitian}
Let $(b,u)$ be a $1/2$-pair in which $u$ is an automorphism. Then the following conditions are equivalent:
\begin{enumerate}[(i)]
\item $(b,u)$ has the square-zero splitting property;
\item There exists a vector space $W$ and a pair $(B,C)$ of non-degenerate Hermitian forms on $W$
such that $(b,u) \simeq \bigl(H_W^{1/2}, B \boxplus C\bigr)$.
\end{enumerate}
\end{prop}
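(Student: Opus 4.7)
The plan is to mirror the proof of Proposition \ref{prop:caracboxedsum} almost verbatim, replacing the ordinary dual $V^\star$ throughout with the semi-dual $V^{\star 1/2}$, using the hyperbolic Hermitian form $H_V^{1/2}$, and taking advantage of the Hermitian equivalence noted earlier: $w_B$ (respectively $v_C$) is $H_V^{1/2}$-selfadjoint if and only if the underlying form $B$ (respectively $C$) is Hermitian. The implication (ii) $\Rightarrow$ (i) is immediate: if $(b,u) \simeq (H_W^{1/2},B \boxplus C)$ with $B,C$ non-degenerate Hermitian, then $B\boxplus C = w_B + v_C$ exhibits a square-zero splitting in $H_W^{1/2}$-selfadjoint summands, and this property transports through an isometry.

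For (i) $\Rightarrow$ (ii), write $u = u_1 + u_2$ with $u_1,u_2$ square-zero and $b$-selfadjoint, and set $V_i := \im u_i$. Since $u_i^2 = 0$ and $u_i$ is $b$-selfadjoint one has $V_i \subset \Ker u_i = V_i^{\bot_b}$, so each $V_i$ is totally $b$-isotropic. Surjectivity of $u$ gives $V_1 + V_2 = V$, which forces $V = V_1 \oplus V_2$ with $V_i = V_i^{\bot_b}$ and $\dim V_i = \tfrac12 \dim V$, and then $u_1$ restricts to a linear isomorphism $V_2 \overset{\simeq}{\to} V_1$ while $u_2$ restricts to $V_1 \overset{\simeq}{\to} V_2$. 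Next, exploit the fact that the non-degeneracy of $b$ together with $V_2 = V_2^{\bot_b}$ yields an $\F$-linear isomorphism
\[
f : V_2 \longrightarrow V_1^{\star 1/2}, \qquad f(x)(y) := b(y,x),
\]
where linearity in $x$ comes from the right-linearity of $b$ and the values $b(-,x)|_{V_1}$ are semi-linear by the left-semi-linearity of $b$. Define
\[
\Phi : V \longrightarrow V_1 \times V_1^{\star 1/2}, \qquad \Phi(x_1+x_2) := (x_1, f(x_2)).
\]
Using the Hermitian identity $b(y_1,x_2)^\bullet = b(x_2,y_1)$ and the isotropy of $V_1$ and $V_2$, a direct expansion shows
\[
H_{V_1}^{1/2}\bigl(\Phi(x),\Phi(y)\bigr) = b(x_2,y_1) + b(x_1,y_2) = b(x,y),
\]
so $\Phi$ is an isometry from $(V,b)$ to $(V_1 \times V_1^{\star 1/2}, H_{V_1}^{1/2})$. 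Finally, the conjugate $\Phi \circ u_1 \circ \Phi^{-1}$ vanishes on $V_1 \times \{0\}$ and sends $\{0\} \times V_1^{\star 1/2}$ bijectively into $V_1 \times \{0\}$, hence equals $w_B$ for a unique non-degenerate sesquilinear form $B$ on $V_1$; symmetrically $\Phi \circ u_2 \circ \Phi^{-1} = v_C$ for some non-degenerate sesquilinear $C$. Both $w_B$ and $v_C$ are $H_{V_1}^{1/2}$-selfadjoint (since $u_1,u_2$ are $b$-selfadjoint and $\Phi$ is an isometry), so by the equivalence recorded in the section on Hermitian boxed sums both $B$ and $C$ are Hermitian. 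Summing yields $\Phi \circ u \circ \Phi^{-1} = B \boxplus C$, giving the desired isometry $(b,u) \simeq (H_{V_1}^{1/2}, B \boxplus C)$.

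The only genuinely Hermitian-specific step in this outline is the verification that $\Phi$ is an isometry; everything else is formal bookkeeping. The main subtlety to handle carefully is the distinction between linearity and semi-linearity for the map $f$ and for the identification $L_B$, which must be kept consistent with the convention that Hermitian forms are right-linear and left-semi-linear, and with the natural $\F$-module structure on $V_1^{\star 1/2}$ (under which $(a\varphi)(y) = a\varphi(y)$). Once these conventions are fixed, the computations go through exactly as in the bilinear case with $\varepsilon = 1$.
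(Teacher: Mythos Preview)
Your proposal is correct and follows exactly the approach the paper intends: the paper does not give a separate proof but simply says ``just like in Section~\ref{section:boxedsum}'', referring to the proof of Proposition~\ref{prop:caracboxedsum}, and you have faithfully carried out that adaptation to the Hermitian setting with $\varepsilon=1$. The only minor remark is that the paper's Hermitian boxed-sum section states the selfadjointness criterion for $b\boxplus c$ as a whole rather than for $w_B$ and $v_C$ separately, but the individual equivalences you invoke are immediate verifications in the same spirit.
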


Now, we compute the invariants of boxed sums. Beware that in the non-degenerate case we need to discuss whether the characteristic of $\F$
is $2$ or not. We start with the case that most resembles the one of $(1,1)$-pairs:

\begin{lemma}
Let $(b,c)$ be a pair of Hermitian forms on a vector space $V$, with $b$ non-degenerate.
Assume that $u:=L_b^{-1}L_c$ is cyclic with minimal polynomial $p_0^r$ for some $p_0 \in \Irr(\F) \cap \K[t]$ and some $r \geq 1$, and assume furthermore
that $p:=p_0(t^2)$ is irreducible. Assume finally that the Hermitian invariant $(b,u)_{p,r}$ represents, for some polynomial $s(t) \in \K[t]$ (not divisible by $p$), the class of $s(t)$ modulo $p_0$.
Then $(H_V^{1/2},b \boxplus c)$ has exactly one Hermitian invariant, namely $(H_V^{1/2},b \boxplus c)_{p,r}$, which is defined on a
$1$-dimensional vector space, and it represents the class of $t s(t^2)$ in the field $\F[t]/(p)$.
\end{lemma}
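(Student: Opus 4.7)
The plan is to adapt almost verbatim the proof of Lemma \ref{lemma:skewrepresentabledim1} to the Hermitian setting, since the overall structure---a single primary invariant on a one-dimensional cokernel whose value is read off from one carefully chosen vector---carries over without substantial change. The preliminary observation is that $p=p_0(t^2)$ lies in $\K[t]$ because $p_0$ does, so $p^\bullet=p$ and Hermitian invariants at $(p,r)$ are well-defined. The matrix calculation behind Lemma \ref{lemma:invariantsboxedsum} is purely formal and transfers to the Hermitian boxed sum without change (using only that $L_b$ is $\F$-linear, which holds because $b$ is right-$\F$-linear), so the invariant factors of $b \boxplus c$ are obtained from those of $u$ by the substitution $t\mapsto t^2$. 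Since $u$ is cyclic with minimal polynomial $p_0^r$, the operator $b \boxplus c$ is cyclic with minimal polynomial $p^r$, giving a unique Hermitian invariant on a one-dimensional $\L$-vector space, where $\L:=\F[t]/(p)$.

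The second ingredient is the Hermitian analogue of Remark \ref{squareboxed}: a direct verification yields $(b\boxplus c)^2=h_{1/2}(u)$, the key step being $L_c\circ L_b^{-1}=u^{t/2}$, which uses that $u$ is $b$-selfadjoint. Following Lemma \ref{lemma:skewrepresentabledim1}, I would pick a representative $x \in V \setminus \im p_0(u)$ realizing $\overline{s}^{\L_0}$ for $(b,u)_{p_0,r}$ (with $\L_0:=\F[t]/(p_0)$), set $X:=(x,0)\in V \times V^{\star 1/2}$, and observe that the class $\overline{X}$ in the cokernel of $p(b \boxplus c)=h_{1/2}(p_0(u))$ is non-zero, since that cokernel identifies with $(V/\im p_0(u))\times (V^{\star 1/2}/\im p_0(u)^{t/2})$.

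The core of the argument is the explicit computation of $(H_V^{1/2},b \boxplus c)_{p,r}(\overline{X},\overline{X})$. For $q\in\F[t]$, split $q(t)=q_1(t^2)+t q_2(t^2)$; using $(b\boxplus c)^2=h_{1/2}(u)$ and the explicit action of $b\boxplus c$ on $(x,0)$ one finds $q(b\boxplus c)(x,0)=(q_1(u)x,\,L_c(q_2(u)x))$, whence
\[
H_V^{1/2}\bigl(X,(p^{r-1}q)(b\boxplus c)\,X\bigr)=c\bigl(x,(p_0^{r-1}q_2)(u)x\bigr)=b\bigl(x,p_0(u)^{r-1}(tq_2)(u)\,x\bigr)=e_{p_0}\bigl(\overline{tq_2 s}^{\L_0}\bigr).
\]
The parity argument from the proof of Lemma \ref{lemma:skewrepresentabledim1} then applies in the same form: $e_p$ vanishes on odd polynomials, and for even polynomials the Euclidean division by $p_0$ in the variable $t^2$ matches the Euclidean division by $p$ in $t$, yielding $e_p\bigl(\overline{q\cdot t s(t^2)}^\L\bigr)=e_{p_0}\bigl(\overline{tq_2 s}^{\L_0}\bigr)$. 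Combining these gives $H_V^{1/2}\bigl(X,q\cdot p^{r-1}X\bigr)=e_p\bigl(\overline{q}^\L\,\overline{ts(t^2)}^\L\bigr)$, so by the defining property of the Hermitian extension $(H_V^{1/2},b\boxplus c)_{p,r}(\overline{X},\overline{X})=\overline{ts(t^2)}^\L$, which is fixed by $\bullet$ since $s\in\K[t]$. The only real thing to watch is the semi-linearity of $V^{\star 1/2}$ when plugging polynomials of $b\boxplus c$ into $(x,0)$; beyond that bookkeeping, no new difficulty arises relative to the bilinear case.
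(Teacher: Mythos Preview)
Your proposal is correct and takes essentially the same approach as the paper, which explicitly states that the proof is a word-for-word adaptation of Lemma~\ref{lemma:skewrepresentabledim1} for $\varepsilon=1$ with no further details given. You have carried out that adaptation carefully, including the Hermitian analogue $(b\boxplus c)^2=h_{1/2}(u)$ of Remark~\ref{squareboxed} and the verification that $L_c\circ L_b^{-1}=u^{t/2}$ via $b$-selfadjointness of $u$; the parity argument for $e_p$ and the identification of the cokernel go through exactly as in the bilinear case.
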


The proof is a word for word adaptation of the one of Lemma \ref{lemma:skewrepresentabledim1} for $\varepsilon=1$ and we give no details to it.
And just in like in Section \ref{section:invariantsboxedsum} we deduce the following corollaries:

\begin{cor}\label{cor:oddrepresentabledirect}
Let $(b,c)$ be a pair of non-degenerate Hermitian forms on a vector space $V$.
Let $p \in \Irr_0(\F) \cap \K[t]$ and $r \geq 1$. Then $(H_V^{1/2},b \boxplus c)_{p,r}$
is odd-representable over the field $\L:=\F[t]/(p)$.
\end{cor}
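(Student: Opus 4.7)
The plan is to mimic, essentially word for word, the proof of Corollary \ref{cor:skewrepresentable}, using the preceding Hermitian analogue of Lemma \ref{lemma:skewrepresentabledim1} in place of that lemma. The key point is that odd-representability, like skew-representability, is preserved under orthogonal direct sums: if $(e_1,\dots,e_n)$ and $(f_1,\dots,f_m)$ are orthogonal bases witnessing odd-representability of two Hermitian forms, then their concatenation is such a basis for the orthogonal sum. So it suffices to establish the conclusion when $(b,c)$ is indecomposable as a Hermitian pair with non-degenerate first component.

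First I would decompose $(b,c) \simeq (b_1,c_1) \bot \cdots \bot (b_n,c_n)$ into an orthogonal sum of indecomposable Hermitian pairs (with each $b_i$ non-degenerate). By the compatibility of Hermitian boxed sums with orthogonal direct sums (stated just before the corollary), we obtain
$$\bigl(H_V^{1/2},b \boxplus c\bigr) \simeq \botoplus_{i=1}^n \bigl(H_{V_i}^{1/2}, b_i \boxplus c_i\bigr),$$
and consequently, since Hermitian invariants are compatible with orthogonal sums and invariant under isometry,
$$\bigl(H_V^{1/2},b \boxplus c\bigr)_{p,r} \simeq \botoplus_{i=1}^n \bigl(H_{V_i}^{1/2}, b_i \boxplus c_i\bigr)_{p,r}.$$

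Next, fix an indecomposable pair $(b_i,c_i)$. The classification of Hermitian pairs $(b,c)$ with $b$ non-degenerate translates via $u_i:=L_{b_i}^{-1} L_{c_i}$ into the classification of $1/2$-pairs $(b_i,u_i)$, and indecomposability forces $u_i$ to be cyclic with minimal polynomial $q^s$ for some $q \in \Irr(\F) \cap \K[t]$ and some $s \geq 1$ (or $q^s (q^\bullet)^s$ for some $q \not\in \K[t]$, but then $L_{b_i}^{-1}L_{c_i}$ has no Hermitian invariant attached to a selfadjoint polynomial, a case we handle below). There are two subcases: either $p = q(t^2)$, or $p$ does not divide $q(t^2)$. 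In the first subcase, the Hermitian analogue of Lemma \ref{lemma:skewrepresentabledim1} (stated just before the corollary) guarantees that $(H_{V_i}^{1/2}, b_i \boxplus c_i)_{p,r}$ is a one-dimensional Hermitian form whose quadratic value on a generator is the class of $t s_i(t^2)$ for some $s_i \in \K[t]$ not divisible by $q$; hence the orthogonality basis $(e_i)$ directly witnesses odd-representability. In the second subcase, Lemma \ref{lemma:invariantsboxedsum} shows that the invariant factors of $b_i \boxplus c_i$ do not have $p$ as a factor, so $(H_{V_i}^{1/2}, b_i \boxplus c_i)_{p,r}$ has rank zero and is vacuously odd-representable.

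Piecing the odd-representability witnesses together over $i$ yields an odd-representability basis of $(H_V^{1/2},b \boxplus c)_{p,r}$, which completes the proof. The only step that is not entirely formal is the verification that the Hermitian analogue of Lemma \ref{lemma:skewrepresentabledim1} produces a representation of the form $\overline{t s(t^2)}$ with $s \in \K[t]$ (rather than merely $s \in \F[t]$); but since $b_i$ and $c_i$ are Hermitian, the bilinear form $B: (x,y)\mapsto b_i(x,p_0(u_i)^{r-1} y)$ used in the construction has its diagonal values in $\K$, and the scalar $s_i$ extracted by the identity $B(x,p_0(u_i)^{r-1}x) = e_{p_0}(\overline{s_i}\cdot \text{(stuff)})$ lies in $\K[t]$, so no obstacle arises here. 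I expect no other significant obstacle; the argument is a direct transcription of the symmetric-bilinear case.
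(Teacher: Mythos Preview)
Your proposal is correct and follows essentially the same approach as the paper, which simply states that this corollary is deduced ``just like in Section~\ref{section:invariantsboxedsum}'' (i.e., by the same orthogonal-sum reduction to indecomposable pairs used for Corollary~\ref{cor:skewrepresentable}). Your final paragraph correctly identifies and resolves the one nontrivial point in the adaptation, namely that the diagonal value of the rank-one Hermitian invariant $(b_i,u_i)_{p_0,r}$ lies in the fixed field of the involution on $\F[t]/(p_0)$, hence is representable by a polynomial $s \in \K[t]$.
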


\begin{cor}\label{cor:oddrepresentable}
Let $p \in \Irr_0(\F) \cap \K[t]$ and $r>0$. Let $H$ be an odd-representable non-degenerate Hermitian form over the field $\L:=\F[t]/(p)$.
Then there exists a vector space $V$ (over $\F$) and a pair $(b,c)$ of non-degenerate Hermitian forms on $V$
such that:
\begin{enumerate}[(i)]
\item The invariant factors of $b \boxplus c$ all equal $p^r$;
\item $\bigl(H_V^{1/2},b \boxplus c\bigr)_{p,r}$ is equivalent to $H$.
\end{enumerate}
\end{cor}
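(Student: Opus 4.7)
The plan is to mimic the proof of Corollary \ref{prop:skewrepresentablereciproc}, reducing to the rank-one case where the Hermitian analogue of Lemma \ref{lemma:skewrepresentabledim1} (stated just above this corollary) already handles the essential computation.

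First, I would use odd-representability to diagonalize $H$: choose an $H$-orthogonal basis $(e_1,\dots,e_n)$ of the underlying $\L$-space of $H$ together with polynomials $s_1,\dots,s_n$ in $\K[t]$, none divisible by $p_0$ (where $p_0 \in \Irr(\F) \cap \K[t]$ is defined by $p(t)=p_0(t^2)$), such that $H(e_i,e_i)$ equals the class of $t s_i(t^2)$ modulo $p$ for every $i \in \lcro 1,n\rcro$.

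For each $i$, I would then use the classification theorem for $1/2$-pairs, together with the fact that the allowed Hermitian invariants are essentially unconstrained, to produce a $1/2$-pair $(b_i,u_i)$ on an $\F$-vector space $V_i$ in which $u_i$ is cyclic with minimal polynomial $p_0^r$ and the Hermitian invariant $(b_i,u_i)_{p_0,r}$ has rank $1$ and represents the class of $s_i(t)$ modulo $p_0$. Setting $c_i(x,y):=b_i(x,u_i(y))$ (a Hermitian form because $p_0 \in \K[t]$ forces the correct symmetry), I would next invoke the Hermitian transcription of Lemma \ref{lemma:invariantsboxedsum}: the block-matrix computation in the basis $\bigl((0,L_{b_i}(f_j))_j,(f_j,0)_j\bigr)$ carries over verbatim to the Hermitian setting, showing that $b_i \boxplus c_i$ is cyclic with sole invariant factor $p_0(t^2)^r = p^r$. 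The Hermitian analogue of Lemma \ref{lemma:skewrepresentabledim1} then tells me that $\bigl(H_{V_i}^{1/2},b_i \boxplus c_i\bigr)$ has a unique nontrivial Hermitian invariant, attached to $(p,r)$, of rank $1$, representing the class of $t s_i(t^2)$ modulo $p$, that is $H(e_i,e_i)$.

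Finally, set $V := V_1 \oplus \cdots \oplus V_n$ and $(b,c) := (b_1,c_1) \bot \cdots \bot (b_n,c_n)$. The compatibility of Hermitian boxed sums with orthogonal direct sums (noted just before the corollary) gives
$$\bigl(H_V^{1/2},b \boxplus c\bigr) \simeq \bigl(H_{V_1}^{1/2},b_1 \boxplus c_1\bigr) \bot \cdots \bot \bigl(H_{V_n}^{1/2},b_n \boxplus c_n\bigr),$$
so the invariant factors of $b \boxplus c$ are $n$ copies of $p^r$, giving (i). Since Hermitian invariants behave additively under orthogonal sums, the invariant $\bigl(H_V^{1/2},b \boxplus c\bigr)_{p,r}$ is equivalent to the orthogonal sum of the rank-one Hermitian forms induced by $H$ on the lines $\L e_i$, which by construction of the basis is precisely $H$, giving (ii).

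The only step requiring verification is the Hermitian transcription of Lemma \ref{lemma:invariantsboxedsum}, but this is a routine check since the matrix computation involves no symmetry property of the bilinear form. I do not anticipate any genuine obstacle: this corollary is the inverse of Corollary \ref{cor:oddrepresentabledirect}, and the argument is a clean assembly strictly parallel to the proof of Corollary \ref{prop:skewrepresentablereciproc}.
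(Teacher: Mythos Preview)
Your proposal is correct and follows essentially the same approach as the paper: the paper explicitly states that this corollary is deduced ``just like in Section~\ref{section:invariantsboxedsum}'', i.e.\ by the same assembly argument as Corollary~\ref{prop:skewrepresentablereciproc}, which is precisely what you outline. Your identification of the need to transcribe Lemma~\ref{lemma:invariantsboxedsum} to the Hermitian setting is apt (the paper leaves this implicit), and your observation that the matrix computation there uses no symmetry of the form is the right justification.
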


Next, we have the results on the invariants attached to the irreducible polynomials $p \in \Irr(\F) \cap \K[t]$ such that $p^\op \neq p$.

\begin{lemma}\label{lemma:hermitiansymmetric}
Assume that $\charac(\F) \neq 2$.
Let $p \in \Irr(\F) \cap \K[t]$ be such that $p^\op \neq p$, and let $r>0$.
Let $\beta$ be a non-zero element of $\L:=\F[t]/(p)$ such that $\beta^\bullet=\beta$.
Then there exists a vector space $V$ (over $\F$) equipped with a pair $(b,c)$ of non-degenerate Hermitian forms such that:
\begin{enumerate}[(i)]
\item $b \boxplus c$ is cyclic with minimal polynomial $p^r (p^\op)^r$;
\item The Hermitian invariant $\bigl(H_V^{1/2}, b \boxplus c\bigr)_{p,r}$ has rank $1$ and represents the value $\beta$.
\end{enumerate}
\end{lemma}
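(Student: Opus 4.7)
The plan is to adapt the method of Lemma~\ref{lemma:representabledim1} to the Hermitian setting. First I would set $p_0 \in \F[t]$ to be the unique polynomial such that $p(t)\,p^\op(t) = p_0(t^2)$; since $p,p^\op\in\K[t]$, also $p_0\in\K[t]$, and $p_0$ is irreducible because $p$ is irreducible with $p \neq p^\op$. I would then consider the residue fields $\M:=\F[t]/(p_0)$ and $\L:=\F[t]/(p)$, both carrying the involution induced coefficientwise from $\F$, with fixed subfields $\M_0$ and $\L_0$; a dimension count gives $\dim_\K\M_0 = \dim_\K\L_0 = \deg p$.

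Starting from an arbitrary $\alpha \in \M_0 \setminus\{0\}$, I would apply the classification of $1/2$-pairs to find a triple $(V,b,c)$ where $V$ is an $\F$-vector space and $(b,c)$ is a pair of non-degenerate Hermitian forms on $V$ such that $u := L_b^{-1} L_c$ is cyclic with minimal polynomial $p_0^r$ and $(b,u)_{p_0,r}$ represents $\alpha$. The matrix argument used to prove Lemma~\ref{lemma:invariantsboxedsum} will still apply because $L_b$ remains an $\F$-linear isomorphism (the matrix of $b \boxplus c$ in the basis $((0,L_b(e_1)),\dots,(e_n,0))$ is $\begin{bmatrix}0 & A\\ I_n & 0\end{bmatrix}$ where $A$ is the matrix of $u$), so $b\boxplus c$ will have sole invariant factor $p_0(t^2)^r = p^r\,(p^\op)^r$. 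Since $p \neq p^\op$, this gives exactly two Jordan cells of size $r$, one for each of $p$ and $p^\op$, and consequently $(H_V^{1/2},b \boxplus c)_{p,r}$ is defined on a one-dimensional $\L$-vector space.

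The heart of the proof will be a calculation expressing the value of $(H_V^{1/2},b \boxplus c)_{p,r}$ at a specific nonzero vector as an explicit function of $\alpha$. Equipping $V \times V^{\star 1/2}$ with the $\F[t]$-module structure from $v := b \boxplus c$ (which satisfies $v^2 = h_{1/2}(u)$), I would pick $x \in V \setminus \im p_0(u)$ realizing $\alpha$, set $X := (p^\op)^r \cdot (x,0)$, and verify $X \in \Ker p(v)^r \setminus \Ker p(v)^{r-1}$ via the splitting $(p^\op)^r(t) = s_1(t^2) + t\,s_2(t^2)$, exactly as in the bilinear proof. For arbitrary $q\in\F[t]$, writing $(p^\op)^{r+1}q = \sigma_1(t^2) + t\,\sigma_2(t^2)$, the key computation will transfer $p^\op(v)$ from the second argument to the first using the $H_V^{1/2}$-selfadjointness of $p^\op(v)$ (valid because $p^\op\in\K[t]$), yielding $e_p(\overline{q}^\L\,\gamma) = e_{p_0}(\overline{t\sigma_2}^\M\,\alpha)$, where $\gamma := (H_V^{1/2},b \boxplus c)_{p,r}(\overline X,\overline X)$. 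This identifies $\gamma$ as the image of $\alpha$ under an $\F$-linear map $\Lambda : \M \to \L$.

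The main obstacle will be proving that $\Lambda$ restricts to a bijection $\M_0 \to \L_0$. Bijectivity of $\Lambda$ as an $\F$-linear map will follow from the exact argument in Lemma~\ref{lemma:representabledim1}: the auxiliary map $q \mapsto \overline{t\sigma_2}^\M$ will be injective by two parity/divisibility arguments (if $p \mid q$ then $p_0(t^2) \mid (p^\op)^{r+1} q$ forces $p_0 \mid \sigma_2$; conversely if $p_0 \mid t\sigma_2$, then $p_0(t^2) \mid \sigma_2(t^2)$ forces $p^\op \mid \sigma_1(t^2)$, and the parity of $\sigma_1(t^2)$ forces $p \mid q$), hence bijective by dimension, and $\Lambda$ is obtained through the duality isomorphisms $\Phi_p$ and $\Psi_{p_0}$. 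To verify $\Lambda(\M_0) \subseteq \L_0$, I would apply the involution to the defining identity: replacing $q$ by $q^\bullet$ converts $\sigma_2$ to $\sigma_2^\bullet$ (since $p^\op \in \K[t]$), and combined with $\alpha^\bullet = \alpha$, this yields $e_p(\overline{q^\bullet}^\L\,\gamma) = e_p(\overline{q^\bullet}^\L\,\gamma^\bullet)$ for all $q$, forcing $\gamma^\bullet = \gamma$ by nondegeneracy. Since $\dim_\K \M_0 = \dim_\K \L_0$, the restriction $\Lambda : \M_0 \to \L_0$ will be a $\K$-linear isomorphism, and taking $\alpha := \Lambda^{-1}(\beta) \in \M_0 \setminus \{0\}$ will complete the proof.
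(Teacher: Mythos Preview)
Your proposal is correct and follows essentially the same route as the paper, which explicitly describes its proof as a word-for-word adaptation of Lemma~\ref{lemma:representabledim1} with the added verification that $\Lambda$ maps Hermitian elements of $\M$ onto Hermitian elements of $\L$. For that last step the paper offers two options: direct computation (which is what you carry out) or the observation that $\gamma$ is the value at a single vector of a Hermitian form and is therefore automatically fixed by the involution; the second option is shorter, but your direct computation is equally valid and is one of the two approaches the paper itself names.
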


The proof is essentially a word-for-word adaptation of the one of Lemma \ref{lemma:representabledim1}, with a
slight twist in the end: in the situation considered here, we need to check that the isomorphism
$\Lambda$ maps selfadjoint elements of $\M$ onto self-adjoint elements of $\L$.
This can be done by noting first that both sets are $\K$-vector spaces, both with finite dimension $\deg p$,
and that $\Lambda$ maps $\{x \in \M : \; x^\bullet=x\}$ into $\{x \in \L : \; x^\bullet=x\}$, which
can be obtained either by direct computation or by noting that it can be recovered from the first part of the proof
(this style of reasoning will be featured again in the proof of an ulterior lemma, so we will say no more about it until then).

Using the usual orthogonal sums technique, Lemma \ref{lemma:hermitiansymmetric} leads to:

\begin{cor}\label{cor:hermitianrepresentable}
Assume that $\charac(\F) \neq 2$.
Let $p \in \Irr(\F) \cap \K[t]$ be such that $p^\op \neq p$, and let $r>0$.
Let $H$ be a non-degenerate Hermitian form over the field $\L:=\F[t]/(p)$.
Then there exists a vector space $V$ (over $\F$) equipped with a pair $(b,c)$ of non-degenerate Hermitian forms such that:
\begin{enumerate}[(i)]
\item The invariant factors of $b \boxplus c$ all equal $(pp^\op)^r$;
\item The Hermitian invariant $\bigl(H_V^{1/2}, b \boxplus c\bigr)_{p,r}$ is equivalent to $H$.
\end{enumerate}
\end{cor}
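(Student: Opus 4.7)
The plan is to derive this corollary from Lemma \ref{lemma:hermitiansymmetric} by a standard orthogonal-sum argument, following the same template that produced Corollary \ref{cor:skewrepresentable} from Lemma \ref{lemma:skewrepresentabledim1} and Corollary \ref{cor:partcorquadratic} from Lemma \ref{lemma:representabledim1}.

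First I would diagonalize $H$. Since $\charac(\F) \neq 2$ and $H$ is a non-degenerate Hermitian form on a finite-dimensional $\L$-vector space, it admits an $H$-orthogonal basis $(e_1,\dots,e_n)$. For each $i$, set $\beta_i := H(e_i,e_i) \in \L$; because $H$ is Hermitian we automatically have $\beta_i^\bullet = \beta_i$, and non-degeneracy forces $\beta_i \neq 0$. Thus each $\beta_i$ satisfies the hypotheses of Lemma \ref{lemma:hermitiansymmetric}.

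Next, for every $i \in \lcro 1,n\rcro$, Lemma \ref{lemma:hermitiansymmetric} provides an $\F$-vector space $V_i$ equipped with a pair $(b_i,c_i)$ of non-degenerate Hermitian forms such that $b_i \boxplus c_i$ is cyclic with minimal polynomial $p^r(p^\op)^r$ and such that the rank-one Hermitian invariant $\bigl(H_{V_i}^{1/2},b_i \boxplus c_i\bigr)_{p,r}$ represents the scalar $\beta_i$. Since two non-degenerate one-dimensional Hermitian forms over $\L$ which represent the same selfadjoint scalar are equivalent, this rank-one invariant is isometric to the restriction $H_i$ of $H$ to the line $\L e_i$. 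I then form the orthogonal sum $V := V_1 \oplus \cdots \oplus V_n$ and $(b,c) := (b_1,c_1) \bot \cdots \bot (b_n,c_n)$. The compatibility of Hermitian boxed sums with orthogonal direct sums (the Hermitian analogue of Lemma \ref{cpdirectsumlemma}) yields
$$\bigl(H_V^{1/2},b \boxplus c\bigr) \simeq \bigl(H_{V_1}^{1/2},b_1 \boxplus c_1\bigr) \bot \cdots \bot \bigl(H_{V_n}^{1/2},b_n \boxplus c_n\bigr).$$
Each summand has its boxed-sum endomorphism cyclic with minimal polynomial $(pp^\op)^r$, so all the invariant factors of $b \boxplus c$ equal $(pp^\op)^r$, which is condition~(i). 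Since Hermitian invariants are additive over orthogonal sums, we conclude $\bigl(H_V^{1/2},b \boxplus c\bigr)_{p,r} \simeq H_1 \bot \cdots \bot H_n \simeq H$, which is condition~(ii).

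The substantive work has already been carried out in Lemma \ref{lemma:hermitiansymmetric}, so no real obstacle remains here; the mildly delicate point is merely verifying that the Hermitian-boxed-sum construction behaves as expected under orthogonal direct sums, which is a routine transposition to the semilinear setting of the argument given for Lemma \ref{cpdirectsumlemma} (the only new ingredient is that the canonical isomorphism $(V_1 \times V_2)^{\star 1/2} \simeq V_1^{\star 1/2} \times V_2^{\star 1/2}$ is an isomorphism of $\F$-vector spaces compatible with the splitting of $H_{V_1 \times V_2}^{1/2}$).
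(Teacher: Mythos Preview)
Your proposal is correct and follows exactly the approach the paper indicates: the paper states only ``Using the usual orthogonal sums technique, Lemma \ref{lemma:hermitiansymmetric} leads to:'' before Corollary \ref{cor:hermitianrepresentable}, and you have spelled out precisely that technique (diagonalize $H$, apply the rank-one lemma to each diagonal entry, take the orthogonal sum, and invoke compatibility of Hermitian boxed sums with direct sums).
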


The next result is proved exactly like its counterpart in the quadratic setting:

\begin{lemma}
Let $r$ be an even positive integer, and let $\alpha \in \K \setminus \{0\}$.
Then there exists, on some vector space $V$, a pair $(b,c)$ of Hermitian forms such that $b$ non-degenerate and:
\begin{enumerate}[(i)]
\item $b \boxplus c$ is nilpotent with a single Jordan cell, of size $r$;
\item The Hermitian invariant $\bigl(H_V^{1/2},b \boxplus c\bigr)_{t,r}$ represents the value $\alpha$.
\end{enumerate}
\end{lemma}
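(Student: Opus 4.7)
The plan is to imitate the proof of Lemma \ref{lemma:evensizenilpotent}, transplanted to the Hermitian setting. First I would invoke the classification of $1/2$-pairs recalled above to produce a pair $(b,c)$ of Hermitian forms on some $\F$-vector space $V$, with $b$ non-degenerate and $u := L_b^{-1}L_c$ cyclic with minimal polynomial $t^{r/2}$. Such a pair exists because a $1/2$-pair with $u$ cyclic of minimal polynomial $t^{r/2}$ is classified up to isometry by its single Hermitian invariant $(b,u)_{t,r/2}$, which is a non-degenerate Hermitian form on a one-dimensional $\F$-vector space and so can be prescribed arbitrarily.

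Second, I would establish the Hermitian analog of Lemma \ref{lemma:invariantsboxedsum} via the same block-matrix computation: choosing a basis $(e_1,\dots,e_n)$ of $V$ and considering the basis $((0,L_b(e_1)),\dots,(0,L_b(e_n)),(e_1,0),\dots,(e_n,0))$ of $V \times V^{\star 1/2}$, the endomorphism $b \boxplus c$ is represented by $\bigl[\begin{smallmatrix}0 & A\\ I & 0\end{smallmatrix}\bigr]$ where $A$ is the matrix of $u$. The invariant-factor computation cited in the proof of Lemma \ref{lemma:invariantsboxedsum} then promotes the single invariant factor $t^{r/2}$ of $u$ to the single invariant factor $t^r$ of $b \boxplus c$; hence $b \boxplus c$ is cyclic with minimal polynomial $t^r$, i.e., nilpotent with one Jordan cell of size $r$, which gives~(i).

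Third, for~(ii), I would follow the scaling strategy of Lemma \ref{lemma:evensizenilpotent}. For every $\lambda \in \K \setminus \{0\}$ the form $\lambda c$ is again Hermitian (because $\lambda^\bullet = \lambda$) and $L_b^{-1}L_{\lambda c} = \lambda u$ still has minimal polynomial $t^{r/2}$, so the pair $(b,\lambda c)$ still satisfies~(i). A direct computation on a cyclic vector $(x_0,\varphi_0)$ of $b \boxplus c$, iterating the relation $(b \boxplus (\lambda c))^2 = h_{1/2}(\lambda u)$, shows that the Hermitian invariant $(H_V^{1/2},b \boxplus (\lambda c))_{t,r}$ differs from $(H_V^{1/2},b \boxplus c)_{t,r}$ by multiplication by some positive power of $\lambda$. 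Letting $\beta \in \K^*$ be a value represented by the original invariant, this produces all values $\lambda^k \beta$ with $\lambda \in \K^*$. Combining with the fact that a Hermitian form representing $\gamma$ also represents $\mu^\bullet \mu\,\gamma$ for every $\mu \in \F^*$, and using the hypothesis that $r$ is even to align the exponents just as in the quadratic case, one arranges for the represented value to equal any chosen $\alpha \in \K^*$. Choosing $\lambda$ accordingly, the pair $(b,\lambda c)$ satisfies~(ii).

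The main obstacle is the bookkeeping of the scaling exponent in the third step: one must track carefully how the iterated application of $b \boxplus (\lambda c)$ accumulates powers of $\lambda$ along a cyclic chain, and then verify that the combined $\K^*$-scaling (from $\lambda$) and $N(\F^*)$-scaling (from vector rescaling) exhausts $\K^*$. In the Hermitian context the role played by squares in the quadratic case is taken over by norms $\mu^\bullet \mu$, so the hypothesis ``$r$ even'' enters in a slightly different guise but is still exactly what makes the combined scaling reach every element of $\K^*$.
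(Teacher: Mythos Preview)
Your approach coincides with the paper's stated strategy (which is simply to transplant the proof of Lemma~\ref{lemma:evensizenilpotent}), and steps one and two are fine. The scaling argument in step three, however, has a genuine gap that you rightly flag as ``the main obstacle'' but do not resolve. A direct computation pins down the exponent: if $y$ is a cyclic vector for $u$ and $X=(0,L_b(y))$, then $(b\boxplus(\lambda c))^{\,r-1}(X)=\bigl(\lambda^{r/2-1}u^{r/2-1}(y),0\bigr)$, so the invariant $\bigl(H_V^{1/2},b\boxplus(\lambda c)\bigr)_{t,r}$ equals $\lambda^{r/2-1}$ times $\bigl(H_V^{1/2},b\boxplus c\bigr)_{t,r}$, not $\lambda^{r-1}$ times. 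When $r\equiv 2\pmod 4$ the exponent $r/2-1$ is even, and then the combined action of the $\lambda$-scaling (over $\K^*$) and the norm scaling $\mu^\bullet\mu$ only sweeps out the single coset $N(\F^*)\cdot\beta$, which need not be all of $\K^*$; the ``alignment of exponents just as in the quadratic case'' does not go through. (The same slip in the exponent is present in the paper's own proof of Lemma~\ref{lemma:evensizenilpotent}.)

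The repair is already implicit in your first paragraph. You observed that the Hermitian invariant $(b,u)_{t,r/2}$ can be prescribed freely when you build the initial pair $(b,c)$. The computation above shows that the value represented by $\bigl(H_V^{1/2},b\boxplus c\bigr)_{t,r}$ at the class of $X$ is exactly $b\bigl(y,u^{r/2-1}(y)\bigr)=(b,u)_{t,r/2}(\bar y,\bar y)$. So simply choose $(b,c)$ at the outset so that $(b,u)_{t,r/2}$ represents $\alpha$; then $\beta=\alpha$ and the rescaling step is unnecessary.
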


Once more, by using orthogonal sums of pairs of Hermitian forms and an orthogonal decomposition
of a Hermitian form, this leads to:

\begin{cor}
Let $r$ be an even positive integer, and let $H$ be a non-degenerate Hermitian form over $\F$.
Then there exists, on some vector space $V$, a pair $(b,c)$ of Hermitian forms, with $b$ non-degenerate, such that:
\begin{enumerate}[(i)]
\item $b \boxplus c$ is nilpotent and all its Jordan cells have size $r$;
\item The Hermitian invariant $\bigl(H_V^{1/2},b \boxplus c\bigr)_{t,r}$ is equivalent to $H$.
\end{enumerate}
\end{cor}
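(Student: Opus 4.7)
The plan is to reduce the statement to the preceding lemma by an orthogonal decomposition of $H$, then reassemble the pieces using the compatibility of Hermitian boxed sums with orthogonal direct sums.

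First, I would diagonalize $H$. Since $H$ is a non-degenerate Hermitian form on a finite-dimensional $\F$-vector space, there exists an $H$-orthogonal basis $(f_1,\dots,f_n)$, and setting $\alpha_i:=H(f_i,f_i)$ for each $i \in \lcro 1,n\rcro$, we have $\alpha_i \in \K \setminus \{0\}$ because $\alpha_i^\bullet=\alpha_i$ by the Hermitian property and $\alpha_i \neq 0$ by non-degeneracy. Hence $H \simeq \langle \alpha_1\rangle \bot \cdots \bot \langle \alpha_n\rangle$ as a Hermitian form over $\F$, where $\langle \alpha\rangle$ denotes the rank-$1$ Hermitian form represented by the scalar $\alpha$.

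Next, for each $i \in \lcro 1,n\rcro$, I would apply the preceding lemma to the scalar $\alpha_i \in \K \setminus \{0\}$ and to the given even integer $r$, producing a vector space $V_i$ and a pair $(b_i,c_i)$ of Hermitian forms on $V_i$, with $b_i$ non-degenerate, such that $b_i \boxplus c_i$ is nilpotent with a single Jordan cell of size $r$, and the Hermitian invariant $\bigl(H_{V_i}^{1/2},b_i \boxplus c_i\bigr)_{t,r}$ represents $\alpha_i$. Since this invariant is defined on a $1$-dimensional space over the residue field $\F[t]/(t)=\F$ and represents $\alpha_i$, it is in fact equivalent to $\langle \alpha_i\rangle$.

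Now set $V:=V_1 \oplus \cdots \oplus V_n$, $b:=b_1 \bot \cdots \bot b_n$ and $c:=c_1 \bot \cdots \bot c_n$; then $b$ is a non-degenerate Hermitian form on $V$ and $c$ is a Hermitian form on $V$. Using the compatibility of Hermitian boxed sums with orthogonal direct sums (stated in the excerpt), we obtain an isometry
$$\bigl(H_V^{1/2},b \boxplus c\bigr) \simeq \bigl(H_{V_1}^{1/2},b_1 \boxplus c_1\bigr) \bot \cdots \bot \bigl(H_{V_n}^{1/2},b_n \boxplus c_n\bigr).$$
The endomorphism $b \boxplus c$ is thus the direct sum of the $b_i \boxplus c_i$'s, each of which is a single nilpotent Jordan cell of size $r$; hence $b \boxplus c$ is nilpotent and all its Jordan cells have size $r$. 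This establishes (i). For (ii), each summand has its $(t,r)$ invariant equivalent to $\langle \alpha_i\rangle$ and no other non-trivial invariant (its minimal polynomial being $t^r$ with a single Jordan cell). The compatibility of Hermitian invariants with orthogonal sums then yields
$$\bigl(H_V^{1/2},b \boxplus c\bigr)_{t,r} \simeq \langle \alpha_1\rangle \bot \cdots \bot \langle \alpha_n\rangle \simeq H,$$
which is the desired conclusion.

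The argument is essentially bookkeeping; there is no serious obstacle beyond checking that the diagonal entries of $H$ can indeed be chosen in $\K \setminus \{0\}$ (which uses only the basic fact that Hermitian forms are diagonalizable and that diagonal values of a Hermitian form lie in the fixed field). The real content sits in the preceding lemma, whose proof is where the interaction between the parity of $r$ and the scaling of $c$ by an element of $\F \setminus \{0\}$ happens; here we merely assemble the pieces.
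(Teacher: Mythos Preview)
Your proof is correct and follows exactly the approach indicated in the paper, which simply states that the corollary follows ``by using orthogonal sums of pairs of Hermitian forms and an orthogonal decomposition of a Hermitian form.'' You have fleshed out precisely this sketch: diagonalize $H$ over $\F$ (noting the diagonal values lie in $\K\setminus\{0\}$), apply the preceding lemma to each diagonal entry, and assemble via the compatibility of Hermitian boxed sums with orthogonal direct sums.
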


It remains to consider a special situation in the characteristic $2$ case:

\begin{lemma}
Assume that $\charac(\F) = 2$.
Let $p \in \Irr(\F) \cap \K[t]$ be non-even and distinct from $t$.
Let $r \geq 1$.
Let $\alpha$ be a nonzero element of $\L:=\F[t]/(p)$ such that $\alpha^\bullet=\alpha$.
Then there exists a vector space $V$ and a pair $(b,c)$ of non-degenerate Hermitian forms on $V$
such that:
\begin{itemize}
\item $b \boxplus c$ is cyclic with minimal polynomial $p^{2r}$;
\item The Hermitian invariant $\bigl(H_V^{1/2},b \boxplus c\bigr)_{p,2r}$ has rank $1$ and represents $\alpha$.
\end{itemize}
\end{lemma}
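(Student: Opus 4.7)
The plan follows the template of Lemma \ref{lemma:representabledim1} and its Hermitian analogue Lemma \ref{lemma:hermitiansymmetric}, with the modifications required by characteristic~$2$. Decompose $p(t) = p_e(t^2) + t\,p_o(t^2)$ with $p_e, p_o \in \K[s]$, and set $q(s) := p_e(s)^2 + s\,p_o(s)^2 \in \K[s]$, so that $q(t^2) = p(t)^2$ in $\F[t]$. The assumptions that $p$ is non-even and distinct from $t$ force $p_o \neq 0$ and $p_e \neq 0$, respectively. I claim that $q$ is irreducible in $\F[s]$: any nontrivial factorization $q = gh$ in $\F[s]$ would yield $g(t^2)\,h(t^2) = p(t)^2$, and by unique factorization together with the irreducibility of $p$, one of $g(t^2), h(t^2)$ would equal $p(t)$, forcing $p$ to be even. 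Hence $q \in \Irr(\F) \cap \K[t]$.

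By the classification of $1/2$-pairs, for any nonzero selfadjoint $\beta \in \M := \F[s]/(q)$, I can pick a pair $(b, c)$ of non-degenerate Hermitian forms on some vector space $V$ such that $u := L_b^{-1} L_c$ is cyclic with minimal polynomial $q^r$ and $(b, u)_{q, r}(\overline{x}, \overline{x}) = \beta$ for some generator $x$ of $V$ over $\F[s]$. Then Lemma \ref{lemma:invariantsboxedsum} yields that $v := b \boxplus c$ has invariant factor $q(t^2)^r = p(t)^{2r}$, so $(H_V^{1/2}, v)_{p, 2r}$ is the only non-trivial Hermitian invariant, of rank~$1$ over $\L := \F[t]/(p)$. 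Setting $X := (x, 0) \in V \times V^{\star 1/2}$ and using $v^2 = h_{1/2}(u)$, one computes $p(v)^{2r-2} X = (q(u)^{r-1}(x), 0)$ and then $p(v)^{2r-1} X = (p_e(u)\,q(u)^{r-1}(x),\, L_c(p_o(u)\,q(u)^{r-1}(x)))$, which is nonzero because neither $p_e$ nor $p_o$ is divisible by $q$ (by degree). Hence $X$ represents a generator of the $\L$-line underlying $(H_V^{1/2}, v)_{p, 2r}$.

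A direct computation, analogous to the one in the proof of Lemma \ref{lemma:representabledim1}, then yields, for every $\lambda(t) = \lambda_e(t^2) + t\,\lambda_o(t^2)$ in $\F[t]$,
\begin{equation*}
H_V^{1/2}\bigl(X,\, p(v)^{2r-1}(\lambda X)\bigr) \;=\; e_q\bigl(\overline{s(p_e\lambda_o + p_o\lambda_e)(s)}^{\M}\,\beta\bigr).
\end{equation*}
Setting $\gamma := (H_V^{1/2}, v)_{p, 2r}(\overline{X}, \overline{X})$, this reads $e_p(\overline{\lambda}^{\L}\,\gamma) = e_q(\overline{\Lambda^{\op}(\lambda)}^{\M}\,\beta)$, where $\Lambda^{\op} : \L \to \M$ is the $\F$-linear map $\overline{\lambda}^{\L} \mapsto \overline{s(p_e\lambda_o + p_o\lambda_e)(s)}^{\M}$. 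As in the conclusion of Lemma \ref{lemma:representabledim1}, the correspondence $\beta \mapsto \gamma$ is then given by the $\F$-linear map $\Lambda_* : \M \to \L$ obtained by transposing $\Lambda^{\op}$ through the perfect pairings associated with $e_p$ and $e_q$.

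The main obstacle is showing that $\Lambda^{\op}$ is bijective; by matching $\F$-dimensions, injectivity will suffice. Suppose $\Lambda^{\op}(\lambda) = 0$, i.e., $q \mid p_e\lambda_o + p_o\lambda_e$ (the factor $s$ is coprime to $q$ since $p \neq t$ gives $q(0) = p_e(0)^2 = p(0)^2 \neq 0$). The element $p_o$ is invertible in $\M$ by degree, and the identity $q = p_e^2 + s\,p_o^2$ rearranges to give $\sigma := p_e/p_o \in \M$ with $\sigma^2 = s$; the condition becomes $\lambda_e \equiv \sigma\lambda_o \pmod q$. Now the natural ring homomorphism $\M \to \L$ defined by $s \mapsto \overline{t}^2$ (well-defined since $q(\overline{t}^2) = \overline{p(t)^2} = 0$) sends $\sigma$ to $\overline{t}$, because $p(\overline{t}) = 0$ rewrites in $\L$, using $\charac(\F) = 2$, as $p_e(\overline{t}^2) = \overline{t}\,p_o(\overline{t}^2)$. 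Pushing $\lambda_e \equiv \sigma\lambda_o$ through this homomorphism yields $\lambda_e(t^2) \equiv t\,\lambda_o(t^2) \pmod p$, i.e., $\lambda \equiv 0 \pmod p$, as required. Finally, since $p, q \in \K[t]$, the map $\Lambda^{\op}$ is defined over $\K$ and commutes with the coefficient-wise involution~$\bullet$; a short duality check then shows that $\Lambda_*$ restricts to a $\K$-linear isomorphism between $\M^\bullet$ and $\L^\bullet$, the $\K$-subspaces of selfadjoint elements. Hence, given any nonzero $\alpha \in \L^\bullet$, setting $\beta := \Lambda_*^{-1}(\alpha) \in \M^\bullet \setminus \{0\}$ and constructing $(b, c)$ as above produces the desired pair.
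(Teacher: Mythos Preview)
Your proof is correct and follows essentially the same strategy as the paper: write $p^2=p_0(t^2)$ (your $q$ is the paper's $p_0$), build the pair $(b,c)$ so that $u=L_b^{-1}L_c$ is cyclic with minimal polynomial $p_0^r$ and prescribed Hermitian invariant $\beta$, compute $(H_V^{1/2},b\boxplus c)_{p,2r}(\overline{X},\overline{X})$ via the odd part of $p\lambda$, and show that the resulting $\F$-linear map $\beta\mapsto\gamma$ is a bijection carrying selfadjoint elements to selfadjoint elements. The only notable difference is in the injectivity step: the paper argues by direct divisibility manipulations (if $p_0\mid tq_2$ then $p\mid q$, and conversely), whereas you exploit the square root $\sigma=p_e/p_o$ of $s$ in $\M$ and the field homomorphism $\M\to\L$, $s\mapsto\overline{t}^{\,2}$, to transport the congruence $\lambda_e\equiv\sigma\lambda_o$ into $p\mid\lambda$; both arguments are short and valid.
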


\begin{proof}
We write $p^2=p_0(t^2)$ for some $p_0\in \Irr(\F) \cap \K[t]$. We also set $\mathbb{M}:=\F[t]/(p_0)$.

We can start from a non-zero element $\beta \in \M$ such that $\beta^\bullet=\beta$,
and find a pair $(b,c)$ of non-degenerate Hermitian forms on a vector space $V$ such that $u:=L_b^{-1} L_c$
is cyclic with minimal polynomial $p_0$ and the Hermitian invariant $(b,u)_{p_0,r}$ represents the value $\beta$.
Then, we know from Lemma \ref{lemma:invariantsboxedsum} that $b \boxplus c$
is cyclic with minimal polynomial $(p_0)^r(t^2)=p^{2r}$, which yields the first point and the first part of the second one.

Again, we see $V$ as a $\F[t]$-module for the structure associated with the endomorphism $u$, and
$W:=V \times V^{\star 1/2}$ as a $\F[t]$-module for the structure associated with the endomorphism $v:=b \boxplus c$.

We can choose an element $x \in V$ such that
$b\bigl(x,p_0^{r-1}\,(q\,x)\bigr)=e_{p_0}(\overline{q}^{\M} \beta)$ for all $q \in \F[t]$.
We set $X:=(x,0)$ and we denote by $\overline{X}$ its class in $V/\im p_0(v^2)$ (the space on which
the invariant $(H_V^{1/2},v)_{p,2r}$ is defined).
So let $q \in \F[t]$. We split $pq=q_1(t^2)+t q_2(t^2)$ with $q_1,q_2$ in $\F[t]$.
Then,
\begin{align*}
H_V^{1/2}\bigl(X,p^{2r-1}\,(q\,X)\bigr) & =H_V^{1/2}\bigl((x,0),(p_0^{r-1}(t^2)\,q_1(t^2)+t p_0^{r-1}(t^2)\,q_2(t^2)).(x,0)\bigr) \\
& = L_c((p_0^{r-1} q_2)\,x)[x] \\
& = c\bigl(x,(p_0^{r-1} q_2)\,x\bigr) \\
& = b\bigl(x,(p_0^{r-1} tq_2)\,x\bigr) \\
& = e_{p_0}\bigl(\overline{tq_2}^{\mathbb{M}} \beta\bigr)
\end{align*}

Now, we consider the $\F$-linear mapping that takes the polynomial
$q \in \F[t]$ to $\overline{tq_2}^{\mathbb{M}} \in \F[t]/(p_0)$ (where $q_2$ is defined above with respect to $q$).

If $q$ is a multiple of $p$, then we check that $tq_2$ is a multiple of $p_0$: indeed we can then write
$q=p s$ for some $s \in \F[t]$ which we split $s=s_1(t^2)+t s_2(t^2)$; then
$pq=p^2 s=s_1(t^2)p_0(t^2)+t s_2(t^2) p_0(t^2)$ and hence $q_2(t^2)= s_2(t^2) p_0(t^2)$, which leads to $q_2=s_2 p_0$.

Conversely, assume that $tq_2$ is a multiple of $p_0$. Then $q_2$ is a multiple of $p_0$ and hence $q_2(t^2)$ is a multiple of $p^2$,
and finally $q_1(t^2)$ is a multiple of $p$. Then $q_1$ must be a multiple of $p_0$
(otherwise it is coprime with $p_0$, and then applying the B\'ezout identity and specializing at $t^2$ leads to $q_1(t^2)$ being coprime
with $p_0(t^2)=p^2$ and hence with $p$); therefore $q_1(t^2)$ is actually a multiple of $p^2$. We conclude that $pq$ is a multiple of $p^2$ and hence $p$ divides $q$.

Hence, the above $\F$-linear mapping yields an injective homomorphism of $\F$-vector spaces
$$\varphi : \F[t]/(p) \hookrightarrow \F[t]/(p_0),$$
which turns out to be an isomorphism because $\F[t]/(p)$ and $\F[t]/(p_0)$ have the same dimension as vector spaces over $\F$.
Finally, we consider the isomorphism of $\F$-vector spaces
$$\Phi_p : \begin{cases}
\L & \longrightarrow \Hom_\F(\L,\F) \\
a & \longmapsto [x \mapsto e_p(ax)]
\end{cases} \quad \text{and} \quad
\Psi_{p_0} : \begin{cases}
\mathbb{M} & \longrightarrow \Hom_\F(\mathbb{M},\F) \\
b & \longmapsto [x \mapsto e_{p_0}(bx)].
\end{cases}$$
The composite map $\Lambda:=\Phi_p^{-1} \circ \varphi^t \circ \Psi_{p_0}$ is an isomorphism of $\F$-vector spaces. In the above computation, it precisely maps
$\beta$ to $(H_V^{1/2},b \boxplus c)_{p,2r}(\overline{X},\overline{X})$.
In order to conclude, it suffices to prove that every Hermitian element of $\mathbb{L}$
is the image under $\Lambda$ of some Hermitian element of $\mathbb{M}$.
Yet, by varying $\beta$ in the above construction, we see that $\Lambda$ maps any Hermitian element to a Hermitian element. Hence $\Lambda$ induces an injective $\K$-linear map from $\{q \in \M : q^\bullet=q\}$ to
 $\{s \in \mathbb{L} : s^\bullet=s\}$, both of which are $\K$-vector spaces of dimension $\deg p$;
 hence $\Lambda$ induces a surjection from the former to the latter. Finally we can take $\beta:=\Lambda^{-1}(\alpha)$, which completes the proof.
\end{proof}

Once more, this yields the following corollary:

\begin{cor}
Assume that $\charac(\F)=2$.
Let $p \in \Irr(\F) \cap \K[t]$ be non-even and distinct from $t$.
Let $r \geq 1$, and let $H$ be a non-degenerate Hermitian form over $\F[t]/(p)$.
Then there exists a vector space $V$ together with a pair $(b,c)$ of non-degenerate Hermitian forms on $V$
such that:
\begin{itemize}
\item All the invariant factors of $b \boxplus c$ equal $p^{2r}$;
\item The Hermitian invariant $\bigl(H_V^{1/2},b \boxplus c\bigr)_{p,2r}$ is equivalent to $H$.
\end{itemize}
\end{cor}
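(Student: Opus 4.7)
The plan is to diagonalize $H$ over $\L := \F[t]/(p)$ and then assemble the desired pair as an orthogonal sum of the rank-one pairs supplied by the preceding lemma, using the compatibility of Hermitian boxed sums (and of Hermitian invariants) with orthogonal direct sums that was established earlier.

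First I would verify that $H$ admits an orthogonal decomposition over $\L$. Since $p \in \K[t]$, the involution of $\F$ extends coefficient-wise to an involution $\bullet$ of $\L$, whose fixed subring $\L' := \{z \in \L : z^\bullet = z\}$ is a subfield of index $2$ in $\L$; the involution is non-trivial on $\L$ because it is already non-trivial on the constant subfield $\F \subset \L$. In characteristic $2$ the trace $\mathrm{Tr} : z \mapsto z + z^\bullet$ is $\L'$-linear and non-zero (any $\alpha \in \F$ with $\alpha^\bullet \neq \alpha$ witnesses this), hence surjective onto $\L'$. Using this, the usual argument shows that any non-zero non-degenerate Hermitian form on a space over $\L$ represents a non-zero element of $\L'$: if $h(v,v) = 0$ for every $v$, then polarization gives $h(v + \lambda w, v + \lambda w) = \mathrm{Tr}(\lambda h(v,w))$ for all $\lambda \in \L$, and choosing $v,w$ with $h(v,w) \neq 0$ together with surjectivity of $\mathrm{Tr}$ provides a $\lambda$ for which this value is non-zero. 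Iterating yields a decomposition $H \simeq \langle \alpha_1, \dots, \alpha_n \rangle$ with each $\alpha_i \in \L' \setminus \{0\}$, where $n := \rk(H)$.

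Next, for every $i \in \lcro 1, n \rcro$, I apply the preceding lemma with the datum $\alpha := \alpha_i$ to obtain a vector space $V_i$ and a pair $(b_i, c_i)$ of non-degenerate Hermitian forms on $V_i$ such that $b_i \boxplus c_i$ is cyclic with minimal polynomial $p^{2r}$, and such that $\bigl(H_{V_i}^{1/2}, b_i \boxplus c_i\bigr)_{p, 2r}$ has rank $1$ and represents $\alpha_i$ (all its other Hermitian invariants vanishing).

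Finally I set $V := V_1 \oplus \cdots \oplus V_n$, $b := b_1 \bot \cdots \bot b_n$, and $c := c_1 \bot \cdots \bot c_n$. The compatibility of Hermitian boxed sums with orthogonal direct sums, recorded earlier in the Hermitian section, gives an isometry
$$\bigl(H_V^{1/2}, b \boxplus c\bigr) \;\simeq\; \underset{1 \leq i \leq n}{\bot}\, \bigl(H_{V_i}^{1/2}, b_i \boxplus c_i\bigr).$$
Since each summand has $p^{2r}$ as its sole invariant factor, all invariant factors of $b \boxplus c$ equal $p^{2r}$, which gives the first bullet. For the second, Hermitian invariants are compatible with orthogonal direct sums, so
$$\bigl(H_V^{1/2}, b \boxplus c\bigr)_{p, 2r} \;\simeq\; \langle \alpha_1, \dots, \alpha_n \rangle \;\simeq\; H,$$
while all other Hermitian invariants of $\bigl(H_V^{1/2}, b \boxplus c\bigr)$ vanish. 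The only point requiring real care is the diagonalization step above, which is why I spell it out explicitly; the remainder is purely a bookkeeping assembly from the preceding lemma and the already-proved structural properties of boxed sums.
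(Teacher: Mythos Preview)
Your proof is correct and follows exactly the approach the paper intends: the paper states this corollary immediately after the rank-one lemma with the phrase ``Once more, this yields the following corollary,'' implicitly invoking the same orthogonal-sum assembly used for the earlier analogous corollaries (diagonalize $H$, apply the preceding lemma to each diagonal value, take orthogonal sums of the resulting Hermitian pairs). Your explicit verification that Hermitian forms over $\L$ admit orthogonal bases in characteristic $2$ is a welcome addition that the paper takes for granted, but the overall argument is the same.
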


\subsection{Completing the case of automorphisms}

\subsubsection{An additional result}\label{section:hermitiansmallresult}

We are almost ready to complete the study of $1/2$-pairs $(b,u)$ in which $u$ is an automorphism.
We only need two additional considerations before we can conclude.

First of all, given a pair $(b,c)$ of non-degenerate Hermitian forms on a vector space $V$,
the $1/2$-pair $\bigl(H_V^{1/2}, b \boxplus c\bigr)$ is isometric to its opposite $\bigl(-H_V^{1/2}, -b \boxplus c\bigr)$:
one simply checks that $\id_V \oplus (-\id_{V^{\star 1/2}})$ is such an isometry.

Besides, just like in the case of $(1,1)$-pairs, we obtain the following result on Hermitian invariants:

\begin{prop}\label{prop:isomopposite3}
Let $(b,u)$ be a $1/2$-pair.
Let $p \in \Irr(\F) \cap \K[t]$ be non-even and different from $t$, and let $r \geq 1$.
Denote by $d$ the degree of $p$.
Then $(-b,-u)_{p,r}=(-1)^{1+d(r-1)} \bigl((b,u)_{p^\op,r}\bigr)^{\op}$.
\end{prop}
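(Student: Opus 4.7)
The plan is to adapt the proof of Proposition \ref{prop:isomopposite2} to the Hermitian setting, since the structural arguments are identical once one verifies compatibility with the involutions.

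First, I would set up the two residue fields $\L := \F[t]/(p)$ and $\L' := \F[t]/(p^\op)$. Since $p \in \K[t]$, the assumption $p^\bullet = p$ holds, and likewise $(p^\op)^\bullet = p^\op$, so both fields inherit an involution from the involution of $\F$ (acting coefficient-wise on polynomials). The $\F$-algebra isomorphism $\varphi : \L \to \L'$ that sends the class of $t$ mod $p$ to the class of $-t$ mod $p^\op$ commutes with those involutions, because the involution and the substitution $t \mapsto -t$ act on disjoint pieces of data (coefficients vs.\ indeterminate). I would also recall the identity $e_p(z) = e_{p^\op}(\varphi(z))$ used in the earlier proof.

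Next I would observe that $p(-u) = (-1)^d p^\op(u)$, so $\Ker p(-u)^k = \Ker p^\op(u)^k$ for all $k \geq 1$; consequently the cokernel on which $(-b,-u)_{p,r}$ is defined coincides (as an $\F$-vector space) with the one on which $(b,u)_{p^\op,r}$ is defined. Call this common space $W$. On $W$ I would introduce the two Hermitian forms
\[
B : (\overline{x},\overline{y}) \mapsto b\bigl(x,p^\op(u)^{r-1}(y)\bigr), \qquad
B' : (\overline{x},\overline{y}) \mapsto -b\bigl(x,p(-u)^{r-1}(y)\bigr),
\]
and note the scalar identity $B' = (-1)^{1+d(r-1)}\,B$. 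Both are Hermitian because $p \in \K[t]$ guarantees that the polynomials $p^\op(u)^{r-1}$ and $p(-u)^{r-1}$ are polynomials in $u$ with coefficients in $\K$, under which $b$-selfadjointness is preserved.

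The main computation mirrors the symmetric case. Let $(x,y) \in W^2$ and $\lambda(t) \in \F[t]$. Then
\[
B(x,\lambda(u)[y])
=(-1)^{1+d(r-1)}\,B'\bigl(x,\lambda(-t)(-u)[y]\bigr)
=(-1)^{1+d(r-1)}\,e_p\Bigl(\overline{\lambda(-t)}^{\L}\,(-b,-u)_{p,r}(x,y)\Bigr),
\]
where the second equality is the defining property of $(-b,-u)_{p,r}$ with the $\F[t]$-module structure on $W$ induced by $-u$. Applying $\varphi$ and the identity $e_p(z) = e_{p^\op}(\varphi(z))$, I rewrite the right-hand side as $e_{p^\op}\bigl(\overline{\lambda}^{\L'}\,(-1)^{1+d(r-1)}\,\varphi((-b,-u)_{p,r}(x,y))\bigr)$, and identifying this with the defining relation of $(b,u)_{p^\op,r}$ yields
\[
(b,u)_{p^\op,r}(x,y) = (-1)^{1+d(r-1)}\,\varphi\bigl((-b,-u)_{p,r}(x,y)\bigr) = (-1)^{1+d(r-1)}\,\bigl((-b,-u)_{p,r}\bigr)^{\op}(x,y).
\]
Inverting the sign $(-1)^{1+d(r-1)}$ (squaring to $1$) and taking the opposite on both sides gives the claim. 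The only mild obstacle is a bookkeeping one: making sure the $\F$-semilinear variable lands on the correct side when passing from $B$ to $B'$ via $\varphi$, and that $\varphi$ is compatible with the involutions; both follow from $p \in \K[t]$.
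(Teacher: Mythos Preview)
Your proposal is correct and follows exactly the approach the paper intends: the paper itself gives no detailed proof, stating only that it is ``a word for word adaptation of the one of Proposition \ref{prop:isomopposite2}.'' Your write-up is in fact more thorough than the paper, since you explicitly record the Hermitian-specific verifications (that $p\in\K[t]$ forces $p^\bullet=p$ and $(p^\op)^\bullet=p^\op$, that $\varphi$ intertwines the involutions, and that $B,B'$ are genuinely Hermitian because the polynomial coefficients lie in $\K$) which the paper leaves implicit.
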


The proof is, again, a word for word adaptation of the one of Proposition \ref{prop:isomopposite2}.

Now, we are ready to prove Theorems \ref{theo:hermitianautomorphismcharnot2} and \ref{theo:hermitianautomorphismchar2}.

\subsubsection{Fields of characteristic different from $2$}

Here, we assume that $\charac(\F) \neq 2$.
Let $(b,u)$ be a $1/2$-pair in which $u$ is an automorphism.
Assume that this pair has the square-zero splitting property.
First of all, by Botha's theorem we directly recover condition (iii) in Theorem \ref{theo:hermitianautomorphismcharnot2}.

Next, $(b,u) \simeq (H_V^{1/2},B \boxplus C)$ for some vector space $V$ and some pair $(B,C)$ of non-degenerate Hermitian forms on $V$.
Hence, by a remark in the preceding paragraph we obtain that  $(b,u)$ is isometric to $(-b,-u)$,
and by Proposition \ref{prop:isomopposite3} we derive condition (ii) in Theorem \ref{theo:hermitianautomorphismcharnot2}.

It remains to check condition (i). Noting that this condition is preserved in taking orthogonal direct sums of $1/2$-pairs,
we can reduce the situation to the one where the pair $(B,C)$ is \emph{indecomposable}.
Let us now assume that such is the case, and let us set $u:=L_B^{-1} L_C$.
We know that only three main situations can occur:

\vskip 3mm
\noindent \textbf{Case 1:} $u$ is cyclic with minimal polynomial $(pp^\bullet)^r$ for some $p \in \Irr(\F) \setminus \K[t]$ and some $r \geq 1$. Then, $B \boxplus C$ is cyclic with minimal polynomial $(pp^\bullet)^r(t^2)=(p(t^2))^r(p^\bullet(t^2))^r$.
We note that $p(t^2) \not\in \K[t]$. Since $p$ is irreducible, the only possible even $q \in \Irr(\F)$ that divides $p(t^2)$
is $p(t^2)$ (the polynomial $p(t^2)$ might not be irreducible, by the way). Hence, condition (i) is trivially satisfied
(all the Hermitian invariants of $(b,u)$ are trivial).

\vskip 3mm
\noindent \textbf{Case 2:} $u$ is cyclic with minimal polynomial $p^r$ for some $p \in \Irr(\F) \cap \K[t]$
such that $p(t^2)$ is irreducible, and some integer $r \geq 1$.

Then, by Lemma \ref{lemma:invariantsboxedsum}, $B \boxplus C$ is cyclic with minimal polynomial $p(t^2)^r$, and the Hermitian invariant
$\bigl(H_V^{1/2},B \boxplus C\bigr)_{p(t^2),r}$ is odd-representable. Hence $(b,u)_{p(t^2),r}$ is the only non-trivial Hermitian invariant
of $(b,u)$, and it is odd-representable.

\vskip 3mm
\noindent
\textbf{Case 3:} $u$ is cyclic with minimal polynomial $p_0^r$ for some $p_0 \in \Irr(\F) \cap \K[t]$
such that $p_0(t^2)$ is not irreducible.
Then $p_0(t^2)=pp^\op$ for some $p \in \Irr(\F)$ such that $p \neq p^\op$, and like in Case 1 we obtain
that condition (i) is trivially satisfied.

\vskip 3mm
Now, we turn to the converse statement. We take a $1/2$-pair $(b,u)$
that satisfies conditions (i) to (iii) in Theorem \ref{theo:hermitianautomorphismcharnot2}, and we prove that $(b,u)$ has the square-zero
splitting property.

We make the Klein group $(\Z/2)^2$ act on $\Irr(\F)$ as follows:
$(a,b).p:=\varphi^a(\psi^b(p))$ where $\varphi : p \mapsto p^\bullet$ and $\psi : p \mapsto p^\op$
(which obviously commute). The orbits of this group action yield a partition $\calP$
of $\Irr(\F)$. Using the classification of $1/2$-pairs, we are able to split
$(b,u) \simeq \underset{\mathcal{O} \in \calP, r \geq 1}{\bot} B_{\mathcal{O},r}$
so that, for all $\mathcal{O} \in \calP$ and all $r \geq 1$:
\begin{itemize}
\item The Jordan numbers of $B_{\mathcal{O},r}$ attached to the polynomials $q \in \Irr(\F) \setminus \mathcal{O}$ are all zero, whereas the ones attached to the polynomials $p \in \mathcal{O}$ and to the integer $r$ are equal to the corresponding ones for $(b,u)$.
    \item The Jordan numbers of $B_{\mathcal{O},r}$ attached to integers that are distinct from $r$ are zero.
\item For all $p \in \mathcal{O}$ such that $p=p^\bullet$, the Hermitian invariant of $B_{\mathcal{O},r}$ attached to $(p,r)$
is equivalent to $(b,u)_{p,r}$.
\end{itemize}
Considering the form of conditions (i) to (iii), it is then clear that each $B_{\mathcal{O},r}$
satisfies them; and it suffices to prove that each $B_{\mathcal{O},r}$ has the square-zero splitting property.

So, in the rest of the proof we only need to consider the case where, for some orbit $\mathcal{O}$
and some $r \geq 1$, the only non-zero Jordan numbers of $(b,u)$ are attached to $(p,r)$ for polynomials
$p$ in $\mathcal{O}$ (still assuming that conditions (i) to (iii) hold).
We split the discussion into several cases.

\vskip 2mm
\noindent \textbf{Case 1.}
$\mathcal{O}=\{p\}$ for some $p \in \Irr(\F)$. Then $p \in \K[t]$ and $p$ is even.
We know from condition (i) that $(b,u)_{p,r}$ is odd-representable. Applying Corollary \ref{cor:oddrepresentable} to the Hermitian form
$(b,u)_{p,r}$, we deduce from the classification of
$1/2$-pairs that $(b,u)$ is isometric to a Hermitian boxed sum, and hence it has the square-zero splitting property.

\noindent \textbf{Case 2.}
$\mathcal{O}=\{p,p^\op\}$ for some $p \in \Irr(\F)$ such that $p^\bullet=p$ and $p \neq p^\op$.
Applying Corollary \ref{cor:hermitianrepresentable} to the Hermitian form
$(b,u)_{p,r}$, we find a Hermitian boxed sum $(H_W^{1/2},B\boxplus C)$
in which $B \boxplus C$ is cyclic with minimal polynomial $p^r (p^\op)^r$ and
$(b,u)_{p,r} \simeq (H_W^{1/2},B\boxplus C)_{p,r}$. Then, by condition (ii) applied to both
$(b,u)$ and $(H_W^{1/2},B\boxplus C)$, we deduce that 
$(b,u)_{p^\op,r} \simeq (H_W^{1/2},B\boxplus C)_{p^\op,r}$. By the classification of $1/2$-pairs, we deduce
that $(b,u) \simeq (H_W^{1/2},B\boxplus C)$. Hence $(b,u)$ is isometric to a Hermitian boxed sum, and
we conclude that it has the square-zero splitting property.

\noindent \textbf{Case 3.}
$\mathcal{O}=\{p,p^\bullet\}$ for some $p \in \Irr(\F)$ such that $p \neq p^\bullet$.

\textbf{Subcase 3.1: $p=p^\op$, i.e.\ $p$ is even.} We choose an endomorphism $v$
whose invariant factors are all equal to $p^r$, and with $n_{p,r}(u)$ such factors.
Then the pair $H_{1/2}(v)$ has exactly two non-zero Jordan numbers, both equal to $n_{p,r}(u)$,
attached to the pairs $(p,r)$ and $(p^\bullet,r)$, and hence it is isometric to $(b,u)$.
By Botha's theorem, $v$ is the sum of two square-zero endomorphisms, and hence
$H_{1/2}(v)$ has the square-zero splitting property. Hence, so does $(b,u)$.

\textbf{Subcase 3.2: $p^\op=p^\bullet$.}
Then we can write $pp^\op=p_0(t^2)$ for some $p_0 \in \K[t]$ which is irreducible over $\F$.
We can find a vector space $W'$ equipped with a pair $(B,C)$ of non-degenerate Hermitian forms
such that all the invariant factors of $L_B^{-1} L_C$ equal $p_0^r$, with $n_{p,r}(u)$ such factors.
Then, by computing the Jordan numbers of $\bigl(H_{W'}^{1/2}, B \boxplus C\bigr)$ we find that this pair
is isometric to $(b,u)$ and we conclude as in the above cases.

\noindent \textbf{Case 4.}
$\mathcal{O}=\{p,p^\bullet,p^\op,(p^\op)^\bullet\}$ for some $p \in \Irr(\F)$ such that $p,p^\bullet,p^\op,(p^\op)^\bullet$
are pairwise distinct.
Then, as in Subcase 3.1, we choose a vector space $V'$ equipped with an endomorphism $v$
whose invariant factors are all equal to $(pp^\op)^r$, and with $n_{p,r}(u)$ such factors.
Then, by computing the Jordan numbers we gather that
$H_{1/2}(v) \simeq (b,u)$. Finally, Botha's theorem shows that
$v$ is the sum of two square-zero endomorphisms, and hence $(b,u)$ has the
square-zero splitting property.

\vskip 3mm
Thus, the proof of Theorem \ref{theo:hermitianautomorphismcharnot2} is now complete.

\subsubsection{Fields of characteristic $2$}

Here, we prove Theorem \ref{theo:hermitianautomorphismchar2}.
The strategy is very similar to the one of the proof of Theorem \ref{theo:hermitianautomorphismcharnot2},
so we shall be brief.

Let $(b,u)$ be a $1/2$-pair with the square-zero splitting property, in which $u$ is an automorphism.
Condition (i) in Theorem \ref{theo:hermitianautomorphismchar2} is immediately satisfied thanks to
Botha's theorem.
By Proposition \ref{prop:caracboxedsumhermitian} and the reduction of Hermitian pairs, we only need to prove condition (ii) when
$(b,u)$ is isometric to $(H_V^{1/2},B \boxplus C)$ for some indecomposable pair $(B,C)$ of non-degenerate Hermitian
forms on a vector space $V$ over $\F$.

Then, there are three cases to consider, depending on $v:=L_B^{-1}L_C$.

\vskip 3mm
\noindent \textbf{Case 1: $v$ is cyclic with minimal polynomial $(pp^\bullet)^r$ for some $p \in \Irr(\F) \setminus \K[t]$
and some $r \geq 1$.} \\
Then $B \boxplus C$ is cyclic with minimal polynomial $p(t^2)^r (p^\bullet(t^2))^r$.
Either the polynomial $p(t^2)$ is irreducible or it is the square of an irreducible polynomial, but in any
case none of its irreducible factors belongs to $\K[t]$, and ditto for $p^\bullet(t^2)$.
Hence, all the Hermitian invariants of $(b,u)$ vanish, and condition (ii) is trivially satisfied.

\vskip 3mm
\noindent \textbf{Case 2: $v$ is cyclic with minimal polynomial $p^r$ for some $p \in \K[t] \cap \Irr(\F)$ and some $r \geq 1$,
and $p(t^2)$ is irreducible over $\F$.} \\
Then $B \boxplus C$ is cyclic with minimal polynomial $p(t^2)^r$,
and Corollary \ref{cor:oddrepresentabledirect} shows that the sole non-zero Hermitian invariant of $(H_V^{1/2},B \boxplus C)$,
that is $(H_V^{1/2},B \boxplus C)_{p(t^2),r}$, is odd-representable.

\vskip 3mm
\noindent \textbf{Case 3: $v$ is cyclic with minimal polynomial $p^r$ for some $p \in \K[t] \cap \Irr(\F)$ and some $r \geq 1$,
and $p(t^2)$ is not irreducible over $\F$.} \\
Then $p(t^2)=q^2$ for some $q \in \K[t] \cap \Irr(\F)$.
Then $B \boxplus C$ is cyclic with minimal polynomial $q^{2r}$, and $q$ is not even. Hence condition (ii) is trivially
satisfied by $\bigl(H_V^{1/2},B \boxplus C\bigr)$.

\vskip 3mm
Conversely, let $(b,u)$ be a $1/2$-pair, where $u$ is an automorphism, that satisfies conditions (i) and (ii)
in Theorem \ref{theo:hermitianautomorphismchar2}. We seek to prove that $(b,u)$
is isometric to an orthogonal sum of pairs that have the square-zero splitting property.
To do so, we can reduce the situation to the following cases:

\vskip 3mm
\noindent \textbf{Case 1: The invariant factors of $u$ all equal $(p(t^2)p^\bullet(t^2))^r$ for some $p \in \F[t] \setminus \K[t]$ such that $p(t^2)$ is irreducible over $\F$, and some $r \geq 1$.} \\
Then we choose a vector space automorphism $v$ whose invariant factors all equal $p(t^2)^r$
and with as many invariant factors as $u$. Then, one sees that
$(b,u)$ and $H_{1/2}(v)$ have exactly the same Jordan numbers and that all their Hermitian invariants are zero. It follows that they are isometric.

\vskip 3mm
\noindent \textbf{Case 2: The invariant factors of $u$ all equal $(p p^\bullet)^{2r}$ for some $p \in \Irr(\F) \setminus \Irr_0(\F)$ such that $p \neq p^\bullet$, and some $r \geq 1$.} \\
    Then we choose a vector space automorphism $v$ whose invariant factors all equal $p^{2r}$ and with as many invariant factors as $u$. Then, one sees that $(b,u)$ and $H_{1/2}(v)$ have exactly the same Jordan numbers and that all their Hermitian invariants are zero. It follows that they are isometric.

\vskip 3mm
\noindent \textbf{Case 3: The invariant factors of $u$ all equal $p^r$ for some $p \in \Irr_0(\F) \cap \K[t]$ and some $r \geq 1$.} \\
Then we write $p=p_0(t^2)$ for some $p_0 \in \Irr(\F) \cap \K[t]$.
In that case, condition (ii) yields that $(b,u)_{p,r}$ is odd-representable.
By Corollary \ref{cor:oddrepresentable}, there exists a space $W$ equipped with a pair $(B,C)$ of bilinear forms such that $B \boxplus C$ has all its invariant factors equal to $p^r$, and
the Hermitian invariant $\bigl(H_W^{1/2},B \boxplus C\bigr)_{p,r}$ is equivalent to $(b,u)_{p,r}$.
It follows from the classification of $1/2$-pairs that
$\bigl(H_W^{1/2},B \boxplus C\bigr)$ is isometric to $(b,u)$.

In any case, $(b,u)$ is isometric to a pair with the square-zero splitting property, and hence it has this property.

This completes the proof of Theorem \ref{theo:hermitiangeneralchar2}.

\subsection{The nilpotent case}

In this final section, we prove Theorem \ref{theo:hermitiannilpotent}.
Thankfully, the proof is very close to the one given in Section \ref{section:nilpotent}, and we will
only focus on the main differences.

We start with the ways of constructing pairs $(b,u)$ with the square-zero splitting property and
in which $u$ is nilpotent. We have already constructed, for $r \geq 1$ even, such pairs in which all the Jordan cells of $u$ are of size $r$, and the Hermitian invariant $(b,u)_{t,r}$ is equivalent to a given non-degenerate Hermitian form.

Just like in Section \ref{section:nilpotent}, the next step is to tackle twisted pairs. Here, the proof is slightly different from the one given in the quadratic case because of the characteristic $2$ case
(if $\charac(\F) \neq 2$ the proof of Lemma \ref{lemma:twisted} applies effortlessly to our case by taking $\eta:=1$).

\begin{lemma}
Let $k \in \N^*$. Let $V$ be a $4k$-dimensional vector space
together with a hyperbolic Hermitian form $h : V^2 \rightarrow \R$.
Then there exists an $h$-selfadjoint endomorphism $u$ of $V$ that is nilpotent with exactly two Jordan cells, one of size $2k-1$ and one of size $2k+1$.
\end{lemma}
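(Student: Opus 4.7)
The plan is to adapt the explicit construction from the proof of Lemma \ref{lemma:twisted}, tweaked by a carefully chosen scalar from $\F$ so that the argument runs uniformly in all characteristics. First, choose $\omega \in \F$ with $\omega + \omega^\bullet \neq 0$: when $\chi(\F) \neq 2$ take $\omega = 1$, and when $\chi(\F) = 2$ use the non-triviality of the involution to pick $\omega \in \F \setminus \K$, so that $\omega^\bullet \neq \omega$ and hence $\omega + \omega^\bullet \neq 0$. Fix an $h$-hyperbolic basis $\bfB = (e_1, \dots, e_{2k}, f_1, \dots, f_{2k})$ of $V$, meaning $h(e_i, f_j) = \delta_{ij} = h(f_j, e_i)$ and $h$ vanishes on all other pairs.

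Define $a_1 \in \End(V)$ by $a_1(e_{2i}) = e_{2i-1}$ and $a_1(f_{2i-1}) = f_{2i}$ for $i \in \lcro 1, k \rcro$, and $a_2 \in \End(V)$ by $a_2(e_{2i+1}) = e_{2i}$ and $a_2(f_{2i}) = f_{2i+1}$ for $i \in \lcro 1, k-1 \rcro$, together with $a_2(f_1) = \omega^\bullet e_{2k}$ and $a_2(f_{2k}) = \omega e_1$; both $a_1, a_2$ annihilate the remaining basis vectors. A direct check on basis vectors gives $a_j^2 = 0$ for $j \in \{1, 2\}$. To verify $h$-selfadjointness, write the matrix $M_j$ of $a_j$ in $\bfB$ in block form with top-left block $A_j$, top-right block $C_j$, bottom-left $0$, and bottom-right $A_j^T$, where $A_j$ has entries in $\K$; the condition $M_j^* S = S M_j$ (where $S$ is the matrix of $h$ in $\bfB$) then reduces to $C_j^* = C_j$, which is trivial for $C_1 = 0$ and which for $C_2$ reduces to $(\omega^\bullet)^\bullet = \omega$, true by definition of the involution.

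Setting $u := a_1 + a_2$, an induction gives $u^i(f_1) = f_{i+1} + \omega^\bullet e_{2k-i+1}$ for $i \in \lcro 1, 2k-1 \rcro$, followed by $u^{2k}(f_1) = (\omega + \omega^\bullet) e_1 \neq 0$ and $u^{2k+1}(f_1) = 0$, so $f_1$ generates a Jordan cell of size $2k+1$. Setting $v := f_2 - \omega e_{2k}$, a similar computation gives $u^i(v) = f_{i+2} - \omega e_{2k-i}$ for $i \in \lcro 0, 2k-2 \rcro$ and $u^{2k-1}(v) = \omega e_1 - \omega e_1 = 0$, yielding a Jordan cell of size $2k-1$. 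The two cyclic subspaces $W_1 := \F[u](f_1)$ and $W_2 := \F[u](v)$ satisfy $V = W_1 \oplus W_2$: for each $j \in \lcro 2, 2k \rcro$ the vectors $f_j + \omega^\bullet e_{2k+2-j}$ (in $W_1$) and $f_j - \omega e_{2k+2-j}$ (in $W_2$) are linearly independent by $\omega + \omega^\bullet \neq 0$, and together with $f_1$ and $(\omega + \omega^\bullet) e_1$ in $W_1$ they span all of $V$; a dimension count then forces the direct sum. The main obstacle is precisely the characteristic $2$ case, which was excluded in Lemma \ref{lemma:twisted}: the original proof hinged on scalars $\pm 1$ and the splitting $f_j \pm e_{2k+2-j}$, which collapses when $2 = 0$; the point of the replacement by $\omega$ and $-\omega$ is that it simultaneously preserves $h$-selfadjointness (through $(\omega^\bullet)^\bullet = \omega$) and restores the non-vanishing of $\omega + \omega^\bullet$ that is required to separate the two Jordan cells.
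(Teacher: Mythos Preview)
Your proof is correct and follows essentially the same approach as the paper's own proof. The only difference is a relabeling: your scalar $\omega$ plays the role of the paper's $\alpha^\bullet$ (the paper sets $a_2(f_1)=\alpha e_{2k}$ and $a_2(f_{2k})=\alpha^\bullet e_1$, and takes the second cyclic generator to be $\alpha^\bullet e_{2k}-f_2$, which is just $-v$ under the identification $\omega=\alpha^\bullet$).
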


\begin{proof}
We can choose $\alpha \in \F$ such that $\alpha^\bullet \neq -\alpha$
(if $\charac(\F) \neq 2$ it suffices to take $\alpha=1$, otherwise the existence follows from the fact that $x \mapsto x^\bullet$ is not the identity of $\F$).

We choose an $h$-hyperbolic basis $\bfB=(e_1,\dots,e_{2k},f_1,\dots,f_{2k})$ of $V$.
The matrix of $h$ in that basis is
$S=\begin{bmatrix}
0 & I_{2k} \\
I_{2k} & 0
\end{bmatrix}$.
Now, we define $a_1 \in \End(V)$ by $a_1(e_{2i})=e_{2i-1}$ and
$a_1(f_{2i-1})=f_{2i}$ for all $i \in \lcro 1,k\rcro$, and $a_1$ maps all the other vectors of $\bfB$ to $0$.
We define $a_2 \in \End(V)$ by $a_2(e_{2i+1})=e_{2i}$ and
$a_2(f_{2i})=f_{2i+1}$ for all $i \in \lcro 1,k-1\rcro$,
$a_2(f_1)=\alpha e_{2k}$ and $a_2(f_{2k})=\alpha^\bullet e_1$, and $a_2$ maps all the other vectors of $\bfB$ to $0$.
It is easily checked on the vectors of $\bfB$ that $a_1^2=0$ and $a_2^2=0$.

The respective matrices $M_1$ and $M_2$ of $a_1$ and $a_2$ in $\bfB$ are of the form
$$M_1=\begin{bmatrix}
A_1 & 0 \\
0 & A_1^\star
\end{bmatrix} \quad \text{and} \quad
M_2=\begin{bmatrix}
A_2 & C \\
0 & A_2^\star
\end{bmatrix}$$
where $C^\star=C$, and $A_1$ and $A_2$ have their entries in $\K$.
One checks that both $SM_1$ and $SM_2$ are Hermitian, to the effect that both $a_1$ and $a_2$ are $h$-selfadjoint.

To conclude, it suffices to check that $u:=a_1+a_2$ is nilpotent and that it has exactly two Jordan cells, one of size $2k+1$
and one of size $2k-1$.
To see this, note first that $u(e_i)=e_{i-1}$ for all $i \in \lcro 2,2k\rcro$, whereas $u(e_1)=0$,
$u(f_1)=f_2+\alpha e_{2k}$, $u(f_i)=f_{i+1}$ for all $i \in \lcro 2,2k-1\rcro$, and
$u(f_{2k})=\alpha^\bullet e_1$.

From $u(f_1)=f_2+\alpha e_{2k}$, we gather that
$u^i(f_1)=f_{i+1}+\alpha e_{2k-i+1}$ for all $i \in \lcro 1,2k-1\rcro$,
in particular $u^{2k-1}(f_1)=f_{2k}+\alpha e_2$. Hence, $u^{2k}(f_1)=(\alpha+\alpha^\bullet) e_1 \neq 0$, and finally $u^{2k+1}(f_1)=0$.
Hence, the subspace $W_1:=\Vect(e_1,f_1) \oplus \Vect(f_{i+1}+\alpha\,e_{2k-i+1})_{1 \leq i \leq 2k-1}$
is stable under $u$ and the resulting endomorphism is a Jordan cell of size $2k+1$.

Besides, one checks that $u^i(\alpha^{\bullet} e_{2k}-f_2)=\alpha^{\bullet} e_{2k-i}-f_{i+2}$ for all $i \in \lcro 1,2k-2\rcro$,
and in particular $u^{2k-2}(\alpha^{\bullet} e_{2k}-f_2)=\alpha^{\bullet} e_2-f_{2k}$ and then $u^{2k-1}(\alpha^\bullet e_{2k}-f_2)=0$.
Obviously, $W_2:=\Vect(\alpha^\bullet e_{2k-i+1}-f_{i+1})_{1 \leq i \leq 2k-1}$ is a $(2k-1)$-dimensional subspace, it is stable
under $u$ and the resulting endomorphism is a Jordan cell of size $2k-1$.
Finally, using the fact that $\alpha^\bullet \neq -\alpha$, one checks that $V=W_1 \oplus W_2$, and the claimed result follows.
\end{proof}

The same arguments as in Section \ref{section:nilpotent} (with $\eta=1$) then show that,
in the previous lemma, the Hermitian invariants $(b,u)_{t,2k+1}$ and $(b,u)_{t,2k-1}$
satisfy $(b,u)_{t,2k+1} \simeq -(b,u)_{t,2k-1}$.

From there, the proof that, in Theorem \ref{theo:hermitiannilpotent}, condition (ii) implies condition (i)
is a word for word adaptation of the one given in Section \ref{section:nilpotent} for Theorem \ref{theo:symformnilpotent}, where one takes $\eta:=1$.

\vskip 3mm
It remains to prove the converse implication. Once again, the techniques with induced forms are adapted effortlessly
to the Hermitian case by taking $\eta:=1$, and one is entirely reduced to the following lemma:

\begin{lemma}\label{lemma:keylemmanilpotenthermitian}
Let $(b,u)$ be a $1/2$-pair in which $u^3=0$.
Assume that $u$ is the sum of two square-zero $b$-selfadjoint endomorphisms.
Set $B_1:=(b,u)_{t,1}$ and $B_3:=(b,u)_{t,3}$.
Then $B_1$ Witt-simplifies $B_3$.
\end{lemma}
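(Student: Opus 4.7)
The plan is to imitate, step by step, the proof of Lemma \ref{lemma:keylemmanilpotent} with $\eta=1$, replacing symmetric bilinear forms with Hermitian forms throughout, and being careful at the points where the original argument uses the assumption $\charac(\F)\neq 2$. The inequality to prove is $\nu(B_3)+\nu(B_1\bot B_3)\geq \rk B_3$, which is trivially true when $B_3$ is hyperbolic. So I may fix square-zero $b$-selfadjoint endomorphisms $a_1,a_2$ with $u=a_1+a_2$, set $a:=a_1$ (so $au+ua=u^2$ and $a$ commutes with $u^2$), and reduce to the case where $(b,u)$ is indecomposable as a $1/2$-pair with the square-zero splitting property and $B_3$ is non-hyperbolic.

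The next preparation is a short classification of indecomposable $1/2$-pairs $(b',u')$ with $(u')^2=0$ and $u'\neq 0$: each such pair is $2$-dimensional and admits a hyperbolic basis $(x,y)$ with $u'(y)=\lambda x$, $u'(x)=0$ for some $\lambda\in\F\setminus\{0\}$. From this one extracts, for any $1/2$-pair $(b',u')$ with $(u')^2=0$ and $u'\neq 0$, a $b'$-hyperbolic pair $(x,y)$ together with $\lambda\neq 0$ so that $u'(y)=\lambda x$. I would then run the original Steps 3 and 4 verbatim: first prove that $a$ sends $V$ into $\Ker u^2$ by contradicting indecomposability through the stable subspace $W_x=\Vect(x,a(x),u(x),u(a(x)),u^2(x),u^2(a(x)))$, whose Gram matrix for $b$ is block-antidiagonal with blocks determined by $B_3$ restricted to the hyperbolic pair, hence invertible; then prove that $a$ sends $\Ker u^2$ into $\Ker u+\im u$ by looking at the endomorphism $\overline{a}$ of $(\Ker u^2)/(\Ker u+\im u)$ equipped with the induced form $B_2=(b,u)_{t,2}$ and building a regular stable subspace forcing $u^2=0$, which contradicts $B_3\neq 0$. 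The only subtle point here is that the indecomposability-$2$-dimensional classification of $(b',u')$ pairs with $(u')^2=0$ is easier in the Hermitian case precisely because we only have to handle the selfadjoint variant (there is no skew variant to chase), so the $(\varepsilon',\eta')$-splitting of the original proof collapses.

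Once the structural conclusions $a(V)\subset\Ker u^2$ and $a(\Ker u^2)\subset\Ker u+\im u$ are established, I would introduce the same data as in the original Step 5: complementary subspaces $V_1\subset V$ lifting a complement of $\Ker\overline a$ in $V/(\Ker u+\im u)$, $V_2\subset V$ lifting a complement of $(\Ker u^2)/(\Ker u+\im u)$ inside $\Ker\overline a$, and $V_3\subset\Ker u^2$ lifting a $B_2$-complement of $\overline{a(V_1)}^{\bot_{B_2}}$ inside $(\Ker u^2)/(\Ker u+\im u)$, with $\dim V_3=\dim V_1$. Then the linear map $f\colon V_3\oplus V_2\to Z:=(\Ker u+\im u)/(\Ker u\cap \im u)$, $x\mapsto\overline{a(x)}$, has totally isotropic image for the Hermitian form $\overline b$ induced by $b$ on $Z$ (which is equivalent to $B_1\bot B_3$, exactly as in Section \ref{section:twisteddescent} with $\eta=1$), because $\im a\subset(\im a)^{\bot_b}$. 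Hence $\nu(B_1\bot B_3)\geq \rk f$.

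The final step is to show $\rk f\geq \rk B_3-\nu(B_3)$, which I would do by the same two sub-steps: first $\Ker f\cap V_3=\{0\}$, using that if $x\in V_3\cap\Ker f$ then $a(x)\in \Ker u\cap\im u$, hence $a(u(x))=u^2(x)-u(a(x))=0$, and since $\im a\perp_b\Ker a$ one gets $b(z,u(x))=0$ for every $z\in a(V_1)$, forcing $x=0$ by the definition of $V_3$; and second $\dim\pi(\Ker f)\leq\nu(B_3)$ where $\pi:V_3\oplus V_2\to V_2$ is the projection along $V_3$, using that for $x\in\Ker f$ one has $b(a(x),u(x))=b(u(a(x)),x)=0$, whence $b(x,u^2(x))=b(x,au(x))+b(x,ua(x))=b(a(x),u(x))^\bullet+b(a(x),u(x))\cdot\text{(selfadjointness)}$ vanishes, so $\pi(\Ker f)$ projects to a totally $B_3$-isotropic subspace of $V/\Ker u^2$. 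The main obstacle I anticipate is precisely this computation of $b(x,u^2(x))=0$ in characteristic $2$: in the quadratic case one uses $b(u(a(x)),x)=\eta\,b(a(x),u(x))$ cleanly, and here the Hermitian symmetry contributes a $\bullet$-conjugate, so the argument must be rewritten as $b(x,u^2(x))=b(x,au(x))+b(x,ua(x))=b(a(x),u(x))+b(u(x),a(x))^\bullet\cdot(\ldots)$, and one must check that both summands individually vanish (which they do, since $a(x)\in\Ker u$ yields $b(u(x),a(x))=b(x,u(a(x)))=0$, using selfadjointness of $u$). Once this verification goes through, the rank inequality $\rk f\geq \dim V_2+\dim V_3-\nu(B_3)=\dim(V/\Ker u^2)-\nu(B_3)=\rk B_3-\nu(B_3)$ follows by the same arithmetic as before, and the lemma is proved.
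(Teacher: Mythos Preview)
Your overall strategy is correct and closely tracks the paper's proof, but there is one genuine gap in Step~4 that your commentary actually gets backwards. You claim that in the Hermitian case ``there is no skew variant to chase'' when passing to the induced endomorphism $\overline{a}$ on $(\Ker u^2)/(\Ker u+\im u)$ equipped with $B_2=(b,u)_{t,2}$. In fact the opposite occurs: for $x,y\in\Ker u^2$ one has
\[
B_2(x,\overline{a}(y))=b(x,u(a(y)))=b(x,-a(u(y))+u^2(y))=-b(a(x),u(y))=-B_2(\overline{a}(x),y),
\]
so $\overline{a}$ is $B_2$-\emph{skew}-selfadjoint, not selfadjoint. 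The classification of $1/2$-pairs $(b',u')$ with $(u')^2=0$ that you invoke in Step~2 therefore does not apply directly to $(B_2,\overline{a})$.

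The fix, which is what the paper does, is short but must be made explicit: choose a nonzero $\alpha\in\F$ with $\alpha^\bullet=-\alpha$ (such $\alpha$ exists because the involution is non-identity), and observe that $\alpha\,\overline{a}$ is then $B_2$-selfadjoint with square zero. Now the Step~2 classification applies to $(B_2,\alpha\,\overline{a})$, yielding a $B_2$-hyperbolic pair $(\bar x,\bar y)$ with $\alpha\,\overline{a}(\bar x)=\beta\bar y$ for some $\beta\in\K\setminus\{0\}$, hence $\overline{a}(\bar x)=\lambda\bar y$ for some nonzero $\lambda\in\F$. From there one lifts $\bar x$ to $x'\in\Ker u^2$, checks that $U_{x'}=\Vect(x',a(x'),u(x'),u(a(x')))$ is $b$-regular and stable under $a$ and $u$, and reaches the contradiction as before. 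Note in particular that this yields a $4$-dimensional stable subspace, in contrast to the $8$-dimensional one needed in the quadratic $(1,-1)$ case; so the argument does simplify, but only after the scalar trick is applied.

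Your Steps~5 and~6 are fine; the computation $b(x,u^2(x))=b(a(x),u(x))+b(u(x),a(x))=b(u(a(x)),x)+b(x,u(a(x)))=0$ (each summand vanishing individually because $a(x)\in\Ker u$) works in any characteristic, exactly as you sketch.
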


\begin{proof}
Denote by $V$ the underlying vector space of $(b,u)$.

The proof is globally similar to the one of Lemma \ref{lemma:keylemmanilpotent}, with only subtle differences.
Step 1 is adapted effortlessly, and we are only left with the case where $B_3$ is non-hyperbolic and the pair
$(b,u)$ is indecomposable as a $1/2$-pair with the square-zero splitting property.

In the rest of the proof, one takes a $b$-selfadjoint square-zero endomorphism $a$ such that $u-a$ has square zero.

Step 2 is slightly different: by taking advantage of the classification of $1/2$-pairs, one shows that if such a $1/2$-pair $(b',u')$, with underlying vector space $V'$, satisfies $(u')^2=0$ and $u' \neq 0$, then there exists a pair $(x,y)$ of vectors of $V'$ such that $b(x,x)=b(y,y)=0$, $b(x,y)=b(y,x)=1$, and $u'(x)=\alpha y$ for some $\alpha \in \K \setminus \{0\}$
(and consequently $u'(y)=0$).

Now, for Step 3 one easily adapts the case where $\eta=1$. There, one proves that the assumption that $a$ does not map $V$ into $\Ker u^2$
would lead to a contradiction with the assumption that $B_3$ is non-hyperbolic, thanks to the assumption that $(b,u)$ is indecomposable as a $1/2$-pair with the square-zero splitting property. We conclude at this point that $a$ maps $V$ into $\Ker u^2$.

In Step 4, we consider the endomorphism $\overline{a}$ of $(\Ker u^2)/(\im u+\Ker u)$,
and we obtain that it is skew-selfadjoint for the Hermitian form $B_2:=(b,u)_{t,2}$.
Let us choose a nonzero $\alpha \in \F$ such that $\alpha^\bullet=-\alpha$. Then $\alpha \overline{a}$ is $B_2$-selfadjoint, and its square equals zero.
Assume now that $\overline{a} \neq 0$. Using the classification of $1/2$-pairs, we would obtain a pair
$(x,y)$ of vectors of $(\Ker u^2)/(\im u+\Ker u)$ such that $B_2(x,x)=B_2(y,y)=0$, $B_2(x,y)=B_2(y,x)=1$, together with a non-zero scalar $\beta \in \K$ such that $\alpha \overline{a}(x)=\beta y$, leading to $\overline{a}(x)=\lambda y$ for some nonzero $\lambda \in \F$. From there, the rest of Step 4 is adapted to the present situation by lifting $x$ to a vector $x'$ of $\Ker u^2$, and by showing that $U:=\Vect(x',a(x'),u(x'),u(a(x')))$ is $b$-regular and stable under $a$ and $u$.

It follows that $a$ maps $\Ker u^2$ into $\Ker u+\im u$.

The remaining two steps can then be adapted word for word (with $\eta=1$), the only difference being that
$(x,y)\in V_3 \times a(V_1) \mapsto b(x,u(y))$ is sesquilinear, not bilinear, but this has no impact on the validity of the arguments.
This completes the adaptation of the proof of Lemma \ref{lemma:keylemmanilpotent}.
\end{proof}

\end{document}